\def\ov#1{{\overline{#1}}}
\def\wh#1{{\widehat{#1}}}
\def\wt#1{{\widetilde{#1}}}
\newcommand{\bigboxplus}{
  \mathop{
    \vphantom{\bigoplus}
    \mathchoice
      {\vcenter{\hbox{\resizebox{\widthof{$\displaystyle\bigoplus$}}{!}{$\boxplus$}}}}
      {\vcenter{\hbox{\resizebox{\widthof{$\bigoplus$}}{!}{$\boxplus$}}}}
      {\vcenter{\hbox{\resizebox{\widthof{$\scriptstyle\oplus$}}{!}{$\boxplus$}}}}
      {\vcenter{\hbox{\resizebox{\widthof{$\scriptscriptstyle\oplus$}}{!}{$\boxplus$}}}}
  }\displaylimits
}
\newcommand{\MV}{\operatorname{MV}}
\newcommand{\conv}{\operatorname{conv}}
\newcommand{\vol}{\operatorname{vol}}
\newcommand{\Hom}{\operatorname{Hom}}
\newcommand{\supp}{\operatorname{supp}}
\newcommand{\Res}{\operatorname{Res}}
\newcommand{\Elim}{\operatorname{Elim}}
\newcommand{\rank}{\operatorname{rank}}
\newcommand{\cc}{{\rm c}}
\newcommand{\epi}{\operatorname{epi}}
\newcommand{\gr}{\operatorname{gr}}
\newcommand{\diag}{\operatorname{diag}}
\newcommand{\init}{\operatorname{init}}
\newcommand{\rc}{\operatorname{rc}}
\renewcommand{\and}{\quad \text{and} \quad}
\newcommand{\ord}{\operatorname{ord}}
\newcommand{\ri}{\operatorname{ri}}
\newcommand{\transposed}{{\rm T}}
\newcommand{\MI}{\operatorname{MI}}
\newcommand{\ev}{\operatorname{eval}}
\newcommand{\dd}{{\rm d\hspace*{-0.5mm}}}
\newcommand{\sat}{{\rm sat}}
\newcommand{\red}{{\rm red}}
\newcommand{\C}{\mathbb{C}}
\newcommand{\K}{\mathbb{K}}
\newcommand{\N}{\mathbb{N}}
\renewcommand{\P}{\mathbb{P}}
\newcommand{\Q}{\mathbb{Q}}
\newcommand{\R}{\mathbb{R}}
\newcommand{\T}{\mathbb{T}}
\newcommand{\Z}{\mathbb{Z}}
\newcommand{\cA}{{\mathcal A}}
\newcommand{\cB}{{\mathcal B}}
\newcommand{\cC}{{\mathcal C}}
\newcommand{\cD}{{\mathcal D}}
\newcommand{\cE}{{\mathcal E}}
\newcommand{\cG}{{\mathcal G}}
\newcommand{\cH}{{\mathcal H}}
\newcommand{\cI}{{\mathcal I}}
\newcommand{\cM}{{\mathcal M}}
\newcommand{\cN}{{\mathcal N}}
\newcommand{\cP}{{\mathcal P}}
\newcommand{\cQ}{{\mathcal Q}}
\newcommand{\cS}{{\mathcal S}}
\newcommand{\bfa}{{\boldsymbol{a}}}
\newcommand{\bfb}{{\boldsymbol{b}}}
\newcommand{\bfc}{{\boldsymbol{c}}}
\newcommand{\bfd}{{\boldsymbol{d}}}
\newcommand{\bff}{{\boldsymbol{f}}}
\newcommand{\bfs}{{\boldsymbol{s}}}
\newcommand{\bft}{{\boldsymbol{t}}}
\newcommand{\bfu}{{\boldsymbol{u}}}
\newcommand{\bfv}{{\boldsymbol{v}}}
\newcommand{\bfx}{{\boldsymbol{x}}}
\newcommand{\bfz}{{\boldsymbol{z}}}
\newcommand{\bfp}{{\boldsymbol{p}}}
\newcommand{\bfF}{{\boldsymbol{F}}}
\newcommand{\bfG}{{\boldsymbol{G}}}
\newcommand{\bfcA}{{\boldsymbol{\cA}}}
\newcommand{\bfcD}{{\boldsymbol{\cD}}}
\newcommand{\bfalpha}{{\boldsymbol{\alpha}}}
\newcommand{\bfbeta}{{\boldsymbol{\beta}}}
\newcommand{\bfgamma}{{\boldsymbol{\gamma}}}
\newcommand{\bfomega}{{\boldsymbol{\omega}}}
\newcommand{\bfdelta}{{\boldsymbol{\delta}}}
\newcommand{\bfzeta}{{\boldsymbol{\zeta}}}
\newcommand{\bfnu}{{\boldsymbol{\nu}}}
\newcommand{\bfrho}{{\boldsymbol{\rho}}}
\newcommand{\bfDelta}{{\boldsymbol{\Delta}}}
\newcommand{\bfzero}{{\boldsymbol{0}}}
\newcounter{thm}
\numberwithin{thm}{section}
\numberwithin{equation}{section}
\theoremstyle{definition}
\newtheorem{definition}[thm]{Definition}
\newtheorem{remark}[thm]{Remark}
\newtheorem{example}[thm]{Example}
\theoremstyle{plain}
\newtheorem{lemma}[thm]{Lemma}
\newtheorem{proposition}[thm]{Proposition}
\newtheorem{theorem}[thm]{Theorem}
\newtheorem{corollary}[thm]{Corollary}
\newtheorem{prop-def}[equation]{Proposition-Definition}
\begin{document}

\title[The Canny-Emiris conjecture for the sparse resultant]{The
  Canny-Emiris conjecture for the sparse resultant}

\author[D'Andrea]{Carlos D'Andrea}
\address{Departament de Matem\`atiques i Inform\`atica, Universitat de Barcelona.
Gran Via~585, 08007 Barcelona, Spain}
\email{cdandrea@ub.edu}

\author[Jeronimo]{Gabriela Jeronimo}
\address{Departamento de Matem\'atica and
  IMAS (CONICET-UBA), Facultad de Ciencias Exactas y Naturales,
  Universidad de Buenos Aires. Ciudad Universitaria, Pabell\'on I,
  1428 Buenos Aires, Argentina }
\email{jeronimo@dm.uba.ar}

\author[Sombra]{Mart{\'\i}n~Sombra}
\address{Instituci\'o Catalana de Recerca
  i Estudis Avan\c{c}ats (ICREA). Passeig Llu{\'\i}s Companys~23,
  08010 Barcelona, Spain  \vspace*{-2.5mm}}
\address{Departament de Matem\`atiques i
  Inform\`atica, Universitat de Barcelona. Gran Via 585, 08007
  Bar\-ce\-lo\-na, Spain}
\email{sombra@ub.edu}

\date{\today}

\subjclass[2010]{Primary 13P15; Secondary 52B20.}
\keywords{Sparse resultant, initial part, mixed subdivision, Macaulay
  formula}

\begin{abstract}
  We present a product formula for the initial parts of the sparse
  resultant associated to an arbitrary family of supports,
  generalizing a previous result by Sturmfels. This allows to compute
  the homogeneities and  degrees of this sparse resultant, and its
  evaluation at systems of Laurent polynomials with smaller supports.
  We obtain an analogous product formula for some of the initial parts of
  the principal minors of the Sylvester-type square matrix associated
  to a mixed subdivision of a polytope.

  Applying these results, we prove that under suitable hypothesis, the
  sparse resultant can be computed as the quotient of the determinant
  of such a square matrix by one of its principal minors. This
  generalizes the classical Macaulay formula for the homogeneous
  resultant, and confirms a conjecture of Canny and~Emiris.
\end{abstract}

\maketitle

\vspace{-7mm}

\setcounter{tocdepth}{1}
\tableofcontents

\vspace{-8mm}

\section{Introduction}

In \cite{Macaulay:sfe}, Macaulay introduced the notion of homogeneous
resultant, extending the Sylvester resultant to systems of homogeneous
polynomials in several variables with given degrees.  In the same
paper, he also presented an intriguing family of formulae, each of
them allowing to compute it as the quotient of the determinant of a
Sylvester-type square matrix by one of its principal minors.

The sparse resultant is a generalization of the homogeneous resultant
to systems of multivariate Laurent polynomials with prefixed
monomials.  It is a basic tool of elimination theory and polynomial
equation solving, and it is also connected to combinatorics, toric
geometry, and hypergeometric functions, see for instance \cite{GKZ94,
  Stu94, Est10:npdp, DS15}. As a consequence, there has been a lot of
interest in efficient methods for computing it, see also \cite{EM99,
  CanEmi:sbasr, DAndrea:msfsr, CLO05,DicEmi2005:spe, JS18} and the
references therein.

In \cite{CannyEmiris:easmr, CanEmi:sbasr}, Canny and Emiris introduced
a family of Sylvester-type square matrices whose determinants are
nonzero multiples of the sparse resultant, and showed that the sparse
resultant can be expressed as the gcd of several of these
determinants.  Besides, for each of these matrices they identified a
certain principal submatrix and, following Macaulay, conjectured that
the quotient of their determinants coincides with the sparse
resultant, at least in some cases.  Their construction relies heavily
on the combinatorics of the polytopes defined as the convex hull of
the exponents of the given monomials, and of a chosen family of affine
functions on them.  Shortly afterwards, Sturmfels extended the method
by allowing the use of convex piecewise affine functions on these
polytopes \cite{Stu94}.

Using this circle of ideas, the first author found a recursive
procedure to build Sylvester-type square matrices with a distinguished
principal submatrix, and obtained another family of formulae for the
sparse resultant extending those of Macaulay for the homogeneous
resultant \cite{DAndrea:msfsr}.  Some connections between the D'Andrea
construction and that of Canny and Emiris were explored by Emiris and
Konaxis for families of monomials whose associated polytopes are
scaled copies of a fixed one \cite{EmirisKonaxis:slmtfgusr}.  There
are also some determinantal formulae for sparse resultants, but their
applicability is limited to a short list of special cases
\cite{SZ94,WZ94, DE03,Khe02,Khe05,BFMT18,Gro20,BMT20,EMT21}.

The main result of this paper is a proof of a generalized version of
the Canny-Emiris conjecture, with precise conditions for its validity.
Our approach is based on a systematic study of the Canny-Emiris
matrices and their interplay with mixed subdivisions of polytopes. In
particular, we compute the orders and initial parts of its principal
minors and establish the compatibility of this construction with the
restriction of the defining data. We also prove a product formula for
the initial parts of the sparse resultant, generalizing a previous one
by Sturmfels \cite{Stu94}.

Classically, sparse resultants and Canny-Emiris matrices were studied
in the situation where the family of exponents of the given monomials
is essential in the sense of Sturmfels, that is, when the sparse
resultant does depend on all the sets of variables and, in addition,
the affine span of these exponents coincides with the ambient lattice,
see \cite[\S1]{Stu94} or Remark \ref{rem:1} for details. Whereas this
is, without any doubt, the main case of interest, a crucial part of
our analysis consists in extending and studying these notions in full
generality. Having constructions and properties that behave uniformly
allows us to descend to the simple cases where the result can be
proved directly.

We also show that the Macaulay formula for the homogeneous resultant
corresponding to the critical degree appears as a particular case of
our result, thus obtaining an independent proof for it.

\medskip We next explain these results with more detail.  Let
$M\simeq \Z^{n}$ be a lattice of rank $n$. Set
$\T_{M}= \Hom(M,\C^{\times }) \simeq (\C^{\times})^{n} $ for the
associated torus and, for $a\in M$, denote by
$ \chi^{a} \colon \T_{M}\to \C^{\times}$ the corresponding character.
For $i=0,\dots, n$ let $\cA_{i} \subset M$ be a nonempty finite
subset, $\bfu_{i}=\{u_{i,a}\}_{a\in \cA_{i}}$ a set of $\#\cA_{i}$
variables and
 $$
  F_{i}=\sum_{a\in \cA_{i}}u_{i,a} \, \chi^{a}\in \Z[\bfu_{i}][M]
$$
the general Laurent polynomial with support $\cA_{i}$, where
$\Z[\bfu_{i}][M] \simeq \Z[\bfu_{i}][x_1^{\pm 1},\dots, x_n^{\pm 1}]$
denotes the group $\Z[\bfu_{i}]$-algebra of $M$.

Let
$\Res_{\bfcA}, \Elim_{\bfcA} \in \Z[\bfu] = \Z[\bfu_{0},\dots,
\bfu_{n}] $ be the sparse resultant and the sparse eliminant
associated to the family of supports $\bfcA=(\cA_{0},\dots, \cA_{n})$
in the sense of \cite{Est10:npdp, DS15}. The sparse resultant is the
resultant of the multiprojective toric variety with torus $\T_{M}$
associated to $\bfcA$ in the sense of R\'emond's multiprojective
elimination theory, whereas the sparse eliminant corresponds to what
is classically referred to as the sparse resultant, as is done
in~\cite{GKZ94, Stu94, CLO05} for instance.  Both are well-defined up
to the sign, the sparse resultant is a power of the sparse eliminant,
and they coincide when the family of supports $\bfcA$ is essential and
its affine span coincides with $M$, see \cite{DS15} or
\S\ref{sec:basic-prop-sparse} for precisions.

For each $i$ denote by $\Delta_{i} $ the convex hull of $\cA_{i}$ in
the vector space $M_{\R}=M\otimes\R$ and set
$\Delta=\sum_{i=0}^{n}\Delta_{i}$ for the Minkowski sum of these
lattice polytopes. For a vector
$\bfomega=(\bfomega_{0},\dots, \bfomega_{n})\in \R^{\bfcA}
=\prod_{i=0}^{n}\R^{\cA_{i}}$~set
\begin{equation}
  \label{eq:45}
\vartheta_{\bfomega_{i}}\colon \Delta_{i}\longrightarrow \R,
\ i=0,\dots, n, \and \Theta_{\bfomega} \colon \Delta\longrightarrow \R
\end{equation}
for the convex piecewise affine functions parametrizing the lower
envelope of the convex hull of the lifted supports
$ \wh\cA_{i}=\{(a,\omega_{i,a})\}_{a\in \cA_{i}} \subset M \times \R$,
$i=0,\dots, n$, and of their sum
$\sum_{i=0}^{n}\wh\cA_{i} \subset M \times \R$, respectively.   These
functions  define a mixed subdivision $S(\Theta_{\bfomega})$
of~$\Delta$, and for each $n$-cell $D$ of $S(\Theta_{\bfomega})$ they
also determine a decomposition
  \begin{equation*}
    D=\sum_{i=0}^{n} D_{i}
  \end{equation*}
  where each $D_{i}$ is a cell of the subdivision
  $S(\vartheta_{\bfomega_{i}})$ of $\Delta_{i}$, called the $i$-th
  component of~$D$. We can then consider the restriction
  \begin{displaymath}
   \bfcA_{D}=( \cA_{0}\cap D_{0},\dots, \cA_{n}\cap D_{n}) 
  \end{displaymath}
  of the given family of supports to these components.

  Our first main result, contained in Theorem \ref{mt1}, is the
  following factorization for the initial part of the sparse resultant
  with respect to $\bfomega$, defined as the the sum of the monomial
  terms whose exponents have minimal weight with respect to this
  vector. It generalizes a previous one by Sturmfels for the case when
  $\bfcA$ is essential \cite[Theorem 4.1]{Stu94}. 
  
\begin{theorem}
  \label{thm:6}
  Let $\bfomega\in \R^{\bfcA}$. Then
  \begin{displaymath}
    {\init}_\bfomega(\Res_{\bfcA})=\pm\prod_{D}\Res_{\bfcA_{D}} ,
  \end{displaymath}
the product being over the $n$-cells of $S(\Theta_{\bfomega})$.
\end{theorem}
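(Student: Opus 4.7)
\textbf{Proof plan for Theorem \ref{thm:6}.}

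The plan is to reduce Theorem~\ref{thm:6} to the product formula of Sturmfels \cite[Theorem~4.1]{Stu94}, which handles the case where $\bfcA$ is essential and $\mathrm{aff}(\bfcA)=M$ for what Sturmfels calls the ``sparse resultant'' and what here is called the sparse eliminant $\Elim_{\bfcA}$, and then to promote the resulting identity from the eliminant to the resultant in the generality of an arbitrary family of supports. Since $\init_{\bfomega}$ is a multiplicative operation on nonzero polynomials up to sign, it will be enough to match both sides factor by factor.

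First I would invoke the relation $\Res_{\bfcA}=\pm\,\Elim_{\bfcA}^{e_{\bfcA}}$ recalled in \S\ref{sec:basic-prop-sparse}, whose exponent $e_{\bfcA}$ is a combinatorial invariant of the family $\bfcA$ depending on the essential subfamily and on the index of the sublattice spanned by the differences $\cA_{i}-\cA_{i}$ inside~$M$. Since initial parts commute with integer powers, this rewrites the left-hand side as $\pm\init_{\bfomega}(\Elim_{\bfcA})^{e_{\bfcA}}$. Writing the same relation cellwise, $\Res_{\bfcA_{D}}=\pm\,\Elim_{\bfcA_{D}}^{e_{\bfcA_{D}}}$, reduces the theorem to an identity purely between eliminants.

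In the essential case, Sturmfels' theorem applied to $\Elim_{\bfcA}$ yields a product formula of the form
\begin{equation*}
\init_{\bfomega}(\Elim_{\bfcA})=\pm\prod_D \Elim_{\bfcA_{D}}^{m_{D}},
\end{equation*}
the multiplicities $m_{D}$ being read off from the combinatorics of the mixed subdivision $S(\Theta_{\bfomega})$ and vanishing on unmixed cells. The heart of the argument is then a cellwise exponent identity of the form $e_{\bfcA}\cdot m_{D}=e_{\bfcA_{D}}$, which ensures that after raising to the $e_{\bfcA}$-th power each factor $\Res_{\bfcA_{D}}$ appears with multiplicity one, and that trivial cells contribute trivial factors. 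This is where the choice of the resultant over the eliminant pays off: the extra multiplicities built into $\Res$ absorb the Sturmfels multiplicities exactly.

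The case where $\bfcA$ is not essential, or where its affine span is a proper subgroup of $M$, has to be handled separately: one either descends to the essential subfamily along an appropriate quotient and verifies the compatibility of the restriction operation $\bfcA\mapsto\bfcA_{D}$ with this descent, or checks directly that both sides of the claimed identity are trivial in a compatible way. The principal obstacle will be the exponent identity $e_{\bfcA}\cdot m_{D}=e_{\bfcA_{D}}$, since it demands a precise accounting of how the essential subfamily, the affine span, and Sturmfels' multiplicities transform under $\bfcA\mapsto\bfcA_{D}$, and this transformation depends in a subtle way on the position of the $n$-cell $D$ inside $S(\Theta_{\bfomega})$.
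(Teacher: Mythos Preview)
Your approach differs substantially from the paper's. The paper does not bootstrap from Sturmfels' result but gives an independent proof: after reducing to integer $\bfomega$ via a rationality lemma, it applies the hidden-variable formula to realize $\Res_{\bfcA}(\bfF^{\bfomega})$ as a sparse resultant for $n+2$ supports in $M\times\Z$, expands this via the Poisson formula (Theorem~\ref{poisson}) as a product of directional resultants over primitive vectors $(v,l)\in N\times\Z$ with $l>0$, and then identifies each such factor with $\Res_{\bfcA_D}$ using the behavior of $\Res$ under lattice monomorphisms (Proposition~\ref{outil}) together with an elementary index computation (Lemma~\ref{cramer}). Sturmfels' theorem thus becomes a corollary rather than an input.

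Your plan has a genuine gap: the exponent identity $e_{\bfcA}\cdot m_D = e_{\bfcA_D}$ is the entire content of the reduction, and you explicitly leave it unproven. This identity is not a formality. Sturmfels' multiplicities $m_D$ encode lattice indices of the restricted supports inside $M$, while the exponents $e_{\bfcA_D}$ depend on which subfamily of $\bfcA_D$ is essential and on a mixed volume in a quotient lattice (Proposition~\ref{prop:12}); matching these requires tracking how the essential subfamily transforms under $\bfcA\mapsto\bfcA_D$, and this transformation is not uniform across cells. Moreover, in the non-essential case Sturmfels' theorem does not apply directly to $\Elim_{\bfcA}$ at all, so you would first have to pass to the essential subfamily $\bfcA_J$ on its own lattice $L_{\bfcA_J}$, verify that $S(\Theta_{\bfomega})$ induces a compatible mixed subdivision there, and check that the cells correspond correctly to those of the original---none of which is automatic. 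The paper's direct approach sidesteps all of this bookkeeping by working with $\Res$ throughout, which behaves multiplicatively under change of lattice and does not require case analysis on the essential subfamily.
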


A result by Philippon and the third author for the Chow weights of a
multiprojective toric variety \cite[Proposition~4.6]{PS08a} implies
that the order of the sparse resultant with respect to $\bfomega$ can
be expressed as the mixed integral of the $\vartheta_{\bfomega_{i}}$'s
(Theorem~\ref{mt1}).  Applying this together with Theorem \ref{thm:6},
we derive product formulae for the evaluation of $\Res_{\bfcA}$ by
setting some of the coefficients of the input Laurent polynomials to
zero (Theorem \ref{mt2} and Proposition \ref{minn}), correcting and
generalizing a previous one by Minimair \cite{Min03}, see
Remark~\ref{rem:13}.  These factorizations might be interesting from
the computational point of view, since they allow to extract the
sparse resultant associated to a family of supports contained in those
of $\bfcA$ as a factor of such an evaluation (Remark~\ref{osts}).

Apart from being homogeneous with respect to the sets of variables
$\bfu_{i}$, the sparse resultant is also homogeneous with respect to a
weighted grading on $\C[\bfu]$ associated to the action of $\T_{M}$ by
pullbacks on the system of Laurent polynomials
$\bfF=(F_{0},\dots, F_{n})$.  As another application of the
Philippon-Sombra formula, we compute its degree with respect to this
grading,  extending  a result by Gelfand, Kapranov and
Zelevinsky \cite[Chapter 9, Proposition 1.3]{GKZ94} and by Sturmfels
\cite[\S6]{Stu94} (Theorem~ \ref{antt}).

 To state our second main result, let
\begin{equation}
  \label{eq:88}
  \rho_{i}\colon \Delta_{i}\longrightarrow \R, \ i=0,\dots, n, \and
  \rho\colon \Delta \longrightarrow \R
\end{equation}
be the family of convex piecewise affine functions and its
inf-convolution defined by a vector of $\R^{\bfcA}$ as in
\eqref{eq:45}. {Set $\bfrho=(\rho_{0},\dots,\rho_{n})$ and} suppose that
the mixed subdivision $S(\rho)$ is tight (Definition \ref{def:7}).

Following Canny and Emiris \cite{CannyEmiris:easmr, CanEmi:sbasr} and
Sturmfels \cite{Stu94}, this data together with a generic translation
vector $\delta\in M_{\R}$ determines linear subspaces of
$ \C(\bfu)[M]^{n+1}$ and of $ \C(\bfu)[M] $, both of them generated by
monomials indexed by the lattice points in the translated polytope
$\Delta+\delta$, and such that the expression
\begin{displaymath}
(G_{0},\dots, G_{n}) \longmapsto \sum_{i=0}^{n}G_{i}\, F_{i}
\end{displaymath}
defines a linear map between them, see~\S\ref{sec:constr-first-prop}
for details.  The matrix of this linear map is denoted by
$\cH_{\bfcA,\bfrho}$, and we denote by $\cE_{\bfcA,\bfrho}$ the
principal submatrix corresponding to the lattice points in
$\Delta+\delta$ contained in the translated nonmixed $n$-cells of
$S(\rho)$ (Definition \ref{def:3}).

There is a nice interplay between these square matrices and the mixed
subdivisions of $\Delta$ that are coarser than $S(\rho)$.  Let
\begin{displaymath}
 \phi_{i}  \colon \Delta_{i}\longrightarrow  \R, \ i =0, \dots, n, \and
\phi\colon \Delta \longrightarrow \R
\end{displaymath}
be another family of convex piecewise affine functions and its
respective inf-convolution, and suppose that $S(\phi)$ is coarser than
$ S( \rho)$, a condition that is denoted by
$S(\phi) \preceq S( \rho)$.  For an $n$-cell $D$ of $S(\phi)$ denote
by $\bfrho_{D}=(\rho_{0}|_{D_{0}}, \dots, \rho_{n}|_{D_{n}})$ the
restriction to its components of this family of functions.

\begin{theorem}
  \label{thm:8}
  For $\bfomega=(\phi_{i}(a))_{i,a}\in \R^{\bfcA}$ we have that
  \begin{displaymath}
  \init_{\bfomega}(\det(\cH_{\bfcA,\bfrho} )) = \prod_{D}\det(\cH_{\bfcA_{ D},\bfrho_{D}}),
\end{displaymath}
the product being over the $n$-cells of $S(\phi)$.
\end{theorem}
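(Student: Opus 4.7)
The plan is to analyze the Leibniz expansion of $\det(\cH_{\bfcA,\bfrho})$, identify the permutations of minimal $\bfomega$-weight, and show that they organize themselves into a block-diagonal structure indexed by the $n$-cells of $S(\phi)$. In spirit this parallels the proof of Theorem~\ref{thm:6}, but carried out inside a specific square matrix whose entries are variables $u_{i,a}$ or zero.

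First I would recall that the Canny--Emiris construction labels each row index $p\in(\Delta+\delta)\cap M$ with an index $i(p)\in\{0,\dots,n\}$ read off from the $n$-cell of $S(\rho)$ containing $p-\delta$, and that the nonzero entries of row $p$ are variables $u_{i(p),a}$ with $a\in\cA_{i(p)}$. Hence a Leibniz summand indexed by a permutation $\sigma$ has $\bfomega$-weight
\[
w(\sigma)\;=\;\sum_{p}\phi_{i(p)}\bigl(a_{p,\sigma(p)}\bigr),
\]
where $a_{p,\sigma(p)}\in\cA_{i(p)}$ is the support element attached to the entry in position $(p,\sigma(p))$. Genericity of $\delta$ (built into the construction of $\cH_{\bfcA,\bfrho}$) ensures that each $p-\delta$ lies in the interior of a unique $n$-cell $D$ of $S(\phi)$, so the rows and columns partition into blocks $\Lambda_D$ indexed by the $n$-cells of $S(\phi)$.

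The central step is to show that $w(\sigma)$ is minimized precisely on those $\sigma$ that preserve this partition, and that its minimum value is constant on such $\sigma$. The main tool is the inf-convolution identity $\phi(p-\delta)=\sum_i\phi_i(p_i)$ for the unique mixed decomposition $p-\delta=\sum_i p_i$ with $p_i\in D_i$, combined with convexity of each $\phi_i$ across the cells of $S(\phi_i)$. Applied to the support element $a_{p,\sigma(p)}$, this shows that the contribution $\phi_{i(p)}(a_{p,\sigma(p)})$ is minimized when $a_{p,\sigma(p)}\in D_{i(p)}$, which in turn forces $\sigma(p)\in\Lambda_D$. The bookkeeping that the number of rows of $\cH_{\bfcA,\bfrho}$ labeled $i$ and lying in $\Lambda_D$ matches the number of available column positions inside $\Lambda_D$ for $F_i$ (a tight-mixed-subdivision statement) guarantees that block-preserving permutations exist and makes the argument sharp.

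Once this block-diagonal structure of $\init_{\bfomega}(\cH_{\bfcA,\bfrho})$ is established, the determinant factors as $\prod_D\det(M_D)$, where $M_D$ is the $\Lambda_D$-block. To identify $M_D$ with $\cH_{\bfcA_D,\bfrho_D}$, I would unwind the definitions: since $S(\phi)\preceq S(\rho)$, the restriction $S(\rho)|_D$ is a tight mixed subdivision of $D=\sum_i D_i$ compatible with the restricted family of supports $\bfcA_D=(\cA_0\cap D_0,\dots,\cA_n\cap D_n)$, and the Canny--Emiris labeling on $\Lambda_D$ inherited from $S(\rho)$ coincides with the one attached intrinsically to the data $(\bfcA_D,\bfrho_D)$.

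\textbf{Main obstacle.} The technical heart is the strict-inequality part of the weight minimization, together with the verification that no cancellation occurs among minimum-weight Leibniz summands. The strict inequality requires carefully exploiting the convexity of each $\phi_i$ across the boundaries of the cells of $S(\phi_i)$, and using genericity of $\delta$ to avoid lattice points on such boundaries. The absence of cancellation is controlled precisely because the minimum-weight summands reassemble block by block into $\det(\cH_{\bfcA_D,\bfrho_D})$; verifying this match requires tracking signs and support labels, which hinges on the fact that the restriction of a tight mixed subdivision to a coarser cell is again tight.
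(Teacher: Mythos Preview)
Your architecture is correct and matches the paper's: both arguments show that, after weighting each variable $u_{i,a}$ by $t^{\phi_i(a)}$, the lowest-order part of $\cH_{\bfcA,\bfrho}$ is block diagonal with blocks indexed by the $n$-cells of $S(\phi)$, and then identify each block with $\cH_{\bfcA_D,\bfrho_D}$ via Propositions~\ref{prop:17} and~\ref{prop:7}. The paper executes this by multiplying row $b$ by $t^{\phi(b-\delta)-\phi_{i(b)}(a(b))}$ and showing that every entry of column $b'$ then has order at least $\phi(b'-\delta)$, with equality exactly on the block-diagonal; this is Lemma~\ref{lemm:3}. The factorization and nonvanishing then fall out of the matrix-level statement, and one appeals to Proposition~\ref{prop:3} (the case $\phi=\rho$) to see that each block determinant is nonzero, so no cancellation can occur.

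There is, however, a genuine gap in your key step. The claim that the individual contribution $\phi_{i(p)}(a_{p,\sigma(p)})$ is minimized when $a_{p,\sigma(p)}\in D_{i(p)}$ is false in general: $\phi_{i(p)}$ is convex piecewise affine on all of $\Delta_{i(p)}$ and has no reason to attain its minimum on the particular component $D_{i(p)}$. What is true is the inf-convolution inequality of Lemma~\ref{lemm:3},
\[
\phi_{i(b)}(a')\;\ge\;\phi_{i(b)}(a(b))+\phi(b'-\delta)-\phi(b-\delta),\qquad a'=b'-b+a(b),
\]
whose proof uses that $b-\delta-a(b)\in D_{i(b)}^{\cc}$ (coming from $S(\phi)\preceq S(\rho)$) and the inf-convolution definition of $\phi$, not merely the convexity of the individual $\phi_i$. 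Summing this over $b$ with $b'=\sigma(b)$ gives the lower bound $w(\sigma)\ge\sum_b\phi_{i(b)}(a(b))$ because the terms $\phi(\sigma(b)-\delta)-\phi(b-\delta)$ telescope over a permutation; the equality case then forces $b,\sigma(b)$ into the same translated $n$-cell and $a'\in D_{i(b)}$. This telescoping is the point you are missing, and it is exactly what the paper's row-scaling encodes automatically. Once you have it, the counting argument you mention and the separate sign-tracking become unnecessary: you obtain a block-diagonal initial matrix, and nonvanishing of each block follows from Proposition~\ref{prop:3}.
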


More generally, a similar factorization holds for all the principal
minors of the Canny-Emiris matrix and in particular, for the
determinant of $\cE_{\bfcA,\bfrho}$ (Theorem~\ref{thm:2}).  Hence for
the vector defined by the $\phi_{i}$'s, the initial part of each of
these minors factorizes in the same way as the corresponding initial
part of the sparse resultant.  In contrast with the situation for the
sparse resultant, we do not know if this factorization holds for every
$\bfomega\in \R^{\bfcA}$ and as a matter of fact, it would be most
interesting to extend it to a larger class of vectors.

Another important property is that the Canny-Emiris matrices
associated to the restricted data $\bfcA_{D}$ and $\bfrho_{D}$ can be
retrieved as the evaluation of a principal submatrix of
$\cH_{\bfcA, \bfrho}$ by setting some of its coefficients to zero, and
that this construction is compatible with refinements of mixed
subdivisions (Propositions \ref{prop:17} and \ref{prop:7}).  We also
determine the homogeneities and degrees of $\det(\cH_{\bfcA,\bfrho})$
(Proposition \ref{prop:6}) and show that, under a mild hypothesis,
this determinant is a nonzero multiple of the sparse resultant
(Proposition \ref{prop:14}).  As a side question, such a hypothesis
does not seem necessary, and it would be interesting to get rid of it
(Remark \ref{rem:4}).

 The Canny-Emiris conjecture \cite[Conjecture 13.1]{CanEmi:sbasr}
states that, if the family of supports $\bfcA$ is essential and its
affine span coincides with $M$, then there is a family $\bfrho$ of
affine functions on the $\Delta_{i}$'s and a translation vector
$\delta\in M_{\R}$ such that
\begin{equation}
  \label{eq:86}
\Elim_{\bfcA} =\pm  \frac{\det(\cH_{\bfcA,\bfrho})}{\det(\cE_{\bfcA,\bfrho})}.
\end{equation}
As noted in \cite[\S13]{CanEmi:sbasr}, this identity does not hold
unconditionally since there are examples of families of convex
piecewise affine functions whose associated Canny-Emiris matrix and
distinguished principal submatrix do not verify it
(Example~\ref{exm:7}).

In \cite{DAndrea:msfsr}, the first author presented a recursive
procedure, using several mixed subdivisions on polytopes of every
possible dimension up to $n$, for constructing a square matrix with a
distinguished principal submatrix {such that the quotient of
  the determinants of these matrices coincides with
  $\Elim_{\bfcA}$}. In \cite{EmirisKonaxis:slmtfgusr}, Emiris and
Konaxis showed that in the generalized unmixed case, the D'Andrea
formula can be produced by a single mixed subdivision of $\Delta$, at
the price of adding many more points to the supports.

Our third main result gives a positive answer to a generalized version
of the Canny-Emiris conjecture.  To bypass the recursive steps of the
previous approaches, we consider chains of mixed subdivisions of
$\Delta$
\begin{displaymath}
   S(\theta_{0})\preceq \dots\preceq S(\theta_{n})
\end{displaymath}
with $S(\theta_{n})\preceq S(\rho)$.  The tight mixed subdivision
$S(\rho)$ is said to be admissible if there is such a chain which is
incremental in the sense of Definition~\ref{def:9} and satisfies the
conditions in Definition~\ref{def:5}.

Not every tight mixed subdivision of $\Delta$ is admissible (Example
\ref{exm:7}). However, for the family of supports $\bfcA$ one can
always find convex piecewise affine
functions~$\bfrho=(\rho_{0},\dots,\rho_{n})$ whose associated mixed
subdivision $S(\rho)$ is admissible.  For instance, this can be
realized by considering convex piecewise affine functions as
in~\eqref{eq:45} associated to a generic vector
$\bfnu=(\bfnu_{0}, \dots, \bfnu_{n})\in \R^{\bfcA}$ such that
$ \bfnu_{0}\gg \dots \gg \bfnu_{n} =\bfzero$.  Moreover, this vector
can be chosen so that the $\rho_{i}$'s are affine (Example \ref{exm:9}
and Corollary \ref{cor:2}).

\begin{theorem}
\label{thm:9}
If $S(\rho)$ is admissible, then
\begin{displaymath}
  \Res_{\bfcA} =\pm
\frac{\det(\cH_{\bfcA,\bfrho})}{\det(\cE_{\bfcA,\bfrho})}.
\end{displaymath}
\end{theorem}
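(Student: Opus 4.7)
The plan is to prove the equivalent identity
\[
\det(\cH_{\bfcA,\bfrho}) = \pm \Res_{\bfcA}\cdot \det(\cE_{\bfcA,\bfrho})
\]
in $\Z[\bfu]$. The starting point is Proposition \ref{prop:14}, which already gives that $\det(\cH_{\bfcA,\bfrho})$ is a nonzero multiple of $\Res_{\bfcA}$, so one may write $\det(\cH_{\bfcA,\bfrho}) = Q\cdot \Res_{\bfcA}$ with $Q \in \Z[\bfu]$. From the homogeneity and degree data of Proposition \ref{prop:6}, the known multidegree of the sparse resultant, and a direct analysis of $\cE_{\bfcA,\bfrho}$, I would first verify that $Q$ and $\det(\cE_{\bfcA,\bfrho})$ share the same multidegree with respect to each block of variables $\bfu_{i}$ and the $\T_{M}$-weighted grading. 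It then suffices to show that $Q$ and $\det(\cE_{\bfcA,\bfrho})$ have the same initial parts with respect to a controlling family of weight vectors, and for this I would argue by induction.

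The induction is on a complexity measure of the admissible data, for instance a combination of $n$ and the length of the incremental chain $S(\theta_{0})\preceq\dots\preceq S(\theta_{n})\preceq S(\rho)$ witnessing admissibility. For the inductive step, pick some coarser subdivision $S(\phi)$ in the chain and let $\bfomega=(\phi_{i}(a))_{i,a}\in \R^{\bfcA}$ be its defining vector. Theorem \ref{thm:6} applied to $\Res_{\bfcA}$, Theorem \ref{thm:8} applied to $\det(\cH_{\bfcA,\bfrho})$, and the principal-minor version of Theorem \ref{thm:2} applied to $\det(\cE_{\bfcA,\bfrho})$ then yield three parallel factorizations indexed by the $n$-cells $D$ of $S(\phi)$:
\begin{align*}
\init_{\bfomega}(\Res_{\bfcA}) &= \pm\textstyle\prod_{D}\Res_{\bfcA_{D}},\\
\init_{\bfomega}(\det(\cH_{\bfcA,\bfrho})) &= \textstyle\prod_{D}\det(\cH_{\bfcA_{D},\bfrho_{D}}),\\
\init_{\bfomega}(\det(\cE_{\bfcA,\bfrho})) &= \textstyle\prod_{D}\det(\cE_{\bfcA_{D},\bfrho_{D}}).
\end{align*}
The compatibility results (Propositions \ref{prop:17} and \ref{prop:7}) guarantee that on each cell $D$ the restricted data $(\bfcA_{D},\bfrho_{D})$ still carries an admissible mixed subdivision, with an incremental chain that is strictly simpler than the original one. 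The inductive hypothesis therefore applies factor by factor, yielding $\det(\cH_{\bfcA_{D},\bfrho_{D}}) = \pm \Res_{\bfcA_{D}}\cdot \det(\cE_{\bfcA_{D},\bfrho_{D}})$ on every cell.

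Multiplying across cells and comparing with the three displays above shows that $\init_{\bfomega}(Q)$ equals $\init_{\bfomega}(\det(\cE_{\bfcA,\bfrho}))$ up to a global sign, for every $\bfomega$ realizing an intermediate subdivision of the chain. Since a polynomial with prescribed multidegree is determined by sufficiently many of its initial forms, this forces $Q = \pm \det(\cE_{\bfcA,\bfrho})$ and closes the induction. The base case occurs when the chain is trivial or the cell has collapsed to a configuration in which $\cE_{\bfcA,\bfrho}$ is an empty principal submatrix, so $\det(\cE_{\bfcA,\bfrho})=1$ and $\det(\cH_{\bfcA,\bfrho})=\pm\Res_{\bfcA}$ follows from the pure-mixed-cell analysis. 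The main obstacle I expect is verifying that admissibility genuinely descends to each restricted datum $(\bfcA_{D},\bfrho_{D})$ after one peels off a step of the chain: this is precisely where the technical conditions hidden in Definitions \ref{def:9} and \ref{def:5} have to do the real work, and a secondary difficulty will be propagating the universal sign coherently through the product over cells.
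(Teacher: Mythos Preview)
Your overall architecture—parallel factorizations of $\Res_{\bfcA}$, $\det(\cH_{\bfcA,\bfrho})$ and $\det(\cE_{\bfcA,\bfrho})$ under a weight $\bfomega$ coming from the chain, combined with an induction over the chain—is exactly the skeleton of the paper's argument. But the step where you close the induction has a genuine gap.

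You obtain $\init_{\bfomega}(Q)=\pm\init_{\bfomega}(\det(\cE_{\bfcA,\bfrho}))$ for the finitely many weights $\bfomega$ arising from the levels of the incremental chain, and then assert that ``a polynomial with prescribed multidegree is determined by sufficiently many of its initial forms.'' This is not true here: you have at most $n+1$ such weight vectors, while $Q$ and $\det(\cE_{\bfcA,\bfrho})$ live in a polynomial ring with many more variables, so matching these few initial forms (even together with all the multidegrees) does not force equality. The paper avoids this problem entirely by a sharper observation. At level $k$, the admissibility condition~(\ref{item:63}) in Definition~\ref{def:5} (applied for all $i\le k$, which holds because the chain is increasing) together with Proposition~\ref{prop:14} forces both $E_{\bfcA_{D},\bfrho_{D}}$ and $H_{\bfcA_{D},\bfrho_{D}}/\Res_{\bfcA_{D}}$ to lie in $\Z[\bfu_{k+1},\dots,\bfu_{n}]$ (respectively $\Q(\bfu_{k+1},\dots,\bfu_{n})$). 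Since the chain is \emph{incremental}, the weight $\bfomega$ coming from $\theta_{k+1}$ is identically zero on $\bfu_{k+1},\dots,\bfu_{n}$. Hence these two quantities are \emph{equal to their own initial parts} with respect to $\bfomega$, and the factorization of the initial parts is already a factorization of the quantities themselves. No ``determined by sufficiently many initial forms'' principle is needed; the equality is immediate.

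Two smaller points. First, your opening appeal to Proposition~\ref{prop:14} at the global level is not automatic: that proposition has a hypothesis on $\cB_i$ that need not hold for $(\bfcA,\bfrho)$ itself; the paper instead runs a reverse induction on $k$ (from $k=n+1$ down to $k=0$), invoking Proposition~\ref{prop:14} only inside cells $D$ where admissibility guarantees the hypothesis. Second, the base case is not ``$\cE$ is empty'' but rather $k=n+1$, where $S(\theta_{n+1})=S(\rho)$ is tight, so every $n$-cell $D$ has some zero-dimensional component and Proposition~\ref{prop:18} applies directly.
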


 In the setting of the Canny-Emiris conjecture \eqref{eq:86}, the
sparse eliminant coincides with the sparse resultant. Hence this
statement follows from Theorem \ref{thm:9} taking a family of affine
functions whose associated mixed subdivision is admissible.

The statement of Theorem \ref{thm:9} is contained in Theorem
\ref{thm:3} and its proof uses a descent argument similar to that of
Macaulay in \cite{Macaulay:sfe} and the first author in
\cite{DAndrea:msfsr}, but its implementation is different. In contrast
to these references, our approach works directly with the Canny-Emiris
matrices associated to restrictions of the given data, without any
need of extending the Canny-Emiris construction to a larger one.  On
the other hand, it is interesting to note that such an enlargement is
possible, in analogy with the situation in \cite{Macaulay:sfe,
  DAndrea:msfsr}: the Canny-Emiris construction can be enlarged by
replacing the translation vector $\delta$ by a convex piecewise affine
function on a polytope, and Theorem \ref{thm:9} extends to this more
general situation (Remark \ref{rem:6}).

This result calls in for several research questions. To begin with, it
would be interesting to extend the class of mixed subdivisions to
which the quotient formula for the sparse resultant holds. Indeed,
such an extension might be possible by enlarging the range of validity
of Theorem \ref{thm:8}. In the mean time, for computational purposes
it would be interesting to have a fast way of checking if a given
tight mixed subdivision of $\Delta$ is admissible. In the same line,
it would be interesting to determine the probability that a given
tight mixed subdivision is admissible, with respect to a suitable
probability distribution.

As an application, we show that the Macaulay formula for the
homogeneous resultant corresponding to the critical degree is a
particular case of Theorem \ref{thm:9}, thus providing an independent
proof for it (Corollary~\ref{cor:4}). This is done by considering a
specific admissible mixed subdivision of scalar multiples of the
standard simplex such that its Canny-Emiris matrix and distinguished
principal submatrix coincide with those in that formula (Proposition
\ref{prop:2}).

The paper is organized as follows. In \S \ref{sec:polyhedral-geometry}
we explain the necessary notions and results from polyhedral geometry,
including convex piecewise affine functions on polyhedra and their
associated mixed subdivisions, and mixed volumes and integrals.
In~\S\ref{sec:basic-prop-sparse} we recall the basic definitions and
properties of sparse resultants and study some further aspects,
including their orders and initial parts, their homogeneities and
corresponding degrees, and their behavior under the evaluation at
systems of Laurent polynomials with smaller supports. In
\S\ref{sec:canny-emir-matr}  we study Canny-Emiris matrices: their
behavior under restriction of the data, the orders, initial parts,
homogeneities and degrees of their principal minors, some divisibility
properties of their determinants, and we give the proof of the
Canny-Emiris conjecture.   In \S\ref{sec:homog-result-1} we study
 the Macaulay formula for the homogeneous resultant in the framework
of the Canny-Emiris construction, and give some additional examples
and observations.

\medskip \noindent {\bf Acknowledgments.}  We thank the anonymous
reviewers {for their comments and suggestions for improvement
  on a previous version of} this paper. We also thank Carles Checa for
useful comments. Part of this work was done while the authors met at
the universities of Barcelona and of Buenos Aires.

D'Andrea was partially supported by the European Horizon 2020 research
and innovation program under the Marie Sklodowska-Curie grant
agreements No. 675789 and 860843. Jeronimo was partially supported by
the Argentinian CONICET research grant PIP 11220130100527CO and the
University of Buenos Aires research grants UBACYT 20020160100039BA and
UBACYT 20020190100116BA.  Both D'Andrea and Sombra were also partially
supported by the Spanish MINECO research projects MTM2015-65361-P and
PID2019-104047GB-I00, the Spanish Mar{\'\i}a de Maeztu program for
units of excellence MDM-2014-0445 (BGSMath Mar\'ia de Maeztu), and the
Argentinian research project ANPCyT PICT-2013-0294.

\section{Polyhedral geometry} \label{sec:polyhedral-geometry}

\subsection{Convex piecewise affine functions and mixed
  subdivisions}\label{sec:piec-affine-conv}

In this section we study the mixed subdivisions of convex polyhedra
produced by families of convex piecewise affine functions. We also
introduce some notions that will play a key role in our analysis of
Canny-Emiris matrices, and establish their feasability for a given
family of supports. Some of the techniques we use are similar to those
in \cite{HS95,JMSW09}. The necessary background on polyhedral geometry
can be found in \cite[Part~1]{Ewa96}.

Let $M\simeq \Z^{n}$ be a lattice of rank $n \in \N$ and
$N=M^{\vee}=\Hom(M,\Z) \simeq \Z^{n}$ its dual lattice. Set
$M_{\R}=M\otimes \R\simeq \R^{n}$ and
$N_{\R}=N\otimes \R\simeq \R^{n}$ for the associated $n$-dimensional
vector spaces, and denote by $\langle v, x\rangle$ the pairing between
$v\in N_{\R}$ and $x \in M_{\R}.$

A \emph{ convex polyhedron} of $M_{\R}$ is a subset of this vector
space given as the intersection of a finite family of closed
halfspaces.  For a convex polyhedron $\Delta$ of $M_{\R}$ we
denote by $\ri(\Delta)$ its \emph{relative interior}, that is, the
interior of this convex polyhedron relative to the minimal affine
subspace containing it.  Its \emph{support function} is the function
$h_{\Delta}\colon N_{\R}\to \R \cup\{-\infty\}$ defined~by
\begin{equation}
  \label{eq:28}
  h_{\Delta}(v)=\inf \{\langle v,x\rangle \mid x\in \Delta\}.
\end{equation}
The assignment $\Delta \mapsto h_{\Delta}$ is additive with respect
to the Minkowski sum of convex polyhedra and the pointwise sum of
functions.

For a vector $v\in N_{\R}$, the \emph{face of $\Delta$ in the
  direction of $v$} is defined as
\begin{equation}
  \label{eq:51}
  \Delta^{v}=\{x\in \Delta \mid \langle v, x\rangle = h_{\Delta}(v)\}.
\end{equation}

Let $\rho \colon \Delta \to \R $ be a convex piecewise affine
function. Its \emph{graph} and its \emph{epigraph} are the subsets of
$M_{\R}\times\R$ respectively defined as
\begin{displaymath}
\gr(\rho) =  \{(x,\rho(x)) \mid x\in \Delta \} \and
\epi(\rho)=\{(x,z)  \mid x\in \Delta,  z\ge \rho(x)\}.
\end{displaymath}
The epigraph is a convex polyhedron, whose faces of the form
$\epi(\rho)^{(v,1)}$, $v\in N_{\R}$, are contained in the graph, and
are called the \emph{faces} of $\gr(\rho)$.

The \emph{subdivision of $\Delta$ induced by $\rho $}, denoted by
$S(\rho)$, is the polyhedral subdivision of $\Delta$ given by the
image of the faces of the graph of $\rho $ with respect to the
projection $\pi\colon M_{\R}\times \R \rightarrow M_{\R}$.  Its
elements are called the \emph{cells} of this subdivision. For
$j \ge -1$, we denote by $S(\rho)^{j}$ the set of cells of $S(\rho)$
of dimension $j$, or \emph{$j$-cells}. Their union gives the
$j$-\emph{skeleton} of $S(\rho)$, denoted by $|S(\rho)^{j}|$.  For a
vector $v\in N_{\R}$, the corresponding cell of $S(\rho)$ is denoted
by
  \begin{equation}
    \label{eq:35}
\Gamma(\rho,v)= \pi\big(\epi(\rho)^{(v,1)}\big).
\end{equation}
For $x\in \Delta$ we have that
\begin{equation}
  \label{eq:79}
\rho(x)\ge \langle -v,x\rangle +
h_{\epi(\rho)}(v,1),
\end{equation}
and the equality holds if and only if $x\in \Gamma(\rho,v)$.

Let $\rho \colon \Delta\to\R$ and $\rho'\colon \Delta' \to \R$ be
convex piecewise affine functions on convex polyhedra. Their
\emph{inf-convolution}, denoted by $\rho \boxplus \rho'$, is the
convex piecewise affine function on the Minkowski sum
$\Delta+\Delta'$ defined by
\begin{equation}
  \label{eq:6}
  (  \rho\boxplus \rho')(x)=\inf\{\rho(y)+\rho'(y') \mid y\in \Delta, y'\in \Delta'  \text{ and } x=y+y'\}.
\end{equation}
Alternatively, it can be defined as the function parametrizing the
lower envelope of $\epi(\rho)+\epi(\rho')$, that is,
\begin{equation*}
  (  \rho\boxplus \rho')(x)=\inf\{z\in \R \mid (x,z)\in \epi(\rho)+\epi(\rho')\}.
\end{equation*}
The Minkowski sum $ \epi(\rho)+\epi(\rho')$ is a convex
polyhedron, and so  $\rho\boxplus \rho'$ is a convex piecewise affine
function on $ \Delta+\Delta'$ and for every point $x$ in this set,
the infimum in~\eqref{eq:6} is attained.

Now for $s\in \N$ let $\rho _{i} \colon \Delta_{i}\to\R$, $i=0,\dots, s$, be a
family of $s+1$ convex piecewise affine functions on convex polyhedra and
set $ \rho= \bigboxplus_{i=0}^{s}\rho_{i}$ for their inf-convolution,
which is a convex piecewise affine function on the Minkowski sum
$\Delta=\sum_{i=0}^{s}\Delta_{i}$.  The subdivision $S(\rho)$ of
$\Delta$ is called a \emph{mixed subdivision} of $\Delta$.

For $i=0,\dots, s$ we respectively denote by
\begin{equation}
  \label{eq:13}
  \Delta_{i}^{\cc}=\sum_{j\ne i}\Delta_{j} \and
  \rho_{i}^{\cc}=\bigboxplus_{j\ne i}\rho_{j}
\end{equation}
the convex polyhedron and the convex piecewise affine function
respectively defined by the $i$-th complementary Minkowski sum and by
the $i$-th complementary inf-convolution. We have that
$\Delta_{i}^{\cc}+\Delta_{i}=\Delta$ and
$\rho _{i}^{\cc} \boxplus \rho_{i}=\rho$.

  For  $C \in  S(\rho)$ consider the  subset of $M_{\R}^{s+1}$ defined as
  \begin{displaymath}
   \Pi_{C}=\Big\{(x_{0},\dots, x_{s}) \in \prod_{i=0}^{s}
    \Delta_{i} \, \Big| \
\sum_{i=0}^{s}x_{i}\in C \text{ and }    \rho\Big(\sum_{i=0}^{s}x_{i}\Big)=\sum_{i=0}^{s}\rho_{i}(x_{i}) \Big\}.
  \end{displaymath}
  For $i=0,\dots, s$ let $\pi_{i} \colon M_{\R}^{s+1}\to M_{\R}$ denote
  the projection onto the $i$-th factor.  The \emph{$i$-th component}
  of $C$ is the nonempty subset of $\Delta_{i}$ defined as
\begin{equation}
  \label{eq:66}
  C_{i}=   \pi_{i}(\Pi_{C}).
\end{equation}

The next two results give the basic properties of the components of
the cells of a mixed subdivision.

\begin{proposition}
  \label{prop:10}
  Let $C\in S(\rho)$. Then
  \begin{enumerate}
  \item \label{item:2} for  $v\in N_{\R}$ such that
    $C=\Gamma(\rho,v)$ we have that
    $C_{i}=\Gamma(\rho_{i},v)\in S(\rho_{i})$ for all~$i$,
  \item \label{item:3} $\displaystyle C=\sum_{i=0}^{s} C_{i}$,
  \item \label{item:5} for $x\in C$ and $x_{i}\in \Delta_{i}$,
    $i=0,\dots, s$, such that $x=\sum_{i=0}^{s}x_{i}$ we have that
    $\rho(x)=\sum_{i=0}^{s}\rho_{i}(x_{i})$ if and only if
    $x_{i}\in C_{i}$ for all $i$.
    \end{enumerate}
\end{proposition}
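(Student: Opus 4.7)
The plan is to derive all three items from a single identity: the epigraph of the inf-convolution equals the Minkowski sum of the individual epigraphs,
\[
\epi(\psi)=\sum_{i=1}^{s}\epi(\psi_{i}),
\]
which is essentially the alternative description of $\psi\boxplus\psi'$ via the lower envelope of $\epi(\psi)+\epi(\psi')$ given just after~\eqref{eq:6}. Combined with two standard facts from convex geometry---the additivity $h_{A+B}=h_{A}+h_{B}$ of support functions, and the compatibility $(A+B)^{w}=A^{w}+B^{w}$ of faces with Minkowski sums---this reduces the whole proposition to a single sandwich computation.

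For item (1), fix $v\in N_{\R}$ with $C=C(\psi,v)$, which exists since $C\in S(\psi)$. Taking the face in direction $(v,1)$ of both sides of the epigraph identity and projecting via $\pi$, one obtains
\[
C(\psi,v)=\sum_{i=1}^{s} C(\psi_{i},v).
\]
Since $C$ is nonempty, so is each $C(\psi_{i},v)$. Now, for any tuple $(x_{1},\dots,x_{s})\in\prod_{i}\Lambda_{i}$ with sum $x\in\Lambda$, adding the inequality~\eqref{eq:79} applied to each $\psi_{i}$ at $x_{i}$ and using the additivity of $h$ produces a lower bound for $\sum_{i}\psi_{i}(x_{i})$ that matches the lower bound for $\psi(x)$ given by~\eqref{eq:79}; together with $\psi(x)\le\sum_{i}\psi_{i}(x_{i})$ from the definition of inf-convolution, one finds that when $x\in C$ all inequalities collapse to equalities, and term-by-term comparison forces $x_{i}\in C(\psi_{i},v)$ for every $i$. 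Feeding this into the definition of $\Pi_{C}$ gives $C_{i}\subseteq C(\psi_{i},v)$; conversely, for any $x_{i}\in C(\psi_{i},v)$ one builds a representative tuple by choosing $x_{j}\in C(\psi_{j},v)$ for $j\ne i$, which lies in $\Pi_{C}$ by the equality case. Thus $C_{i}=C(\psi_{i},v)\in S(\psi_{i})$, proving~(1); and~(2) is immediate from the displayed identity.

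For item (3), assume $x=\sum_{i}x_{i}\in C$. The equivalence of $\psi(x)=\sum_{i}\psi_{i}(x_{i})$ and $x_{i}\in C_{i}=C(\psi_{i},v)$ for all $i$ is precisely what the sandwich argument above delivers in both directions: equality of the sums forces equality in each of the parallel inequalities~\eqref{eq:79} for the $\psi_{i}$'s, and conversely. The only subtle point in the whole proof is the passage from an equality of sums $\sum a_{i}=\sum b_{i}$ with $a_{i}\ge b_{i}$ to term-by-term equalities $a_{i}=b_{i}$, which is elementary; everything else reduces to formal manipulations with epigraphs and support functions, so I do not anticipate any serious obstacle once the epigraph identity is in hand.
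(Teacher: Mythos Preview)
Your proposal is correct and follows essentially the same route as the paper: both use the identity $\epi(\psi)=\sum_{i}\epi(\psi_{i})$ to obtain additivity of the support values $h_{\epi(\psi_{i})}(v,1)$, then compare the summed inequalities~\eqref{eq:79} for the $\psi_{i}$'s against the one for $\psi$ to force term-by-term equalities. Your explicit appeal to the face identity $(A+B)^{w}=A^{w}+B^{w}$ is a mild shortcut for item~(2) that the paper instead derives by hand, and your sentence ``when $x\in C$ all inequalities collapse'' is slightly imprecise---the collapse also requires the $\Pi_{C}$ condition $\psi(x)=\sum_{i}\psi_{i}(x_{i})$, which you do invoke in the next line---but the argument is sound.
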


\begin{proof}
  Let $v \in N_{\R}$ and set for short $\kappa=h_{\epi(\rho)}(v,1)$
  and $\kappa_{i}=h_{\epi(\rho_{i})}(v,1)$ for each~$i$. We have that
  $ \epi(\rho)= \sum_{i=0}^{s}\epi(\rho_{i}) $ and so, by the
  additivity of the support function,
\begin{displaymath}
  \kappa=\sum_{i=0}^{s}\kappa_{i}.
\end{displaymath}

Let $i\in \{0,\dots, s\}$ and $x_{i}\in C_{i}$. Choose
$x_{j}\in C_{j}$, $j\ne i$, such that
$(x_{0},\dots, x_{s})\in \Pi_{C}$ and set $x=\sum_{j=0}^{s}x_{j}$,
so that $x\in C$ and $\rho(x)=\sum_{j=0}^{s}\rho_{j}(x_{j})$.  Hence
$(x,\rho(x))\in \epi(\rho)^{(v,1)}$ and
$(x_{j},\rho_{j}(x_{j}))\in \epi(\rho_{j})$ for all $j$ and so
\begin{equation*}
\kappa =  \langle (v,1),(x,\rho(x))\rangle =
  \sum_{j=0}^{s} \langle
  (v,1),(x_{j},\rho_{j}(x_{j}))\rangle  \ge \sum_{j=0}^{s}  \kappa_{j}=\kappa.
\end{equation*}
Thus $\langle (v,1),(x_{i},\rho_{i}(x_{i}))\rangle = \kappa_{i}$ or
equivalently $(x_{i},\rho_{i}(x_{i}))\in \epi(\rho_{i})^{(v,1)}$,
which implies that $x_{i}\in\Gamma(\rho_{i},v)$.

Conversely, let $x_{i}\in\Gamma(\rho_{i},v)$. Choose
$x_{j}\in\Gamma(\rho_{j},v)$, $j\ne i$, and set $x=\sum_{j=0}^{s}x_{j}$
and $t=\sum_{j=0}^{s}\rho_{j}(x_{j})$. We have that $t\ge \rho(x)$ and
so $(x,t)\in \epi(\rho)$. Moreover
\begin{equation*}
  \langle  (v,1),(x,t)\rangle =  \sum_{j=0}^{s}  \langle
  (v,1),(x_{j},\rho_{j}(x_{j}))\rangle = \sum_{j=0}^{s}\kappa_{j}=\kappa.
\end{equation*}
Hence $(x,t)\in \epi(\rho)^{(v,1)}$ and so $x\in C$ and
$t=\rho(x)$. In particular, $x_{i} \in C_{i}$ and we conclude that
$C_{i}=\Gamma(\rho_{i},v)$, proving \eqref{item:2}.

Now let $x_{i}\in C_{i}$, $i=0,\dots, s$, and set
$x=\sum_{i=0}^{s}x_{i}$. By \eqref{item:2}, for any $v\in N_{\R}$ such
that $C=\Gamma(\rho,v)$ we have that $C_{i}=\Gamma(\rho_{i},v)$, and the last
part of the proof of this statement shows that $x\in C$ and
$\rho(x)=\sum_{i=0}^{s}\rho_{i}(x_{i})$. This proves both that
$ \sum_{i=0}^{s}C_{i}\subset C$ and the ``if'' part in \eqref{item:5}.

Conversely, for each $x\in C$ the infimum in \eqref{eq:6} is attained,
and so there are $x_{i}\in C_{i}$, $i=0,\dots, s$, with
$x=\sum_{i=0}^{s}x_{i}$. Hence $C=\sum_{i=0}^{s}C_{i}$ as stated in
\eqref{item:3}, whereas the ``only if'' part in~\eqref{item:5} is
immediate from the definition of the components in \eqref{eq:66}.
\end{proof}

\begin{proposition}
\label{prop:20}
  Let $C, C'\in S(\rho)$ and $i\in \{0,\dots, s\}$ such that their
  respective $i$-th components have both dimension $n$ and
  coincide. Then $C=C'$.
\end{proposition}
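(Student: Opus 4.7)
The plan is to use Proposition \ref{prop:10}(\ref{item:2}) to reduce the statement to a uniqueness assertion: any top-dimensional cell of $S(\psi_i)$ is realized by a unique direction vector. Since every cell of $S(\psi)$ has the form $C(\psi,v)=\pi(\epi(\psi)^{(v,1)})$ for some $v\in N_{\R}$ (by the very definition of the subdivision as the projection of the faces of the graph), I would pick $v,v'\in N_{\R}$ with $C=C(\psi,v)$ and $C'=C(\psi,v')$. Proposition \ref{prop:10}(\ref{item:2}) then identifies the $i$-th components as $C_{i}=C(\psi_{i},v)$ and $C_{i}'=C(\psi_{i},v')$, two cells of $S(\psi_{i})$ which by hypothesis coincide and have dimension $n$.

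The main step is the claim that when a cell $C_{i}$ of $S(\psi_{i})$ has dimension $n$, there is exactly one $w\in N_{\R}$ realizing it as $C(\psi_{i},w)$. Since $\dim C_{i}=n$ and $C_{i}\subseteq \Lambda_{i}\subseteq M_{\R}\simeq \R^{n}$, the relative interior of $C_{i}$ is a nonempty open subset of $M_{\R}$, and the restriction $\psi_{i}|_{C_{i}}$ is a unique affine function, say $x\mapsto \langle -w_{0},x\rangle + c_{0}$. Using the characterization in \eqref{eq:79}, any $w$ with $C(\psi_{i},w)=C_{i}$ forces the identity $\psi_{i}(x)=\langle -w,x\rangle + h_{\epi(\psi_{i})}(w,1)$ to hold on $C_{i}$. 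Since two affine functions on $M_{\R}$ that agree on a nonempty open set are equal, this yields $w=w_{0}$.

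Applying this uniqueness to the common value $C_{i}=C_{i}'$ gives $v=w_{0}=v'$, and therefore
\begin{displaymath}
C=C(\psi,v)=C(\psi,v')=C'.
\end{displaymath}
The only delicate point is the uniqueness assertion in the second paragraph, and it ultimately boils down to the fact that the affine piece of $\psi_{i}$ on an $n$-dimensional cell determines its gradient; the rest of the argument is bookkeeping on top of Proposition \ref{prop:10}(\ref{item:2}).
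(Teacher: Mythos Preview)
Your proof is correct and follows essentially the same approach as the paper's own proof: both pick direction vectors $v,v'$ realizing $C$ and $C'$, invoke Proposition~\ref{prop:10}\eqref{item:2} to identify $C_i=C(\psi_i,v)$ and $C_i'=C(\psi_i,v')$, and then use that an $n$-dimensional cell of $S(\psi_i)$ determines its defining vector uniquely (because $\psi_i$ is affine on it with a well-defined gradient). The paper simply asserts the uniqueness in one sentence, whereas you spell out why it holds via~\eqref{eq:79}; otherwise the arguments coincide.
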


\begin{proof}
  Since both $C_{i}$ and $C_{i}'$ have dimension $n$ and coincide,
  there is a unique $v\in N_{\R}$ with $ C_{i}=C_{i}'=\Gamma(\rho_{i},v)$.
Proposition \ref{prop:10}\eqref{item:2} then implies that
  $C=\Gamma(\rho,v)=C'$.
\end{proof}

\begin{definition}
  \label{def:7}
  A mixed subdivision $S(\rho)$ on $\Delta$ is \emph{tight} if for every
  $n$-cell $C$ of $ S(\rho)$,
  \begin{displaymath}
 \sum_{i=0}^{s}\dim(C_{i})=n.
  \end{displaymath}
  If this condition holds, when $s=n-1$ an $n$-cell of $S(\rho)$ is
  \emph{mixed} if all its components are segments and when $s=n$,
  for $k=0,\dots, n$ an $n$-cell of $S(\rho)$ is \emph{$k$-mixed} if
  its $i$-th component is a segment for all $i\ne k$ (and so its
  $k$-th component is a point).
\end{definition}


The set of mixed subdivisions of $\Delta$ is partially ordered by refinements:
for another mixed subdivision $S(\rho')$ of $\Delta$ given by a
family of convex piecewise affine functions
$ \rho'_{i}\colon \Delta_{i}\to \R$, $i=0,\dots, s$, we say that
$S(\rho)$ is a \emph{refinement} of $S(\rho')$, denoted~by
\begin{equation*}
S(\rho)\succeq S(\rho') \quad \text{ or } \quad S(\rho')\preceq S(\rho),
\end{equation*}
if for all $C\in S(\rho)$ there is $D\in S(\rho')$ such that
$C\subset D$ and that $C_{i}\subset D_{i}$ for all $i$.

\begin{definition}
  \label{def:9}
  An \emph{incremental} chain of mixed subdivisions of $\Delta$ is a
  chain
  \begin{math}
     S(\theta_{0})\preceq \dots\preceq S(\theta_{s})
  \end{math}
  where, for $k=0,\dots, s $, the mixed subdivision $S(\theta_{k})$ is
  induced by the inf-convolution $ \theta_{k}\colon \Delta\to \R$ of
  a family of convex piecewise affine functions
  $\theta_{k,i} \colon \Delta_{i}\to\R$, $i=0,\dots, s$, such that
  $\theta_{k,i}= 0|_{\Delta_{i}}$ for $i\ge k$.

  This incremental chain is \emph{tight} if, for each $k$, the mixed
  subdivision of $\Delta$ (considered as the sum of the $k+1$
  polytopes
  $\Delta_0, \dots, \Delta_{k-1}, \sum_{{i=k}}^{s} \Delta_{i}$)
  induced by the convex piecewise affine functions
\begin{displaymath}
\theta_{k,i}\colon \Delta_{i} \to \R, \ i=0,\dots,
k-1, \and {  \bigboxplus_{i=k}^{s}\theta_{k,i}=0\Big|_{\sum_{{i=k}}^{s}
\Delta_{i}}},
\end{displaymath}
is tight in the sense of Definition \ref{def:7}.
\end{definition}

\begin{remark}
  \label{rem:9}
  The notion of incremental chain of mixed subdivisions of $\Delta$
  might be easily extended to chains of arbitrary length. We have
  chosen to restrict it to chains of length equal to the number of
  polyhedra $\Delta_{i}$ because it is the only case of interest for
  the proof of Theorem \ref{thm:3}. 
\end{remark}

\begin{remark}
  \label{rem:17}
  If $ S(\theta_{0})\preceq \dots\preceq S(\theta_{s})$ is a tight
  incremental chain, then $S(\theta_{s})$ is a tight mixed
  subdivision.
\end{remark}

\begin{example}
  \label{exm:8}
  Let $n=2$ and $M=\Z^{2}$. Set $d_{0}=1$, $d_{1}=3$ and $d_{2}=2$ and
  for $i=0,1,2$ consider the triangle
  $\Delta_{i}=\{(x_{1},x_{2})\in (\R_{\ge 0})^{2}\mid x_{1}+x_{2}\le
  d_{i}\}$. Consider also the affine functions
  $\rho_{i}\colon \Delta_{i}\to \R$, $i=0,1$, defined by
  \begin{displaymath}
    \rho_{0}(x_{1},x_{2})= 3\, x_{1}+6\, x_{2} \and
    \rho_{1}(x_{1},x_{2})= 2\, x_{1}+x_{2}.
  \end{displaymath}
  For $k, i=0,1,2$ set $\theta_{k,i}=\rho_{i}$ if $i<k$ and
  $\theta_{k,i}=0|_{\Delta_{i}}$ if $i\ge k$, and then for $k=0,1,2$ set
  $\theta_{k}=\theta_{k,0}\boxplus\theta_{k,1}\boxplus\theta_{k,2}$. Hence
  $S(\theta_{0}) \preceq S(\theta_{1}) \preceq S(\theta_{2}) $ is a
  tight incremental chain of mixed subdivisions of the triangle
  $\Delta=\{(x_{1},x_{2})\in (\R_{\ge 0})^{2}\mid x_{1}+x_{2}\le 6\}$
  (Figure~\ref{fig:7}).

\begin{figure}[ht]
\begin{tikzpicture}[scale=0.5]
\draw[thick] (0,0)--(6,0)--(0,6)--cycle;
\draw (3,-0.1) node[below] {$S(\theta_0)$};
\end{tikzpicture} \qquad
\begin{tikzpicture}[scale=0.5]
\draw[thick] (0,0)--(6,0)--(0,6)--cycle;
\draw (3,-0.1) node[below] {$S(\theta_1)$};
\draw (0,5)--(5,0);
\draw (0,5)--(1,5);
\end{tikzpicture} \qquad
\begin{tikzpicture}[scale=0.5]
\draw[thick] (0,0)--(6,0)--(0,6)--cycle;
\draw (0,5)--(5,0);
\draw (0,5)--(1,5);
\draw (2,0)--(2,3);
\draw (2,3)--(3,3);
\draw (2,0)--(0,2);
\draw (3,-0.1) node[below] {$S(\theta_2)$};
\end{tikzpicture}
\vspace{-3mm}
\caption{A tight incremental chain}
\label{fig:7}
\end{figure}
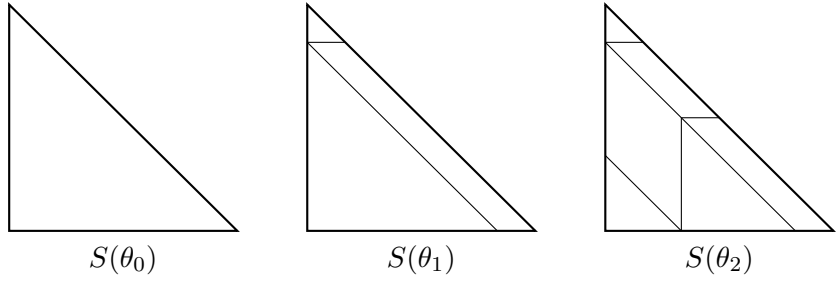
\end{example}

Convex piecewise affine functions on lattice polytopes might be
constructed by means of finite sets of lattice points and lifting
vectors, as we next describe.  For $i=0,\dots, s$ let
$\cA_{i}\subset M$ be a nonempty finite subset and
$\bfnu_{i}\in \R^{\cA_{i}}$ a vector.  Set $\Delta_{i}=\conv(\cA_{i})$
for the lattice polytope of $M_{\R}$ given by the convex hull of
$\cA_{i}$ and
\begin{equation}
  \label{eq:14}
  \vartheta_{\bfnu_{i}}
\colon \Delta_{i}
  \longrightarrow  \R
\end{equation}
for the convex piecewise affine function parametrizing the lower
envelope of the lifted polytope
$ \conv ( \{(a,\nu_{i,a})\}_{a\in \cA_{i}} ) \subset M_{\R} \times
\R$.  Set also $\bfcA=( \cA_{0},\dots, \cA_{s})$,
$\bfnu=(\bfnu_{0},\dots, \bfnu_{s}) \in \R^{\bfcA}=\prod_{i=0}^{s}\R^{\cA_{i}}$ and
\begin{equation}
  \label{eq:19}
  \Theta_{\bfnu} =\bigboxplus_{i=0}^{s}
  \vartheta_{\bfnu_{i}} .
\end{equation}
This latter is a convex piecewise affine function on the Minkowski sum
$\Delta=\sum_{i=0}^{s}\Delta_{i}$, and $ S(\Theta_{\bfnu})$ is a mixed
subdivision of this polytope.

The next result shows that a generic choice of lifting vectors
produces a mixed subdivision that is tight. Furthermore, this choice
can be made among the lifting vectors whose associated  functions
 are affine.

Consider the linear map $T_{\bfcA}\colon N^{s+1}\to \R^{\bfcA}$ defined
by
\begin{equation}
  \label{eq:26}
  T_{\bfcA}(v_{0},\dots, v_{s})=(\langle v_{i}, a\rangle)_{i\in \{0,\dots, s\},
    a\in \cA_{i}}.
\end{equation}
The convex piecewise affine functions associated to the vectors in its
image are affine.

{The next result is similar to those in \cite[page 1546]{HS95}
  and \cite[Lemma 2.1]{JMSW09}.}

\begin{proposition}
  \label{prop:1}
  There is a finite union of hyperplanes $W\subset \R^{\bfcA}$ not
  containing $T_{\bfcA}(N^{s+1})$ such that for all
  $\bfnu\in \R^{\bfcA}\setminus W$ the mixed subdivision
  $S(\Theta_{\bfnu})$ of $\Delta$  is tight.
  \end{proposition}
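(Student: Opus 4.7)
The plan is to describe the locus in $\R^{\bfcC}$ where $S(\Theta_{\bfnu})$ fails to be tight as a finite union of hyperplanes via the Cayley trick, and then to check that this locus meets the subspace $A_{\bfcC}(N_{\R}^{s})\subset \R^{\bfcC}$ properly.

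First I would recall that $S(\Theta_{\bfnu})$ corresponds, by the Cayley trick, to a regular subdivision of the Cayley polytope $P=\conv\big(\bigcup_{i}\cC_{i}\times\{e_{i}\}\big)\subset M_{\R}\times\R^{s}$, and that tight mixed subdivisions of $\Delta$ correspond precisely to triangulations of $P$. The combinatorial type of the regular subdivision is locally constant in $\bfnu$, the chambers of constancy forming a complete polyhedral fan on $\R^{\bfcC}$ (the secondary fan of $P$) whose maximal cones are exactly the loci producing triangulations. The non-tight locus is therefore contained in the codimension-one skeleton of this fan, a finite union of rational hyperplanes. Each such hyperplane is the zero set of a linear form $\bfnu\mapsto\sum_{i,a}c_{i,a}\nu_{i,a}$ indexed by a circuit of $P$, i.e.\ a family of coefficients $\{c_{i,a}\}$ satisfying $\sum_{a}c_{i,a}=0$ for each $i$ together with $\sum_{i,a}c_{i,a}\,a=0$ in $M_{\R}$.

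Second I would take $W$ to consist only of those hyperplanes whose defining circuit is \emph{mixed}, in the sense that the vectors $w_{i}=\sum_{a}c_{i,a}\,a\in M_{\R}$ are not all zero. Circuits with $w_{i}=0$ for every $i$ are internal to a single layer $\cC_{i}\times\{e_{i}\}$ of $P$; since a maximal cell of a subdivision of $P$ necessarily meets several layers (for $s\ge 2$), such internal circuits produce only lower-dimensional non-simplex faces of the regular subdivision, the maximal cells remaining simplices. As maximal cells are the only ones governing tightness of the associated mixed subdivision of $\Delta$, these internal hyperplanes can be safely omitted from $W$—which is important because they all contain $A_{\bfcC}(N_{\R}^{s})$. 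For each surviving hyperplane $H\in W$, the pullback of its defining linear form under $A_{\bfcC}$ equals $(v_{1},\dots,v_{s})\mapsto\sum_{i}\langle v_{i},w_{i}\rangle$, which is nonzero because some $w_{i}\ne 0$; thus $A_{\bfcC}^{-1}(H)\subsetneq N_{\R}^{s}$. A finite union of proper subspaces does not exhaust $N_{\R}^{s}$, and by density of lattice points there exists $(v_{1},\dots,v_{s})\in N^{s}$ with $A_{\bfcC}(v_{1},\dots,v_{s})\notin W$, as required.

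The delicate point is precisely this distinction between mixed and internal circuits: naively taking every codimension-one wall of the secondary fan would produce a union that generally absorbs all of $A_{\bfcC}(N_{\R}^{s})$, namely as soon as one of the $\cC_{i}$ carries a non-trivial affine dependence. The technical content is therefore the observation that internal circuits of $P$ translate into degeneracies strictly inside a single layer $\cC_{i}\times\{e_{i}\}$, which never destroy simpliciality of maximal cells of a subdivision of $P$ and hence never prevent the induced mixed subdivision of $\Delta$ from being tight.
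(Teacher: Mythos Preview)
Your overall strategy via the Cayley polytope and its secondary fan is natural and, once unwound, closely parallels the paper's direct argument. However, the justification you give for discarding the internal circuit hyperplanes is incorrect. You assert that when $\bfnu$ lies only on internal circuit hyperplanes, the maximal Cayley cells ``remain simplices.'' This fails already for $n=1$, $s=2$, $\cC_{1}=\{0,1,2\}$, $\cC_{2}=\{0,1\}$, with lift $\bfnu_{1}=(0,0,0)$, $\bfnu_{2}=(0,10)$ (which lies in $A_{\bfcC}(N^{s})$, taking $v_{1}=0$, $v_{2}=10$): the regular subdivision of the Cayley configuration has a maximal cell on the four points $(0,e_{1}),(1,e_{1}),(2,e_{1}),(0,e_{2})$, which is not a simplex, and the only circuit it contains is the internal one on $\{0,1,2\}\subset\cC_{1}$. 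The same example refutes your opening claim that tight mixed subdivisions correspond \emph{precisely} to triangulations of $P$: here the mixed subdivision of $[0,3]$ is tight (each $1$-cell has $\dim C_{1}+\dim C_{2}=1$), yet the Cayley subdivision is not a triangulation. Only the implication ``triangulation $\Rightarrow$ tight'' holds in general.

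What you actually need, and what is true, is the weaker statement that if the mixed subdivision is not tight then $\bfnu$ lies on some \emph{mixed} circuit hyperplane. The correct argument is a dimension count rather than an appeal to simpliciality: if an $n$-cell $C$ has $\sum_{i}\dim C_{i}>n$, choose affinely independent subsets $\cD_{i}\subset\cC_{i}\cap C_{i}$ with $\sum_{i}\#\cD_{i}=n+s+1$ and $\dim(\sum_{i}\conv\cD_{i})=n$; the corresponding $n+s+1$ Cayley points lie in an $(n+s-1)$-dimensional affine space and hence support a circuit, which is necessarily multi-layer (hence mixed in your sense) because each $\cD_{i}$ is affinely independent. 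Flatness of the lift on the cell then places $\bfnu$ on the associated hyperplane. This is exactly the paper's argument in different language: its determinant $G_{\bfcD}$ is, up to a nonzero scalar, the circuit linear form for the unique circuit on $\bigcup_{i}\cD_{i}\times\{e_{i}\}$, and the paper's condition~(b) (each $\cD_{i}$ affinely independent) is precisely what forces that circuit to be mixed and so keeps its zero hyperplane from containing $A_{\bfcC}(N^{s})$.
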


  To prove it, we need the following auxiliary result.   For
  $i=0,\dots, s$ let $e_{i}$ be the $(i+1)$-th vector in the standard
  basis of $\R^{s+1}$.

\begin{lemma}
  \label{lemm:13}
  For $\bfnu\in\R^{\bfcA}$ let
  $\wh L\subset M_{\R}\times \R^{s+1} \times \R$ denote the linear span
  of the vectors
  $ \{(a, e_{i},\nu_{i,a})\}_{i\in \{0,\dots, s\},  a\in \cA_{i}}$.  Set also
  $\wh \cA_{i}= \{(a,\nu_{i,a})\}_{a\in \cA_{i}} \subset M_{\R}\times
  \R$, $i=0,\dots, s$. Then
  \begin{equation*}
    \dim(\wh L)=
    \dim\Big(\sum_{i=0}^{s}\conv(\wh\cA_{i})\Big) +s+1.
  \end{equation*}
\end{lemma}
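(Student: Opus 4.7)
The plan is to exploit the projection $\pi\colon M_\R\times\R^s\times\R\to\R^s$ onto the middle factor, and to compute $\dim(\wh L)$ via rank–nullity.

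First I would observe that $\pi(\wh L)=\R^s$: each generator $(a,e_i,\nu_{i,a})$ of $\wh L$ is mapped to the standard basis vector $e_i$, so all of $\R^s$ lies in the image. Rank–nullity then gives $\dim(\wh L)=s+\dim(\ker \pi|_{\wh L})$.

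Next I would identify the kernel. A vector of $\wh L$ has the form $\sum_{i,a}c_{i,a}(a,e_i,\nu_{i,a})$, and lies in $\ker\pi$ if and only if $\sum_{a\in\cC_i}c_{i,a}=0$ for each $i=1,\dots,s$. Forgetting the (identically zero) middle coordinate induces a linear isomorphism between $\ker\pi|_{\wh L}$ and the subspace
\begin{displaymath}
V=\Bigl\{\sum_{i,a}c_{i,a}(a,\nu_{i,a}) \;\Big|\; \sum_{a\in\cC_i}c_{i,a}=0 \text{ for all } i\Bigr\}\subset M_\R\times \R.
\end{displaymath}
For each $i$, the set $\{\sum_a c_{i,a}(a,\nu_{i,a})\mid \sum_a c_{i,a}=0\}$ is the linear direction of the affine hull of $\wh\cC_i$, hence of $\conv(\wh\cC_i)$. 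Therefore $V$ is the sum of these directions.

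Finally I would invoke the standard fact that the linear direction of a Minkowski sum of convex sets equals the sum of the linear directions of the summands (since translating each summand to contain the origin shows $\mathrm{aff}(\sum_i X_i)-x_0=\sum_i(\mathrm{aff}(X_i)-p_i)$ for a suitable base point). This gives $\dim(V)=\dim\bigl(\sum_{i=1}^s\conv(\wh\cC_i)\bigr)$, and combining the steps yields the claimed equality.

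No step is substantively hard; the only mild point to handle carefully is the identification of the direction space of a Minkowski sum with the sum of the direction spaces of the factors, and the verification that the isomorphism between $\ker\pi|_{\wh L}$ and $V$ genuinely matches the presentation of $V$ as a sum of directions of the $\conv(\wh\cC_i)$.
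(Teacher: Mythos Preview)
Your proof is correct and follows essentially the same approach as the paper's. The paper phrases the first step as ``making linear combinations between the generators'' to reduce to the sum of the affine spans $\wh L_i$ of the $\wh\cC_i$, which amounts to subtracting a fixed generator in each block to kill the $e_i$-component; your rank--nullity argument with the projection onto $\R^s$ is a cleaner and more explicit packaging of the same computation, and your identification of $\ker\pi|_{\wh L}$ with the sum of the direction spaces of the $\conv(\wh\cC_i)$ is exactly the content of the paper's final step $\dim(\sum_i \wh L_i)=\dim(\sum_i\conv(\wh\cC_i))$.
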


\begin{proof}
  For each $i$ let $\wh L_{i}\subset M_{\R}\times \R $ denote the
  affine span of the nonempty finite subset~$\wh\cA_{i}$. Making
  linear combinations between the generators of the linear subspace
  $\wh L$, we easily deduce that
  $ \dim(\wh L)= \dim(\sum_{i=0}^{s}\wh L_{i}) +s+1$.  The statement then
  follows from the fact that
  $ \dim(\sum_{i=0}^{s}\wh L_{i}) =
  \dim(\sum_{i=0}^{s}\conv(\wh\cA_{i}))$.
\end{proof}

\begin{proof}[Proof of Proposition \ref{prop:1}]
  For $i=0,\dots, s$ let $\bfu_{i}$ be a set of $\# \cA_{i}$ variables
  and let $\bfu=(\bfu_{0}, \dots, \bfu_{s})$.  Fix an isomorphism
  $M_{\R}\simeq \R^{n}$. Then for each family
  $\bfcD=(\cD_{0},\dots, \cD_{s})$ of nonempty subsets
  $\cD_{i}\subset\cA_{i}$ satisfying the conditions
  \begin{enumerate}
  \item \label{item:1} $    \sum_{i=0}^{s}\#\cD_{i}= n+s+2$,
    \smallskip
    \item \label{item:61} $ \dim(\conv(\cD_{i})) =
    \#\cD_{i}-1$, $ \ i=0,\dots, s$,
      \smallskip
  \item \label{item:48} $
    \dim(\sum_{i=0}^{s}\conv(\cD_{i})) =n$
  \end{enumerate}
  set $ \cG_{\bfcD}\in \R[\bfu]^{ \bfcD\times (n+s+2)} $ for the
  square matrix made of the row vectors
  \begin{displaymath}
 (a, e_{i},u_{i,a}) \in \R[\bfu]^{n+s+2}, \ i=0,\dots, s
  \and  a\in \cD_{i}.
  \end{displaymath}
Set also
  $ G_{\bfcD}=\det(\cG_{\bfcD})\in \R[\bfu]$, which is a linear form.

  The conditions on $\bfcD$ imply that for each $i$ there is
  $\bfnu \in \R^{\bfcA}$ such that, setting
\begin{equation}
  \label{eq:82}
 \wh\cD_{i}= \{(a,\nu_{i,a})\}_{a\in \cD_{i}} \subset
M_{\R}\times \R, \ i=0,\dots, s,
\end{equation}
we have that $ \dim( \sum_{i=0}^{s}\conv(\wh \cD_{i})) = n+1 $.
Moreover, this condition can be fulfilled with a vector
$\bfnu \in T_{\bfcA}(N^{s+1})$. Indeed, the condition \eqref{item:61}
implies that the $\mathcal{D}_{i}$'s are simplexes which by the
condition \eqref{item:1}, have dimensions that sum up $n+1$. Assuming
without loss of generality that each $\mathcal{D}_{i}$ contains $0$ as
one its points, the remaining points in {the union of} these
sets determine $n+1$ directions which by condition \eqref{item:48}
span $M_{\mathbb{R}}$. {Then a possible choice for $\bfnu$
  satisfying \eqref{eq:82} consists in setting $\nu_{i,a}=0$ for each
  $i$ and all $a\in \mathcal{D}_{i}$ except one of them, for which
  this coordinate is set to $1$.}

Lemma~\ref{lemm:13} then implies
$ G_{\bfcD}(\bfnu)\ne 0$ and in particular, the zero set of
$G_{\bfcD}$ is a hyperplane not containing the linear subspace
$T_{\bfcA}(N^{s+1})$. The set $W$ is then defined as the union of all
these hyperplanes.

Now let $\bfnu\in \R^{\bfcA} $ and suppose that $S(\Theta_{\bfnu})$ is
not tight. Let $C$ be an $n$-cell of this mixed subdivision such that
$ \sum_{i=0}^{s}\dim(C_{i})>n$.  Then we can choose nonempty finite
subsets $\cD_{i} \subset \cA_{i}\cap C_{i}$, $i=0,\dots, s$,
satisfying the conditions \eqref{item:1}, \eqref{item:61} and
\eqref{item:48}.  Such a choice may be accomplished by picking
simplexes defined by points in the finite subsets
$ \cA_{i}\cap C_{i}$, $i=0,\dots, s$, whose dimensions sum up $n+1$.

Let $P$ be the face of the graph of $\Theta_{\bfnu}$
corresponding to  $C$, and for each $i$ let $P_{i}$ be the
face of the graph of $\vartheta_{\bfnu_{i}}$ corresponding to the
component $C_{i}$.  For each $i$ the lifted set $\wh\cD_{i}$ as in \eqref{eq:82}
is contained in $ P_{i}$ and so
\begin{displaymath}
\sum_{i=0}^{s}\conv (\wh \cD_{i}) \subset \sum_{i=0}^{s} P_{i} = P.
\end{displaymath}
Hence $ \dim( \sum_{i=0}^{s} \conv (\wh \cD_{i})) \le \dim(P)=\dim(C) = n$.
Lemma \ref{lemm:13} then implies that $G_{\bfcD}(\bfnu)=0$ and so
$\bfnu\in W$, concluding the proof.
\end{proof}

The next corollary shows that we might fix one of the lifting vectors
to zero and still get a mixed subdivision that is tight. Set for
short
\begin{displaymath}
  \bfcA'=(\cA_{0},\dots, \cA_{s-1})
\end{displaymath}
and let $T_{\bfcA'}\colon N^{s} \to \R^{\bfcA'}$ be the
corresponding linear map as in \eqref{eq:26}.

\begin{corollary}
  \label{cor:7}
  There is a finite union of hyperplanes $W'\subset \R^{\bfcA'}$ not
  containing  $T_{\bfcA'}(N^{s})$ such that for
  all $\bfnu' \in \R^{\bfcA'}\setminus W'$ the mixed subdivision
  $S(\Theta_{\bfnu})$ of $\Delta$ associated to the vector
  $\bfnu=(\bfnu', \bfzero)\in \R^{\bfcA}$ is tight.
\end{corollary}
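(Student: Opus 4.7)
The plan is to adapt the proof of Proposition \ref{prop:1} by restricting each linear form $G_{\bfcD}$ constructed there to the coordinate subspace $\R^{\bfcC'}\cong\{\bfu_s=\bfzero\}\subset\R^{\bfcC}$, and then showing that each such restriction still admits a nonzero value on the lattice $A_{\bfcC'}(N^{s-1})$.

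More precisely, for each tuple $\bfcD=(\cD_1,\dots,\cD_s)$ of nonempty subsets $\cD_i\subset\cC_i$ satisfying conditions (i), (ii) and (iii) from the proof of Proposition \ref{prop:1}, I would set $G'_{\bfcD}(\bfu_1,\dots,\bfu_{s-1}):=G_{\bfcD}(\bfu_1,\dots,\bfu_{s-1},\bfzero)$. The essential step is to exhibit an element $\bfnu'\in A_{\bfcC'}(N^{s-1})$ with $G'_{\bfcD}(\bfnu')\ne 0$. By Lemma \ref{lemm:13}, this amounts to producing integer linear functionals $v_1,\dots,v_{s-1}\in N$ such that the lifted sets $\wh\cD_i=\{(a,\langle v_i,a\rangle) : a\in\cD_i\}$ for $i<s$, together with the horizontal lift $\wh\cD_s=\cD_s\times\{0\}$, satisfy $\dim\bigl(\sum_{i=1}^s\conv(\wh\cD_i)\bigr)=n+1$. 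From condition (i) one has $\sum_{i=1}^s(\#\cD_i-1)=n+1$, and since $\conv(\cD_s)\subset M_\R$ one has $\dim(\conv(\cD_s))\le n$, so $\sum_{i=1}^{s-1}(\#\cD_i-1)\ge 1$ and there is an index $i_0<s$ with $\#\cD_{i_0}\ge 2$. Choosing $v_{i_0}\in N$ to take distinct values on two points of $\cD_{i_0}$ and $v_i=0$ for the remaining $i<s$, the projection to $M_\R$ of the Minkowski sum is $\sum_{i=1}^s\conv(\cD_i)$ of dimension $n$ by (iii), while the non-constant lift of $\cD_{i_0}$ contributes a direction with nonzero last coordinate, giving total dimension $n+1$.

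With this in hand, each zero locus $H'_{\bfcD}\subset\R^{\bfcC'}$ is a hyperplane not containing $A_{\bfcC'}(N^{s-1})$, and $W':=\bigcup_{\bfcD}H'_{\bfcD}$ is a finite union of such hyperplanes; the same density argument as in Proposition \ref{prop:1} (a finite union of proper subspaces of the $\R$-span of $A_{\bfcC'}(N^{s-1})$ cannot contain the Zariski-dense lattice) shows that $A_{\bfcC'}(N^{s-1})\not\subset W'$. Any $\bfnu'\in\R^{\bfcC'}\setminus W'$ then produces $\bfnu=(\bfnu',\bfzero)\notin W$, and Proposition \ref{prop:1} immediately yields the tightness of $S(\Theta_{\bfnu})$. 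The main obstacle is the dimension count in the middle paragraph: fixing $\bfnu_s=\bfzero$ could a priori force $G'_{\bfcD}$ to vanish identically along the image of $A_{\bfcC'}$, and ruling this out depends crucially on the automatic bound $\dim(\conv(\cD_s))\le n$, which guarantees that the extra height dimension can always be supplied by lifting some $\cD_{i_0}$ with $i_0<s$.
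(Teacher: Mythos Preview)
Your direct-restriction approach is sound in outline, and it is genuinely different from the paper's: the paper instead exploits that adding the same linear functional $\langle w_s,\cdot\rangle$ to every $\vartheta_{\bfnu_i}$ does not change the mixed subdivision, and so it \emph{translates} the arrangement $W$ by a vector in $A_{\bfcC}(N^s)$ rather than restricting it to $\{\bfu_s=\bfzero\}$. Your route is more hands-on but requires a sharper dimension argument, and that is where there is a gap.

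You claim that an $n$-dimensional projection to $M_\R$ together with ``a direction with nonzero last coordinate'' forces $\dim\bigl(\sum_{i}\conv(\wh\cD_i)\bigr)=n+1$. This is false: the linear span of the difference vectors can be the graph of a nonzero linear form $\ell\in M_\R^*$ and still project onto all of $M_\R$ while containing vectors with nonzero last coordinate. Concretely, take $n=s=3$ and $\cD_1=\{0,e_1\}$, $\cD_2=\{0,e_2\}$, $\cD_3=\{0,e_2,e_3\}$; conditions (i)--(iii) hold, but with $i_0=1$ and any $v_1$ nonconstant on $\cD_1$ the span is generated by $(e_1,c),(e_2,0),(e_3,0)$, which is only $3$-dimensional (the graph of $c\,e_1^*$), so $G'_{\bfcD}$ vanishes at this $\bfnu'$. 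The argument is repairable: writing $L_i$ for the linear span of $\cD_i-\cD_i$, what you actually need is an index $i_0<s$ with $L_{i_0}\cap\sum_{j\ne i_0}L_j\ne 0$ and $v_{i_0}\in N$ nonzero on that intersection; then no graph $\ell$ can match $v_{i_0}$ on $L_{i_0}$ and vanish on the rest. Such an $i_0$ exists because if $L_i\cap\sum_{j\ne i}L_j=0$ for every $i<s$, a short computation shows that all the $L_i$ (including $L_s$) are in direct sum, contradicting $\sum_i\dim L_i=n+1>n=\dim\sum_iL_i$. In the example above, $i_0=2$ works. With this correction your proof goes through.
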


\begin{proof}
  With notation as in Proposition \ref{prop:1}, choose a vector
  $(w_{0},\dots, w_{s})\in N^{s+1}$ whose image with respect to the
  linear map $T_{\bfcA}$ does not lie in $ W$. Let
  $\bfzeta_{i} =(\langle w_{s},a\rangle )_{a\in \cA_{i}}\in
  \R^{\cA_{i}}$, $i=0,\dots, s$, and
\begin{displaymath}
  W'=\{(\bfnu'_{0},\dots, \bfnu'_{s-1})\in \R^{\bfcA'}\mid (\bfnu'_{0}+
  \bfzeta_{0},\dots, \bfnu'_{s-1}+ \bfzeta_{s-1}, \bfzeta_{s})\in W\},
\end{displaymath}
which is a finite union of hyperplanes of $\R^{\bfcA'}$. We have that
\begin{displaymath}
(  T_{\bfcA'}(w_{0}-w_{s}, \dots, w_{s-1}-w_{s})  , \bfzero) +
(\bfzeta_{0},\dots,  \bfzeta_{s-1}, \bfzeta_{s})=
 T_{\bfcA}(w_{0},\dots, w_{s-1}, w_{s})  \notin W.
\end{displaymath}
Hence $T_{\bfcA'}(w_{0}-w_{s}, \dots, w_{s-1}-w_{s}) \notin W'$, and
so $W' \not\supset T_{\bfcA'}(N^{s-1})$.

By Proposition \ref{prop:1}, for
$\bfnu' =(\bfnu' _{0}, \dots, \bfnu' _{s-1})\in \R^{\bfcA'}\setminus
W'$ the mixed subdivision associated to
$ (\bfnu'_{0}+ \bfzeta_{0},\dots, \bfnu'_{s-1}+ \bfzeta_{s-1},
\bfzeta_{s})\in \R^{\bfcA}$ is tight.  Hence this is also the case for
the mixed subdivision associated to the vector
$(\bfnu',\bfzero) \in \R^{\bfcA}$, since the corresponding functions
differ by a globally defined linear one.
\end{proof}

The next result shows that small perturbations of a given family of
lifting vectors produce finer mixed subdivisions.

\begin{proposition}
  \label{prop:9}
  Let $\bfnu\in \R^{\bfcA}$. There is a neighborhood $U $ of $\bfnu$
  such that for all $\wt \bfnu\in U $ we have that
  $S(\Theta_{\wt \bfnu})\succeq S(\Theta_{\bfnu})$.
\end{proposition}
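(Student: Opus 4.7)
By Proposition~\ref{prop:10}, each cell of $S(\Theta_\bfnu)$ together with its components is determined by a support vector $v\in N_\R$ via $C(\Theta_\bfnu,v)=\sum_{i=1}^s\conv(\cC_i^{v,\bfnu_i})$ with components $C(\vartheta_{\bfnu_i},v)=\conv(\cC_i^{v,\bfnu_i})$, where $\cC_i^{v,\bfnu_i}:=\arg\min_{a\in\cC_i}(\langle v,a\rangle+\nu_{i,a})$. Hence the refinement $S(\Theta_{\wt\bfnu})\succeq S(\Theta_\bfnu)$ amounts to the combinatorial assertion that for every $v\in N_\R$ there is $v'\in N_\R$ such that $\cC_i^{v,\wt\bfnu_i}\subset\cC_i^{v',\bfnu_i}$ for all $i=1,\dots,s$. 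The plan is to prove this by combining a finiteness reduction with a compactness argument.

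Arguing by contradiction, suppose no neighborhood $U$ of $\bfnu$ works. Then there are a sequence $\wt\bfnu^{(k)}\to\bfnu$ in $\R^{\bfcC}$ and support vectors $v^{(k)}\in N_\R$ such that no $v'\in N_\R$ realizes the required inclusion for $\wt\bfnu^{(k)}$. Since each $\cC_i$ has only finitely many subsets, after extracting a subsequence I may assume that $\cC_i^{v^{(k)},\wt\bfnu^{(k)}_i}=\wt\cB_i$ is constant in $k$ for every $i$. It then suffices to produce $v^*\in N_\R$ with $\wt\cB_i\subset\cC_i^{v^*,\bfnu_i}$ for all $i$, contradicting the choice of the sequence. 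For this, I would embed $(v^{(k)},1)\in N_\R\times\R$, normalize to the unit sphere $\SS^n\subset N_\R\times\R$, and extract a convergent subsequence $(v^{(k)},1)/\|(v^{(k)},1)\|\to(u^*,\tau^*)$ with $\tau^*\ge 0$. In the case $\tau^*>0$, setting $v^*:=u^*/\tau^*$ and taking the limit in the defining inequalities $\langle v^{(k)},a\rangle+\wt\nu^{(k)}_{i,a}\le\langle v^{(k)},b\rangle+\wt\nu^{(k)}_{i,b}$ (valid for $a\in\wt\cB_i$ and $b\in\cC_i$) directly yields $\wt\cB_i\subset\cC_i^{v^*,\bfnu_i}$.

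The case $\tau^*=0$, which I expect to be the main obstacle, corresponds to $\|v^{(k)}\|\to\infty$. Writing $v^{(k)}=t_k u^{(k)}$ with $t_k\to\infty$ and $u^{(k)}\to\tilde u$ on the unit sphere of $N_\R$, division of the inequalities by $t_k$ and passage to the limit forces $\wt\cB_i$ to lie on the face $\cF_i(\tilde u):=\arg\min_{a\in\cC_i}\langle\tilde u,a\rangle$ of $\conv(\cC_i)$. One then decomposes $v^{(k)}$ along $\tilde u$ and its orthogonal complement, and reduces to the analogous problem for the restricted configurations $\cC_i\cap\cF_i(\tilde u)$ with the restricted liftings. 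Since the affine span strictly drops in dimension at each such restriction, an induction on this dimension terminates in finitely many steps and ultimately produces the required $v^*$, yielding the contradiction that proves the proposition.
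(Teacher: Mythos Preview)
Your approach is correct in outline and genuinely different from the paper's. The paper argues directly and quantitatively: for each $n$-cell $C$ of $S(\Theta_\bfnu)$ it fixes the defining vector $v_C$ and constants $\kappa_{C,i}$, extracts a uniform gap $c>0$ separating the points of $\cC_i$ lying in $C_i$ from those outside, and then shows that for any $n$-cell $\wt C$ of $S(\Theta_{\wt\bfnu})$ with $\dim(\wt C\cap C)=n$ the corresponding $v_{\wt C},\kappa_{\wt C,i}$ are within $K\varepsilon$ of $v_C,\kappa_{C,i}$; for small $\varepsilon$ this forces $\wt C_i\subset C_i$. That argument is constructive and yields an explicit neighborhood, whereas yours is by contradiction and compactness.

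Your case $\tau^*>0$ is complete, but $\tau^*=0$ is only sketched and two points need to be filled in. First, once the induction produces some $w^*$ that works for the restricted configurations $\cC_i'=\cC_i\cap\cF_i(\tilde u)$, you still have to produce a $v^*$ that works for the full $\cC_i$: the missing step is to take $v^*=T\tilde u+w^*$ for $T$ large, using that $\langle\tilde u,a-b\rangle<0$ strictly for $a\in\wt\cB_i$ and $b\in\cC_i\setminus\cC_i'$, so that finitely many such inequalities are simultaneously satisfied. Second, the claim that ``the affine span strictly drops'' is not automatic as stated; the clean induction variable is the dimension of the space where the direction vectors live, and to make it decrease you should first replace each $v^{(k)}$ by its projection onto the linear span of $\bigcup_i(\cC_i-\cC_i)$ (this does not change the argmins), so that the limit direction $\tilde u$ lies in that span and passing to $\tilde u^\perp$ genuinely drops dimension. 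Alternatively, your whole compactness argument can be bypassed by one polyhedral observation: the set $\{(v,\bfmu)\in N_\R\times\R^{\bfcC}:\wt\cB_i\subset\cC_i^{v,\bfmu_i}\text{ for all }i\}$ is defined by finitely many (non-strict) linear inequalities, hence is a polyhedron, so its projection to $\R^{\bfcC}$ is closed and therefore contains the limit $\bfnu$ of the $\wt\bfnu^{(k)}$.
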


\begin{proof}
For each $n$-cell $C$ of
  $S(\Theta_{\bfnu})$ denote by $v_{C}$ the unique vector in
  $N_{\R}$ such that $C=\Gamma(\Theta_{\bfnu}, v_{C})$. By Proposition
  \ref{prop:10}\eqref{item:2} and the inequality in \eqref{eq:79}, for
  each $i$ there is $\kappa_{C,i}\in \R$ such that, for
  $x\in \Delta_{i}$,
  \begin{equation*}
  \vartheta_{\bfnu_{i}}(x)\ge \langle -v_{C},x\rangle +\kappa_{C,i}
  \end{equation*}
  with equality if and only if $x \in C_{i}$.  Hence there is $c>0$
  such that for all $a\in \cA_{i}\setminus C_{i}$,
\begin{equation}
  \label{eq:80}
   \vartheta_{\bfnu_{i}}(a)\ge \langle -v_{C}, a  \rangle
+\kappa_{C,i}  + c.
\end{equation}

Let $\varepsilon>0$ and $\wt \bfnu\in  \R^{\bfcA} $ with
$\|\wt\bfnu-\bfnu\|_{\infty}<\varepsilon$, where $\|\cdot\|_{\infty}$
denotes the $\ell^{\infty}$-norm of~$\R^{\bfcA}$. Then for all $i$ and
$x\in \Delta_{i}$ we have that
\begin{equation}
  \label{eq:83}
  |\vartheta_{\wt \bfnu_{i}}(x) -\vartheta_{ \bfnu_{i}}(x) |
<\varepsilon.
\end{equation}

Fix a norm $\|\cdot\|$ on~$M_{\R}$ and let $\|\cdot\|^{\vee}$ be the
corresponding operator norm on $N_{\R}$, so that for $v\in N_{\R}$ and $x\in M_{\R}$ we have that
\begin{equation}
  \label{eq:23}
  |  \langle v,x\rangle | \le \| v\|^{\vee} \, \|x\|.
\end{equation}

Let $\wt C$ be an $n$-cell of $S(\Theta_{\wt\bfnu})$ and, similarly as
before, denote by $v_{\wt C} \in N_{\R}$ and
$\kappa_{\wt C, i} \in \R$, $i=0,\dots, s$, the corresponding vector
and constants.  Then for each $n$-cell $C$ of $S(\Theta_{\bfnu})$ with
$\dim(\wt C \cap C)=n$ there is $K>0$ such that
\begin{equation}
  \label{eq:84}
\| v_{\wt C} - v_{C}\|^{\vee}, |\kappa_{\wt C, i}- \kappa_{C,i}| <K \, \varepsilon.
\end{equation}
Since the number of possible pairs $(\wt C, C)$ for varying
$\wt \bfnu\in \R^{\bfcA}$ is finite, the constant $K>0$ can be taken
independently of the choice of these $n$-cells.

From the inequalities in \eqref{eq:80} and \eqref{eq:83} we deduce that for all $a\in \cA_{i}\setminus C_{i}$,
\begin{displaymath}
\vartheta_{\wt \bfnu_{i}}(a)>  \vartheta_{\bfnu_{i}}( a  ) -
  \varepsilon \ge
\langle -v_{C},a\rangle +\kappa_{C,i}  +c-
\varepsilon,
\end{displaymath}
and from the inequalities in \eqref{eq:23} and \eqref{eq:84},
\begin{displaymath}
\langle -v_{C},a\rangle +\kappa_{C,i}
>
\langle -v_{\wt C},a\rangle +\kappa_{\wt C,i}  -   K \varepsilon\Big(  \sup_{x\in \Delta_{i}}\|x\| +1\Big).
\end{displaymath}
Then for $\varepsilon>0$ sufficiently small we have that
$ \vartheta_{\wt \bfnu_{i}}(a) >\langle -v_{\wt C},a\rangle
+\kappa_{\wt C,i} $ for all $a\in \cA_{i}\setminus C_{i}$, which
implies that $\wt C_{i}\subset C_{i}$ for all $i$. In turn, by
Proposition~\ref{prop:10}\eqref{item:3} this implies that
$\wt C\subset C$.

Since this holds for every $n$-cell of $S(\Theta_{\wt \bfnu})$, we
deduce that this mixed subdivision refines $S(\Theta_{\bfnu})$. The
statement follows by taking $U$ as the ball of $\R^{\bfcA}$ centered
at  $\bfnu$  of radius $\varepsilon$ with respect to the
$\ell^{\infty}$-norm.
\end{proof}

As an application of these results, we can exhibit an explicit family
of tight incremental chains of mixed subdivisions of the polytope  $\Delta$ 
associated to the data $\bfcA$.

\begin{example}
  \label{exm:9}
  Set $\cA_{> i }=\sum_{j> i} \cA_{j} \subset M$, $i=0,\dots, s-1$,
  and for each $i$ denote by $ W_{i}'$ the finite union of hyperplanes
  given by Corollary~\ref{cor:7} applied to the family
  $(\cA_{0}, \dots, \cA_{i}, \cA_{> i })$ of $i+{2}$ nonempty
  subsets of $M$.

For $i=0,\dots, s-1$ choose iteratively 
$\bfnu_{i} \in \R^{\cA_{i}}$ such that
$(\bfnu_{0},\dots, \bfnu_{i}) \notin W_{i}'$ and 
$ (\bfnu_{0},\dots, \bfnu_{i-1}, \bfnu_{i}, \bfzero, \dots, \bfzero)
\in \R^{\bfcA} $ lies in the neighborhood of
$ (\bfnu_{0},\dots, \bfnu_{i-1}, \bfzero, \bfzero, \dots, \bfzero)$
given by Proposition \ref{prop:9}.  Then for $k=0,\dots, s$ consider
the family of convex piecewise affine functions
$ \theta_{k,i}\colon \Delta_{i}\to \R$, $i=0,\dots, s$, defined as
$ \theta_{k,i}= \vartheta_{ \bfnu_{i}} $ if $i<k$ and as
$ \theta_{k,i}=0|_{\Delta_{i}}$ if $i\ge k$. Their  inf-convolution
\begin{displaymath}
  \theta_{k}=\bigboxplus_{i=0}^{s}\theta_{k,i},
\end{displaymath}
 is a convex piecewise affine function on $\Delta$.  By
Corollary \ref{cor:7} and Proposition~\ref{prop:9}, 
\begin{displaymath}
  S(\theta_{0})\preceq \dots\preceq S(\theta_{s})
\end{displaymath}
is a tight incremental chain of mixed subdivisions of $\Delta$.
Moreover, Corollary \ref{cor:7} allows to choose the vector
$(\bfnu_{0},\dots, \bfnu_{s-1})\in \R^{\bfcA'}$ in the image of the
linear map $T_{\bfcA'}$, so that the convex piecewise affine functions
$\theta_{k,i}$ are indeed affine.
\end{example}
  
\subsection{Mixed volumes and mixed integrals}\label{sec:mixed-volumes-mixed}

The mixed volume of $n$ convex bodies of $M_{\R}$ is a polarization of
the notion of volume of a single one. Here we recall its definition
and basic properties,  referring to \cite[Chapter IV]{Ewa96} for the
corresponding proofs.  We restrict the presentation to polytopes,
which are the only convex bodies  appearing  in this paper.

We denote by $\vol_{M}$ the Haar measure on the vector space $M_{\R}$
that is normalized so that the lattice $M$ has covolume~1.

\begin{definition}
  \label{def:19}
  The \emph{mixed volume} of a family of polytopes
  $\Delta_{i}\subset M_{\R}$, $i=1,\dots, n$, is defined as
  \begin{equation*}
  \MV_M(\Delta_{1},\dots,\Delta_{n}) = \sum_{j=1}^n
  (-1)^{n - j} \sum_{\mathclap{1 \le i_1 < \cdots < i_j \le n}}
  \vol_M(\Delta_{i_1} + \cdots + \Delta_{i_j}).
\end{equation*}
For $n=0$ we agree that $ \MV_{M}=1$.
\end{definition}

For a single polytope $\Delta$ we have that
$\MV_M(\Delta, \dots, \Delta) = n! \vol_M(\Delta)$. The mixed volume
is symmetric and linear in each variable $\Delta_i$ with respect to
the Min\-kows\-ki sum, invariant with respect to linear maps that
preserve the measure $\vol_{M}$, and monotone with respect to the
inclusion of polytopes.  We have that
$\MV_M(\Delta_{1},\dots,\Delta_{n}) \ge 0$, and the equality holds if
and only if there is a subset $I\subset \{1,\dots, n\}$ such that
$\dim(\sum_{i\in I} \Delta_{i}) <\#I$.  If  the
$\Delta_{i}$'s are lattice polytopes, then
$ \MV_M(\Delta_{1},\dots,\Delta_{n})\in \N$.

Given a family of convex piecewise affine functions
$\rho_i\colon \Delta_i\to\R$, $i=1,\dots, n$, with inf-convolution
$\rho=\boxplus_{i=1}^{n}\rho_{i}$ and such that the mixed subdivision
$S(\rho)$ is tight (Definition \ref{def:7}), the mixed volume of
the $\Delta_{i}$'s can be computed as the sum  of  the volumes of the mixed
$n$-cells \cite[Theorem 2.4]{HS95}:
\begin{equation}
  \label{eq:38}
  \MV_{M}(\Delta_{1},\dots, \Delta_{n})= \sum_{\mathclap{C \text{ mixed}}} \vol_{M}(C).
\end{equation}

Analogously, the mixed integral of a family of $n+1$ concave functions
on convex bodies is a polarization of the notion of integral of a
single one.  It was introduced in \cite[\S8]{PS08a}, and is equivalent
to the shadow mixed volume defined in \cite[\S1]{Est08}. Here we
recall its definition and properties, translating them to the convex
setting and restricting to piecewise affine functions on polytopes.
 We refer to \cite[\S4.3]{PS08a} and \cite[\S8]{PS08b} for the
corresponding proofs and more information about this~notion.

\begin{definition}\label{mintegral}
  The {\em mixed integral} of a family of convex piecewise affine
  functions $\rho_i\colon \Delta_i\to\R$, $i=0,\dots, n$, is defined
  as
$$
\MI_M(\rho_0,\ldots,\rho_n)=\sum_{j=0}^n(-1)^{n-j} \hspace{-3mm}\sum_{0\leq
  i_0<\ldots < i_j\leq n} \int_{\Delta_{i_0}+\dots
  +\Delta_{i_j}}\rho_{i_0}\boxplus\ldots\boxplus\rho_{i_j} \dd \vol_{M}.
$$
\end{definition}

For a convex piecewise affine function on a polytope
$\rho \colon \Delta\to \R$ we have that
$\MI_{M}(\rho,\dots, \rho)=(n+1)! \int_{\Delta}\rho \, \dd
\vol_{M}$. The mixed integral is symmetric and additive in each
variable $\rho_{i}$ with respect to the inf-convolution, and monotone.

It is possible to express mixed integrals in terms of mixed
volumes. For $i=0,\dots,n$ choose $\kappa_{i} \in \R_{\ge0}$ with
$\kappa_{i}\ge \rho_{i}(x)$ for all ${x\in\Delta_{i}}$ and consider
the polytope
\begin{equation*}
 \Delta_{i,\rho_{i},\kappa_{i}}=\conv(\gr(\rho_{i}),
  \Delta_{i}\times \{\kappa_{i}\}) \subset M_{\R}\times \R.
\end{equation*}
Then by \cite[Proposition 4.5(d)]{PS08a},
\begin{multline}
  \label{eq:46}
  \MI_{M}(\rho_{0},\dots, \rho_{n})= - \MV_{M\times \Z} (
  \Delta_{0,\rho_{0},\kappa_{0}},\dots, \Delta_{n,\rho_{n},\kappa_{n}}) \\
  +\sum_{i=0}^{n} \kappa_{i} \, \MV_{M}(\Delta_{0},\dots,
  \Delta_{i-1},\Delta_{i+1}, \dots, \Delta_{n}).
\end{multline}

For each $i$, the convex piecewise affine function
$\rho_{i}\colon \Delta_{i}\to \R$ is \emph{lattice} if there are
$\cA_{i}\subset M$ and $\bfnu_{i}\in \Z^{\cA_{i}}$ such that
$\Delta_{i}=\conv(\cA_{i})$ and $\rho_{i}=\vartheta_{\bfnu_{i}}$ as in
\eqref{eq:14}.

\begin{proposition}
  \label{prop:4}
  For $i=0,\dots, n$ let $\rho_{i} \colon \Delta_{i}\to \R$ be a
  lattice convex piecewise affine function. Then $\MI_{M}(\rho_{0},\dots, \rho_{n})\in \Z$.
\end{proposition}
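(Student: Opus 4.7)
The plan is to apply the formula \eqref{eq:46}, which expresses the mixed integral $\MI_{M}(\rho_{0},\dots,\rho_{n})$ as a $\Z$-linear combination of mixed volumes, and then to verify that all these mixed volumes are in fact integers. First I would choose each bounding constant $\kappa_{i}$ to be a nonnegative \emph{integer} with $\kappa_{i}\ge \rho_{i}(x)$ for all $x\in \Delta_{i}$; this is possible because each $\rho_{i}$ is continuous on the compact polytope $\Delta_{i}$, hence bounded.

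The key step is to check that each lifted polytope
\begin{displaymath}
\Delta_{i,\rho_{i},\kappa_{i}}=\conv(\gr(\rho_{i}), \Delta_{i}\times\{\kappa_{i}\})\subset M_{\R}\times \R
\end{displaymath}
is a lattice polytope of $M\times \Z$. By hypothesis there is a finite subset $\cC_{i}\subset M$ and an integer vector $\bfnu_{i}\in \Z^{\cC_{i}}$ such that $\Delta_{i}=\conv(\cC_{i})$ and $\rho_{i}=\vartheta_{\bfnu_{i}}$ as in \eqref{eq:14}. Hence $\gr(\rho_{i})$ is the lower envelope of the lattice polytope $\conv(\{(a,\nu_{i,a})\}_{a\in \cC_{i}})\subset M_{\R}\times \R$, and its vertices form a subset of the lifted lattice points, all lying in $M\times \Z$. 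Since $\kappa_{i}\ge \rho_{i}$ pointwise, one easily checks that
\begin{displaymath}
\Delta_{i,\rho_{i},\kappa_{i}}=\{(x,z)\in M_{\R}\times \R \mid x\in \Delta_{i},\ \rho_{i}(x)\le z\le \kappa_{i}\},
\end{displaymath}
and that its vertices are the vertices of $\gr(\rho_{i})$ together with the points $(v,\kappa_{i})$ for $v$ a vertex of $\Delta_{i}$. Every vertex of $\Delta_{i}=\conv(\cC_{i})$ lies in $\cC_{i}\subset M$, and since $\kappa_{i}\in \Z$, all vertices of $\Delta_{i,\rho_{i},\kappa_{i}}$ belong to $M\times \Z$, as claimed.

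Finally, applying \eqref{eq:46} with these integer choices of $\kappa_{i}$, the right-hand side becomes a $\Z$-linear combination (with coefficients $\pm 1$ and $\kappa_{i}\in \Z$) of mixed volumes of lattice polytopes in $M\times \Z$ and in $M$ respectively. Since the mixed volume of lattice polytopes is a nonnegative integer, as recalled in \S\ref{sec:mixed-volumes-mixed}, it would follow that $\MI_{M}(\rho_{0},\dots,\rho_{n})\in \Z$. The only point that requires genuine care is the identification of the vertices of $\Delta_{i,\rho_{i},\kappa_{i}}$, ensuring that it is a lattice polytope of $M\times \Z$; once that is settled, the integrality conclusion is a direct consequence of \eqref{eq:46} and of the integrality of mixed volumes of lattice polytopes.
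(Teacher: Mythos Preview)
Your proof is correct and follows exactly the approach of the paper, which simply states that the result ``follows directly from \eqref{eq:46} and the analogous property for the mixed volume.'' You have merely supplied the details the paper leaves implicit---namely, that with an integer choice of each $\kappa_{i}$ the polytope $\Delta_{i,\rho_{i},\kappa_{i}}$ is a lattice polytope of $M\times\Z$---so there is nothing to add.
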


\begin{proof}
  This follows directly from \eqref{eq:46} and the analogous property
  for the mixed volume. 
\end{proof}

\section{Sparse resultants}\label{sec:basic-prop-sparse}

\subsection{Definitions and basic properties}\label{sec:defin-basic-prop}

In this section we recall the basic notations, definitions and
properties of sparse eliminants and resultants from
\cite{DS15}.

We keep the notation of the previous sections. In particular
$M\simeq \Z^{n}$ is a lattice of rank $n \ge 0$ and
$N=M^{\vee} \simeq \Z^{n}$ its dual lattice. Let
\begin{equation*}
  \T_{M}= \Hom(M,\C^{\times }) = N\otimes_{\Z} \C^{\times} \simeq
  (\C^{\times})^{n}
\end{equation*}
be the torus over $\C$ associated to $M$.  Then
$M=\Hom(\T_{M},\C^{\times})$, and for $a\in M$ we denote by
$ \chi^{a} \colon \T_{M}\to \C^{\times}$ the corresponding character
of $\T_{M}$.

For $i=0,\dots, n$ let $\cA_{i}$ be a nonempty finite subset of $M$,
$\Delta_{i}=\conv(\cA_{i})$ the lattice polytope of $M_{\R}$ given by
its convex hull, $\bfu_{i}=\{u_{i,a}\}_{a\in \cA_{i}}$ a set of
$\#\cA_{i}$ variables and
\begin{equation*}
  F_{i}=\sum_{a\in \cA_{i}}u_{i,a} \, \chi^{a}\in \Z[\bfu_{i}][M]
\end{equation*}
the general Laurent polynomial with support $\cA_{i}$, where
$\Z[\bfu_{i}][M] \simeq \Z[\bfu_{i}][x_1^{\pm 1},\dots, x_n^{\pm 1}]$
denotes the group $\Z[\bfu_{i}]$-algebra of $M$. Set for short
\begin{displaymath}
 \bfcA=(\cA_{0},\dots, \cA_{n}), \quad \bfDelta=(\Delta_{0},\dots,
 \Delta_{n}), \quad \bfu=(\bfu_{0},\dots, \bfu_{n}) \and \bfF=(F_{0},\dots, F_{n}).
\end{displaymath}
The \emph{incidence variety} of $\bfF$ is defined as
\begin{displaymath}
  \Omega_{\bfcA}=Z(\bfF) \subset \T_M\times
  \prod_{i=0}^{n}\P(\C^{\cA_{i}}),
\end{displaymath}
that is, the zero set of these Laurent polynomials in
that product space. It is an irreducible algebraic subvariety of
codimension $n+1$ defined over $\Q$. Denote by
$\varpi \colon \T_M\times \prod_{i=0}^{n}\P(\C^{\cA_{i}})\to
\prod_{i=0}^{n}\P(\C^{\cA_{i}})$ the projection onto the second
factor.  The \emph{direct image} of $\Omega_{\bfcA}$ with respect to
$\varpi$ is the Weil divisor of $\prod_{i=0}^{n}\P(\C^{\cA_{i}})$ defined
as
\begin{displaymath}
  \varpi_{*}\Omega_{\bfcA}=
  \begin{cases}
    \deg(\varpi|_{\Omega_{\bfcA}}) \, \overline{\varpi(\Omega_{\bfcA})}
  & \text{ if }
  \overline{\varpi(\Omega_{\bfcA})} \text{ is a hypersurface}, \\
  0 & \text{ otherwise},
\end{cases}
\end{displaymath}
where $\overline{\varpi(\Omega_{\bfcA})}$ is the Zariski closure of
the image of the incidence variety with respect to the projection, and
$\deg(\varpi|_{\Omega_{\bfcA}}) $ is the degree of the restriction of
this map to the incidence variety.

\begin{definition}
  \label{def:16}
  The \emph{sparse resultant}, denoted by $\Res_{\bfcA}$, is defined
  as any primitive polynomial in $\Z[\bfu]$ giving an equation for
  $\varpi_{*}\Omega_{\bfcA}$.  The \emph{sparse eliminant}, denoted by
  $\Elim_{\bfcA}$, is defined as any irreducible polynomial in
  $\Z[\bfu]$ giving an equation for $\ov{\varpi(\Omega_{\bfcA})} $, if
  this a hypersurface, and as 1 otherwise.
\end{definition}

Given a ring $A$ and Laurent polynomials $f_{i}\in A[M]$ with support
contained in $\cA_{i}$ for each $i$, we apply the usual notation
\begin{equation}
  \label{eq:54}
\Elim_{\bfcA}(f_{0},\dots, f_{n}) \and  \Res_{\bfcA}(f_{0},\dots,
f_{n})
\end{equation}
to denote the evaluation at the coefficients of the $f_{i}$'s.

Both the sparse resultant and the sparse eliminant are well-defined up
to the sign, and  are both invariant by translations and
permutations of the supports \cite[Proposition 3.3]{DS15}.  The sparse
eliminant does not depend on the lattice $M$ but the sparse resultant
does, as the following proposition shows.

\begin{proposition}\label{outil}
  Let $\varphi\colon M\to M'$ be a  monomorphism  of lattices of
  rank $n$. Then
  $\Elim_{\varphi(\bfcA)}= \pm
  \Elim_{\bfcA}$ and
  $\Res_{\varphi(\bfcA)}= \pm
  \Res_{\bfcA}^{[M':\varphi(M)]}$.
\end{proposition}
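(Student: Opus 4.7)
The plan is to transpose the problem to the dual picture of algebraic tori, where the monomorphism $\varphi$ becomes a finite étale cover of tori, and exploit functoriality of the incidence variety construction.

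The monomorphism $\varphi\colon M\to M'$ of lattices of rank $n$ induces by contravariance a surjective morphism of tori $\varphi^{*}\colon \T_{M'}\to \T_{M}$ whose kernel is a finite (reduced) subgroup of order $d=[M':\varphi(M)]$. Under this duality the characters transform as $\chi^{\varphi(a)}=\chi^{a}\circ \varphi^{*}$ as regular functions on $\T_{M'}$, so identifying $\bfu_{i}=\{u_{i,a}\}_{a\in \cA_{i}}$ with $\{u_{i,\varphi(a)}\}_{\varphi(a)\in\varphi(\cA_{i})}$ via the bijection induced by $\varphi$, the general Laurent polynomial with support $\varphi(\cA_{i})$ is exactly the pullback of $F_{i}$ along $\varphi^{*}$.

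Set $\Phi=\varphi^{*}\times \id$ on the ambient space $\T_{M'}\times \prod_{i}\P(\C^{\cA_{i}})$. The previous paragraph gives $\Omega_{\varphi(\bfcA)}=\Phi^{-1}(\Omega_{\bfcA})$, and since $\varphi^{*}$ is a finite étale cover of degree $d$ and $\Omega_{\bfcA}$ is irreducible, the restriction $\Phi|_{\Omega_{\varphi(\bfcA)}}\colon \Omega_{\varphi(\bfcA)}\to \Omega_{\bfcA}$ is a finite surjective morphism of degree $d$. The projections $\varpi$ onto the projective factors satisfy $\varpi\circ\Phi=\varpi$ (they forget the torus coordinate), so $\varpi(\Omega_{\varphi(\bfcA)})=\varpi(\Omega_{\bfcA})$ as subsets of $\prod_{i}\P(\C^{\cA_{i}})$, and in particular their Zariski closures agree. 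This identifies the irreducible hypersurface (if any) underlying both eliminants, proving $\Elim_{\varphi(\bfcA)}=\pm \Elim_{\bfcA}$; in the non-hypersurface case both sides equal $1$ by Definition \ref{def:16}.

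For the resultant, the factorization $\varpi|_{\Omega_{\varphi(\bfcA)}}=\varpi|_{\Omega_{\bfcA}}\circ \Phi|_{\Omega_{\varphi(\bfcA)}}$ and multiplicativity of degrees yield $\deg(\varpi|_{\Omega_{\varphi(\bfcA)}})=d\cdot \deg(\varpi|_{\Omega_{\bfcA}})$. Thus $\varpi_{*}\Omega_{\varphi(\bfcA)}=d\cdot \varpi_{*}\Omega_{\bfcA}$ as Weil divisors of $\prod_{i}\P(\C^{\cA_{i}})$, and translating this equality into primitive defining polynomials gives $\Res_{\varphi(\bfcA)}=\pm \Res_{\bfcA}^{d}$. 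The only delicate point is the degree count for $\Phi|_{\Omega_{\varphi(\bfcA)}}$: one must justify that each fiber contains exactly $d$ distinct points without excess multiplicities, which follows from the reducedness of $\ker(\varphi^{*})$ and the fact that $\Omega_{\bfcA}$ is irreducible and reduced, so $\Phi^{-1}(\Omega_{\bfcA})$ inherits the reduced scheme structure and the cover is generically étale of degree $d$. This is the step that requires the most care but is a standard consequence of the duality between lattices and algebraic tori.
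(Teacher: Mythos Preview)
Your proof is correct and follows essentially the same route as the paper: dualize the lattice monomorphism to a finite surjective map $\varphi^{*}\colon \T_{M'}\to \T_{M}$ of degree $d=[M':\varphi(M)]$, use the resulting commutative diagram between incidence varieties and their projections to identify the images (hence the eliminants), and invoke multiplicativity of degrees to get the factor $d$ on the pushforward divisor (hence the resultant). Your final paragraph on reducedness and the \'etale property is more explicit than the paper, which simply cites multiplicativity of the degree; both are fine.
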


\begin{proof}
  The  monomorphism  $\varphi\colon M\to M'$ induces a finite
  map of degree $[M' : \varphi(M)]$
  \begin{displaymath}
    \varphi^{*}\colon \T_{M'}=\Hom(M',\C^{\times})\longrightarrow \T_{M}=\Hom(M,\C^{\times}).
  \end{displaymath}
  Setting $F_{i}'=\sum_{a\in \cA_{i}}u_{i,a} \, \chi^{\varphi(a)}$ for
  the general Laurent polynomial with support $\varphi(\cA_{i})$ for
  each $i$, the system $F'_0=\dots= F'_n=0$ has a nontrivial solution
  in $\T_{M'}$ if and only if $F_0=\dots= F_n=0$ has a nontrivial
  solution in~$\T_{M}$. Hence $\varphi^{*}$ induces a commutative
  diagram
  \begin{displaymath}
\xymatrix{  \Omega_{\varphi(\bfcA)}\ar[r] \ar[d]^{\varpi'}&
  \Omega_{\bfcA} \ar[d]^{\varpi}\\
    \prod_{i=0}^{n} \P(\C^{\varphi(\cA_{i})})  \ar@{=}[r]& \prod_{i=0}^{n}
\P(\C^{\cA_{i}})}
\end{displaymath}
which implies the stated equality between the sparse eliminants.  From
here we also deduce that $\ov{ \varpi'  (\Omega_{\varphi(\bfcA)})} $ is
not a hypersuperface if and only if this also holds for
$\ov{\varpi(\Omega_{\bfcA})} $, in which case both
$\Res_{\varphi(\bfcA)}$ and $ \Res_{\bfcA}$ are equal to $\pm 1$,
proving the second equality in this case. Otherwise, the
multiplicativity of the degree implies that
\begin{displaymath}
  \deg(\varpi'|_{\Omega(\varphi(\bfcA)})= [M' : \varphi(M)] \,
\deg(\varpi|_{\Omega_\bfcA})
\end{displaymath}
and so
$\varpi'_{*}\Omega_{\varphi(\bfcA)}= [M' : \varphi(M)]
\,\varpi_{*}\Omega_{\bfcA}$, which implies the second equality in this other
case and completes the proof.
\end{proof}

The sparse resultant is homogeneous in each  set of
variables $\bfu_{i}$ of degree \cite[Proposition 3.4]{DS15}:
\begin{equation}
  \label{eq:2}
  \deg_{\bfu_{i}}(\Res_{\bfcA})= \MV_{M}(\Delta_{0},\dots,
  \Delta_{i-1},\Delta_{i+1},\dots, \Delta_{n}), \ i=0,\dots, n.
\end{equation}

Let $\cA\subset M$ be a nonempty finite subset and
$f= \sum_{a\in \cA}\alpha_{a}\, \chi^{ a}\in \C[M]$ a Laurent
polynomial with support contained in $\cA$. For $v \in N_{\R}$ we
respectively set
\begin{equation}
  \label{eq:52}
  \cA^{v}=\cA\cap \conv(\cA)^{v} \and
  \init_{v}(f)= \sum_{ a\in
  \cA^{ v}}\alpha_{ a}\, \chi^{ a}
\end{equation}
for the \emph{restriction of $\cA$ to the face $\conv(\cA)^{v}$} as
defined in \eqref{eq:51} and  the \emph{initial part of $f$ in the
  direction of $v$}.

For $ v\in N\setminus \{0\}$, the \emph{sparse resultant in the
  direction of $ v$}, denoted by
$\Res_{\cA_{1}^{ v },\dots,\cA_{n}^{v}}$, is the sparse resultant
associated to the orthogonal lattice $v^{\bot}\cap M \simeq\Z^{n-1}$
and the supports $\cA_{i}^{ v }$, $i=1,\dots, n$, modulo suitable
translations placing them inside this lattice, see \cite[Definition
4.1]{DS15} for details.  By \cite[Proposition 3.8]{DS15}, this
directional resultant is nontrivial only when $ v $ is the inner
normal to a face of dimension $n-1$ of the Minkowski sum
$\sum_{i=1}^{n}\Delta_i$. In particular, the number of non-trivial
directional sparse resultants of the family of supports $ \bfcA$ is
finite.

The following result is the Poisson formula for the sparse resultant
\cite[Theorem~4.2]{DS15}.
For a subset $B\subset M_{\R}$, its  \emph{support function}
$h_{B}\colon N_{\R}\to \R \cup\{-\infty\}$ is defined~by
\begin{equation*}
  h_{B}(v)=\inf \{\langle v,x\rangle \mid x\in B\}.
\end{equation*}
This generalizes the support function of a convex polyhedron in
\eqref{eq:28}.

\begin{theorem}
\label{poisson}
  For $i=0,\dots, n$ let $f_{i}\in \C[M]$ with support
  contained in $\cA_{i}$ and suppose that
  $\Res_{\cA_{1}^{v}, \dots, \cA_{n}^{v}}(\init_{v}(f_{1}),\dots,
  \init_{v}(f_{n})) \ne 0$ for all $v\in N\setminus\{0\}$. Then
 \begin{equation*}
  \Res_{\bfcA}(f_{0}, f_{1},\dots, f_{n})= \pm
  \prod_{v}\Res_{\cA_{1}^{v}, \dots, \cA_{n}^{v}}(\init_{v}(f_{1}),\dots, \init_{v}(f_{n}))^{-h_{\cA_{0}}(v)}
  \cdot \prod_{p} f_{0}(p)^{m_{p}} ,
\end{equation*}
the first product being over the primitive vectors $v\in N$ and the
second over the solutions $p\in \T_{M}$ of the system of equations
$f_{1}=\dots=f_{n}=0$, where $m_{p}$ denotes the intersection
multiplicity of this system of equations at the point $p$.
\end{theorem}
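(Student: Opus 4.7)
The plan is to interpret $\Res_{\bfcA}(f_{0},f_{1},\ldots,f_{n})$ through the definition of $\Res_{\bfcA}$ as a primitive equation for the pushforward $\varpi_{*}\Omega_{\bfcA}$, and to compute its evaluation on a smooth projective toric compactification of $\T_{M}$ by decomposing it according to the toric stratification. The open torus $\T_{M}$ will contribute the product $\prod_{p} f_{0}(p)^{m_{p}}$ over the isolated zeros of $f_{1}=\cdots=f_{n}=0$, while each boundary divisor $D_{v}$ will contribute a directional resultant raised to an explicit multiplicity depending only on $\cA_{0}$ and $v$.

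First I would reduce to a generic situation. Using the continuity of both sides in the coefficients of the $f_{i}$'s on the Zariski-open subset defined by the non-vanishing hypotheses on the directional resultants, it suffices to treat the case where $f_{1},\ldots,f_{n}$ have finitely many reduced common zeros $p\in \T_{M}$ and $f_{0}$ does not vanish on them. The general identity would then follow by a specialization argument combined with the homogeneity formulas in~\eqref{eq:2}, used to match degrees on both sides.

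Next I would fix a smooth projective toric variety $X_{\Sigma}$ associated with a smooth refinement $\Sigma$ of the inner normal fan of the Minkowski sum $\sum_{i=0}^{n}\Delta_{i}$. Each $\Delta_{i}$ defines a nef line bundle $\cL_{i}$ on $X_{\Sigma}$ whose space of global sections contains a canonical copy of $\C^{\cA_{i}}$, so that each $f_{i}$ becomes a section of $\cL_{i}$. Definition~\ref{def:16} then identifies $\Res_{\bfcA}(f_{0},\ldots,f_{n})$, up to sign, with the appropriate product of local intersection contributions of the Cartier divisors cut out by $f_{0},\ldots,f_{n}$ on $X_{\Sigma}$ pushed forward through $\varpi$. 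The toric stratification separates these contributions: on the open torus $\T_{M}$, transversality and the genericity assumption yield $\prod_{p}f_{0}(p)^{m_{p}}$, while on the boundary they are supported on the toric divisors $D_{v}$ indexed by the primitive rays $v\in N$ of $\Sigma$.

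For each such $v$, the divisor $D_{v}$ is a toric variety whose character lattice is $v^{\bot}\cap M$, and the restrictions of $f_{1},\ldots,f_{n}$ to $D_{v}$ coincide with the initial parts $\init_{v}(f_{i})$ with supports $\cA_{i}^{v}$ as in~\eqref{eq:52}. The non-vanishing of $\Res_{\cA_{1}^{v},\ldots,\cA_{n}^{v}}(\init_{v}(f_{1}),\ldots,\init_{v}(f_{n}))$ rules out common zeros of these restrictions on the open stratum of $D_{v}$, so the contribution of $D_{v}$ reduces to the order of vanishing of $F_{0}$ along $D_{v}$ as a section of $\cL_{0}$, which equals $-h_{\cA_{0}}(v)$, times a sparse resultant computed intrinsically on $D_{v}$. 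By induction on $n$ together with Proposition~\ref{outil} for descent along the sublattice inclusion $v^{\bot}\cap M\hookrightarrow M$, this latter resultant is identified with $\Res_{\cA_{1}^{v},\ldots,\cA_{n}^{v}}(\init_{v}(f_{1}),\ldots,\init_{v}(f_{n}))$, and combining all contributions gives the Poisson formula up to a global sign that can be fixed by a single explicit normalization. The main obstacle will be the precise identification of the boundary contribution with the directional sparse resultant and of its exponent with $-h_{\cA_{0}}(v)$, which requires simultaneously invoking the recursive behavior of $\Res_{\bfcA}$ under restriction to faces and carefully relating the support function $h_{\cA_{0}}$ to orders of vanishing along the toric boundary.
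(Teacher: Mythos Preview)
The paper does not prove this theorem here; it is quoted from \cite[Theorem~4.2]{DS15} without argument, so there is no in-paper proof to compare against.

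Your geometric outline---compactify $\T_M$ in a smooth toric variety $X_\Sigma$, view the $f_i$ as sections of nef line bundles, and split the resultant evaluation into a torus contribution and boundary contributions along the $D_v$---is the standard shape of a Poisson-formula argument and is in the right spirit. However, two steps are genuine gaps rather than routine details. First, Definition~\ref{def:16} presents $\Res_{\bfcA}$ as a primitive equation for a divisor on $\prod_i\P(\C^{\cA_i})$, not as an intersection product on $X_\Sigma$; the passage from one to the other---that is, showing that the scalar $\Res_{\bfcA}(f_0,\ldots,f_n)$ factors as a product of local contributions on $X_\Sigma$---is essentially the content of the theorem you are trying to prove, so invoking it at that point is circular. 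You need an independent input here, for instance the multiplicativity of the sparse resultant along the projection forgetting $F_0$, or its interpretation as a norm of $f_0$ from the coordinate ring modulo $(f_1,\ldots,f_n)$. Second, your appeal to Proposition~\ref{outil} for the inclusion $v^\bot\cap M\hookrightarrow M$ is misplaced: that proposition applies only to monomorphisms between lattices of the \emph{same} rank $n$, whereas $v^\bot\cap M$ has rank $n-1$. The identification of the boundary contribution along $D_v$ with the directional resultant $\Res_{\cA_1^v,\ldots,\cA_n^v}$ raised to the power $-h_{\cA_0}(v)$ requires a genuine dimension-reducing recursion (or a limit/degeneration argument for the incidence variety), not a same-rank base-change lemma.
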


For a subset $J\subset \{0,\dots, n\}$ put
$ \bfcA_J= (\cA_{i})_{i\in J} $ and $\bfu_J= (\bfu_{i})_{i\in J} $.

\begin{definition}
  \label{def:1}
  The \emph{fundamental subfamily} of $\bfcA$ is the family of
  supports $\bfcA_{J}$ for the minimal subset
  $J\subset \{0,\dots, n\}$ such that $\Res_{\bfcA} \in \Z[\bfu_{J} ]$
  or equivalently, such that $\Elim_{\bfcA} \in \Z[\bfu_{J} ]$.
\end{definition}

For each $i$ set $L_{\cA_{i}}$ for the
sublattice of $M$ generated by the differences of the elements of
$\cA_{i}$. For a subset $J\subset \{0,\dots, n\}$ consider the sum
$L_{\bfcA_J}=\sum_{i\in J} L_{\cA_i}$ and its saturation
$L_{\bfcA_{J}}^{\sat}= (L_{\bfcA_J} \otimes_{\Z} \R)\cap M$.

\begin{remark}
  \label{rem:1}
  By \cite[Corollary 1.1 and Lemma 1.2]{Stu94} or \cite[Proposition
  3.13]{DS15} when $J\ne \emptyset$ the fundamental subfamily
  $\bfcA_{J}$ coincides with the unique essential subfamily of
  $\bfcA$, that is, the unique subfamily such that
  $\rank( L_{\bfcA_{J}}) = \#J-1$ and
  $ \rank( L_{\bfcA_{J'}}) \ge \#J'$ for all $J'\subsetneq J$, whereas
  when $J=\emptyset$, that is when $\Res_{\bfcA}=\pm1$, we have that
  $\bfcA_{J}=\emptyset$ and $\bfcA$ has at least two essential
  subfamilies.
\end{remark}

The sparse eliminant and the sparse resultant are  related by
\begin{equation}
  \label{eq:48}
  \Res_{\bfcA}=\pm \Elim_{\bfcA}^{d_{\bfcA}}
\end{equation}
with $d_{\bfcA}\in \N_{>0}$.

\begin{proposition}
  \label{prop:12}
  Let $\bfcA_{J}$ be the fundamental subfamily of $\bfcA$ and suppose
  that $J\ne \emptyset$. Then $\rank(L_{\bfcA_{J}})= \#J - 1$, and
the  exponent in \eqref{eq:48} can be written as
\begin{equation*}
  d_{\bfcA} = [L_{\bfcA_{J}}^{\sat}:L_{\bfcA_{J} }]\,
  \MV_{M/L_{\bfcA_{J}}^{\sat}}(\{\pi (\Delta_{i})\}_{i\notin J})
\end{equation*}
where $\pi$ is the projection $M\to M/L_{\bfcA_{J}}^{\sat}$.
\end{proposition}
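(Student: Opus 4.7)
The plan is to establish the rank equality via the essentiality of the fundamental subfamily, and then to derive the formula for $d_{\bfcA}$ by comparing the degree formulas \eqref{eq:2} for $\Res_{\bfcA}$ in $M$ and for the associated sparse resultant of $\bfcA_{J}$ viewed in the sublattice $L_{\bfcA_{J}}^{\sat}$, combined with Proposition \ref{outil} and a classical product formula for mixed volumes.

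For the rank equality, I would argue that $\bfcA_{J}$ is necessarily essential in the sense of Sturmfels. Indeed, the hypothesis $J\ne \emptyset$ means $\Elim_{\bfcA}$ is a nonconstant irreducible polynomial, and by the structure theory recalled in \cite{Stu94} and \cite{DS15} there is a unique essential subfamily of $\bfcA$ whose eliminant coincides (up to sign) with $\Elim_{\bfcA}$. Since $\Elim_{\bfcA}\in \Z[\bfu_{J}]$ and $J$ is minimal with this property, $\bfcA_{J}$ must equal this essential subfamily; essentiality then yields $\rank(L_{\bfcA_{J}}) = \#J - 1$ and the identification $\Elim_{\bfcA}= \pm \Elim_{\bfcA_{J}}$ as irreducible elements of $\Z[\bfu_{J}]$.

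Next, view $\bfcA_{J}$ as a family of $\#J$ supports in $L_{\bfcA_{J}}^{\sat}$, a lattice of rank $\#J-1$. After translating each $\cA_{i}$, $i\in J$, so that it lies in $L_{\bfcA_{J}}$, we have that $\bfcA_{J}$ is essential and its affine span equals the ambient lattice $L_{\bfcA_{J}}$, so that the sparse eliminant and sparse resultant coincide: $\Res_{\bfcA_{J}}^{(L_{\bfcA_{J}})} = \pm \Elim_{\bfcA_{J}}$. Applying Proposition \ref{outil} to the monomorphism $L_{\bfcA_{J}}\hookrightarrow L_{\bfcA_{J}}^{\sat}$ then yields
\[
\Res_{\bfcA_{J}}^{(L_{\bfcA_{J}}^{\sat})} =\pm \Elim_{\bfcA_{J}}^{[L_{\bfcA_{J}}^{\sat}:L_{\bfcA_{J}}]}.
\]
Fixing any $i\in J$ and applying \eqref{eq:2} in $L_{\bfcA_{J}}^{\sat}$ and in $M$, this gives
\[
\deg_{\bfu_{i}}(\Elim_{\bfcA_{J}}) = \frac{\MV_{L_{\bfcA_{J}}^{\sat}}(\Delta_{j} : j \in J\setminus \{i\})}{[L_{\bfcA_{J}}^{\sat}:L_{\bfcA_{J}}]} \and \deg_{\bfu_{i}}(\Res_{\bfcA}) = \MV_{M}(\Delta_{j} : j \ne i).
\]
The $\#J-1=\dim L_{\bfcA_{J}}^{\sat}$ polytopes $\Delta_{j}$ with $j\in J\setminus \{i\}$ lie, after suitable translations, inside $L_{\bfcA_{J}}^{\sat}\otimes \R$, so the classical product formula for mixed volumes yields
\[
\MV_{M}(\Delta_{j} : j \ne i) = \MV_{L_{\bfcA_{J}}^{\sat}}(\Delta_{j} : j \in J\setminus \{i\}) \cdot \MV_{M/L_{\bfcA_{J}}^{\sat}}(\pi(\Delta_{l}) : l \notin J).
\]
Combining these identities with $\Res_{\bfcA} = \pm \Elim_{\bfcA_{J}}^{d_{\bfcA}}$ and dividing degrees produces the claimed formula for $d_{\bfcA}$.

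The main technical obstacle will be justifying the identification $\Elim_{\bfcA}= \pm \Elim_{\bfcA_{J}}$ and the essentiality of $\bfcA_{J}$, both of which ultimately rest on the uniqueness of the essential subfamily when the eliminant is nontrivial; the product formula for mixed volumes is classical and can be cited from \cite[Chapter IV]{Ewa96}.
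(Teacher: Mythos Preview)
Your argument for the rank equality is exactly the paper's: both invoke the fact that the fundamental subfamily is the unique essential subfamily, citing \cite{Stu94} and \cite{DS15}.

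For the formula for $d_{\bfcA}$, however, the paper simply cites \cite[Proposition~3.13]{DS15}, whereas you give a genuine derivation inside the paper's framework: you compare $\deg_{\bfu_{i}}(\Res_{\bfcA})$ and $\deg_{\bfu_{i}}(\Elim_{\bfcA_{J}})$ via \eqref{eq:2} and Proposition~\ref{outil}, and then split the mixed volume in $M$ using the product formula for mixed volumes along the saturated sublattice $L_{\bfcA_{J}}^{\sat}$. This is correct and arguably more informative than a bare citation, since it shows exactly where each factor comes from. Two points worth tightening: first, the division step requires $\deg_{\bfu_{i}}(\Elim_{\bfcA_{J}})>0$ for your chosen $i\in J$, which follows from the minimality of $J$ and should be stated explicitly; second, the equality $\Res_{\bfcA_{J}}^{(L_{\bfcA_{J}})}=\pm\Elim_{\bfcA_{J}}$ (eliminant equals resultant in the essential, lattice-spanning case) must be sourced independently of the present proposition to avoid circularity --- it is available directly from the definition of $d_{\bfcA}$ together with the fact that the projection from the incidence variety is generically one-to-one in this situation, or from \cite{DS15} prior to their Proposition~3.13. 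Finally, the product formula for mixed volumes along a rational subspace is classical but is not stated in \cite[Chapter~IV]{Ewa96} in the form you need; a more precise reference would help.
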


\begin{proof}
The first claim follows from Remark \ref{rem:1}, whereas the second is
\cite[Proposition~3.13]{DS15}.
\end{proof}

Sparse eliminants are particular cases of sparse resultants.

\begin{proposition}
  \label{prop:8}
  Let $\bfcA_{J}$ be the fundamental subfamily of $\bfcA$, suppose
  that $J\ne\emptyset$ and consider $\bfcA_{J}$ as a family of $\#J$
  nonempty finite subsets of the lattice
  $L_{\bfcA_{J}}\simeq \Z^{\#J-1}$.  Then
  $\Elim_{\bfcA}=\pm\Elim_{\bfcA_{J}}=\pm\Res_{\bfcA_{J}}$.
\end{proposition}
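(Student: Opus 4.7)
The plan is to prove the two equalities separately, both via the incidence variety characterization of the sparse eliminant.

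For the first equality $\Elim_{\bfcA}=\pm\Elim_{\bfcA_{J}}$, I would show that both polynomials cut out the same hypersurface in $\prod_{i\in J}\P(\C^{\cA_{i}})$. Since $\Elim_{\bfcA}\in \Z[\bfu_{J}]$, the hypersurface $\ov{\varpi(\Omega_{\bfcA})}$ is a cylinder over a hypersurface $H\subset \prod_{i\in J}\P(\C^{\cA_{i}})$, cut out by $\Elim_{\bfcA}$ viewed as a polynomial in $\bfu_{J}$ alone. I would then argue that $\bfu_{J}$ lies in the image of $\varpi(\Omega_\bfcA)$ under the projection to the $\bfu_J$ coordinates iff the subsystem $F_{j}=0$, $j\in J$, has a common zero in $\T_{M}$: indeed, given such a solution $\bft$, for each $i\notin J$ the linear form $F_{i}(\bft)=\sum_{a}u_{i,a}\chi^{a}(\bft)$ in the variables $\bfu_{i}$ is nonzero (as $\chi^{a}(\bft)\ne 0$ for $\bft\in \T_{M}$), so one can select $\bfu_{i}$ making $F_{i}(\bft)=0$. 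Next, after translating each $\cA_{j}$ by a fixed element $a_{j}^{0}\in \cA_{j}$ (an operation that preserves the eliminant up to sign), the $F_{j}$ involve only characters of $L_{\bfcA_{J}}$. The restriction $\T_{M}\to \T_{L_{\bfcA_{J}}}$ is surjective because $M$ is free abelian, and consequently solvability of $(F_{j})_{j\in J}$ in $\T_{M}$ is equivalent to solvability in $\T_{L_{\bfcA_{J}}}$. This identifies $H$ with $\ov{\varpi_{J}(\Omega_{\bfcA_{J}})}$, whence the first equality since both $\Elim_\bfcA$ and $\Elim_{\bfcA_J}$ are primitive and irreducible.

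For the second equality $\Elim_{\bfcA_{J}}=\pm\Res_{\bfcA_{J}}$, I would invoke \eqref{eq:48} and Proposition \ref{prop:12} to show that the exponent $d_{\bfcA_{J}}$ equals $1$. The key task is to verify that the fundamental subfamily of $\bfcA_{J}$, viewed now inside $L_{\bfcA_{J}}$, is $\bfcA_{J}$ itself. By the uniqueness of the essential subfamily in Proposition \ref{prop:12}, this amounts to $\rank(L_{\bfcA_{J}})=\#J-1$ and $\rank(L_{\bfcA_{J'}})\ge \#J'$ for all $J'\subsetneq J$; both rank conditions are inherited from the fact that $J$ is the fundamental subfamily of $\bfcA$ in $M$. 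The formula in Proposition \ref{prop:12} applied to $\bfcA_{J}$ in $L_{\bfcA_{J}}$ then gives $d_{\bfcA_{J}}=1$, since $L_{\bfcA_{J}}$ is trivially its own saturation in itself (index $1$) and the mixed volume is taken over the empty family (equal to $1$ by the convention $\MV_{M}=1$ for $n=0$).

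The main obstacle is the equivalence of solvability conditions in the first step, which hinges on the surjectivity of $\T_{M}\to \T_{L_{\bfcA_{J}}}$ and a careful accounting of translations. The second equality is then a mostly formal consequence of Proposition \ref{prop:12} once the fundamental subfamily of the restricted system has been correctly identified.
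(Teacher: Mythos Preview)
Your proposal is correct and follows essentially the same route as the paper. The paper's proof is two lines: it cites \cite[Proposition~3.11]{DS15} for the first equality and invokes \eqref{eq:48} together with Proposition~\ref{prop:12} for the second. Your argument for the second equality is exactly this, spelled out in detail (identifying the fundamental subfamily of $\bfcA_J$ in $L_{\bfcA_J}$ as $\bfcA_J$ itself, then reading off $d_{\bfcA_J}=1$ from the index and empty-mixed-volume conventions). For the first equality you reconstruct, rather than cite, the content of \cite[Proposition~3.11]{DS15}: the cylinder description of $\ov{\varpi(\Omega_\bfcA)}$, the extension step for $i\notin J$ (which is valid since $J\ne\emptyset$ forces $\#\cA_i\ge 2$ for all $i$, else $\Omega_\bfcA=\emptyset$ and $J=\emptyset$), and the surjectivity of $\T_M\to\T_{L_{\bfcA_J}}$ via divisibility of $\C^\times$. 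This is a faithful reproduction of the cited argument, not a genuinely different approach.
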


\begin{proof}
  The first equality is given by \cite[Proposition 3.11]{DS15},
  whereas the second follows from the equality in \eqref{eq:48} and
  Proposition \ref{prop:12}.
\end{proof}

We also need the following auxiliary result.

\begin{proposition}
  \label{prop:22}
  Let $\wt{\bfcA}=(\wt \cA_{0},\dots, \wt\cA_{n})$ be a further family
  of supports in $M$ such that $\wt \cA_{i}\subset \cA_{i}$ for all
  $i$. Let $\wt{\bfcA}_{J}$ and $\bfcA_{K}$ be the respective
  fundamental subfamilies of supports. Then $J\subset K$.
\end{proposition}

\begin{proof}
  By the degree formula in \eqref{eq:2}, an index
  $j\in \{0,\dots, n\}$ lies in $ K$ if and only if
\begin{displaymath}
  \MV_{M}(\Delta_{0},\dots,
  \Delta_{j-1},\Delta_{j+1},\dots, \Delta_{n})>0.  
\end{displaymath}
The statement follows then from the monotonicity of the mixed volume
with respect to the inclusion of polytopes.
\end{proof}

The notions of sparse eliminant and of sparse resultant include the
classical homogeneous resultant introduced by Macaulay
\cite{Macaulay:sfe}, as we next explain.

\begin{example}
\label{exm:3}
For $\bfd=(d_{0},\dots, d_{n})\in (\N_{>0})^{n+1}$ let $\Res_{\bfd}$
be the homogeneous resultant, giving the condition for a system of
$n+1$ homogeneous polynomials in $n+1$ variables of degrees $\bfd$ to
have a zero in the $n$-dimensional projective space
\cite[\S3.2]{CLO05}.  It coincides, up to the sign, both with the
sparse eliminant and the sparse resultant for the lattice $M=\Z^{n}$
and the family of supports $\bfcA=(\cA_{0},\dots, \cA_{n})$ given~by
  \begin{displaymath}
    \cA_i=\{\bfa  \in\N^n \mid |\bfa| \leq d_i\} ,
  \end{displaymath}
  where for a lattice point $\bfa=(a_{1},\dots, a_{n})\in \Z^{n}$ we
  denote by $|\bfa|=\sum_{i=1}^{n}a_{i}$ its \emph{length}.  Then
  $\Delta_{i}= \{\bfx\in (\R_{\ge 0})^{n} \mid |\bfx|\le d_{i}\}$ for
  each $i$ and we can deduce from the degree formula
  in~\eqref{eq:2}~that
  \begin{displaymath}
\deg_{\bfu_{i}}(\Res_{\bfd})=\prod_{j\ne i} d_{j}, \
    i=0,\dots, n.
  \end{displaymath}
\end{example}

\subsection{Order and initial parts}\label{sec:initial-parts}

In this section we study the different orders and initial parts of the
sparse resultant.

\begin{definition}
  \label{def:13}
  Let $\bfomega\in \R^{\bfcA}$ and let $t$ be a variable. For
  $P\in \C[\bfu] \setminus\{0\}$ set
  \begin{equation}
    \label{eq:49}
      P^{\bfomega}= P( (t^{\omega_{i,a}}\, u_{i,a} )_{i\in \{0,\dots, n\}, a\in \cA_{i}}) \in \C[\bfu][t^{\R}] \setminus \{0\}.
  \end{equation}
The \emph{order} and the \emph{initial part} of $P$ with respect to
$\bfomega$ are the elements $\ord_{\bfomega}(P) \in \R$ and
$\init_{\bfomega}(P)\in \C[\bfu]\setminus\{0\}$ defined by the
equation
\begin{equation}
  \label{eq:42}
  P^{ \bfomega} = ( \init_{\bfomega}(P) + o(1)) \,
  t^{\ord_{\bfomega}(P)},
\end{equation}
where $o(1)$ denotes a sum of terms whose degree in $t$ is strictly
positive.

For a nonzero rational function $P\in \C(\bfu)^{\times}$ written as
$P={P_{1}}/{P_{2}}$ with $P_{i}\in \C[\bfu] \setminus\{0\}$, $i=1,2$,
the \emph{order} and the \emph{initial part} of $P$ with respect to
$\bfomega$ are defined~as
\begin{displaymath}
  \ord_{\bfomega}(P)=  \ord_{\bfomega}(P_{1}) - \ord_{\bfomega}(P_{2}) \and
  \init_{\bfomega}(P)  = \frac{\init_{\bfomega}(P_{1})}{\init_{\bfomega}(P_{2})}.
\end{displaymath}
These notions do not depend on the choice of $P_{1}$ and $P_{2}$  and
the maps
\begin{displaymath}
  \ord_{\bfomega}\colon \C(\bfu)^{\times} \longrightarrow \R \and  \init_{\bfomega}\colon \C(\bfu)^{\times} \longrightarrow \C(\bfu)^{\times}
\end{displaymath}
are group morphisms.  We extend them by setting
$\ord_{\bfomega}(0)=+\infty$ and $\init_{\bfomega}(0)=0$.  The notion
of initial part generalizes the definition in \eqref{eq:52} for
Laurent polynomials.
\end{definition}

As pointed out by Sturmfels, the initial part of the sparse resultant
in a given direction is closely related to the mixed subdivision of
$\Delta$ associated to the convex piecewise affine functions defined
by that direction \cite{Stu94}.

\begin{definition}
  \label{def:12}
  For $\bfomega=(\bfomega_{0},\dots, \bfomega_{n})\in \R^{\bfcA}$ let
  $\vartheta_{\bfomega_{i}}\colon \Delta_{i}\rightarrow \R$,
  $i=0,\dots, n$, and $\Theta_{\bfomega} \colon \Delta\rightarrow \R$
  be the associated convex piecewise affine functions as in
  \eqref{eq:14} and~\eqref{eq:19}.  Let $D$ be an $n$-cell of the
  mixed subdivision $S(\Theta_{\bfomega})$ of $\Delta$ and
  $D_{i} \in S(\vartheta_{\bfomega_{i}})$, $i=0,\dots, n$, its
  components as defined in \eqref{eq:66}.  The \emph{restriction} of
  $\bfcA$ to $D$ is the family of nonempty finite subsets of $M$
  defined as
\begin{displaymath}
  \bfcA_{D}=( \cA_{0}\cap
  D_{0},\dots, \cA_{n}\cap D_{n}).
\end{displaymath}
\end{definition}

The next theorem gives formulae for the order and the initial part of
the sparse resultant. The first part is a reformulation of a result
 by  Philippon and the third author for the Chow weights of a
multiprojective toric variety  \cite[Proposition~4.6]{PS08a}, whereas
the second is a generalization of a result  by  Sturmfels for
sparse eliminants in the case when the fundamental subfamily coincides
with $\bfcA$ \cite[Theorem 4.1]{Stu94}.

\begin{theorem}\label{mt1}
  Let $\bfomega \in \R^{\bfcA}$. Then
  \begin{displaymath}
    \ord_{\bfomega}(\Res_{\bfcA})=\MI_{M}(\vartheta_{\bfomega_{0}},\dots,
    \vartheta_{\bfomega_{n}}) \and
    {\init}_\bfomega(\Res_{\bfcA})=\pm\prod_{\mathclap{D\in
      S(\Theta_{\bfomega})^{n}}}\Res_{\bfcA_{D}}.
  \end{displaymath}
\end{theorem}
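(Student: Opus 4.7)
The theorem contains two identities, which I would establish separately.

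For the order formula, I would translate the Philippon--Sombra formula for Chow weights of multiprojective toric varieties \cite[Proposition 4.6]{PS08a} to the present setting. By Definition~\ref{def:16}, $\Res_{\bfcA}$ is a primitive defining equation for the direct image cycle $\varpi_{*}\Omega_{\bfcA}$, which coincides with the Chow divisor of the multiprojective toric variety $X_{\bfcA}$ with torus $\T_{M}$ associated to $\bfcA$. Under this identification, the order of $\Res_{\bfcA}$ with respect to $\bfomega$ equals the Chow weight of $X_{\bfcA}$ in the direction $\bfomega$, and the cited result identifies this Chow weight with $\MI_{M}(\vartheta_{\bfomega_{0}},\dots,\vartheta_{\bfomega_{n}})$.

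For the initial part formula I would reduce to the essential case, treated by Sturmfels. Let $\bfcA_{J}$ be the fundamental subfamily (Definition~\ref{def:1}). By Proposition~\ref{prop:8}, $\Elim_{\bfcA}=\pm\Res_{\bfcA_{J}}$ when $\bfcA_{J}$ is viewed as a family of supports in $L_{\bfcA_{J}}^{\sat}\simeq \Z^{\#J-1}$; combined with $\Res_{\bfcA}=\pm\Elim_{\bfcA}^{d_{\bfcA}}$ from \eqref{eq:48} and the fact that $\init_{\bfomega}$ is a group morphism on $\C(\bfu)^{\times}$, this yields
\begin{displaymath}
\init_{\bfomega}(\Res_{\bfcA})=\pm\,\init_{\bfomega_{J}}(\Res_{\bfcA_{J}})^{d_{\bfcA}}.
\end{displaymath}
Since $\bfcA_{J}$ is essential and coincides with its own fundamental subfamily, \cite[Theorem~4.1]{Stu94} applies and factors $\init_{\bfomega_{J}}(\Res_{\bfcA_{J}})$ as $\pm\prod_{D'}\Res_{(\bfcA_{J})_{D'}}$, the product ranging over the $(\#J-1)$-cells $D'$ of the mixed subdivision of $\sum_{i\in J}\Delta_{i}$ in $L_{\bfcA_{J}}^{\sat}$ induced by $\bfomega_{J}$.

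The main obstacle is to identify this $d_{\bfcA}$-th power with $\prod_{D}\Res_{\bfcA_{D}}$ over the $n$-cells $D$ of $S(\Theta_{\bfomega})\subset \Delta$. I would exploit the projection $\pi\colon M_{\R}\to M_{\R}/L_{\bfcA_{J},\R}^{\sat}$: the mixed subdivision $S(\Theta_{\bfomega})$ descends to the mixed subdivision of $\pi(\Delta)$ induced by the projected data, organizing the $n$-cells $D$ into fibers above each cell $D'$. For each such $D$, applying Proposition~\ref{prop:12} to the restricted family $\bfcA_{D}$ should express $\Res_{\bfcA_{D}}$ as a power of $\Res_{(\bfcA_{J})_{D'}}$ (after suitable translations), with exponents that sum over the fiber of $\pi^{-1}(D')$ to reproduce the global multiplicity $d_{\bfcA}$. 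A consistency check is provided by \eqref{eq:2} and the additivity of mixed volumes and mixed integrals over tight mixed subdivisions, together with the order formula. The delicate point is the bookkeeping of those cells $D$ whose restricted family $\bfcA_{D}$ has a strictly smaller fundamental subfamily and hence contributes a trivial factor $\pm 1$: these must be compensated by strictly higher multiplicities at the nontrivial cells lying above the same $D'$, which requires a careful analysis of the cell structure of $\pi$.
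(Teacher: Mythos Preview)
Your treatment of the order formula matches the paper's (which cites the equivalent formulation \cite[Proposition~4.5]{PS08b}). For the initial part, however, your reduction to Sturmfels leaves a real gap, and the paper takes an entirely different route: after passing to $\bfomega\in\Z^{\bfcA}$ (Lemma~\ref{cool}), it lifts to $M\times\Z$ by adjoining the support $\cC=\{(0,0),(0,1)\}$ and the lifted supports $\wh\cA_i=\{(a,\omega_{i,a})\}_{a\in\cA_i}$, expresses $\Res_{\bfcA}^{\bfomega}$ via the hidden-variable formula \cite[Proposition~4.7]{DS15} and the Poisson formula (Theorem~\ref{poisson}) as a product of directional resultants over primitive $(v,l)\in N\times\Z$ with $l>0$, and then identifies each factor $\Res_{\wh\bfcA^{(v,l)}}^{\,l}$ with $\Res_{\bfcA_{D}}$ for $D=C(\Theta_{\bfomega},v)$ using Proposition~\ref{outil} together with the index computation of Lemma~\ref{cramer}. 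No matching of subdivisions across different lattices is needed.

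In your outline the projection goes the wrong way: the cells $D'$ appearing in Sturmfels' product live in $\sum_{i\in J}\Delta_i\subset L_{\bfcA_J}\otimes\R$, not in the quotient $M_{\R}/(L_{\bfcA_J}^{\sat}\otimes\R)$, so $\pi$ does not organize the $n$-cells $D$ into fibers over the $D'$. The honest correspondence sends $D$ to $\sum_{i\in J}D_i$, which need not be top-dimensional in the $J$-subdivision. You would then have to show that every $D$ with $\sum_{i\in J}D_i$ lower-dimensional contributes $\pm1$, and that over each genuine $(\#J-1)$-cell $D'$ the product $\prod_{D}\Res_{\bfcA_D}$ equals $\Res_{(\bfcA_J)_{D'}}^{\,d_{\bfcA}}$ up to sign. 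But $\Res_{\bfcA_D}$ is computed in $M$ while Sturmfels' factor is an eliminant computed in $L_{\bfcA_J}$; reconciling them via Propositions~\ref{outil} and~\ref{prop:12} introduces further cell-dependent exponents, and proving that these aggregate to the constant $d_{\bfcA}$ over each $D'$ is a statement of the same order of difficulty as the theorem you are trying to prove.
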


Before proving it, we need to establish some auxiliary results. For
$P\in\C[\bfu]$ we denote by $\supp(P)$ its \emph{support}, that is,
the finite subset of $\N^{\bfcA}$ of the exponents of the nonzero
terms of this polynomial.

\begin{lemma}\label{cool}
  For $\bfomega=(\bfomega_{0},\dots, \bfomega_{n})\in\R^{\bfcA}$ there
  is
  $\wt\bfomega=(\wt\bfomega_{0},\dots, \wt\bfomega_{n})\in \Z^{\bfcA}$
  such that
  \begin{enumerate}
  \item \label{item:11}   ${\init}_{\wt\bfomega}(\Res_{\bfcA})={\init}_\bfomega(\Res_{\bfcA})$,
  \item \label{item:12}   $ S(\Theta_{\wt\bfomega})=
    S(\Theta_{\bfomega})$,
  \item \label{item:13} for every $n$-cell of
    $ S(\Theta_{\wt\bfomega})$, its components with respect to the
    families of convex piecewise affine functions
    $\vartheta_{\wt\bfomega_{i}}$, $i=0,\dots, n$,  and
    $\vartheta_{\bfomega_{i}}$, $i=0,\dots, n$,  coincide.
  \end{enumerate}
\end{lemma}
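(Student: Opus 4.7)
The plan is to argue that each of conditions \eqref{item:11}, \eqref{item:12}, \eqref{item:13} is determined by the position of $\bfomega$ relative to a rational polyhedral fan in $\R^\bfcA$, and then use a density argument to find an integer vector in the relative interior of the common refinement cone containing $\bfomega$.

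First I would address \eqref{item:11}. Write $\Res_\bfcA = \sum_{\alpha \in \cS} c_\alpha \bfu^\alpha$ with $\cS := \supp(\Res_\bfcA) \subset \N^\bfcA$. By Definition \ref{def:13}, the initial part $\init_\bfomega(\Res_\bfcA)$ is determined by the face of $\conv(\cS)$ minimized by $\bfomega$ in the sense of \eqref{eq:51}, and this face is constant as $\bfomega$ ranges over the relative interior of a cone of the normal fan of $\conv(\cS)$. This fan is rational polyhedral because $\cS \subset \Z^\bfcA$.

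Next I would treat \eqref{item:12} and \eqref{item:13} together. By Proposition \ref{prop:10}\eqref{item:2}, every $n$-cell of $S(\Theta_\bfomega)$ is of the form $C(\Theta_\bfomega, v)$ for some $v \in N_\R$, and its $i$-th component is $C(\vartheta_{\bfomega_i}, v)$, namely the convex hull of the subset of $\cA_i$ on which $\omega_{i,a} + \langle v, a\rangle$ attains its minimum. Only finitely many such subsets arise as $v$ varies, corresponding to the inner normals of the lower faces of $\sum_i \epi(\vartheta_{\bfomega_i})$; hence the complete combinatorial data of the mixed subdivision $S(\Theta_\bfomega)$ together with its decomposition into components is cut out by the sign pattern of a finite family of linear functionals on $\R^\bfcA$ with integer coefficients, of the form $(\omega_{i,a} - \omega_{i,a'}) + \langle v, a - a'\rangle$ for $i \in \{0,\dots,n\}$, $a, a' \in \cA_i$, and $v$ in a suitable finite subset of $N$. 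This exhibits conditions \eqref{item:12} and \eqref{item:13} as membership of $\wt\bfomega$ in the same relatively open cone of a rational polyhedral fan in $\R^\bfcA$ as $\bfomega$.

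To conclude, let $\sigma \subset \R^\bfcA$ be the intersection of the two closed cones above whose relative interiors contain $\bfomega$. Then $\sigma$ is rational polyhedral, $\bfomega \in \ri(\sigma)$, and every vector in $\ri(\sigma)$ satisfies \eqref{item:11}, \eqref{item:12} and \eqref{item:13}. Since $\sigma$ is rational, $\ri(\sigma) \cap \Q^\bfcA$ is dense in $\ri(\sigma)$; choosing any $\bfq \in \ri(\sigma) \cap \Q^\bfcA$ and setting $\wt\bfomega = N\bfq$ for a common denominator $N \in \N_{>0}$ of the entries of $\bfq$ produces an integer vector on the ray through $\bfq$, hence in $\ri(\sigma)$ since $\sigma$ is a cone. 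The most delicate step will be setting up the secondary-type fan behind \eqref{item:12} and \eqref{item:13} carefully enough to justify both its rationality and the fact that its open cones control not only the mixed subdivision but also its component decomposition, rather than merely the former.
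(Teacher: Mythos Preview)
Your approach coincides with the paper's: both express conditions \eqref{item:11}--\eqref{item:13} as membership of $\wt\bfomega$ in the relative interior of a rational polyhedral cone in $\R^{\bfcA}$ containing $\bfomega$, then pick an integer point there. The paper makes the cone for \eqref{item:12} and \eqref{item:13} explicit via one block of conditions per $n$-cell $D$ of $S(\Theta_{\bfomega})$, using the fixed vector $v_D\in N_\R$ with $D=C(\Theta_{\bfomega},v_D)$. One imprecision in your sketch: the displayed functionals $(\omega_{i,a}-\omega_{i,a'})+\langle v,a-a'\rangle$ are \emph{affine} in $\wt\bfomega$ (the term $\langle v,a-a'\rangle$ is a constant once $v$ is fixed), and there is no reason the relevant $v$'s can be taken in $N$ rather than $N_\R$, so as written they do not cut out a cone. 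The rationality and conicity come instead from treating the $v_D$'s as auxiliary variables in a joint homogeneous integer-linear system and projecting (Fourier--Motzkin), or equivalently from the secondary fan of the Cayley configuration $\bigsqcup_i(\cA_i\times\{e_i\})$---exactly the point you flag as delicate in your last sentence. The paper's write-up is equally terse on this step.
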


\begin{proof}
  Set $\cS=\supp(\Res_{\bfcA}) \subset\N^{\bfcA}$ and $\cS^{\bfomega}$
  for the subset of $\cS$ of lattice points with minimal scalar
  product with respect to $\bfomega$. A vector
  $\wt\bfomega=(\wt\bfomega_{0},\dots, \wt\bfomega_{n})\in \R^{\bfcA}$
  verifies the condition~\eqref{item:11} if and only if
  \begin{equation}
    \label{eq:21}
 \langle \wt\bfomega,\bfc'-\bfc \rangle =0 \text{ for } \bfc,
      \bfc'\in
      \cS^{\bfomega} \and  \langle \wt\bfomega,\bfc'-\bfc \rangle >0 \text{ for } \bfc\in \cS^{\bfomega}
      \text{ and } \bfc'\in \cS\setminus \cS^{\bfomega}.
  \end{equation}

  With notation as in \eqref{eq:35}, for each
  $D\in S(\Theta_{\bfomega})^{n}$ set $v_{D}\in N_{\R}$ for the
  unique vector such that $ D=\Gamma(\Theta_{\bfomega},v_{D}) $. For
  each $i$ let $D_{i}^{0}\subset \cA_{i}$ be the set of vertices of
  the $i$-th component of $D$.  Then
  $D_{i}=\Gamma(\vartheta_{\wt\bfomega_{i}},v_{D})$ if and only if
\begin{equation}
  \label{eq:24}
    \begin{split}
      & \langle v_{D},a'-a \rangle + \wt\omega_{i,a'}-\wt\omega_{i,a}= 0
      \text{ for }  a,a'\in D_{i}^{0}, \\
      & \langle v_{D},a'-a \rangle + \wt\omega_{i,a'}-\wt\omega_{i,a}\ge 0
      \text{ for }  a\in D_{i}^{0} \text{ and } a'\in (\cA_{i} \cap
      D_{i})\setminus D_{i}^{0}, \\
      & \langle v_{D},a'-a \rangle  + \wt\omega_{i,a'}-\wt\omega_{i,a}>0
      \text{ for }a\in D_{i}^{0} \text{ and } a'\in \cA_{i}\setminus D_{i}.
    \end{split}
  \end{equation}
  If this condition holds, then $D=\Gamma(\Theta_{\wt\bfomega},v_{D})$
  by Proposition \ref{prop:10}\eqref{item:2}.

  Hence if $\wt\bfomega$ satisfies the condition \eqref{eq:21} and
  that in \eqref{eq:24} for all $D\in S(\Theta_{\bfomega})^{n}$,
  then it also verifies \eqref{item:11}, \eqref{item:12}
  and~\eqref{item:13}. These conditions amount to the fact that
  $\wt\bfomega$ lies in the relative interior of a polyhedral cone of
  $\R^{\bfcA}$ defined over $\Z$. This relative interior is nonempty
  as it contains $\bfomega$, and so it also contains a vector in
  $ \Z^{\bfcA}$.
\end{proof}

\begin{lemma}\label{cramer}
  Let $(v,l) \in N\times \Z$ be a primitive lattice vector with
  $l>0$. Let $ (v,l)^{\perp} $ be its orthogonal subspace of
  $M_{\R}\times \R$ and
  \begin{displaymath}
    \varphi\colon (v,l)^{\perp} \cap (M\times \Z) \longrightarrow  M
  \end{displaymath}
  the lattice map defined by $(a,q)\mapsto a$. Then
  $[M : \varphi((v,l)^{\perp} \cap (M\times \Z) )]=l.$
\end{lemma}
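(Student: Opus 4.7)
The plan is to describe the image of $\varphi$ explicitly as a kernel of a map $M\to\Z/l\Z$, and then use the primitivity of $(v,l)$ to show that this map is surjective.

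First I would observe that $\varphi$ is injective: if $(a,q)\in (v,l)^{\perp}\cap(M\times\Z)$ has $a=0$, then the orthogonality condition reads $lq=0$, and since $l>0$ this forces $q=0$. Next I would identify the image: an element $a\in M$ lies in $\varphi((v,l)^{\perp}\cap(M\times\Z))$ if and only if there exists $q\in\Z$ with $\langle v,a\rangle+lq=0$, that is, if and only if $\langle v,a\rangle\in l\Z$. Hence
\begin{displaymath}
\varphi\bigl((v,l)^{\perp}\cap(M\times\Z)\bigr) = \ker\bigl(\overline{\langle v,\cdot\rangle}\colon M \longrightarrow \Z/l\Z\bigr),
\end{displaymath}
where $\overline{\langle v,\cdot\rangle}(a)=\langle v,a\rangle\bmod l$.

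To compute the index I would analyze the image of this map. The image of the integer-valued map $\langle v,\cdot\rangle\colon M\to\Z$ equals $d\,\Z$, where $d$ is the largest positive integer such that $v\in dN$ (equivalently, $d$ is the gcd of the coordinates of $v$ in any basis of $N$). Hence the image of $\overline{\langle v,\cdot\rangle}$ equals $d\Z/l\Z\subset\Z/l\Z$, which is a cyclic subgroup of order $l/\gcd(d,l)$.

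Now I would use the hypothesis that $(v,l)$ is primitive in $N\times\Z$: by definition, the only positive integer dividing every coordinate of $v$ together with $l$ is $1$, so $\gcd(d,l)=1$. Therefore the image of $\overline{\langle v,\cdot\rangle}$ is all of $\Z/l\Z$, and the first isomorphism theorem gives
\begin{displaymath}
[M:\varphi((v,l)^{\perp}\cap(M\times\Z))]=\#(\Z/l\Z)=l,
\end{displaymath}
as desired. The only subtle point is the identification of $\gcd(d,l)$ with the primitivity content of $(v,l)$, but this is essentially a tautology once the right bases are chosen, so I do not foresee a real obstacle.
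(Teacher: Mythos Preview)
Your proof is correct and takes a genuinely different route from the paper. You work directly on the primal side: you identify $\varphi(P)$ with $\ker\bigl(M\to\Z/l\Z,\ a\mapsto\langle v,a\rangle\bmod l\bigr)$ and then use primitivity of $(v,l)$ to force $\gcd(d,l)=1$, making that map surjective. The paper instead passes to the dual: it uses the index formula $[M:\varphi(P)]=[P^{\vee}:\varphi^{\vee}(N)]$, identifies $P^{\vee}$ with $(N\times\Z)/\Z(v,l)$ (this is where primitivity enters), describes $\varphi^{\vee}(w)=(w,0)+\Z(v,l)$, and reads off that the cokernel is $\Z/l\Z$. Your argument is more elementary in that it avoids the duality/index formula and stays entirely within $M$; the paper's argument is a bit more structural but requires knowing that identity for indices under dual maps. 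One cosmetic point: your notation ``$d\Z/l\Z$'' for the image is slightly abusive when $d\nmid l$ (the image is really $\gcd(d,l)\Z/l\Z$), but since you state the correct order $l/\gcd(d,l)$ this does not affect the argument; also the edge case $v=0$ (where ``largest $d$ with $v\in dN$'' is ill-posed) is harmless because primitivity then forces $l=1$.
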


\begin{proof}
  Set for short $P=(v,l)^{\perp} \cap (M\times\Z)$, which is a
  sublattice of $M\times\Z$ of rank~$n$. The map
  $\varphi \colon P\to M$ is injective if and only if so is its dual
  $\varphi^{\vee}\colon M^{\vee} \to P^{\vee}$ and if this is the
  case, then
\begin{equation}
  \label{eq:53}
  [M:\varphi(P)]=[P^{\vee} :\varphi^{\vee}(M^{\vee})].
\end{equation}
We have that $M^{\vee}=N$ and
$P^{\vee}\simeq (M\times \Z) / \Z \, (v,l) $. With these
identifications, the dual map
$\varphi^{\vee}\colon N\to (M\times \Z) / \Z \, (v,l) $ writes down as
$\varphi^{\vee}(w)= (w,0)+ \Z\, (v,l)$.  Hence $\varphi^{\vee}$ is
injective because $l>0 $. Moreover, its image is the sublattice
$(M\times l\, \Z) / \Z \, (v,l) $~and~so
\begin{displaymath}
  [P^{\vee} :\varphi^{\vee}(M^{\vee})]=
\#\,  ( M\times \Z / \Z \, (v,l))/ (M\times l\, \Z /  \Z \, (v,l) )= \#
\, \Z/l\Z= l,
\end{displaymath}
which together with \eqref{eq:53} implies the statement.
\end{proof}

\begin{proof}[Proof of Theorem \ref{mt1}]
  By \cite[Proposition 4.5]{PS08b}, the degree of a monomial
  deformation of the sparse resultant can be computed in terms of
  mixed integrals as
  \begin{displaymath}
\deg_{t}(\Res_{\bfcA}^{-\bfomega})= -\MI_{M}(\vartheta_{\bfomega_{0}},\dots,
    \vartheta_{\bfomega_{n}}).
  \end{displaymath}
  Since
  $\ord_{\bfomega}(\Res_{\bfcA})=
  -\deg_{t}(\Res_{\bfcA}^{-\bfomega})$, this gives the first part of
  the statement.

  For the second part, we reduce without loss of generality to the
  case when $\bfomega\in\Z^\bfcA$ thanks to Lemma~\ref{cool}.  Set
  $\bfF^{\bfomega}=(F_{0}^{\bfomega_{0}},\dots, F_{n}^{\bfomega_{n}})$
  with
  \begin{equation*}
    F_{i}^{\bfomega_{i}}= F_{i}( (t^{\omega_{i,a}}\, u_{i,a} )_{a\in
      \cA_{i}}) \in \C[\bfu_{i}][M][t^{\pm1}],
    \ i=0,\dots, n.
  \end{equation*}
  With notation as in \eqref{eq:54} and \eqref{eq:49} we have that
  \begin{equation}
    \label{eq:17}
    \Res_{\bfcA}^{\bfomega} = \Res_{\bfcA}(\bfF^{\bfomega}).
  \end{equation}

  Consider the family of $n+2$ nonempty finite subsets of $M\times \Z$
  given by
  \begin{displaymath}
    \cC=\{(0,0),\,(0,1)\} \and \wh \cA_{i}=\{(a,\omega_{i,a})\}_{ a\in
    \cA_{i}}, \ i=0,\dots, n.
  \end{displaymath}
  Let $ \bfv=\{v_{(0,0)}, v_{(0,1)} \}$ be a set of variables, so that
  the general Laurent polynomial with support $\cC$ is
  $ v_{(0,0)}+v_{(0,1)} \, z $ and that with support $\wh\cA_{i}$ is
  $ F_{i}^{\bfomega}(z)$, the evaluation of $F_{i}^{\bfomega}$ at
  $t=z$ for each $i$.  Set $\wh \bfcA = (\wh \cA_{0}, \dots, \wh \cA_{n})$. By the ``hidden variable'' formula in
  \cite[Proposition~4.7]{DS15}, there is $d_{\bfomega} \in \Z$ such
  that
  $$
  \Res_{\bfcA}(\bfF^{\bfomega})= \pm t^{d_{\bfomega}}
  \Res_{\cC,\wh \bfcA}(z-t, \bfF^{\bfomega}(z)).
  $$

  Thanks to the formula in \eqref{eq:42}, we have that
  \begin{displaymath}
    \init_{\bfomega}(\Res_{\bfcA}) =   \Res_{\cC,\wh \bfcA}(z,\bfF^\bfomega(z))
  \end{displaymath}
  provided this latter polynomial is nonzero. To see this, consider a
  family of Laurent polynomials $f_{i}\in \C[M]$ with
  $\supp(f_{i}) \subset \cA_{i}$, $i=0,\dots, n$, that is sufficiently
  generic and set
  $\bff^{\bfomega}=(f_{0}^{\bfomega_{0}},\dots,
  f_{n}^{\bfomega_{n}})$.  By the invariance of the sparse resultant
  under translation of the first support and the Poisson formula
  (Theorem \ref{poisson}), with notation as therein we have that
\begin{equation}\label{unrav1}
  \begin{split}
  \Res_{\cC,\wh \bfcA}(z,\bff^\bfomega(z))&=
  \Res_{\cC-(0,1),\wh \bfcA}(1,\bff^\bfomega(z)) \\
  & =
\pm
\prod_{(v,l)}{\Res_{\wh {\bfcA}^{(v,l)}}(\init_{(v,l)}(\bff^{\bfomega}(z)))}^{-h_{\cC-(0,1)}((v,l))}\\
& =
\pm
\prod_{(v,l)}\Res_{\wh {\bfcA}^{(v,l)}}(\init_{(v,l)}(\bff^{\bfomega}(z)))^{\max\{0,l\}},
  \end{split}
\end{equation}
the products being over the primitive lattice vectors
$(v,l)\in N\times \Z$, and where $\wh \bfcA^{(v,l)}$ denotes the family of
supports $ (\wh \cA_{0}^{(v,l)}, \dots, \wh \cA_{n}^{(v,l)})$.  The
last equality follows from the fact that
$-h_{\cC-(0,1)}((v,l))=\max\{0,l\}$.  Since this holds for every
choice of $\bff$, we have~that
\begin{displaymath}
  \Res_{\cC,\wh \bfcA}(z,\bfF^\bfomega(z)) =
\pm
\prod_{(v,l)}\Res_{\wh {\bfcA}^{(v,l)}}^{\max\{0,l\}}
\end{displaymath}
because $\init_{(v,l)}(\bfF^{\bfomega}(z))$ is the general Laurent
polynomial with support $\wh {\bfcA}^{(v,l)}$.

Let $(v,l)\in N\times \Z$ primitive with $l>0$. For the linear map
$\varphi\colon (v,l)^{\bot }\cap (M\times \Z) \to M$ induced from the
projection onto the first factor we have that
$[M:\varphi ((v,l)^{\bot }\cap (M\times \Z))]=l$ by Lemma
\ref{cramer}.  For the cell $D=C\big(\Theta_{\bfomega}, \frac{1}{l}\, v\big)$ we also have
that $\varphi(\wh \cA_{i}^{(v,l)})=\cA_{i}\cap D_{i}$ for each $i$.
Proposition \ref{outil} then implies that
\begin{equation}
  \label{eq:50}
\Res_{\wh {\bfcA}^{(v,l)}}^{\max\{0,l\}}=\pm \Res_{\bfcA_{D}}  .
\end{equation}
Since every $n$-cell of $S(\Theta_{\bfomega})^{n}$ appears exactly
once in the product \eqref{unrav1}, this second part then follows from
\eqref{eq:17}, \eqref{unrav1} and \eqref{eq:50}.
\end{proof}

\subsection{Homogeneities and degrees}\label{sec:homogeinities}

The homogeneities of the sparse resultant are of two types: there are
$ \lambda_{i}  \in \Z$, $i=0,\dots, n$, and $\mu\in M$ such that for every
 $\bfc\in \N^{\bfcA}$ in the support of $\Res_{\bfcA}$ we have that
\begin{equation*}
  \sum_{a\in \cA_{i}} c_{i,a}=\lambda_{i}, \  i=0,\dots, n,  \and \sum_{i=0}^{n}\sum_{a\in \cA_{i}} c_{i,a}\, a=\mu,
\end{equation*}
see for instance \cite[Chapter 9, Proposition 1.3]{GKZ94} or
\cite[\S6]{Stu94}. The first type corresponds to the fact that the
sparse resultant is homogeneous in each set of
variables~$\bfu_{i}$. As noted in~\eqref{eq:2}, its partial degree
$\deg_{\bfu_{i}}(\Res_{\bfcA})=\lambda_{i}$ can be computed in terms mixed
volumes.

The second type corresponds to its equivariance with respect to the
action of the torus by translations. For $p\in \T_{M}$ denote by
$\tau_{p} \colon \T_{M}\to \T_{M}$ the translation by this point and
let $\tau_{p}^{*}\bfF=\bfF\circ \tau_{p}$ be the pullback of the
system of general Laurent polynomials $\bfF$ with respect to this
map. The fact that the sparse resultant satisfies this type of
homogeneity is then equivalent to the validity of identity
\begin{equation}
  \label{eq:70}
\Res_{\bfcA}(\tau_{p}^{*}\bfF)= \chi^{\mu}(p) \,
  \Res_{\bfcA}
\end{equation}
 for all $p\in \T_{M}$.

Let $\deg_{M}$ be the grading of the monomials of $\C[\bfu]$ with values
in $M$ defined by
\begin{equation}
  \label{eq:71}
  \deg_{M}(u_{i,a})=a \text{ for } i =0,\dots, n \text{ and } a\in \cA_{i}.
\end{equation}
Then \eqref{eq:70} is also equivalent to the fact that the sparse
resultant is homogeneous with respect to this grading, of degree
$\mu$.  As an application of Theorem \ref{mt1} we will reprove this
type of homogeneity and compute its degree in terms of mixed
integrals.

We first prove an auxiliary lemma. A point $v\in N$ can be seen as a
linear function on $M_{\R}$ and, in particular, can be restricted to
any subset of this linear space.

\begin{lemma}
  \label{lemm:11}
  The function $\mu_{\bfDelta}\colon N\to \Z$  given by
  \begin{displaymath}
\mu_{\bfDelta}(v)= \MI_{M}(v|_{\Delta_{0}},\dots, v|_{\Delta_{n}})
  \end{displaymath}
  is well-defined and linear. Therefore
  $\mu_{\bfDelta} \in M =N^{\vee} $.
\end{lemma}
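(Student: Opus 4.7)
The plan is to compute $\mu_{\bfDelta}(v)$ explicitly from the definition of mixed integral, exploiting the fact that inf-convolution trivializes for linear functions, and then read off both integrality and linearity from that expression.

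First I would observe the following trivialization: for $v\in N$ and any subset of indices $\{i_{0},\dots, i_{j}\}\subset \{0,\dots, n\}$, the inf-convolution
\begin{displaymath}
  v|_{\Delta_{i_{0}}}\boxplus \cdots \boxplus v|_{\Delta_{i_{j}}}
\end{displaymath}
on $\Delta_{i_{0}}+\cdots+\Delta_{i_{j}}$ equals the restriction of the linear functional $v$ to this Minkowski sum. This follows directly from \eqref{eq:6}: for any decomposition $x=\sum_{k} y_{k}$ with $y_{k}\in \Delta_{i_{k}}$, we have $\sum_{k}\langle v, y_{k}\rangle=\langle v,x\rangle$, so the infimum is attained and equals $\langle v,x\rangle$.

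Substituting into Definition~\ref{mintegral}, this yields the explicit formula
\begin{displaymath}
  \mu_{\bfDelta}(v)=\sum_{j=0}^n(-1)^{n-j} \sum_{0\leq i_0<\ldots < i_j\leq n} \int_{\Delta_{i_0}+\dots +\Delta_{i_j}}\langle v,x\rangle \, \dd \vol_{M}(x).
\end{displaymath}
Since the integrand $\langle v,x\rangle$ is linear in $v$ and the domains of integration do not depend on $v$, each summand is a linear function of $v\in N_{\R}$. Hence $\mu_{\bfDelta}\colon N\to \R$ is additive, and therefore $\Z$-linear.

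For integrality, I would check that each $v|_{\Delta_{i}}$ is a lattice convex piecewise affine function in the sense preceding Proposition~\ref{prop:4}: taking $\cC_{i}=\cA_{i}$ and $\bfnu_{i}=(\langle v,a\rangle)_{a\in \cA_{i}}\in \Z^{\cA_{i}}$, the lifted points $\{(a,\langle v,a\rangle)\}_{a\in \cA_{i}}$ all lie on the single affine hyperplane defined by $v$, so the lower envelope is that hyperplane and $\vartheta_{\bfnu_{i}}=v|_{\Delta_{i}}$. Proposition~\ref{prop:4} then gives $\mu_{\bfDelta}(v)\in \Z$. Combined with the additivity above, this shows $\mu_{\bfDelta}$ defines a group morphism $N\to \Z$, that is, an element of $N^{\vee}=M$.

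There is no real obstacle here: the whole argument rests on the single observation that inf-convolution collapses for linear functionals, after which both conclusions follow formally from the definition of mixed integral and from Proposition~\ref{prop:4}.
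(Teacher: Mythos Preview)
Your proof is correct and follows essentially the same approach as the paper: both rest on the observation that $\bigboxplus_{i\in I} v|_{\Delta_{i}} = v|_{\sum_{i\in I}\Delta_{i}}$ for linear $v$, then deduce additivity from the definition of the mixed integral and integrality from Proposition~\ref{prop:4}. Your version is slightly more explicit in writing out the resulting integral formula and in verifying the hypothesis of Proposition~\ref{prop:4}, but the substance is identical.
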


\begin{proof}
  Let $v\in N$. For each subset $I\subset \{0,\dots, n\}$ we have that
  \begin{displaymath}
   \bigboxplus_{i\in I} v|_{\Delta_{i}}= v|_{\sum_{i\in I} \Delta_{i}} 
  \end{displaymath}
  because $v$ is linear. For $v'\in N$, the definition of the mixed
  integral then implies that
  \begin{displaymath}
    \MI_{M}((v+v')|_{\Delta_{0}},\dots, (v+v')|_{\Delta_{n}}) = \MI_{M}(v|_{\Delta_{0}},\dots, v|_{\Delta_{n}})+  \MI_{M}(v'|_{\Delta_{0}},\dots, v'|_{\Delta_{n}}),
  \end{displaymath}
  which shows that $\mu_{\bfDelta}$ is linear. Moreover,
  $v|_{\Delta_{i}}$ is a lattice convex piecewise affine function and
  so Proposition \ref{prop:4} implies that $\mu_{\bfDelta}(v)\in
  \Z$. The last claim follows from the previous ones.
\end{proof}

\begin{theorem}\label{antt}
  The sparse resultant $\Res_{\bfcA}$ is homogeneous with respect to
 $\deg_{M}$ and
  \begin{displaymath}
    \deg_{M}(\Res_{\bfcA})=\mu_{\bfDelta}\in M.
  \end{displaymath}
\end{theorem}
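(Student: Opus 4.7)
The plan is to deduce both the homogeneity and the value of the degree from the order formula $\ord_{\bfomega}(\Res_{\bfcA})=\MI_{M}(\vartheta_{\bfomega_{0}},\dots, \vartheta_{\bfomega_{n}})$ in Theorem \ref{mt1}, applied to the linear weights coming from elements of the dual lattice $N$.

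First, fix $v\in N$ and consider the weight vector $\bfomega=\bfomega(v) \in \R^{\bfcA}$ defined by $\omega_{i,a}=\langle v,a\rangle$ for all $i$ and $a\in \cA_{i}$. Since the lifted points $\{(a,\langle v,a\rangle)\}_{a\in \cA_{i}}$ all lie on the graph of the linear functional $v$ on $M_{\R}$, the lower envelope of their convex hull is parametrized by that same linear functional, so $\vartheta_{\bfomega_{i}}= v|_{\Delta_{i}}$ for each $i$. Theorem \ref{mt1} combined with the very definition of $\mu_{\bfDelta}$ in Lemma \ref{lemm:11} then yields
\begin{equation*}
  \ord_{\bfomega(v)}(\Res_{\bfcA}) =  \MI_{M}(v|_{\Delta_{0}},\dots, v|_{\Delta_{n}}) = \mu_{\bfDelta}(v).
\end{equation*}

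Next I would rewrite this order in terms of the support $\cS=\supp(\Res_{\bfcA})\subset \N^{\bfcA}$. Directly from Definition \ref{def:13},
\begin{equation*}
\ord_{\bfomega(v)}(\Res_{\bfcA}) = \min_{\bfc\in \cS} \langle \bfomega(v), \bfc\rangle = \min_{\bfc\in \cS} \Bigl\langle v, \sum_{i,a} c_{i,a}\, a\Bigr\rangle = \min_{\bfc\in \cS} \langle v, \deg_{M}(\bfu^{\bfc})\rangle.
\end{equation*}
Applying the same identity to $-v$ and using the $\Z$-linearity of $\mu_{\bfDelta}$ given by Lemma \ref{lemm:11},
\begin{equation*}
-\max_{\bfc\in \cS} \langle v, \deg_{M}(\bfu^{\bfc})\rangle = \min_{\bfc\in \cS} \langle -v, \deg_{M}(\bfu^{\bfc})\rangle = \mu_{\bfDelta}(-v)=-\mu_{\bfDelta}(v).
\end{equation*}

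Combining the two displays, the minimum and the maximum of $\langle v,\deg_{M}(\bfu^{\bfc})\rangle$ over $\bfc\in \cS$ both equal $\mu_{\bfDelta}(v)=\langle v, \mu_{\bfDelta}\rangle$, so this quantity is constant on $\cS$. Since this holds for every $v\in N$ and the pairing $N\times M\to \Z$ is nondegenerate, the element $\deg_{M}(\bfu^{\bfc})\in M$ is independent of $\bfc\in \cS$ and equals $\mu_{\bfDelta}$. This proves simultaneously that $\Res_{\bfcA}$ is homogeneous with respect to the grading $\deg_{M}$ and that its $M$-degree is $\mu_{\bfDelta}$.

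There is no real obstacle here: the argument is a direct application of Theorem \ref{mt1} together with the trick of using both $v$ and $-v$ to squeeze $\min$ and $\max$. The only points that require a small verification are the identification $\vartheta_{\bfomega(v)_{i}}=v|_{\Delta_{i}}$ (immediate from the definition of the lifted lower envelope) and the fact that $\mu_{\bfDelta}$ is already known to take integer values and to be linear (Lemma \ref{lemm:11}, which relies in turn on Proposition \ref{prop:4}).
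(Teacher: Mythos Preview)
Your proof is correct and close in spirit to the paper's, but with one notable twist worth recording. Both arguments fix $v\in N$, take the weight $\bfomega(v)$ with $\omega_{i,a}=\langle v,a\rangle$, observe that $\vartheta_{\bfomega_{i}}=v|_{\Delta_{i}}$, and invoke Theorem~\ref{mt1}. The difference is in how constancy of $\langle v,\deg_{M}(\bfu^{\bfc})\rangle$ over the support is obtained: the paper uses the \emph{second} conclusion of Theorem~\ref{mt1}, noting that since $\Theta_{\bfomega}=v|_{\Delta}$ is affine the mixed subdivision $S(\Theta_{\bfomega})$ is trivial, hence $\init_{\bfomega}(\Res_{\bfcA})=\Res_{\bfcA}$, which says directly that every monomial attains the minimum. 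You instead use only the \emph{first} conclusion (the order formula), applied to both $v$ and $-v$, together with the linearity of $\mu_{\bfDelta}$ from Lemma~\ref{lemm:11} to squeeze the min and max together. Your route is slightly more elementary in that it avoids the product formula for initial parts, at the small cost of needing Lemma~\ref{lemm:11} as an input rather than merely for the final identification of the degree.
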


\begin{proof}
  Let $v\in N$. For the weight $\bfomega\in \Z^{\bfcA}$ defined by
  $\omega_{i,a}=\langle v,a\rangle $ for $i=0, \dots, n $ and
  $a\in \cA_{i}$, we have that
  $\vartheta_{\bfomega_{i}}=v|_{\Delta_{i}}$ for each $i$ and
  $\Theta_{\bfomega}=v|_{\Delta}$. Hence $\Delta$ is the unique
  $n$-cell of $S(\Theta_{\bfomega})$, and from Theorem \ref{mt1} we
  deduce that
  $$
  \ord_\bfomega(\Res_\bfcA)= \MI_{M}(v|_{\Delta_{0}},\dots,
  v|_{\Delta_{n}}) \and
\init_\bfomega(\Res_\bfcA)=\Res_\bfcA.
$$
This implies that for all  $\bfc \in \supp(\Res_{\bfcA})$ we have that
\begin{math}
\langle v,  \sum_{i,a} c_{i,a}\, a \rangle = \langle v,\mu_{\bfDelta}\rangle =\mu_{\bfDelta}(v).
\end{math}
Since this holds for all $v\in N$, we deduce the statement.
\end{proof}

\begin{remark}
  \label{rem:2}
  When $M=\Z^{n}$ we have that
  $\mu_{\bfDelta}= (\mu_{\bfDelta,1},\dots, \mu_{\bfDelta,n})$ with
  \begin{equation}
    \label{eq:30}
    \mu_{\bfDelta, i}= \MI_{M}(x_{i}|_{\Delta_{0}}, \dots,
    x_{i}|_{\Delta_{n}}), \ i=1,\dots, n.
    \end{equation}
In this case $\T_{M}=(\C^{\times})^{n}$, and for a point
    $\bfp=(p_{1},\dots, p_{n})$ in this torus we have that
  \begin{displaymath}
    \Res_{\bfcA}(\tau_{\bfp}^{*} \bfF)=\Res_{\bfcA}(\bfF(p_{1}\, x_{1},\dots, p_{n}\, x_{n}))=
\Big(\prod_{i=1}^{n}p_{i}^{\mu_{\bfDelta,i}}\Big) \, \Res_{\bfcA}.
  \end{displaymath}
  \end{remark}

\begin{example}
  \label{exm:2}
  Let $\Res_{\bfd}$ be the homogeneous resultant corresponding to a
  sequence of degrees $\bfd=(d_{0},\dots, d_{n}) \in(\N_{>0})^{n+1}$
  as in Example \ref{exm:3}.  Since the function $\bfx\mapsto x_{i}$
  is linear, for each $i$ the mixed integral in \eqref{eq:30} can be
  computed as
  \begin{multline*}
\mu_{\bfDelta,i} = \sum_{j=0}^n(-1)^{n-j}\hspace{-10pt} \sum_{{0\leq
  k_0<\ldots < k_j\leq n}} \int_{\Delta_{k_0}+\dots
                       +\Delta_{k_j}} x_{i} \, \dd \, \bfx    \\
= \sum_{j=0}^n(-1)^{n-j}\hspace{10pt} \sum_{\mathclap{0\leq
  k_0<\ldots < k_j\leq n}} \hspace{10pt}  \frac{(d_{k_0}+\dots
  +d_{k_j})^{n+1}}{(n+1)!}=\prod_{l=0}^{n} d_{l},
  \end{multline*}
  where the last equality can be proven with elementary algebra as in
  \cite[Theorem~3.7]{Ewa96}.  This gives the well-known {\it
    isobarism} of the homogeneous resultant, a result that goes back
  to Macaulay \cite[page 11]{Macaulay:atms}.
\end{example}

\subsection{Vanishing coefficients}\label{sec:vanish-coeff}

In this section we apply Theorem \ref{mt1} to obtain a formula for the
evaluation of the sparse resultant by setting some of the coefficients
of the system of Laurent polynomials $\bfF$ to zero.

For $i=0,\dots, n$ let $\wt \cA_{i}\subset \cA_{i}$ be a nonempty
subset, $ \wt \Delta_{i}\subset M_{\R}$ its convex hull,
$\wt \bfu_{i}$ the set of variables corresponding to $\wt \cA_{i}$,
and $\wt F_{i}$ the general Laurent polynomial with support
$\wt \cA_{i}$, which can be obtained from $F_{i}$ by setting
$u_{i,a}=0$ for all $a\notin \wt\cA_{i}$. Set then
\begin{displaymath}
\wt\bfcA= (\wt \cA_{0},\dots, \wt\cA_{n}), \quad \wt \bfu=(\wt
\bfu_{0},\dots , \wt \bfu_{n}) \and \wt \bfF=(\wt F_{0},\dots , \wt F_{n}).
\end{displaymath}
Consider the vector
$\bfomega=(\bfomega_{0},\dots, \bfomega_{n})\in \Z^{\bfcA}$ given, for
$i=0,\dots, n$ and $a\in \cA_{i}$,~by
\begin{equation*}
\omega_{i,a}=
\begin{cases}
0& \text{ if } \bfa\in\wt\cA_i,\\
1& \text{ otherwise,}
\end{cases}
\end{equation*}
and let $\vartheta_{\bfomega_{i}}\colon \Delta_{i}\to \R$,
$i=0,\dots, n$, and $\Theta_{\bfomega}\colon \Delta\to~\R$ be the
associated convex piecewise affine functions as in \eqref{eq:14} and
\eqref{eq:19}.

\begin{theorem}
\label{mt2}
The following conditions are equivalent:
\begin{enumerate}
\item \label{item:19}   $\Res_{\bfcA}(\wt \bfF) \ne 0$,
\item \label{item:20} $\MI_{M}(\vartheta_{\bfomega_{0}},\dots, \vartheta_{\bfomega_{n}})=0$,
\item \label{item:21} for every $n$-cell $D$ of
  $ S(\Theta_{\bfomega})$ we have that
  $ \Res_{\bfcA_{D}}(\wt \bfF)\ne 0$.
\end{enumerate}
If any of these conditions holds, then $\Res_{\bfcA_{D}} \in \Z[\wt
\bfu]$ for all $D\in S(\Theta_{\bfomega})^{n}$ and
\begin{equation}\label{fact0}
  \Res_{\bfcA}(\wt \bfF) =\pm \prod_{\mathclap{D\in S(\Theta_{\bfomega})^{n}}}
  \Res_{\bfcA_{D}}.
 \end{equation}
\end{theorem}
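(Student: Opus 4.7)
The plan is to specialize Theorem~\ref{mt1} to the $\{0,1\}$-valued weight $\bfomega$ described in the statement, and to read off the three equivalences and the formula from the two parts of that theorem.

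The starting observation is that since $\omega_{i,a}=0$ if and only if $a\in\wt\cA_i$, evaluating the monomial deformation $\Res_{\bfcA}^{\bfomega}=\Res_{\bfcA}\bigl((t^{\omega_{i,a}}u_{i,a})_{i,a}\bigr)\in\Z[\bfu][t]$ at $t=0$ kills precisely the variables $u_{i,a}$ with $a\notin\wt\cA_i$, so $\Res_{\bfcA}^{\bfomega}|_{t=0}=\Res_{\bfcA}(\wt\bfF)$. On the other hand, by definition
\begin{displaymath}
\Res_{\bfcA}^{\bfomega}=\bigl(\init_{\bfomega}(\Res_{\bfcA})+o(1)\bigr)\,t^{\ord_{\bfomega}(\Res_{\bfcA})},
\end{displaymath}
and $\ord_{\bfomega}(\Res_{\bfcA})\ge 0$ because all weights are nonnegative. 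Hence $\Res_{\bfcA}(\wt\bfF)$ vanishes when $\ord_{\bfomega}(\Res_{\bfcA})>0$ and equals $\init_{\bfomega}(\Res_{\bfcA})$ when $\ord_{\bfomega}(\Res_{\bfcA})=0$. Theorem~\ref{mt1} identifies this order with $\MI_{M}(\vartheta_{\bfomega_{0}},\dots,\vartheta_{\bfomega_{n}})$ and this initial part with $\pm\prod_{D}\Res_{\bfcA_{D}}$, which at once yields the equivalence \eqref{item:19}$\iff$\eqref{item:20} and, whenever these hold, the product formula~\eqref{fact0}.

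It remains to connect with condition \eqref{item:21}, for which I would argue that the factors $\Res_{\bfcA_{D}}$ already lie in $\Z[\wt\bfu]$. A monomial $\prod u_{i,a}^{c_{i,a}}$ contributes to $\init_{\bfomega}(\Res_{\bfcA})$ only when $\sum\omega_{i,a}c_{i,a}=\ord_{\bfomega}(\Res_{\bfcA})=0$, forcing $c_{i,a}=0$ whenever $\omega_{i,a}=1$; hence $\init_{\bfomega}(\Res_{\bfcA})\in\Z[\wt\bfu]$ and therefore $\prod_{D}\Res_{\bfcA_{D}}\in\Z[\wt\bfu]$. Since the degree of a product in any single variable equals the sum of the degrees in that variable of its factors, each individual $\Res_{\bfcA_{D}}$ must already lie in $\Z[\wt\bfu]$. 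Consequently $\Res_{\bfcA_{D}}(\wt\bfF)=\Res_{\bfcA_{D}}$ for every $n$-cell $D$, and the product formula then gives \eqref{item:19}$\iff$\eqref{item:21}.

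The whole argument is thus a direct bookkeeping consequence of Theorem~\ref{mt1} applied to this very simple weight; the one step requiring a moment of care is the passage from ``$\prod_{D}\Res_{\bfcA_{D}}\in\Z[\wt\bfu]$'' to ``each $\Res_{\bfcA_{D}}\in\Z[\wt\bfu]$'', which is the only place where one genuinely uses that we are working in a polynomial ring rather than an arbitrary integral domain.
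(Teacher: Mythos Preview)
Your argument for \eqref{item:19}$\iff$\eqref{item:20} and for the product formula under these conditions is correct and matches the paper. However, there is a genuine gap in your treatment of \eqref{item:21}: the implication \eqref{item:21}$\Rightarrow$\eqref{item:19} is not established. Your claim that each $\Res_{\bfcA_D}\in\Z[\wt\bfu]$ rests on the equality $\ord_\bfomega(\Res_\bfcA)=0$, which is precisely condition \eqref{item:20}; without it the monomials in $\init_\bfomega(\Res_\bfcA)$ have strictly positive $\bfomega$-weight and \emph{must} involve some $u_{i,a}$ with $a\notin\wt\cA_i$. For a concrete failure, take $n=1$, $\cA_0=\cA_1=\{0,1\}$, $\wt\cA_0=\wt\cA_1=\{0\}$: then $\Theta_\bfomega$ is affine, $S(\Theta_\bfomega)$ has the single $1$-cell $D=[0,2]$ with components $D_0=D_1=[0,1]$, hence $\bfcA_D=\bfcA$ and $\Res_{\bfcA_D}=u_{0,0}u_{1,1}-u_{0,1}u_{1,0}\notin\Z[\wt\bfu]$. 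So your chain of reasoning only yields \eqref{item:20}$\Rightarrow$\eqref{item:21}, and the appeal to ``the product formula'' at the end is circular, since that formula was itself derived assuming \eqref{item:20}.

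The paper closes this gap by a different route that avoids any a priori claim about where $\Res_{\bfcA_D}$ lives: it specializes the full expansion $\Res_\bfcA^{\bfomega}=(\pm\prod_D\Res_{\bfcA_D}+o(1))\,t^{\ord_\bfomega(\Res_\bfcA)}$ at $u_{i,a}=0$ for $a\notin\wt\cA_i$. Under this specialization $t^{\omega_{i,a}}u_{i,a}$ becomes $u_{i,a}$ for $a\in\wt\cA_i$ and $0$ otherwise, so the left-hand side collapses to the constant $\Res_\bfcA(\wt\bfF)$ independent of $t$, while the coefficient of $t^{\ord_\bfomega(\Res_\bfcA)}$ on the right becomes $\pm\prod_D\Res_{\bfcA_D}(\wt\bfF)$. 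If \eqref{item:21} holds this coefficient is nonzero, forcing $\ord_\bfomega(\Res_\bfcA)=0$ and hence \eqref{item:19}. The key point is that this argument works directly with the \emph{evaluated} factors $\Res_{\bfcA_D}(\wt\bfF)$, which is exactly what \eqref{item:21} controls.
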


\begin{proof}
By Theorem~\ref{mt1}, following Definition \ref{def:13}, we
  have that
  \begin{equation}
    \label{eq:85}
    \Res^{\bfomega}_{\bfcA}= \pm \Big( \prod_{D} \Res_{\bfcA_{D}}  + o(1) \Big) \,
    t^{\ord_{\bfomega}(\Res_{\bfcA})},
  \end{equation}
the product being over the $n$-cells $D$ of $S(\Theta_{\bfomega})$.

Since $\bfomega\in\N^{\bfcA}$ we have that
$\Res_{\bfcA}^{\bfomega} \in \C[\bfu][t]$ and
$\Res_{\bfcA}^{\bfomega}\big|_{t=0}=\Res_{\bfcA}(\wt \bfF)$. Hence
$\Res_{\bfcA}(\wt \bfF)\ne 0$ if and only if
$\ord_{\bfomega}(\Res_{\bfcA})=0$, and so the expression in
\eqref{eq:85} gives the equivalence between \eqref{item:19} and
\eqref{item:20}.  If any of these conditions holds, then
\begin{equation}
  \label{eq:81}
\Res_{\bfcA}(\wt \bfF)=  \init_{\bfomega}(\Res_{\bfcA}) =\pm \prod_{D}
  \Res_{\bfcA_{D}}.
\end{equation}
Since the left-hand side of \eqref{eq:81} lies in $\Z[\wt \bfu]$ and
the right-hand side is a polynomial, the factors of the latter lie in
$\Z[\wt \bfu]$, proving the last part of the statement and
implying~\eqref{item:21}.

Conversely, suppose that the condition \eqref{item:21}
holds. Evaluating the expression in~\eqref{eq:85} by setting
$u_{i,a}=0$ for $i=0,\dots, n$ and $a\in \cA_{i}\setminus \wt \cA_{i}$
we deduce that
\begin{displaymath}
   \Res_{\bfcA}(\wt \bfF)= \pm \Big( \prod_{D} \Res_{\bfcA_{D}}(\wt \bfF)
 + o(1)\Big) \, t^{\ord_{\bfomega}(\Res_{\bfcA})} ,
\end{displaymath}
which implies \eqref{item:19} and concludes the proof.
\end{proof}

\begin{example}
\label{exm:1} Let $M=\Z$, $\cA_{0}=\cA_{1}=\{0,1\}$ and set
$\bfcA=(\cA_{0},\cA_{1})$. Then
\begin{equation}
  \label{eq:4}
  \Res_{\bfcA}=\det(u_{i,j})_{i,j\in \{0,1\}}=u_{0,0}\, u_{1,1}
  -u_{0,1}\, u_{1,0}.
\end{equation}
Set $\wt \cA_{i}=\{0\}$, $i=0,1$, and let
$\wt \bfF=(u_{0,0}, u_{1,0})$ be the corresponding system of
Laurent polynomials in $\C[t^{\pm1}]$.  With notation as in
Theorem~\ref{mt2}, in this case we have that
$\vartheta_{\bfomega_{i}}(x)= x$ for $i=0,1$ and $x\in [0,1]$ and so
$\Theta_{\bfomega}(x)=x$ for $x\in [0,2]$, as shown in Figure
\ref{fig:4}.  Hence
\begin{displaymath}
  \MI_{\Z}(\vartheta_{\bfomega_{0}}, \vartheta_{\bfomega_{1}})= \int_{0}^{2}\Theta_{\bfomega}(x) \, \text{d}x -
  \int_{0}^{1}\vartheta_{\bfomega_{0}}(x) \, \text{d}x - \int_{0}^{1}\vartheta_{\bfomega_{1}}(x) \, \text{d}x
  =2-\frac12 - \frac12 =1 \ne 0.
\end{displaymath}
This result then tells us that $\Res_{\bfcA}(\wt \bfF)=0$, which can
also be verified from~\eqref{eq:4}.

\begin{figure}[ht]
\begin{tikzpicture}
\draw[->, color= darkgray] (-0.5,0) -- (1.7,0);
\draw[->, color= darkgray] (0,-0.5) -- (0,1.7);
\draw[shift={(0,1)},color=darkgray,thin] (-3pt,0pt) -- (3pt,0pt);
\draw[shift={(1,0)},color=darkgray,thin] (0pt,-3pt) -- (0pt,3pt);
\draw (0,0)--(1,1);
\draw (0.3,1.5) node[right] {$\wt \cA_{0} = \wt \cA_{1} = \{ 0 \}$};
\draw (0.9,0.6) node[right] {$\vartheta_{\bfomega_{0}} = \vartheta_{\bfomega_{1}}$};
\end{tikzpicture} \hspace{1cm}
\begin{tikzpicture}
\draw[->, color= darkgray] (-0.5,0) -- (2.5,0);
\draw[->, color= darkgray] (0,-0.5) -- (0,2.5);
\foreach \x in {1,2} {
\draw[shift={(0,\x)},color=darkgray,thin] (-3pt,0pt) -- (3pt,0pt);
\draw[shift={(\x,0)},color=darkgray,thin] (0pt,-3pt) -- (0pt,3pt);
}
\draw (0,0)--(2,2);
\draw (1.8,1.5) node[right] {$\Theta_{\bfomega}$};
\end{tikzpicture}
  \caption{Convex piecewise affine functions for subsets of the supports}
  \label{fig:4}
  \vspace{-1mm}
\end{figure}
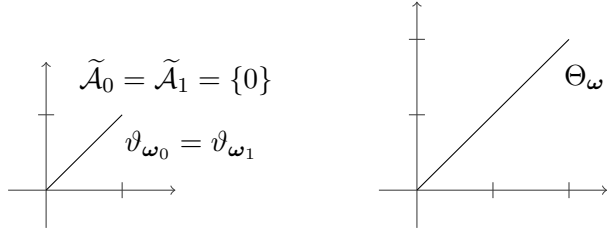

Set also $\wt \cA_{0}=\{0\}$ and $\wt \cA_{1}=\{1\}$, and let
$\wt \bfF=(u_{0,0}, u_{1,1}\, t)$ be the corresponding
system of Laurent polynomials. Then $ \vartheta_{\bfomega_{0}}(x)= x$
and $\vartheta_{\bfomega_{1}}(x)= 1-x$ for $x\in [0,1]$, and so
$\Theta_{\bfomega}(x)= \max\{1-x, x-1\}$ for $x\in [0,2]$, as shown in
Figure \ref{fig:3}.  Hence
\begin{displaymath}
  \MI_{\Z}(\vartheta_{\bfomega_{0}}, \vartheta_{\bfomega_{1}})= 1-\frac12-\frac12=0,
\end{displaymath}
and so Theorem~\ref{mt2} implies that $\Res_{\bfcA}(\wt \bfF)\ne0$.
The mixed subdivision $S(\Theta_{\bfomega})$ has the two $1$-cells
$D=[0,1]$ and $D'=[1,2]$, that decompose as $D=0+[0,1]$ and
$D'=[0,1]+1$. Hence this result also implies that
\begin{displaymath}
  \Res_{\bfcA}(\wt \bfF) = \Res_{\bfcA_{D}} \cdot
  \Res_{\bfcA_{D'}} = u_{0,0}\, u_{1,1},
\end{displaymath}
which can also be verified from \eqref{eq:4}.
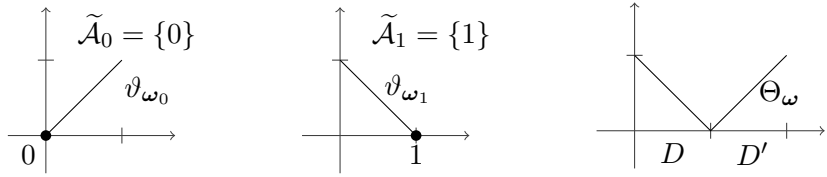
\begin{figure}[ht]
  \begin{tikzpicture}
\draw[->, color= darkgray] (-0.5,0) -- (1.7,0);
\draw[->, color= darkgray] (0,-0.5) -- (0,1.7);
\draw[shift={(0,1)},color=darkgray,thin] (-3pt,0pt) -- (3pt,0pt);
\draw[shift={(1,0)},color=darkgray,thin] (0pt,-3pt) -- (0pt,3pt);
\draw (0,0)--(1,1);
\fill (0,0) circle (2pt) node[below left] {$0$};
\draw (1.2,1) node[above] {$\wt \cA_{0} = \{ 0 \}$};
\draw (0.9,0.6) node[right] {$\vartheta_{\bfomega_{0}}$};
\end{tikzpicture} \hspace{1cm}
\begin{tikzpicture}
\draw[->, color= darkgray] (-0.5,0) -- (1.7,0);
\draw[->, color= darkgray] (0,-0.5) -- (0,1.7);
\draw[shift={(0,1)},color=darkgray,thin] (-3pt,0pt) -- (3pt,0pt);
\draw[shift={(1,0)},color=darkgray,thin] (0pt,-3pt) -- (0pt,3pt);
\draw (0,1)--(1,0);
\fill (1,0) circle (2pt) node[below] {$1$};
\draw  (1.2,1) node[above] {$\wt \cA_{1} = \{ 1 \}$};
\draw (0.45,0.3) node[above right] {$\vartheta_{\bfomega_{1}}$};
\end{tikzpicture} \hspace{1cm}
\begin{tikzpicture}
\draw[->, color= darkgray] (-0.5,0) -- (2.5,0);
\draw[->, color= darkgray] (0,-0.5) -- (0,1.7);
\foreach \x in {1,2} {
\draw[shift={(\x,0)},color=darkgray,thin] (0pt,-3pt) -- (0pt,3pt);}
\draw[shift={(0,1)},color=darkgray,thin] (-3pt,0pt) -- (3pt,0pt);
\draw (0,1)--(1,0)--(2,1);
\draw (1.5,0.5) node[right] {$\Theta_{\bfomega}$};
\draw (0.8,-0.3) node[left] {$D$};
\draw (1.2,-0.3) node[right] {$D'$};
\end{tikzpicture}
  \caption{Convex piecewise affine functions for other subsets}
  \label{fig:3}
\end{figure}
\end{example}

\begin{remark}\label{osts}
  The Minkowski sum $\wt \Delta= \sum_{i=0}^{n}\wt \Delta_{i}$ is the
  cell of the mixed subdivision $S(\Theta_{\bfomega})$ corresponding
  to the vector $0\in N_{\R}$ and its components are the polytopes
  $\wt\Delta_{i}$, $i=0,\dots, n$. Hence
  $\bfcA_{\wt \Delta}=\wt \bfcA$.  We have that either $\wt \Delta$ is
  an $n$-cell of $S(\Theta_{\bfomega})$ or $\Res_{\wt \bfcA}=\pm1$. In
  the presence of any of the equivalent conditions in Theorem
  \ref{mt2}, the factorization in \eqref{fact0} holds and it can be
  alternatively written as
\begin{equation*}
  \Res_{\bfcA}(\wt \bfF) =\pm \Res_{\wt \bfcA} \cdot \prod_{\mathclap{D\ne \wt \Delta}} \Res_{\bfcA_{D}}.
\end{equation*}
When $\Res_{\bfcA}$ is known, this factorization can be useful to
compute the sparse resultant $\Res_{\wt \bfcA}$ as a factor of the
evaluation $\Res_{\bfcA}(\wt \bfF)$.
\end{remark}

The next proposition gives two factorizations for the particular case
when $\tilde{\cA}_i=\cA_i$ for $i=1,\ldots, n$.  The first one follows
directly from Theorem~\ref{mt2}, whereas the second is a consequence
of the Poisson formula.

\begin{proposition} \label{minn} Let $\wt \cA_{0}\subset \cA_{0}$ be a
  nonempty subset and $\wt F_{0}$ the general Laurent polynomial with
  support $\wt \cA_{0}$. With notation as in Theorems \ref{mt2} and~\ref{poisson}, we
  have that
\begin{displaymath}
  \Res_{\cA_0,\cA_{1},\ldots, \cA_n}(\wt{F}_0,F_1,\ldots, F_n)
  =\pm \prod_{\mathclap{D\in S(\Theta_{\bfomega})^{n}}}
  \Res_{\bfcA_{D}}
 = \pm\Res_{\wt{\cA}_0, \cA_1,\ldots,\cA_n}\cdot
  \prod_{v}\Res_{\cA_{1}^{v}, \dots,
  \cA_{n}^{v}}^{h_{\wt \cA_{0}}(v)-h_{\cA_{0}}(v)},
\end{displaymath}
the last product being over the primitive vectors $v\in N$.
\end{proposition}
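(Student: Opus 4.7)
The plan is to handle the two equalities separately.

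For the first equality, I apply Theorem~\ref{mt2} with $\wt\cA_i=\cA_i$ for all $i\geq 1$ and the given $\wt\cA_0\subset\cA_0$. With these choices the associated weight vector $\bfomega\in\N^{\bfcA}$ has $\omega_{i,a}=0$ for every $i\geq 1$ and $a\in\cA_i$, so that $\vartheta_{\bfomega_i}$ is identically zero on $\Delta_i$ for each such $i$, while $\vartheta_{\bfomega_0}$ takes values in $[0,1]$ and vanishes on $\conv(\wt\cA_0)$. To invoke the theorem it suffices to verify condition~\eqref{item:20}, namely $\MI_M(\vartheta_{\bfomega_0},0,\dots,0)=0$. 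Using formula~\eqref{eq:46} with $\kappa_i=0$ for $i\geq 1$ and $\kappa_0=1$, this reduces to the identity
\begin{displaymath}
\MV_{M\times\Z}\bigl(\Delta_{0,\vartheta_{\bfomega_0},1},\Delta_1\times\{0\},\dots,\Delta_n\times\{0\}\bigr)=\MV_M(\Delta_1,\dots,\Delta_n).
\end{displaymath}
This is a Cavalieri-type statement for mixed volumes: when $n$ of the $n{+}1$ summands lie in a common hyperplane, the $(n{+}1)$-dimensional mixed volume factors as the extent of the remaining polytope in the transverse direction (here $\kappa_0-\min\vartheta_{\bfomega_0}=1$) times the $n$-dimensional mixed volume of the projections. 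Substituting back into~\eqref{eq:46} yields $\MI_M(\vartheta_{\bfomega_0},0,\dots,0)=-\MV_M(\Delta_1,\dots,\Delta_n)+\MV_M(\Delta_1,\dots,\Delta_n)=0$, and Theorem~\ref{mt2} then delivers the first equality.

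For the second equality, I apply the Poisson formula (Theorem~\ref{poisson}) to both $\Res_{\cA_0,\dots,\cA_n}(\wt F_0,F_1,\dots,F_n)$ and $\Res_{\wt\cA_0,\cA_1,\dots,\cA_n}$, noting that the latter equals $\Res_{\wt\cA_0,\cA_1,\dots,\cA_n}(\wt F_0,F_1,\dots,F_n)$ since $\wt F_0$ is the generic Laurent polynomial with support $\wt\cA_0$. The Poisson hypothesis is met in both cases because each $\init_v(F_i)$ is the generic Laurent polynomial with support $\cA_i^v$, so every directional resultant $\Res_{\cA_1^v,\dots,\cA_n^v}(\init_v(F_1),\dots,\init_v(F_n))=\Res_{\cA_1^v,\dots,\cA_n^v}$ is a nonzero primitive polynomial. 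Both expansions share the same directional factors and the same toric product $\prod_p\wt F_0(p)^{m_p}$ over the common solutions $p\in\T_M$ of $F_1=\dots=F_n=0$, so dividing one by the other leaves only the exponent difference $h_{\wt\cA_0}(v)-h_{\cA_0}(v)\geq 0$ (nonnegative because $\wt\cA_0\subset\cA_0$). Rearranging gives the second equality.

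The one nontrivial technical point is the Cavalieri reduction establishing the vanishing of $\MI_M(\vartheta_{\bfomega_0},0,\dots,0)$; apart from this, both parts follow routinely from results already in the paper, the first via Theorem~\ref{mt2} and the second by cancellation of common Poisson factors.
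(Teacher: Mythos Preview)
Your proof is correct. For the second equality you argue exactly as the paper does: apply the Poisson formula to both $\Res_{\cA_0,\dots,\cA_n}(\wt F_0,F_1,\dots,F_n)$ and $\Res_{\wt\cA_0,\cA_1,\dots,\cA_n}$ and cancel the common directional factors and the common toric product $\prod_p \wt F_0(p)^{m_p}$. The only difference is that the paper carries this out for generic specializations $\wt f_0,f_1,\dots,f_n\in\C[M]$ and then lifts to the general polynomials, whereas you apply Poisson directly to $\wt F_0,F_1,\dots,F_n$; since Theorem~\ref{poisson} is stated for $\C[M]$, the paper's formulation is a bit more careful, but the content is the same.

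Where you genuinely differ from the paper is in the first equality. The paper establishes the second equality first; since its right-hand side is a product of nonzero resultants, this shows $\Res_{\bfcA}(\wt\bfF)\ne 0$, i.e.\ condition~\eqref{item:19} of Theorem~\ref{mt2}, and the factorization~\eqref{fact0} follows immediately. You instead verify condition~\eqref{item:20} directly, computing $\MI_M(\vartheta_{\bfomega_0},0,\dots,0)=0$ via formula~\eqref{eq:46} and the Cavalieri-type identity for mixed volumes (which is correct: when $n$ of the bodies lie in a hyperplane, the mixed volume equals the transverse width of the remaining body times the $n$-dimensional mixed volume in that hyperplane). Your route makes the two equalities logically independent and gives a self-contained polyhedral reason for the nonvanishing, at the price of the extra mixed-volume lemma; the paper's route is more economical, obtaining the first equality as a free consequence of the second.
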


\begin{proof}
  Let $\wt f_{0}\in \C[M]$ with
  $\supp( \wt f_{0} ) \subset \wt \cA_{0}$ and $f_{i}\in \C[M]$
  with $\supp(f_{i})\subset \cA_{i}$, $i=1,\dots, n$, such that
  $\Res_{\cA_{1}^{v}, \dots, \cA_{n}^{v}}(\init_{v}(f_{1}),\dots,
  \init_{v}(f_{n})) \ne 0$ for all $v\in N\setminus\{0\}$. By~Theorem
  \ref{poisson} we have that
  \begin{multline*}
\Res_{\cA_0,\cA_{1},\ldots, \cA_n}(\wt f_{0}, f_{1},\dots, f_{n})\\= \pm
  \prod_{v}\Res_{\cA_{1}^{v}, \dots, \cA_{n}^{v}}(\init_{v}(f_{1}),\dots, \init_{v}(f_{n}))^{-h_{\cA_{0}}(v)}
  \cdot \prod_{p} \wt f_{0}(p)^{m_{p}} ,
  \end{multline*}
  the first product being over the primitive vectors $v\in N$ and the
  second over the solutions $p\in \T_{M}$ of $f_{1}=\dots= f_{n}=0$,
  where $m_p$ denotes the corresponding intersection multiplicity, and
  similarly
\begin{multline*}
\Res_{\wt{\cA}_0, \cA_1,\ldots,\cA_n}(\wt f_{0}, f_{1},\dots, f_{n})  \\= \pm
  \prod_{v}\Res_{\cA_{1}^{v}, \dots, \cA_{n}^{v}}(\init_{v}(f_{1}),\dots, \init_{v}(f_{n}))^{-h_{\wt\cA_{0}}(v)}
  \cdot \prod_{p} \wt f_{0}(p)^{m_{p}}.
\end{multline*}
Taking the quotient between these two formulae we deduce the second
equality in the statement evaluated at
$\wt f_{0}, f_{1}, \dots, f_{n}$. Since these Laurent polynomials are
generic, we deduce that this equality holds for the general Laurent
polynomials $\wt F_{0},F_{1}, \dots, F_{n}$, as stated.  This also
implies that
$ \Res_{\cA_0,\cA_{1},\ldots, \cA_n}(\wt{F}_0,F_1,\ldots, F_n) \ne 0$,
and so the first equality follows from \eqref{fact0}.
\end{proof}

\begin{remark}
  \label{rem:13}
  In \cite{Min03}, Minimair also studied the factorization of the
  evaluation of sparse resultant at systems of Laurent polynomials
  with smaller supports.  Unfortunately, his result is not consistent,
  since its statement involves the exponent introduced in
  \cite[Remark 3]{Min03}  that, as explained
  in \cite[\S5]{DS15}, is  not well-defined.
\end{remark}

\section{Canny-Emiris matrices}\label{sec:canny-emir-matr}

\subsection{Construction and basic properties}\label{sec:constr-first-prop}

In  \cite{CannyEmiris:easmr, CanEmi:sbasr}  Canny and Emiris presented a
class of matrices whose determinants are nonzero multiples of the
sparse eliminant.  These matrices are associated to some data
including a family of affine functions on polytopes. Shortly
afterwards, this construction was extended by Sturmfels to the convex
piecewise affine case \cite{Stu94}.  Here we recall it and study its
basic properties.

We keep the notations of the previous sections.  In particular,
\begin{itemize}
\item $\bfcA=(\cA_{0},\dots, \cA_{n})$ is a family of $n+1$ supports
  in  the lattice $M$,
\item $ \bfDelta= (\Delta_{0},\dots, \Delta_{n})$ is the family of
  $n+1$ polytopes of the vector space $M_{\R}$ given by the convex hull of these
  supports,
\item $\bfu=(\bfu_{0},\dots, \bfu_{n})$ is the family of $n+1$ sets of
  variables indexed by the elements of the supports,
\item $\bfF=(F_{0},\dots, F_{n}) $ is the associated system of $n+1$ general
  Laurent polynomials.
\end{itemize}

For $i=0,\dots, n$ let $ \rho_{i} \colon \Delta_{i}\rightarrow \R$ be
a convex piecewise affine function on $\Delta_{i}$ defined on
$\cA_{i}$, that is, a convex piecewise affine function of the form
$\rho_{i}=\vartheta_{\bfnu_{i}}$ with $\bfnu_{i}\in \R^{\cA_{i}}$ as
in~\eqref{eq:14}.  Set $\bfrho=(\rho_{0},\dots, \rho_{n})$ and
consider the Minkowski sum and the inf-convolution respectively
defined as
\begin{equation*}
  \Delta=\sum_{i=0}^{n}\Delta_{i}\and
\rho=\bigboxplus_{i=0}^{n}\rho_{i}.
\end{equation*}
We assume that the mixed subdivision $S(\rho)$ of $\Delta$ is
tight (Definition~\ref{def:7}). Choose also a vector
$\delta\in M_{\R}$ such that
\begin{equation}
  \label{eq:20}
( |S(\rho)^{n-1}|+\delta) \cap M = \emptyset,
\end{equation}
where $|S(\rho)^{n-1}|$ denotes the $(n-1)$-skeleton of $S(\rho)$.

The \emph{index set} is the finite set of lattice points
\begin{equation*}
  \cB= (\Delta+\delta) \cap M.
\end{equation*}
Each $b\in \cB$ lies in a unique \emph{translated} $n$-cell of
$S(\rho)$, that is, a polytope of the form $C+\delta$ with
$C\in S(\rho)^{n}$.  Let $C_{i}$, $i=0,\dots, n$, be the components of
this cell, as defined in~\eqref{eq:66}. Since $S(\rho)$ is
tight, there is at least one $i$ such that $\dim(C_{i})=0$, in which case
$C_{i}$ consists of a single lattice point in $ \cA_{i}$ because
$\rho_{i}$ is defined on this support.  Set~then
\begin{displaymath}
 i(b) \in \{0,\dots, n\} \and
 a(b) \in \cA_{i(b)}
\end{displaymath}
for the \emph{largest} of those indexes and the unique lattice point in the
corresponding component, respectively.

\begin{definition}
  \label{def:10}
  The \emph{row content function} associated to $\bfcA$, $\bfrho$ and
  $\delta$ is the function
  $ \rc\colon \cB \rightarrow \bigcup_{i=0}^{n} (\{i\}\times\cA_{i})$
  defined by $\rc(b)=(i(b),a(b))$ for $b\in \cB$.
\end{definition}

Consider the subsets
\begin{equation}
  \label{eq:1}
  \cB_{i}=\{ b\in \cB \mid i(b)=i\}, \ i=0,\dots, n,
\end{equation}
which form a partition of $\cB$.  Set also $\K=\C(\bfu)$ and consider
the finite-dimensional linear subspaces of the group algebra $\K[M]$
defined as
\begin{equation}
  \label{eq:5}
  V_{i}=\sum_{b\in \cB_{i}}  \K \, \chi^{b-a(b)}, \   i=0,\dots, n,  \and
  V=\sum_{b\in \cB}  \K \, \chi^{b}.
\end{equation}

\begin{lemma}
  \label{lemm:9}
Let $i\in \{0,\dots, n\}$ and $b\in \cB_{i}$. Then
  \begin{enumerate}
  \item \label{item:31} for $b'\in \cB_{i}$ we have that
    $b'-a(b')=b-a(b)$ if and only if $b'=b$,
  \item \label{item:32}  $b-a(b)+\cA_{i}\subset \cB$.
  \end{enumerate}
In particular $\dim(V_{i})=\#\cB_{i}$
  and for all $G\in V_{i}$ we have that $G\, F_{i}\in V$.
\end{lemma}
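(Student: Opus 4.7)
The plan is to anchor the whole argument on the tightness of the mixed subdivision $S(\rho)$, which together with the genericity condition \eqref{eq:20} assigns to each $b\in \cB$ a unique $n$-cell $D\in S(\rho)^{n}$ with $b-\delta\in \ri(D)$. The row content function $\rc$ then just reads off the unique index $i$ for which $D_{i}$ is a single point, together with the name $a(b)\in \cA_{i}$ of that point. Both assertions of the lemma will follow from exploiting how $D$ decomposes as $\{a(b)\}+D_{i}^{\cc}$, where $D_{i}^{\cc}:=\sum_{j\ne i}D_{j}$ is an $n$-dimensional cell of the mixed subdivision $S(\rho_{i}^{\cc})$ of $\Delta_{i}^{\cc}$, cf.~\eqref{eq:13}.

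For \eqref{item:31}, given $b, b'\in \cB_{i}$ with associated cells $D, D'$, I would use the additivity of the support function and Proposition \ref{prop:10}\eqref{item:2} to identify $D_{i}^{\cc}=C(\rho_{i}^{\cc},v)$ and $(D')_{i}^{\cc}=C(\rho_{i}^{\cc},v')$ as $n$-cells of $S(\rho_{i}^{\cc})$, where $v,v'\in N_{\R}$ select $D$ and $D'$ respectively. Since $D=\{a(b)\}+D_{i}^{\cc}$ and likewise for $D'$, the common point $y:=b-a(b)-\delta = b'-a(b')-\delta$ lies in $\ri(D_{i}^{\cc})\cap \ri((D')_{i}^{\cc})$, forcing $D_{i}^{\cc}=(D')_{i}^{\cc}$ by disjointness of relative interiors of distinct cells. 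A full-dimensional cell of a polyhedral subdivision has a unique selecting direction (namely the negative of the gradient of the affine piece of $\rho_{i}^{\cc}$ on that cell), so $v=v'$ and hence $D=D'$, which yields $a(b)=a(b')$ and thus $b=b'$.

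Part \eqref{item:32} is a straightforward geometric check: since $b-a(b)-\delta\in D_{i}^{\cc}\subset \Delta_{i}^{\cc}$ and $a\in \cA_{i}\subset \Delta_{i}$, the lattice point $b-a(b)+a = (b-a(b)-\delta)+a+\delta$ lies in $\Delta_{i}^{\cc}+\Delta_{i}+\delta = \Delta+\delta$, and hence in $\cB$. The ``in particular'' is then purely formal: \eqref{item:31} says the exponents $\{b-a(b)\}_{b\in \cB_{i}}$ are pairwise distinct, so the characters $\chi^{b-a(b)}$ form a $\K$-basis of $V_{i}$; and \eqref{item:32} ensures that every exponent appearing in $G\cdot F_{i}$ for $G\in V_{i}$, namely $b-a(b)+a$ with $b\in \cB_{i}$ and $a\in \cA_{i}$, already belongs to $\cB$, so $G\cdot F_{i}\in V$.

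The only genuinely non-formal step is the injectivity argument in \eqref{item:31}. I expect the main temptation is to try to build a direct combinatorial bijection between $\cB_{i}$ and some reservoir of exponents, but going through Proposition \ref{prop:10}\eqref{item:2} together with the uniqueness of the selecting vector for a top-dimensional cell looks both shorter and more conceptual: it isolates the role of tightness (which forces $D_{i}^{\cc}$ to have full dimension $n$) and reduces the rigidity claim to a standard fact about support functions.
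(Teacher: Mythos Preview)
Your proof is correct and follows essentially the same approach as the paper. The only cosmetic difference is that where you argue inline that an $n$-dimensional cell of $S(\rho_{i}^{\cc})$ has a unique selecting direction $v$ (and hence $D=D'$), the paper packages this step as a separate statement (Proposition~\ref{prop:20}) and simply cites it; the content is identical.
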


\begin{proof}
  Let $b,b'\in \cB_{i}$ such that $b-a(b)=b'-a(b')$, and denote by $C$
  and $C'$ the $n$-cells of $ S(\rho)$ corresponding to these lattice
  points.  With notation as in ~\eqref{eq:13}, the complementary cells
  $C_{i}^{\cc}$ and $C_{i}^{\prime \cc}$ have both dimension $n$
  and the lattice point $b-a(b)=b'-a(b')$ lies both in
  $ \ri(C_{i}^{\cc})+\delta$ and in $ \ri(C_{i}^{\prime \cc})+\delta$,
  the translates of the relative interiors of these cells. This
  implies that $C_{i}^{\cc}=C_{i}^{\prime \cc}$, and so $C=C'$ by
  Proposition \ref{prop:20}.  We deduce that
  $\{a(b)\}=C_{i}=C_{i}'=\{a(b')\} $ and so $b=b'$, proving
  \eqref{item:31}.

  We also have that
  $ b-a(b)\in C_{i}^{\cc} + \delta \subset \Delta_{i}^{\cc}+\delta$
  and so
  \begin{displaymath}
   b-a(b) +\cA_{i}  \subset (\Delta_{i}^{\cc}+\delta + \Delta_{i})\cap
   M= (\Delta+\delta)\cap M=\cB
  \end{displaymath}
  as stated in \eqref{item:32}.  The  last two  claims follow
  directly from \eqref{item:31} and \eqref{item:32}.
  \end{proof}

Consider the linear map
  $ \Phi_{\bfcA}\colon \K[M]^{n+1} \rightarrow \K[M] $ defined, for
  $\bfG=(G_{0},\dots, G_{n})\in   \K[M]^{n+1}$, by
\begin{displaymath}
  \Phi_{\bfcA}(\bfG)=\sum_{i=0}^{n}G_{i}\, F_{i}.
\end{displaymath}
By Lemma \ref{lemm:9}\eqref{item:32}, if
$\bfG \in \bigoplus_{i=0}^{n} V_{i}$ then $\Phi_{\bfcA}(\bfG)\in V$.

Fixing an order on $\cB$, the right decomposition in \eqref{eq:5}
gives a basis of $V$ indexed by this finite subset. This order induces
an order on each $\cB_{i}$ through the row content function, and
thanks to Lemma \ref{lemm:9}\eqref{item:31} the left decomposition in
\eqref{eq:5} gives a basis for the linear subspace $V_{i}$ indexed by
$\cB_{i}$. The induced basis for the direct sum $\bigoplus_{i} V_{i}$
is then indexed by $\cB$.

For a subset $\cC\subset \cB$ with the induced order, we denote by
$\K^{\cC\times \cC}$ the set of matrices with entries in $\K$ and
whose rows and columns are indexed by the elements of $\cC$.

\begin{definition}
  \label{def:11}
  The \emph{Sylvester map} associated to $\bfcA$, $\bfrho$ and
  $\delta$ is the linear map
  $ \Phi_{\bfcA, \bfrho, \delta}\colon \bigoplus_{i=0}^{n} V_{i}
  \rightarrow V$ given by the restriction of $\Phi_{\bfcA}$ to these
  linear subspaces.  The \emph{Canny-Emiris matrix} associated to
  $\bfcA$, $\bfrho$ and $\delta$, denoted by
  $ \cH_{\bfcA,\bfrho,\delta} \in \K^{\cB\times \cB}$, is the matrix
  of this linear map in terms of row vectors. We set
  $ H_{\bfcA,\bfrho,\delta}=\det(\cH_{\bfcA,\bfrho,\delta}) \in
  \Z[\bfu]$ for the corresponding \emph{Canny-Emiris determinant}.

  Since the vector $\delta$ is fixed throughout our constructions, we
  omit it from the notation, and so this linear map, matrix and
  determinant will be respectively denoted~by
\begin{displaymath}
\Phi_{\bfcA,\bfrho}, \quad  \cH_{\bfcA, \bfrho} \and H_{\bfcA,\bfrho}.
\end{displaymath}
\end{definition}


\begin{remark}
  \label{rem:8}
  For $\bfG\in \bigoplus_{i}V_{i}$ we have~that
  $[\bfG] \cdot \cH_{\bfcA,\bfrho} = [ \Phi_{\bfcA,\bfrho}(\bfG)] $,
  where $[\bfG] $ and $ [\Phi_{\bfcA,\bfrho}(\bfG)] $ denote the row
  vectors of $\bfG $ and of $ \Phi_{\bfcA,\bfrho}(\bfG) $ with respect
  to the bases of $ \bigoplus_{i} V_{i}$ and of $V$ given by the
  decomposition in \eqref{eq:5}.  Hence the row of the Canny-Emiris
  matrix corresponding to an element $b\in \cB$ codifies the
  coefficients of the Laurent polynomial $\chi^{b-a(b)} \,
  F_{i(b)}$. Precisely, the entry corresponding to a pair
  $b,b'\in \cB$ is
\begin{equation*}
  \cH_{\bfcA,\bfrho}[b,b']=
  \begin{cases}
u_{i(b),b'-b+a(b)} & \text{ if } b'-
b+a(b)\in \cA_{i(b)}, \\
0 & \text{ otherwise.}
  \end{cases}
  \end{equation*}
\end{remark}

For a subset $\cC\subset \cB$, we respectively denote by
\begin{equation*}
  \cH_{\bfcA,\bfrho, \cC} =(\cH_{\bfcA,\bfrho}[b,b'])_{b,b' \in \cC}  \in \K^{\cC \times \cC } \and H_{\bfcA,\bfrho, \cC} =\det(\cH_{\bfcA,\cC}) \in \Z[\bfu]
\end{equation*}
the corresponding principal submatrix and minor of the Canny-Emiris
matrix.

\begin{definition}
  \label{def:3}
 The \emph{nonmixed index subset}, denoted by $\cB^{\circ} $, is the
set of elements of $\cB$  lying in the translated $n$-cells of
  $S(\rho)$ that are not $i$-mixed for any~$i$ (Definition
  \ref{def:7}).  We denote by
\begin{equation*}
  \cE_{\bfcA,\bfrho} =\cH_{\bfcA, \bfrho,\cB^{\circ}} \in \K^{\cB^{\circ}\times \cB^{\circ}}\and E_{\bfcA,\bfrho}=H_{\bfcA,\bfrho, \cB^{\circ}} \in \Z[\bfu]
\end{equation*}
the corresponding principal submatrix and minor of
$\cH_{\bfcA,\bfrho}$.
\end{definition}

We next compute  the homogeneities and  corresponding
degrees of the Canny-Emiris determinants and, more generally, of
its principal minors.

\begin{proposition}
  \label{prop:6}
  For $\cC\subset \cB$, the principal minor $H_{\bfcA,\bfrho, \cC}$ is
  homogeneous in each set of variables $\bfu_{i}$ and with respect to
  the grading $\deg_{M}$ defined in \eqref{eq:71}. Moreover,
  \begin{displaymath}
    \deg_{\bfu_{i}}(H_{\bfcA,\bfrho,\cC})=\#(\cB_{i}\cap \cC), \ i=0,\dots, n, \and
    \deg_{M}(H_{\bfcA,\bfrho, \cC})=\sum_{b\in \cC}a(b).
  \end{displaymath}
\end{proposition}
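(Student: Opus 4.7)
The plan is to read off both homogeneities directly from the explicit entry formula for the Canny--Emiris matrix recalled in Remark \ref{rem:8}, namely
\begin{displaymath}
  \cH_{\bfcA,\bfrho}[b,b']=
  \begin{cases}
    u_{i(b),\, b'-b+a(b)} & \text{ if } b'- b+a(b)\in \cA_{i(b)}, \\
    0 & \text{ otherwise,}
  \end{cases}
\end{displaymath}
and then combine it with the permutation expansion of the determinant. The key point is that every nonzero entry of the row indexed by $b$ is a single variable belonging to the set $\bfu_{i(b)}$, and its $\deg_{M}$-weight is the lattice point $b'-b+a(b)$. Both claims will then be visible at the level of each monomial in the Leibniz expansion of $H_{\bfcA,\bfrho,\cC}$.

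First I would expand
\begin{displaymath}
 H_{\bfcA,\bfrho,\cC}=\sum_{\sigma}\operatorname{sgn}(\sigma)\prod_{b\in \cC}\cH_{\bfcA,\bfrho}[b,\sigma(b)],
\end{displaymath}
where $\sigma$ runs over the bijections of $\cC$, and restrict attention to those $\sigma$ contributing a nonzero term. For such a term, the factor coming from row $b$ is a variable of the form $u_{i(b),\,\sigma(b)-b+a(b)}$, and its $\bfu_{i}$-degree is $1$ precisely when $i(b)=i$. Summing over $b\in\cC$ shows that the $\bfu_{i}$-degree of the monomial is $\#\{b\in \cC\mid i(b)=i\}=\#(\cB_{i}\cap\cC)$, which is independent of $\sigma$ and proves homogeneity in each $\bfu_{i}$ together with the claimed partial degree.

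Next, for the grading $\deg_{M}$ defined in \eqref{eq:71}, the same factor contributes weight $\sigma(b)-b+a(b)\in M$, so the total $M$-weight of the monomial is
\begin{displaymath}
  \sum_{b\in \cC}\bigl(\sigma(b)-b+a(b)\bigr)=\sum_{b\in \cC}\sigma(b)-\sum_{b\in \cC}b+\sum_{b\in \cC}a(b)=\sum_{b\in \cC}a(b),
\end{displaymath}
using that $\sigma$ is a bijection of $\cC$ so that $\sum_{b}\sigma(b)=\sum_{b}b$. This value is again the same for every permutation $\sigma$ producing a nonzero monomial, yielding homogeneity with respect to $\deg_{M}$ and the stated total degree $\sum_{b\in \cC}a(b)$.

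There is no real obstacle here: the only thing to be mindful of is that the formula must be applied to the principal submatrix indexed by $\cC$, but since we are taking $\sigma$ to be a bijection of $\cC$ itself the cancellation $\sum_{b}\sigma(b)=\sum_{b}b$ still works; and one should also note that the argument is vacuously consistent when $H_{\bfcA,\bfrho,\cC}=0$, since the zero polynomial is homogeneous of every degree.
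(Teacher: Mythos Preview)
Your proof is correct and follows essentially the same approach as the paper's: both read off the homogeneities from the explicit entry formula in Remark~\ref{rem:8} and the determinant expansion. The only cosmetic difference is that for the $\deg_{M}$-homogeneity the paper first multiplies the row indexed by $b$ by $\chi^{b-a(b)}$ so that each column becomes $\deg_{M}$-homogeneous, whereas you obtain the same cancellation $\sum_{b}\sigma(b)=\sum_{b}b$ directly from the Leibniz expansion; the two computations are the same bookkeeping in different order.
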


\begin{proof}
  Let $i\in \{0,\dots, n\}$. For $b\in \cC$, the entries in the
  corresponding row of $\cH_{\bfcA,\bfrho,\cC}$ are homogeneous in
  $\bfu_{i}$ of degree $1$ if $i(b)=i$ and of degree $0$
  otherwise. Expanding  $H_{\bfcA,\bfrho, \cC}$ along rows,
  we deduce that it is homogeneous in $\bfu_{i}$ of degree
  $\#(\cB_{i}\cap \cC)$.

  For the claims concerning $\deg_{M}$, first extend this grading to
  $\C[\bfu][M]$  by  declaring that $\deg_{M}(\chi^{a})=a$ for $a\in
  M$. Consider then the matrix
  $ \wt \cH\in \C[\bfu][M]^{\cC\times \cC}$ obtained from
  $\cH_{\bfcA,\bfrho,\cC}$ multiplying by $\chi^{b-a(b)}$ the row
  corresponding to a lattice point $b$, for each $b\in \cC$. By Remark
  \ref{rem:8}, the entry corresponding to a pair $b,b'\in \cC$ is
\begin{equation*}
  \wt \cH[b,b']=
  \begin{cases}
 \chi^{b-a(b)}\, u_{i(b),b'-b+a(b)} & \text{ if } b'-
b+a(b)\in \cA_{i(b)}, \\
0 & \text{ otherwise.}
  \end{cases}
  \end{equation*}
  Hence for $b'\in \cC$, the entries in the corresponding column of
  $\wt \cH$ are homogeneous with respect to $\deg_{M}$ of degree
  $b'$. Expanding the determinant $\wt H=\det(\wt \cH)$ along columns, we deduce that
  it is homogeneous with respect to $\deg_{M}$ of degree
  \begin{displaymath}
\sum_{b'\in \cC}b'.
  \end{displaymath}
  These claims then follow from the fact that
  $ H_{\bfcA,\bfrho, \cC} = \wt H \cdot \prod_{b\in \cC}\chi^{-b+a(b)}
  $.
\end{proof}

\begin{remark}
  The argument for the homogeneity with respect to $\deg_{M}$ of the
  principal minors of a Canny-Emiris matrix is an extension of
  that of Macaulay  for the isobarism of the
  homogeneous resultant in \cite[page 11]{Macaulay:atms}.
\end{remark}

\subsection{Restriction of data and initial
  parts }\label{sec:initial-forms-canny}

In this section we study the interplay between the Canny-Emiris matrix
associated to the data $\bfcA$, $ \bfrho$ and $\delta$, and the mixed
subdivisions of $\Delta$ that are coarser than the tight mixed
subdivision $S(\rho)$. We first introduce the notion of restriction of
data to an $n$-cell of a mixed subdivision and study the compatibility
of the Canny-Emiris construction with this operation.

Let $ \phi_{i} \colon \Delta_{i}\to \R$, $i=0,\dots, n$, be another
family of convex piecewise affine functions, set
$\phi=\bigboxplus_{i=0}^{n}\phi_{i}$ for their inf-convolution,
and let $S(\phi)$ be the associated mixed subdivision of $\Delta$.

Let $D$ be an $n$-cell of $S(\phi)$. Similarly as in Definition
\ref{def:12}, we define the \emph{restriction} of $\bfcA$ and of
$\bfrho$ to $D$ as
\begin{displaymath}
 \bfcA_{D}=( \cA_{0}\cap D_{0},\dots, \cA_{n}\cap D_{n}) \and  \bfrho_{D}=(\rho_{0}|_{D_{0}},\dots, \rho_{n}|_{D_{n}}).
\end{displaymath}

We suppose that $S(\phi) \preceq S(\rho)$ for the rest of this
section. We are not assuming that $S(\phi)$ is tight and in the sequel,
the considered row content function is the one induced by the family
$\bfrho$.

\begin{proposition}
\label{prop:17}
Let $D$ be an $n$-cell of $S( \phi)$. Then
  \begin{enumerate}
  \item \label{item:4} $\rho_{i}|_{D_{i}}$ is a convex piecewise
    affine function on $D_{i}$ defined on $\cA_{i}\cap D_{i}$ for each
    $i$,
    \item \label{item:6}   $\bigboxplus_{i=0}^{n}\rho_{i}|_{D_{i}}=\rho|_{D} $,
    \item \label{item:7} the mixed subdivision
      $S\big(\bigboxplus_{i=0}^{n}\rho_{i}|_{D_{i}}\big)$ of $D$ is
      tight,
    \item \label{item:8} the vector $\delta\in M_{\R}$ is generic with
      respect to $S\big(\bigboxplus_{i=0}^{n}\rho_{i}|_{D_{i}}\big)$~in~the~sense~of~\eqref{eq:20}.
  \end{enumerate}
\end{proposition}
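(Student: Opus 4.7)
The plan is to deduce all four assertions from a single structural observation about the refinement $S(\phi) \preceq S(\rho)$: for every $n$-cell $D \in S(\phi)^{n}$,
\begin{displaymath}
  D_{i} = \bigcup \bigl\{\, C_{i} : C \in S(\rho),\ C \subseteq D \,\bigr\}.
\end{displaymath}
The inclusion $\supseteq$ is immediate from the refinement condition. For $\subseteq$, given $x_{i} \in D_{i}$ I would pick $x_{j} \in D_{j}$ for $j \neq i$ so that $x := \sum_{j} x_{j} \in D$ and $\phi(x) = \sum_{j}\phi_{j}(x_{j})$ (possible by Proposition~\ref{prop:10}\eqref{item:3}--\eqref{item:5} for $\phi$), place $x$ in a cell $C \in S(\rho)$ (which then lies in $D$ by the refinement, as the cells of $S(\rho)$ contained in $D$ cover $D$), and apply Proposition~\ref{prop:10}\eqref{item:5} to $\rho$ to conclude $x_{j} \in C_{j}$ for all $j$. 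In particular $D_{i}$ is a union of cells of $S(\rho_{i})$.

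For (1), I would set $\mu_{i,a} := \rho_{i}(a)$ for $a \in \cA_{i} \cap D_{i}$ as new lifting values (note that these need not equal the original $\nu_{i,a}$). The decomposition above gives $D_{i} = \conv(\cA_{i} \cap D_{i})$, since each cell $C_{i} \in S(\rho_{i})$ contained in $D_{i}$ is the convex hull of its vertices, which lie in $\cA_{i} \cap C_{i} \subseteq \cA_{i} \cap D_{i}$. The identity $\rho_{i}|_{D_{i}} = \vartheta_{\bfmu_{i}}$ would then follow from two matching inequalities: for $x \in D_{i}$, expressing $x$ as a convex combination of the vertices of the cell of $S(\rho_{i})$ containing it realises $\rho_{i}(x)$ as an explicit convex combination of the $\mu_{i,a}$'s, yielding $\vartheta_{\bfmu_{i}}(x) \leq \rho_{i}|_{D_{i}}(x)$; conversely, convexity of $\rho_{i}|_{D_{i}}$ together with $\rho_{i}|_{D_{i}}(a) = \mu_{i,a}$ gives $\rho_{i}|_{D_{i}}(x) \leq \sum_{a} t_{a}\, \mu_{i,a}$ for every convex combination $x = \sum_{a} t_{a}\, a$, and taking the infimum yields the reverse bound.

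For (2), the inequality $\bigboxplus_{i} \rho_{i}|_{D_{i}} \geq \rho|_{D}$ is immediate from the inf-convolution definition. For the reverse, given $x \in D$ I would place $x$ in a cell $C \in S(\rho)$ with $C \subseteq D$ and use Proposition~\ref{prop:10}\eqref{item:3}--\eqref{item:5} applied to $\rho$ to produce a decomposition $x = \sum x_{i}$ with $x_{i} \in C_{i} \subseteq D_{i}$ and $\rho(x) = \sum \rho_{i}(x_{i})$. Once (2) holds, the mixed subdivision of $D$ induced by $\bigboxplus_{i} \rho_{i}|_{D_{i}}$ coincides with the restriction of $S(\rho)$ to $D$, so its $n$-cells are precisely the $n$-cells of $S(\rho)$ contained in $D$. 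For such a cell $C$, the $i$-th component computed with respect to the restricted inf-convolution agrees with $C_{i}$ in the original sense (again by Proposition~\ref{prop:10}\eqref{item:5} combined with $C_{i} \subseteq D_{i}$), so tightness of $S(\rho)$ transfers and gives (3). Finally, (4) follows because the $(n-1)$-skeleton of $S(\bigboxplus_{i}\rho_{i}|_{D_{i}})$ is contained in $|S(\rho)^{n-1}|$, so the genericity condition~\eqref{eq:20} passes directly to the restricted setting.

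The hard part will be assertion (1): while (2)--(4) are essentially formal consequences of Proposition~\ref{prop:10} and the refinement, (1) requires identifying the correct lifting values on $\cA_{i} \cap D_{i}$ and verifying that the restriction of a lower-envelope function is again a lower-envelope function, which relies on the convexity of $\rho_{i}|_{D_{i}}$ and on the fact that $D_{i}$ is a union of full cells of $S(\rho_{i})$.
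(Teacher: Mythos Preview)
Your overall strategy matches the paper's: establish (2) via Proposition~\ref{prop:10}\eqref{item:5} and then read off (3) and (4), while handling (1) through the fact that $D_{i}$ is covered by cells of $S(\rho_{i})$. Your arguments for (2)--(4) are correct and in fact more explicit than the paper's, which simply declares that (3) and (4) ``follow directly'' from (2).

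There is one genuine gap, in the $\subseteq$ direction of your structural observation. Having chosen $x_{j}\in D_{j}$ with $x=\sum_{j}x_{j}\in C\in S(\rho)$, you invoke Proposition~\ref{prop:10}\eqref{item:5} for $\rho$ to deduce $x_{j}\in C_{j}$; but that direction of the proposition requires $\rho(x)=\sum_{j}\rho_{j}(x_{j})$, whereas you have only arranged $\phi(x)=\sum_{j}\phi_{j}(x_{j})$. The decomposition you picked is optimal for $\phi$, not for $\rho$, so the conclusion does not follow. Fortunately your proof of (1) only uses the weaker consequence that $D_{i}$ is a union of cells of $S(\rho_{i})$, which is exactly what the paper asserts (without argument). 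A direct proof of this weaker fact: for any cell $C'\in S(\rho_{i})$ write $C'=C(\rho_{i},v)$ and set $C=C(\rho,v)$; then $C_{i}=C'$ by Proposition~\ref{prop:10}\eqref{item:2}, and the refinement $S(\phi)\preceq S(\rho)$ yields some $D'\in S(\phi)$ with $C'=C_{i}\subset D'_{i}\in S(\phi_{i})$. Thus $S(\rho_{i})$ refines $S(\phi_{i})$ as an ordinary subdivision of $\Delta_{i}$, and so each $D_{i}$ is a union of cells of $S(\rho_{i})$. With this correction your argument for (1) goes through as written.
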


\begin{proof}
  Clearly, the restriction $\rho_{i}|_{D_{i}}$ is a convex piecewise
  affine function on $D_{i}$. Since $S(\phi)\preceq S(\rho)$ we
  have that $D_{i}$ is a union of $n$-cells of $S(\rho_{i})$. Hence
  $\rho_{i}|_{D_{i}}$ is defined on the set of vertices of these
  $n$-cells and so on $\cA_{i}\cap D_{i}$, which proves
  \eqref{item:4}.

  For \eqref{item:6}, note that for $x\in D$ we have that
  $ (\bigboxplus_{i}\rho_{i}|_{D_{i}})(x)$ (respectively
  $\rho|_{D}(x)$) is defined as the infimum of the sum
  \begin{equation}
    \label{eq:29}
    \sum_{i=0}^{n} \rho_{i}(x_{i})
  \end{equation}
  with $x_{i} \in D_{i}$ (respectively $x_{i} \in \Delta_{i}$) for all
  $i$ such that $\sum_{i=0}^{n}x_{i}=x$.  Let $C\in S(\rho)$ such that
  $x\in C$ and $C\subset D$.  By Proposition
  \ref{prop:10}\eqref{item:5}, the infimum of the sum in~\eqref{eq:29}
  with $x_{i}\in \Delta_{i}$, $i=0,\dots, n$, such that
  $\sum_{i=0}^{n}x_{i}=x$ is attained when $x_{i}\in C_{i}$ for all
  $i$. Since $S(\phi) \preceq S(\rho)$, we have that
  $C_{i}\subset D_{i}$ and so $x_{i}\in D_{i}$ for all $i$. This
  implies that $ (\bigboxplus_{i}\rho_{i}|_{D_{i}})(x) =\rho|_{D}(x) $
  and so $\bigboxplus_{i=0}^{n}\rho_{i}|_{D_{i}}=\rho|_{D} $, as
  stated.

  The statements in  \eqref{item:7} and \eqref{item:8}  follow
   directly  from that in~\eqref{item:6}.
\end{proof}

By Proposition \ref{prop:17}, for $D\in S(\phi)^{n}$ the data
$( \bfcA_{ D}, \bfrho_{D}, \delta)$ satisfies the hypothesis in
Definition~\ref{def:11}, and  so we can consider its corresponding
Sylvester map, Canny-Emiris matrix, and determinant.  To set up the
 notation,  for
$i=0,\dots, n$ consider  the set of variables
$\bfu_{D,i}= \{u_{i,a}\}_{a\in \cA_{i}\cap D_{i}} $ and the general
Laurent polynomial with support $\cA_{i}\cap D_{i}$ defined as
  \begin{displaymath}
    F_{D,i}=\sum_{a\in \cA_{i}\cap D_{i}} u_{i,a} \, \chi^{a} \in \C[\bfu_{D,i}][M].
  \end{displaymath}
  Let $ \bfu_{D}=(\bfu_{D,0},\dots, \bfu_{D,n})$ and
  $\K_{ D}=\C(\bfu_{D})$.  Set then
  \begin{equation}
    \label{eq:73}
  \cB_{D}=\cB \cap (D+\delta)  \and
  \cB_{D,i}=\cB_{i}\cap (D+\delta), \ i=0, \dots, n,
\end{equation}
and consider the
  linear subspaces of $\K_{D}[M]$ defined~as
\begin{equation}
  \label{eq:34}
  V_{D,i}= \sum_{b\in  \cB_{D,i}} \K_{D}\, \chi^{b-a(b)}, \ i=0,\dots,
  n, \and V_{D}=\sum_{b\in \cB_{D}} \K_{D} \, \chi^{b}.
\end{equation}
Then the corresponding Sylvester map
$ \Phi_{\bfcA_{ D},\bfrho_{D}}\colon \bigoplus_{i=0}^{n} V_{D,i}
\rightarrow V_{D}$ is defined by
\begin{displaymath}
\Phi_{\bfcA_{ D},\bfrho_{D}}( \bfG)= \sum_{i=0}^{n} G_{i} \, F_{D,i},
\end{displaymath}
the Canny-Emiris matrix
$ \cH_{\bfcA_{ D},\bfrho_{D}} \in \K_{ D}^{\cB_{D}\times \cB_{D}}$ is
the matrix of this linear map with respect to the bases of
$\bigoplus_{i}V_{D,i}$ and of $V_{D}$ given by the decomposition in
\eqref{eq:34}, and $H_{\bfcA_{ D},\bfrho_{D}} \in \Z[\bfu_{ D}]$ is
its determinant.

For  $\cC\subset \cB$ let
$\cC_{D}=\cC\cap (D+\delta) $. This is a subset of $\cB_{D}$, and so
we can consider the corresponding
principal submatrix and minor of $ \cH_{\bfcA_{ D},\bfrho_{D}}$,
respectively denoted by
\begin{equation*}
  \cH_{\bfcA_{ D},\bfrho_{D},\cC_{D}}\in \K_{ D}^{\cC_D\times
    \cC_D} \and H_{\bfcA_{ D},\bfrho_{D},\cC_{D}}\in \Z[\bfu_{ D}].
\end{equation*}

The next result shows that the Canny-Emiris matrix of
$(\bfcA_{D},\bfrho_{D},\delta)$ coincides with a principal
submatrix of  the  evaluation of the Canny-Emiris matrix of
$(\bfcA,\bfrho,\delta)$ setting to zero the coefficients which are not
in $\bfcA_{D}$.

\begin{proposition}
  \label{prop:7}
  The matrix $  \cH_{\bfcA_{ D},\bfrho_{D},\cC_{D}}$ is the evaluation of the
  principal submatrix $\cH_{\bfcA, \bfrho, \cC_{D}}$ by setting
  $u_{i,a} =0$ for $i=0,\dots, n$ and $a\in \cA_{i}\setminus D_{i}$.
\end{proposition}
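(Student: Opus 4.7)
The plan is to verify the identity entry by entry, after first checking that the row content function is compatible with restriction.

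The first step is to show that for every $b\in \cB_D$, the row content data coincides in both settings, that is $i_D(b)=i(b)$ and $a_D(b)=a(b)$, where the subscript $D$ refers to the quantities computed for the restricted data $(\bfcA_D,\bfrho_D,\delta_D)$. Fix $b\in \cB_D$. By the genericity condition \eqref{eq:20}, the point $b-\delta$ lies in the relative interior of a unique $n$-cell $C\in S(\rho)^n$. Since $S(\phi)\preceq S(\rho)$, there exists $D'\in S(\phi)$ with $C\subset D'$ and $C_i\subset D'_i$ for each $i$. As $b-\delta\in \ri(C)\cap D$ and $\dim(C)=n$, this forces $D'=D$, so $C\subset D$ and $C_i\subset D_i$ for all $i$. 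By Proposition \ref{prop:17}\eqref{item:6} we have $\bigboxplus_{i=0}^{n}\rho_i|_{D_i}=\rho|_D$, and a direct inspection of the definition of mixed subdivision shows that $C$ is also an $n$-cell of $S(\rho|_D)$ with the same components $C_0,\dots,C_n$. Since the index $i(b)$ and the lattice point $a(b)$ depend only on these components, we conclude $i_D(b)=i(b)$ and $a_D(b)=a(b)$.

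The second step is the entry-wise comparison. By Remark \ref{rem:8}, the principal submatrix $\cH_{\bfcA,\bfrho,\cC_D}$ has entries
\begin{displaymath}
  \cH_{\bfcA,\bfrho,\cC_D}[b,b'] =
  \begin{cases}
    u_{i(b),\,b'-b+a(b)} & \text{if } b'-b+a(b)\in \cA_{i(b)},\\
    0 & \text{otherwise,}
  \end{cases}
\end{displaymath}
for $b,b'\in \cC_D$. By step one, for the restricted data the matrix $\cH_{\bfcA_D,\bfrho_D,\cC_D}[b,b']$ has the same shape but with $\cA_{i(b)}$ replaced by $\cA_{i(b)}\cap D_{i(b)}$.

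Now I distinguish three cases. If $b'-b+a(b)\in \cA_{i(b)}\cap D_{i(b)}$, then both entries equal $u_{i(b),\,b'-b+a(b)}$ and this variable lies in $\bfu_D$, so the evaluation leaves it untouched. If $b'-b+a(b)\in \cA_{i(b)}\setminus D_{i(b)}$, then the entry of $\cH_{\bfcA,\bfrho,\cC_D}$ is a variable $u_{i(b),\,b'-b+a(b)}$ that is being set to zero by the evaluation, while $\cH_{\bfcA_D,\bfrho_D,\cC_D}[b,b']=0$ because $b'-b+a(b)\notin \cA_{i(b)}\cap D_{i(b)}$. Finally, if $b'-b+a(b)\notin \cA_{i(b)}$, then both entries are zero. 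Thus the evaluated principal submatrix equals $\cH_{\bfcA_D,\bfrho_D,\cC_D}$, proving the proposition.

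The only nontrivial point in this argument is the compatibility of the row content function under restriction; the entry-wise comparison is a routine unwinding of the definitions once this has been settled.
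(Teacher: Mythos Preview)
Your proof is correct and follows essentially the same approach as the paper: first establish that the row content function for the restricted data agrees with the restriction of the original row content function, then deduce the equality of matrices. The paper's proof is terser---it cites Proposition~\ref{prop:17} for the row content compatibility and phrases the second step in terms of the Sylvester map (noting that $F_{D,i}$ is obtained from $F_i$ by the same evaluation)---whereas you spell out the row content argument explicitly and do a direct entry-by-entry comparison via Remark~\ref{rem:8}, but the underlying logic is identical.
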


\begin{proof}
   By Proposition \ref{prop:17}, the row content function associated to
  the restricted data $(\bfcA_{D}, \bfrho_{D}, \delta)$ coincides
  with that of $(\bfcA, \bfrho, \delta)$ restricted to the index set
  $\cB_{D}$.

  For each $i$, the general Laurent polynomial $F_{D,i}$ is the
  evaluation of $F_{i}$ setting $u_{i,a} =0$ for
  $a\in \cA_{i}\setminus D_{i}$. Hence the Sylvester map
  $\Phi_{\bfcA_{ D},\bfrho_{D}}$ is the restriction of
  $\Phi_{\bfcA,\bfrho}$ to the linear subspace
  $\bigoplus_{i=0}^{n} V_{D,i}$ composed with the evaluation that sets
  $u_{i,a} =0$ for all $i$ and $a\in \cA_{i}\setminus D_{i}$.

  This implies the statement when $\cC= \cB$. The case of an arbitrary
  subset $\cC\subset \cB$ follows from this one by considering the
  corresponding principal submatrices.
\end{proof}

Next we turn to the study of the orders and initial parts of the
Canny-Emiris determinant and, more generally, of its principal minors.

\begin{theorem}
  \label{thm:2}
Set $\bfomega=(\phi_{i}(a))_{i,a}\in \R^{\bfcA}$
  and let $\cC\subset \cB$. Then
  \begin{displaymath}
    \ord_{\bfomega}(H_{\bfcA,\bfrho, \cC} )=\sum_{b\in \cC } \phi_{i(b)}(a(b))
    \and     \init_{\bfomega}(H_{\bfcA,\bfrho, \cC} ) =   \prod_{\mathclap{D\in S(\phi)^{n}}}
    H_{\bfcA_{ D},\bfrho_{D}, \cC_{D}}.
  \end{displaymath}
\end{theorem}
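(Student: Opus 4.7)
The plan is to expand the determinant via the Leibniz formula,
\begin{displaymath}
H_{\bfcA,\bfrho,\cC}= \sum_{\sigma}\operatorname{sgn}(\sigma) \prod_{b \in \cC} \cH_{\bfcA,\bfrho}[b,\sigma(b)],
\end{displaymath}
the sum running over permutations $\sigma$ of $\cC$. By Remark~\ref{rem:8}, each nonzero term equals $\pm\prod_{b \in \cC} u_{i(b), y_{b}}$ with $y_{b} := \sigma(b)-b+a(b) \in \cA_{i(b)}$, and its weight with respect to $\bfomega$ is $\sum_{b \in \cC}\phi_{i(b)}(y_{b})$. The proof will proceed in two stages: first bounding this weight from below by $\sum_{b \in \cC}\phi_{i(b)}(a(b))$ to establish the order statement, and then identifying the equality-achieving permutations to obtain the factorization.

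For the lower bound I fix $b \in \cC$ and let $D(b) \in S(\phi)^{n}$ be the cell containing $b-\delta$ in its relative interior (which is well defined since $|S(\phi)^{n-1}| \subset |S(\rho)^{n-1}|$, so the genericity of $\delta$ for $S(\rho)$ is inherited by $S(\phi)$). Set $z_{b} := b-\delta-a(b) \in D(b)_{i(b)}^{\cc}$. Since $(a(b),z_{b})$ is a component decomposition of $b-\delta$ for the $2$-fold inf-convolution $\phi = \phi_{i(b)}\boxplus\phi_{i(b)}^{\cc}$, Proposition~\ref{prop:10}\eqref{item:5} gives $\phi(b-\delta)=\phi_{i(b)}(a(b))+\phi_{i(b)}^{\cc}(z_{b})$, whereas the definition of inf-convolution applied to $\sigma(b)-\delta=y_{b}+z_{b}$ gives $\phi(\sigma(b)-\delta)\le \phi_{i(b)}(y_{b})+\phi_{i(b)}^{\cc}(z_{b})$. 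Subtracting, summing over $b$, and using $\sum_{b}\phi(\sigma(b)-\delta)=\sum_{b}\phi(b-\delta)$ since $\sigma$ is a permutation, the $\phi_{i(b)}^{\cc}(z_{b})$ contributions cancel, yielding
\begin{displaymath}
\sum_{b \in \cC}\phi_{i(b)}(y_{b}) - \sum_{b \in \cC}\phi_{i(b)}(a(b)) \ge 0,
\end{displaymath}
which proves the order formula.

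For the initial part, equality in the summed bound forces per-term equality; by the equality clause of Proposition~\ref{prop:10}\eqref{item:5}, this translates into $y_{b} \in D(\sigma(b))_{i(b)}$ and $z_{b} \in D(\sigma(b))_{i(b)}^{\cc}$ for every $b \in \cC$. The decisive step is then to argue that these conditions force $D(\sigma(b))=D(b)$. For this I would exploit that the genericity of $\delta$ ensures $z_{b}$ lies in the relative interior of an $n$-dimensional cell of $S(\rho_{i(b)}^{\cc})$, and combine it with the second equality condition and the refinement $S(\rho_{i(b)}^{\cc})\succeq S(\phi_{i(b)}^{\cc})$ inherited from $S(\phi)\preceq S(\rho)$. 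This pins down $D(\sigma(b))_{i(b)}^{\cc}$ as the unique $n$-cell of $S(\phi_{i(b)}^{\cc})$ containing $z_{b}$ in its relative interior, namely $D(b)_{i(b)}^{\cc}$; the uniqueness of the direction vector attached to an $n$-cell of a mixed subdivision then upgrades this to $D(\sigma(b))=D(b)$.

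Consequently, the equality-achieving permutations are exactly those preserving each piece of the partition $\cC=\bigsqcup_{D \in S(\phi)^{n}}\cC_{D}$ and satisfying $y_{b} \in D(b)_{i(b)}$ for all $b$. By Proposition~\ref{prop:7} the restriction $\sigma|_{\cC_{D}}$ is then precisely the datum of a nonzero-contribution permutation for $H_{\bfcA_{D},\bfrho_{D},\cC_{D}}$, and one has $\operatorname{sgn}(\sigma)=\prod_{D}\operatorname{sgn}(\sigma|_{\cC_{D}})$. Summing these minimal-weight terms with signs factorizes as $\prod_{D \in S(\phi)^{n}} H_{\bfcA_{D},\bfrho_{D},\cC_{D}}$, giving the initial part formula. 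The principal obstacle is the technical step that the equality conditions force $D(\sigma(b))=D(b)$: carrying it out requires properly establishing the inheritance by the complementary subdivisions $S(\rho_{i}^{\cc})$ and $S(\phi_{i}^{\cc})$ of both the refinement relation and the genericity of $\delta$.
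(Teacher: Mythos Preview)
Your approach is correct and is essentially the paper's argument, organized through the Leibniz expansion rather than through row-rescaling of the matrix $\cH_{\bfcA,\bfrho,\cC}^{\bfomega}$. The per-term inequality you derive, $\phi(\sigma(b)-\delta)\le \phi(b-\delta)-\phi_{i(b)}(a(b))+\phi_{i(b)}(y_b)$, is exactly Lemma~\ref{lemm:3}, and the ``principal obstacle'' you flag---that equality forces $D(\sigma(b))=D(b)$---is precisely the equality clause of that lemma, proven along the lines you sketch: $z_b\in\ri(C_{i(b)}^{\cc})$ with $\dim(C_{i(b)}^{\cc})=n$ (from tightness of $S(\rho)$), so $z_b$ lies in the relative interior of the $n$-cell $D(b)_{i(b)}^{\cc}$ of $S(\phi_{i(b)}^{\cc})$, whence $D(\sigma(b))_{i(b)}^{\cc}=D(b)_{i(b)}^{\cc}$ and then $D(\sigma(b))=D(b)$ by Proposition~\ref{prop:20}. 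The paper's column-by-column matrix treatment avoids the telescoping $\sum_b\phi(\sigma(b)-\delta)=\sum_b\phi(b-\delta)$ and yields the block-diagonal initial matrix directly, but the content is the same.

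One small omission: your lower bound $\sum_b\phi_{i(b)}(y_b)\ge\sum_b\phi_{i(b)}(a(b))$ only gives $\ord_{\bfomega}(H_{\bfcA,\bfrho,\cC})\ge\sum_b\phi_{i(b)}(a(b))$; to conclude equality (and that the product is genuinely the initial part rather than zero) you must check $\prod_D H_{\bfcA_D,\bfrho_D,\cC_D}\ne0$. The paper does this by invoking Proposition~\ref{prop:3}.
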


Before proving the theorem, we will establish some necessary results.
The next lemma is a wide generalization of \cite[Lemma
4.5]{CanEmi:sbasr} and it plays a key role in the proof of Theorem
\ref{thm:2}.

\begin{lemma}
  \label{lemm:3}
  Let $b , b'\in \cB$ such that $b'\in b-a(b)+\cA_{i(b)}$ and set
  $a'=b'-b+a(b)\in \cA_{i(b)}$.  Then
  \begin{equation}
    \label{eq:9}
\phi (b'-\delta) \le   \phi  (b-\delta) - \phi_{i(b)}(a(b))+\phi_{i(b)}(a')
  \end{equation}
  and the equality holds if and only if there is
  $D\in S(\phi)^{n} $ with $b,b'\in D+\delta$ and
  $a'\in D_{i(b)}$.
\end{lemma}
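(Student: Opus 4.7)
Plan: I split the argument into the inequality and the two directions of the equivalence, the main tool being the decomposition of $b-\delta$ provided by its unique $S(\rho)$-cell together with Proposition~\ref{prop:10}\eqref{item:5}.

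\textbf{Inequality.} Let $C \in S(\rho)^{n}$ be the unique $n$-cell with $b-\delta\in \ri(C)$, which exists by the genericity condition~\eqref{eq:20}. The tightness of $S(\rho)$ and the fact that $i=i(b)$ is the largest index making $\dim C_{i}=0$ force $C_{i}=\{a\}$ and $\dim C^{\cc}_{i}=n$, so that $b-\delta-a \in \ri(C^{\cc}_{i})$. Since $S(\phi)\preceq S(\rho)$, there is a unique $D\in S(\phi)^{n}$ containing $C$, with $C_{j}\subset D_{j}$ for all $j$. Fix a decomposition $b-\delta=a+\sum_{j\ne i}y_{j}$ with $y_{j}\in C_{j}\subset D_{j}$; by Proposition~\ref{prop:10}\eqref{item:5} applied to $D$ we have $\phi(b-\delta)=\phi_{i}(a)+\sum_{j\ne i}\phi_{j}(y_{j})$. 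Now $b'-\delta=a'+\sum_{j\ne i}y_{j}$ is another decomposition of $b'-\delta$ obtained by replacing $a$ by $a'$, and the definition of inf-convolution yields $\phi(b'-\delta)\le \phi_{i}(a')+\sum_{j\ne i}\phi_{j}(y_{j}) = \phi(b-\delta)-\phi_{i}(a)+\phi_{i}(a')$.

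\textbf{Equivalence.} For the ``if'' direction, assume $D\in S(\phi)^{n}$ satisfies $b,b'\in D+\delta$ and $a'\in D_{i}$; then all summands of $b'-\delta=a'+\sum_{j\ne i}y_{j}$ lie in the corresponding components of $D$, so Proposition~\ref{prop:10}\eqref{item:5} applied to $D$ gives $\phi(b'-\delta)=\phi_{i}(a')+\sum_{j\ne i}\phi_{j}(y_{j})$, that is, equality. For the ``only if'' direction, let $D'\in S(\phi)^{n}$ be the unique $n$-cell with $b'-\delta\in \ri(D')$. Equality forces the decomposition $b'-\delta = a' + \sum_{j\ne i}y_j$ to attain $\phi(b'-\delta)$, so Proposition~\ref{prop:10}\eqref{item:5} applied to $D'$ gives $a'\in D'_{i}$ and $y_{j}\in D'_{j}$ for $j\ne i$; summing yields $b-\delta-a=\sum_{j\ne i}y_{j}\in D'^{\cc}_{i}$.

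\textbf{Concluding $D=D'$.} Since $C^{\cc}_{i}$ is an $n$-cell of $S(\rho^{\cc}_{i})$ sitting inside the cell $D^{\cc}_{i}$ of the coarser subdivision $S(\phi^{\cc}_{i})$ and both are $n$-dimensional, we have $\ri(C^{\cc}_{i})\subset \ri(D^{\cc}_{i})$; in particular $b-\delta-a\in \ri(D^{\cc}_{i})$. Combined with $b-\delta-a\in D'^{\cc}_{i}$, the intersection $\ri(D^{\cc}_{i})\cap D'^{\cc}_{i}$ is non-empty, which by the compatibility of cells of the polyhedral subdivision $S(\phi^{\cc}_{i})$ forces $D^{\cc}_{i}=D'^{\cc}_{i}$. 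Viewing $\phi$ as the $2$-fold inf-convolution $\phi^{\cc}_{i}\boxplus \phi_{i}$ and applying Proposition~\ref{prop:20} to $D,D'\in S(\phi)^{n}$ whose $\cc$-components coincide and have dimension $n$, we conclude $D=D'$, so $a'\in D_{i}$ and both $b,b'\in D+\delta$. The main obstacle is this final step: the reduction to Proposition~\ref{prop:20} depends crucially on the tightness of $S(\rho)$ (ensuring $\dim C^{\cc}_{i}=n$) and on the genericity of $\delta$, without which $b-\delta-a$ might fail to sit in the relative interior of an $n$-cell of $S(\phi^{\cc}_{i})$.
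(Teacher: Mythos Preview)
Your proof is correct and follows essentially the same approach as the paper's: both establish the inequality via the inf-convolution decomposition and Proposition~\ref{prop:10}\eqref{item:5}, and both handle the ``only if'' direction by showing that the complementary component $D'^{\cc}_{i(b)}$ of the $\phi$-cell containing $b'-\delta$ coincides with $D^{\cc}_{i(b)}$, then invoking Proposition~\ref{prop:20}. The only cosmetic differences are that the paper works with the two-fold decomposition $\phi=\phi^{\cc}_{i(b)}\boxplus\phi_{i(b)}$ throughout while you use the full $(n+1)$-fold decomposition, and that the paper routes the relative-interior argument through $\ri(C^{\cc}_{i(b)})\subset\ri(D'^{\cc}_{i(b)})$ whereas you route it through $\ri(D^{\cc}_{i(b)})$; these are equivalent rearrangements of the same idea.
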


\begin{proof}
With notation as in \eqref{eq:13}, we  have that $b-\delta-a(b) \in \Delta_{i(b)}^{\cc}$ and
  $a'\in \Delta_{i(b)}$. Since $\phi=\phi_{i(b)}^{\cc}\boxplus
  \phi_{i(b)}$, this implies that
  \begin{equation}
    \label{eq:11}
    \phi(b'-\delta) \le \phi_{i(b)}^{\cc} (b-\delta-a(b)) +
    \phi_{i(b)} (a') .
    \end{equation}
    Let $C\in S(\rho)^{n}$ such that $b\in C+\delta$ and
    $D\in S(\phi)^{n}$ with $C\subset D$.  Then
    $b-a(b)-\delta\in C_{i(b)}^{\cc} $ and $a(b)\in C_{i(b)}$. Since
    $S(\rho)\succeq S(\phi)$, we have that
    $ C_{i(b)}^{\cc} \subset D_{i(b)}^{\cc}$ and
    $ C_{i(b)} \subset D_{i(b)}$ and~so
    \begin{equation}
      \label{eq:31}
b-\delta \in D, \quad b-\delta -a(b) \in D_{i(b)}^{\cc}  \and a(b) \in D_{i(b)}.
    \end{equation}
Proposition \ref{prop:10}\eqref{item:5} then implies that
    $\phi (b-\delta)= \phi_{i(b)}^{\cc}
    (b-\delta-a(b))+\phi_{i(b)}(a(b))$. The inequality in
    \eqref{eq:9} follows from this together with \eqref{eq:11}.

    Now if $b'\in D+\delta$ and $a'\in D_{i(b)}$ then Proposition
    \ref{prop:10}\eqref{item:5} together with \eqref{eq:31} implies
    that the inequality in \eqref{eq:11} is an equality, and so is
    \eqref{eq:9}.

    Conversely suppose that \eqref{eq:9} is an
    equality or equivalently, that this is the case for
    \eqref{eq:11}. Let $D'\in S(\phi)^{n}$ such that
    $b'\in D'+\delta$.  Applying again Proposition
    \ref{prop:10}\eqref{item:5},
\begin{displaymath}
 b-\delta -a(b) \in D_{i(b)}^{\prime\cc} \and
a'\in D'_{i(b)}.
\end{displaymath}
Since $S(\rho)$ is tight we have that
$\dim(C_{i(b)}^{\cc})=n-\dim(C_{i(b)})=n$, and since
$b-\delta\in \ri(C)$ we also have that
$ b-\delta -a(b) \in \ri(C_{i(b)}^{\cc})$. Hence
$ \ri(C_{i(b)}^{\cc}) \subset\ri(D^{\prime\cc}_{i(b)})$ and so
$ b-\delta -a(b) \in \ri(D_{i(b)}^{\prime\cc})$.  Using \eqref{eq:31}
we deduce that the $n$-cells $D_{i(b)}^{\prime\cc}$ and
$D_{i(b)}^{\cc}$ coincide. Proposition \ref{prop:20} then implies that
$D^{\prime}=D$, completing the proof.
\end{proof}

  \begin{corollary}
    \label{cor:1}
    Let $b , b'\in \cB$ such that $b'\in b-a(b)+\cA_{i(b)}$ and set
    $a'=b'-b+a(b)\in \cA_{i(b)}$.  Then
  \begin{equation*}
\rho (b'-\delta) \le   \rho  (b-\delta) - \rho_{i(b)}(a(b)) + \rho_{i(b)}(a')
  \end{equation*}
  and the equality holds if and only if $b'=b$.
  \end{corollary}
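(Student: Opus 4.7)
The plan is to deduce this corollary directly from Lemma \ref{lemm:3} by choosing $\phi = \rho$. Since $S(\rho) \preceq S(\rho)$ trivially, the hypothesis of the lemma is satisfied, and the inequality \eqref{eq:9} with $\rho$ in place of $\phi$ gives exactly the claimed bound
\begin{displaymath}
\rho(b'-\delta) \le \rho(b-\delta) - \rho_{i(b)}(a(b)) + \rho_{i(b)}(a').
\end{displaymath}
The content of the corollary is therefore entirely in upgrading the equality criterion from the lemma to the sharper statement ``$b' = b$''.

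From Lemma \ref{lemm:3}, equality holds if and only if there exists $D \in S(\rho)^{n}$ with $b, b' \in D + \delta$ and $a' \in D_{i(b)}$. The direction $b' = b \Longrightarrow$ equality is immediate: one takes $D$ to be the $n$-cell with $b \in D + \delta$, so that $a' = a(b) \in D_{i(b)}$ by the very definition of $a(b)$.

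For the converse, I would combine the genericity of $\delta$ with the tightness of $S(\rho)$. The condition \eqref{eq:20} on $\delta$ ensures that neither $b - \delta$ nor $b' - \delta$ lies on $|S(\rho)^{n-1}|$, so any $n$-cell $D$ whose translate contains $b$ (respectively $b'$) actually contains it in its relative interior, and is therefore uniquely determined. Hence the $D$ supplied by the equality criterion is forced to be the unique $n$-cell of $S(\rho)$ whose translate has $b$ in its relative interior; by the definition of the row content function this is also the cell that defines $i(b)$ and $a(b)$. Tightness of $S(\rho)$ together with the choice of $i(b)$ as the largest index with $\dim(D_{i(b)}) = 0$ gives $D_{i(b)} = \{a(b)\}$, so the condition $a' \in D_{i(b)}$ forces $a' = a(b)$, i.e.\ $b' - b + a(b) = a(b)$, and thus $b' = b$.

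The main potential obstacle, which is actually minor, is checking that the $n$-cell $D$ that appears abstractly in the conclusion of Lemma \ref{lemm:3} must be the one canonically attached to $b$ by the Canny-Emiris construction; this is exactly what the genericity condition on $\delta$ buys us. Once that identification is made, the rest is a one-line consequence of tightness and the definition of $a(b)$.
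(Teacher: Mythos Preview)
Your proof is correct and is precisely the intended argument: the paper states this as a corollary of Lemma~\ref{lemm:3} without a separate proof, and the specialization $\phi=\rho$ together with the uniqueness of the $n$-cell containing $b$ (from the genericity condition~\eqref{eq:20}) and the fact that $D_{i(b)}=\{a(b)\}$ by construction is exactly what makes the equality criterion collapse to $b'=b$. There is nothing to add.
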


  The next result generalizes \cite[Theorem 6.4]{CanEmi:sbasr} which
  is stated for the case when the $\rho_{i}$'s are affine, the
  fundamental subfamily of supports coincides with $\bfcA$ and the
  lattice $L_{\bfcA}$ coincides with~$M$. The proof follows
  \emph{mutatis mutandis} the scheme in \cite[Theorem 3.1]{Stu94} and
  \cite[Theorem 6.4]{CanEmi:sbasr}.
  
  \begin{proposition}
    \label{prop:3}
    Let $\bfomega=(\bfomega_{0}, \dots, \bfomega_{n})\in \R^{\bfcA}$ such that
    $\vartheta_{\bfomega_{i}}=\rho_{i}$ for all $i$ and
    $\cC\subset \cB$. Then
  \begin{displaymath}
    \ord_{\bfomega}(H_{\bfcA,\bfrho, \cC})=\sum_{b\in \cC}  \omega_{i(b),a(b)}
    \and     \init_{\bfomega}(H_{\bfcA,\bfrho, \cC}) =   \prod_{b\in \cC}
    u_{i(b), a(b)}.
  \end{displaymath}
  In particular $H_{\bfcA,\bfrho, \cC}\ne 0$.
  \end{proposition}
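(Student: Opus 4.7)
The plan is to expand the principal minor $H_{\bfcA,\bfrho,\cC}$ over the permutations of $\cC$ and identify the identity as the unique permutation whose associated monomial has minimum $\bfomega$-weight. By Remark~\ref{rem:8} the entry of $\cH_{\bfcA,\bfrho,\cC}$ at $(b,b')$ is $u_{i(b),\,b'-b+a(b)}$ when $b'-b+a(b)\in \cA_{i(b)}$ and zero otherwise, so the Leibniz expansion reads
\[
H_{\bfcA,\bfrho,\cC} \;=\; \sum_{\sigma}\mathrm{sgn}(\sigma)\prod_{b\in \cC} u_{i(b),\,\sigma(b)-b+a(b)},
\]
where $\sigma$ runs over the permutations of $\cC$ satisfying $\sigma(b)-b+a(b)\in \cA_{i(b)}$ for every $b$ (other terms vanish). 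Write $W(\sigma):=\sum_{b\in \cC}\omega_{i(b),\,\sigma(b)-b+a(b)}$ for the $\bfomega$-weight of the $\sigma$-summand. The identity contributes the nonzero monomial $\prod_{b\in \cC} u_{i(b),a(b)}$ of weight $\sum_{b\in \cC}\omega_{i(b),a(b)}$.

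To lower-bound $W(\sigma)$, fix $b\in\cC$ and put $a'=\sigma(b)-b+a(b)$. Because $\rho_{i(b)}=\vartheta_{\bfomega_{i(b)}}$ parametrizes the lower envelope of $\{(a,\omega_{i(b),a})\}_{a\in \cA_{i(b)}}$, we have $\omega_{i(b),a'}\ge \rho_{i(b)}(a')$. Chaining this with Corollary~\ref{cor:1} applied to $b$ and $\sigma(b)$ gives
\[
\omega_{i(b), a'} \;\ge\; \rho(\sigma(b)-\delta)-\rho(b-\delta)+\rho_{i(b)}(a(b)),
\]
with strict inequality unless $\sigma(b)=b$. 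Summing over $b\in \cC$, the contributions $\rho(\sigma(b)-\delta)-\rho(b-\delta)$ telescope to zero since $\sigma$ is a permutation of $\cC$, so $W(\sigma)\ge \sum_{b\in \cC}\rho_{i(b)}(a(b))$, strictly when $\sigma\neq\mathrm{id}$.

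To match this lower bound with $W(\mathrm{id})$ I use the boundary identity $\omega_{i(b),a(b)}=\rho_{i(b)}(a(b))$ for every $b\in \cC$. This follows from the tightness of $S(\rho)$: the component $C_{i(b)}$ of the $n$-cell of $S(\rho)$ containing $b-\delta$ is the $0$-cell $\{a(b)\}$ of $S(\vartheta_{\bfomega_{i(b)}})$, which forces $(a(b),\omega_{i(b),a(b)})$ to be a vertex of the lifted Newton polytope of $\cA_{i(b)}$, hence to lie on its lower envelope. Combining everything yields $W(\sigma)\ge W(\mathrm{id})$ with equality iff $\sigma=\mathrm{id}$, so the identity term survives in $\init_{\bfomega}(H_{\bfcA,\bfrho,\cC})$ without cancellation and the formulae for the order and the initial part follow; in particular $H_{\bfcA,\bfrho,\cC}\neq 0$. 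The main hurdle is to bridge Corollary~\ref{cor:1} (stated in $\rho$-values) with the determinantal expansion (in $\bfomega$-values); that bridge is precisely the boundary identity above, without which the telescoping estimate would be strictly weaker than $W(\mathrm{id})$ and the identification of the initial part would fail.
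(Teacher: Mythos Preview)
Your proof is correct and follows essentially the same strategy as the paper's: both hinge on Corollary~\ref{cor:1}, the inequality $\omega_{i(b),a'}\ge \rho_{i(b)}(a')$, and the boundary identity $\omega_{i(b),a(b)}=\rho_{i(b)}(a(b))$. The only cosmetic difference is that the paper packages the telescoping into a row scaling of $\cH^{\bfomega}_{\bfcA,\bfrho,\cC}$ by $t^{\rho(b-\delta)-\rho_{i(b)}(a(b))}$ and then reads off the minimal $t$-order columnwise, whereas you carry out the Leibniz expansion and telescope explicitly over permutations; the underlying estimate is identical.
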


  \begin{proof}
Set
    $ \cH_{\bfcA,\bfrho,\cC}^{\bfomega}= \cH_{\bfcA,\bfrho,\cC}( (t^{\omega_{i,a}}\, u_{i,a}
    )_{i\in \{0,\dots, n\}, a\in \cA_{i}}) \in \K(t)^{\cC \times\cC }$
    and let $\wt \cH$ be the matrix obtained from it
    multiplying by $t^{\rho(b-\delta) - \rho_{i(b)}(a(b))}$ the row
    corresponding to a lattice point $b$, for each $b\in \cC$.  The
    entry corresponding to a pair $b,b'\in \cC$~is
  \begin{displaymath}
    \wt \cH[b,b']=
    \begin{cases}
      t^{\rho(b-\delta)-   \rho_{i(b)}(a(b))   +\omega_{i(b),a'}}
       \, u_{i(b),a' } &
      \text{ if } a'\in \cA_{i}, \\
      0 & \text{ otherwise},
    \end{cases}
  \end{displaymath}
  with $a'=b'-b+a(b)$. For $b'\in \cC $ we have that
  $ \rho_{i(b)}(a') \le \omega_{i(b),a'}$ by the definition of this
  piecewise affine function. Moreover, let $C \in S(\rho)^{n}$ such
  that $b\in C+\delta$. Then $C_{i(b)}=\{a(b)\}$, and since
  $\rho_{i(b)}= \vartheta_{\bfomega_{i(b)}}$ this implies that
  $\rho_{i(b)}(a(b))= \omega_{i(b),a(b)}$. By Corollary \ref{cor:1}
\begin{displaymath}
  \rho (b'-\delta) \le   \rho(b-\delta)-   \rho_{i(b)}(a(b))   +\omega_{i(b),a'}
\end{displaymath}
and the equality holds if and only if $b'=b$. Hence for $b'\in \cB$
the entry in the corresponding column of $\wt \cH$ for $b\in \cC$ is
of order at least $\rho(b'-\delta)$, and this value is only attained
when $b=b'$. We have that
$ \wt \cH[b,b]= u_{i(b),a(b) } \,t^{\rho(b-\delta)}$ and so
 \begin{multline*}
   H_{\bfcA,\bfrho, \cC}^{\bfomega}  = \det(\cH_{\bfcA,\bfrho,
     \cC}^{\bfomega})  =
   \det \big(\wt \cH \big)  \cdot
   \prod_{b\in \cC} t^{-\rho(b-\delta)+ \rho_{i(b)}(a(b)) }
\\   =
   \Big(  \prod_{b\in \cC} u_{i(b), a(b) } + o(1)\Big) \,
   t^{\sum_{b\in \cC} \omega_{i(b), a(b)}},
\end{multline*}
proving the statement.
\end{proof}

\begin{proof}[Proof of Theorem \ref{thm:2}]
  This result can be proven similarly as it was done for Proposition
  \ref{prop:3}, by considering the matrix
  $ \cH_{\bfcA,\bfrho,\cC}^{\bfomega}= \cH_{\bfcA,\bfrho,\cC} (
  (t^{\omega_{i,a}}\, u_{i,a} )_{ i, a}) \in \K(t)^{\cC \times\cC }$
  and the modified matrix $\wt \cH$ obtained multiplying by
  $t^{\phi(b-\delta) - \phi_{i(b)}(a(b))}$ the row of
  $ \cH_{\bfcA,\bfrho,\cC}^{\bfomega}$ corresponding to a lattice
  point $b$, for each $b\in \cC$.

  Let $D\in S(\phi)^{n}$. By Lemma \ref{lemm:3}, the lowest order in
  $t$ in the column of $\wt \cH$ corresponding to a lattice point
  $b' \in\cC\cap (D+\delta) $ is $\phi(b'-\delta)$, and it is attained
  exactly when $b\in D+\delta$ and $a'=b'-b+a(b) \in D_{i(b)}$.  Hence
  the matrix extracted from $\cH_{\bfcA,\bfrho, \cC}$ by keeping only
  these entries of minimal order in each column is block diagonal,
  with blocks corresponding to the $n$-cells of $S(\phi)$. Moreover,
  the block corresponding to an $n$-cell $D$ coincides with
  $\cH_{\bfcA_{D},\bfrho_{D}, \cC_{D}}$. Hence
\begin{equation*}
H_{\bfcA,\bfrho, \cC}^{\bfomega} = \det(\wt \cH)  \cdot \prod_{b\in
  \cC} t^{-\phi(b-\delta) + \phi_{i(b)}(a(b))}
 =
\Big( \quad  \prod_{\mathclap{D\in S(\phi)^{n}}}
H_{\bfcA_{D},\bfrho_{D}, \cC_{D}} + o(1)\Big) \,
t^{\sum_{b\in \cC } \phi_{i(b)}(a(b))}.
\end{equation*}
By Proposition \ref{prop:3}, all the
$H_{\bfcA_{D},\bfrho_{D}, \cC_{D}}$'s are nonzero, which completes the
proof.
\end{proof}

\subsection{Divisibility properties} \label{sec:divis-prop}

An important feature of Canny-Emiris determinants is that they provide
nonzero multiples of the sparse eliminant. The next proposition
generalizes \cite[Theorem 6.2]{CanEmi:sbasr} and \cite[Theorem
3.1]{Stu94}, which are stated for the case when the fundamental
subfamily of supports coincides with $\bfcA$.

\begin{proposition}
  \label{prop:5}
  $\Elim_{\bfcA} \mid H_{\bfcA,\bfrho}$ in $\Z[\bfu]$.
\end{proposition}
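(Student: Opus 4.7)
The plan is to show that $H_{\bfcA,\bfrho}$ vanishes on the zero locus of $\Elim_{\bfcA}$, and then descend a divisibility in $\C[\bfu]$ to one in $\Z[\bfu]$ via Gauss's lemma. If $\Elim_{\bfcA}=\pm 1$ the statement is trivial, so I may assume $\overline{\varpi(\Omega_{\bfcA})}$ is a hypersurface. Since $\Omega_{\bfcA}$ is an irreducible $\C$-variety, this hypersurface is irreducible over $\C$, so $\Elim_{\bfcA}$ is a primitive polynomial which is irreducible both in $\Z[\bfu]$ and in $\C[\bfu]$.

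The core of the argument is a kernel vector construction. Let $f=(f_{0},\dots,f_{n})\in\prod_{i}\C^{\cA_{i}}$ be a specialization such that the system $f_{0}=\dots=f_{n}=0$ has a common solution $p\in \T_{M}$; I claim $H_{\bfcA,\bfrho}(f)=0$. Indeed, by Lemma~\ref{lemm:9}\eqref{item:32} the row of $\cH_{\bfcA,\bfrho}$ indexed by $b\in\cB$ records the coefficients of the Laurent polynomial $\chi^{b-a(b)}F_{i(b)}$ in the monomial basis $\{\chi^{b'}\}_{b'\in\cB}$ of $V$. Evaluating the unknowns at $f$ and the characters at $p$ gives, for every $b\in\cB$,
\begin{displaymath}
0 \;=\; \chi^{b-a(b)}(p)\, f_{i(b)}(p) \;=\; \sum_{b'\in\cB}\cH_{\bfcA,\bfrho}(f)[b,b']\,\chi^{b'}(p).
\end{displaymath}
Since $p\in\T_{M}$ the column vector $(\chi^{b'}(p))_{b'\in\cB}$ has all entries nonzero, so it is a nontrivial kernel vector of $\cH_{\bfcA,\bfrho}(f)$ and therefore $H_{\bfcA,\bfrho}(f)=0$.

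By the definition of $\Elim_{\bfcA}$, the set of such specializations $f$ is dense in $Z(\Elim_{\bfcA})\subset\prod_{i}\C^{\cA_{i}}$, so the previous paragraph forces $H_{\bfcA,\bfrho}$ to vanish on all of $Z(\Elim_{\bfcA})$. Proposition~\ref{prop:3} applied with $\cC=\cB$ and $\bfomega=\bfnu$ (the lifting vector defining $\bfrho$) gives $H_{\bfcA,\bfrho}\neq 0$. Hilbert's Nullstellensatz together with the irreducibility of $\Elim_{\bfcA}$ in $\C[\bfu]$ then yields $\Elim_{\bfcA}\mid H_{\bfcA,\bfrho}$ in $\C[\bfu]$, and since $\Elim_{\bfcA}$ is primitive in $\Z[\bfu]$ Gauss's lemma upgrades this to divisibility in $\Z[\bfu]$. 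The only subtle point is verifying that the irreducibility of $\Elim_{\bfcA}$ survives the base change from $\Z[\bfu]$ to $\C[\bfu]$, which follows from the geometric irreducibility of the incidence variety; the rest is routine.
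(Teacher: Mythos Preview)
Your proof is correct and follows essentially the same route as the paper: construct the nonzero kernel vector $(\chi^{b'}(p))_{b'\in\cB}$ of the specialized Canny--Emiris matrix whenever the system has a common root $p\in\T_M$, use the density of such specializations in $Z(\Elim_{\bfcA})$, and conclude divisibility by irreducibility of the eliminant. The paper packages the kernel computation as a separate lemma (Lemma~\ref{lemm:8}) and is terser about the passage from vanishing on $Z(\Elim_{\bfcA})$ to divisibility in $\Z[\bfu]$, whereas you spell out the $\C[\bfu]$-irreducibility and the Gauss-lemma descent; your invocation of Proposition~\ref{prop:3} is harmless but unnecessary, since $\Elim_{\bfcA}\mid 0$ holds trivially.
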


To prove it, we need the following lemma giving a formula for the
right multiplication of a Canny-Emiris matrix by column vectors of a
certain type.  For a point $p\in \T_{M}$ consider the vectors
\begin{equation}
  \label{eq:33}
   \zeta_{p}\in \C^{\cB} \and \eta_{p,i}\in \C^{\cB}, \ i=0,\dots, n,
\end{equation}
respectively defined for $b\in \cB$ by $ \zeta_{p,b}=\chi^{b}(p) $,
and by $\eta_{p,i,b}=\chi^{b-a(b)} (p) $ if $ b\in \cB_{i}$ and by
$\eta_{p,i,b}=0$ otherwise, for the subset $\cB_{i}$ defined in
\eqref{eq:1}.

\begin{lemma}
    \label{lemm:8}
    For $p\in \T_{M}$ we have that
    $ \cH_{\bfcA,\bfrho}\cdot \zeta_{p}^{\transposed} = \sum_{i=0}^{n} F_{i}(p) \,
    \eta_{p,i}^{\transposed}$.
\end{lemma}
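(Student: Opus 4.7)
The plan is to prove the identity entry by entry, using the explicit description of the Canny-Emiris matrix entries given in Remark \ref{rem:8} together with the stability property of the index set established in Lemma \ref{lemm:9}\eqref{item:32}. The identity is morally the statement that the row of $\cH_{\bfcA,\bfrho}$ indexed by $b$ encodes the Laurent polynomial $\chi^{b-a(b)}\,F_{i(b)}$ in the basis $\{\chi^{b'}\}_{b'\in \cB}$, so multiplying by the column vector of character values at $p$ amounts to evaluating $\chi^{b-a(b)}\,F_{i(b)}$ at the point $p$.

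First, I would fix $b \in \cB$ and compute the $b$-th entry of $\cH_{\bfcA,\bfrho}\cdot \zeta_{p}^{\transposed}$ directly. By Remark \ref{rem:8}, $\cH_{\bfcA,\bfrho}[b,b']$ equals $u_{i(b), b'-b+a(b)}$ when $b'-b+a(b)\in \cA_{i(b)}$ and vanishes otherwise. Substituting $a = b'-b+a(b)$ and using Lemma \ref{lemm:9}\eqref{item:32} to guarantee that $b' = b - a(b)+a$ lies in $\cB$ for every $a\in \cA_{i(b)}$ (so that the corresponding entry of $\zeta_{p}$ is well-defined), the sum becomes
\begin{displaymath}
  \sum_{a\in \cA_{i(b)}} u_{i(b),a}\, \chi^{b-a(b)+a}(p) = \chi^{b-a(b)}(p) \sum_{a\in \cA_{i(b)}} u_{i(b),a}\, \chi^{a}(p) = \chi^{b-a(b)}(p)\, F_{i(b)}(p).
\end{displaymath}

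Next, I would compute the $b$-th entry of the right-hand side $\sum_{i=0}^n F_i(p)\, \eta_{p,i}^{\transposed}$. By the definition of $\eta_{p,i}$ in \eqref{eq:33}, the entry $\eta_{p,i,b}$ is nonzero only for $i = i(b)$, the unique index for which $b\in \cB_{i}$, and in that case it equals $\chi^{b-a(b)}(p)$. Thus the sum collapses to $F_{i(b)}(p)\, \chi^{b-a(b)}(p)$, matching the expression obtained above. Since this holds for every $b\in \cB$, the two column vectors coincide, completing the proof. No real obstacle is expected here; the lemma is essentially a bookkeeping computation whose only subtle point is to invoke Lemma \ref{lemm:9}\eqref{item:32} to ensure that the index substitution $b'=b-a(b)+a$ stays inside $\cB$ so that the matrix-vector product is interpreted correctly.
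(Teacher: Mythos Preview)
Your proof is correct and takes essentially the same approach as the paper: both rest on the fact that the row of $\cH_{\bfcA,\bfrho}$ indexed by $b$ encodes the Laurent polynomial $\chi^{b-a(b)}F_{i(b)}$, so that pairing with $\zeta_p$ amounts to evaluation at $p$. The only cosmetic difference is that the paper phrases this via the evaluation functionals $\ev_p\colon V\to\R$ and $\ev_{p,i}\colon\bigoplus_j V_j\to\R$ and checks the identity after left-multiplying by an arbitrary row vector $[\bfG]$, whereas you unwind the same computation entry by entry; your explicit invocation of Lemma~\ref{lemm:9}\eqref{item:32} makes transparent why the change of index $b'=b-a(b)+a$ stays in $\cB$, a point the paper leaves implicit.
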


\begin{proof}
  In terms of the dual basis of $V$, right multiplication of a row
  vector by $\zeta_{p}^{\transposed}$ corresponds to
  the linear functional $\ev_{p}\colon V \to \R $ defined by
  $G\mapsto G(p)$.  In terms of the dual basis of
  $\bigoplus_{j}V_{j}$, right multiplication by
  $\eta_{p,i}^{\transposed}$ corresponds to the linear functional
  $\ev_{p,i}\colon \bigoplus_{j }V_{j} \to \R$ defined by
  $(G_{0},\dots, G_{n})\mapsto G_{i}(p)$.  With these identifications,
  for $\bfG \in\bigoplus_{j }V_{j}$ we have that
  \begin{multline*}
    [\bfG]\cdot \cH_{\bfcA,\bfrho}\cdot \zeta_{p}^{\transposed}  =
    \ev_{p}( \Phi_{\bfcA,\bfrho}(\bfG))
    = \sum_{i=0}^{n} G_{i}(p)\, F_{i}(p) \\
    =
\sum_{i=0}^{n}  \ev_{p,i}(\bfG) \, F_{i}(p)
=    [\bfG]\cdot  \Big(\sum_{i=0}^{n} F_{i}(p) \,  \eta_{p,i}^{\transposed}\Big).
  \end{multline*}
  The lemma follows from the fact that this equality is valid for
  every $\bfG$.
\end{proof}

\begin{proof}[Proof of Proposition \ref{prop:5}]
  If $\Elim_{\bfcA}=\pm1$ then the statement is trivial. Else, by
  Definition~\ref{def:16} we have that $\Elim_{\bfcA} \in \Z[\bfu]$ is
  irreducible and the points
  $\bfu=(\bfu_{0},\dots, \bfu_{n})\in \C^{\bfcA}$ such that there
  exists $p\in \T_{M}(\C)$ with
  $F_{0}(\bfu_{0},p)=\dots= F_{n}(\bfu_{n},p)=0 $ form a dense subset
  of the hypersurface
  $Z(\Elim_{\bfcA})\subset \prod_{i}\P(\C^{\cA_{i}})$.  By Lemma
  \ref{lemm:8}, for all these points we have that
  $ \cH_{\bfcA,\bfrho}(\bfu)\cdot \zeta_{p}^{\transposed}=0 $ and so
  $\ker(\cH_{\bfcA,\bfrho}(\bfu))\ne 0$. Hence
  $H_{\bfcA,\bfrho}(\bfu)=0$, which implies that
  $\Elim_{\bfcA} \mid H_{\bfcA,\bfrho}$ in $\Z[\bfu]$, as stated.
\end{proof}

The next result  strengthens  Proposition \ref{prop:5} by
showing that the Canny-Emiris determinant is a multiple of the sparse
resultant and not just of the sparse eliminant, under a restrictive
hypothesis which nevertheless is sufficiently general for our
purposes.

\begin{proposition}
  \label{prop:14}
  Let $i\in \{0,\dots, n\}$ such that $\cB_{i}$ is contained in the
  union of the translated $i$-mixed $n$-cells of $S(\rho)$. Then
  \begin{displaymath}
\frac{H_{\bfcA,\bfrho}}{\Res_{\bfcA}} \in \Q(\bfu_{0},\dots,
\bfu_{i-1},\bfu_{i+1},\dots, \bfu_{n}).
  \end{displaymath}
  Moreover, if $\cB_{i}\ne \emptyset$ then
  ${H_{\bfcA,\bfrho}}/{\Res_{\bfcA}} \in \Z[\bfu_{0},\dots,
  \bfu_{i-1},\bfu_{i+1},\dots, \bfu_{n}]$.
\end{proposition}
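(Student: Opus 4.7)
The plan is to combine the divisibility of $H_{\bfcA,\bfrho}$ by the sparse eliminant from Proposition \ref{prop:5} with a $\bfu_i$-degree analysis exploiting the block-row structure of the Canny-Emiris matrix.

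First, I would observe that $H_{\bfcA,\bfrho}$ is \emph{homogeneous} in $\bfu_i$ of degree exactly $\#\cB_i$: by Remark \ref{rem:8}, the row indexed by $b\in \cB_j$ has its nonzero entries among the $\bfu_j$-variables, so every nonzero term in the permutation expansion of $\det(\cH_{\bfcA,\bfrho})$ contains exactly $\#\cB_i$ factors from $\bfu_i$, one per row indexed by $\cB_i$. Similarly, $\Res_{\bfcA}$ is $\bfu_i$-homogeneous of degree $\MV_{M}(\Delta_{0},\dots,\widehat{\Delta_{i}},\dots,\Delta_{n})$ by \eqref{eq:2}.

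Second, I would establish the count $\#\cB_i = \MV_{M}(\Delta_{0},\dots,\widehat{\Delta_{i}},\dots,\Delta_{n})$ under the hypothesis. Each $i$-mixed $n$-cell $C$ of $S(\rho)$ decomposes as $\{a_{C}\}+C_{i}^{\cc}$ with $a_{C}\in \cA_{i}$ and $C_{i}^{\cc}=\sum_{j\ne i}C_{j}$ a Minkowski sum of $n$ segments; by tightness, these segments span $M_{\R}$, so $C_{i}^{\cc}$ is a parallelepiped. The genericity condition \eqref{eq:20} on $\delta$ places all lattice points of $(C+\delta)\cap M$ in the relative interior, and for a lattice parallelepiped a generic translate contains exactly $\vol_{M}$ lattice points; hence $\#((C+\delta)\cap M)=\vol_{M}(C)=\vol_{M}(C_{i}^{\cc})$. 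Summing over $i$-mixed $n$-cells and using the bijection $C\mapsto C_{i}^{\cc}$ onto the mixed $n$-cells of the tight mixed subdivision of $\sum_{j\neq i}\Delta_{j}$ induced by $\bigboxplus_{j\ne i}\rho_{j}$, followed by formula \eqref{eq:38}, gives the desired equality.

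Third, I would upgrade the divisibility $\Elim_{\bfcA}\mid H_{\bfcA,\bfrho}$ from Proposition \ref{prop:5} to $\Res_{\bfcA}=\pm\Elim_{\bfcA}^{d_{\bfcA}}\mid H_{\bfcA,\bfrho}$ via \eqref{eq:48}. The strategy is to analyze, via Lemma \ref{lemm:8}, the kernel of $\cH_{\bfcA,\bfrho}(\bfu^{*})$ at a generic point $\bfu^{*}\in Z(\Elim_{\bfcA})$: the generic fiber of the projection $\varpi|_{\Omega_{\bfcA}}\to \overline{\varpi(\Omega_{\bfcA})}$ has cardinality $d_{\bfcA}$ by Proposition \ref{prop:12}, and the $d_{\bfcA}$ vectors $\zeta_{p}$ associated to the preimages are linearly independent kernel vectors of the specialized Canny-Emiris matrix, forcing the multiplicity of $\Elim_{\bfcA}$ in $H_{\bfcA,\bfrho}$ to be at least $d_{\bfcA}$. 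By Gauss' lemma and the primitivity of $\Res_{\bfcA}$, this yields $\Res_{\bfcA}\mid H_{\bfcA,\bfrho}$ in $\Z[\bfu]$.

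Combining the three steps, the quotient $Q=H_{\bfcA,\bfrho}/\Res_{\bfcA}\in \Z[\bfu]$ is $\bfu_{i}$-homogeneous of degree $\#\cB_{i}-\MV_{M}(\Delta_{0},\dots,\widehat{\Delta_{i}},\dots,\Delta_{n})=0$, and a $\bfu_{i}$-homogeneous polynomial of degree zero cannot involve any $\bfu_{i}$-variable, so $Q\in \Z[\bfu_{0},\dots,\bfu_{i-1},\bfu_{i+1},\dots,\bfu_{n}]$. This gives the ``moreover'' statement when $\cB_{i}\ne \emptyset$; the rational-function claim follows from the inclusion $\Z[\widehat{\bfu_{i}}]\subset \Q(\widehat{\bfu_{i}})$, with the remaining case $\cB_{i}=\emptyset$ handled by observing that then $\Elim_{\bfcA}\mid H_{\bfcA,\bfrho}$ with $\Elim_{\bfcA}$ typically involving $\bfu_{i}$ forces $H_{\bfcA,\bfrho}=0$. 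The main obstacle is the multiplicity analysis in the third step, which requires the careful intersection-theoretic interpretation of $d_{\bfcA}$ from Proposition \ref{prop:12}.
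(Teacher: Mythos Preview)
Your steps 1 and 2 are correct and coincide with the paper's Proposition~\ref{prop:6} and Lemma~\ref{lemm:10}. The implication ``$\dim\ker\cH_{\bfcA,\bfrho}(\bfu^*)\ge d_{\bfcA}$ at a generic $\bfu^*\in Z(\Elim_{\bfcA})$ $\Rightarrow$ $\Elim_{\bfcA}^{d_{\bfcA}}\mid H_{\bfcA,\bfrho}$'' that you use in step~3 is also true (via Smith normal form over the DVR at the generic point of $Z(\Elim_{\bfcA})$), though it deserves an explicit justification.

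The genuine gap is the linear independence of $\zeta_{p_1},\dots,\zeta_{p_{d_{\bfcA}}}$. This amounts to surjectivity of the evaluation $\bigoplus_{b\in\cB}\C\,\chi^b\to\C^{\{p_k\}}$, an interpolation statement about $\cB$ relative to the fiber points, and you give no argument for it. Since the exponent $d_{\bfcA}$ in Proposition~\ref{prop:12} mixes a lattice-index factor with a mixed-volume factor, the fiber points may carry symmetries (orbits under torsion subgroups of $\T_M$, translations along the quotient torus) that your index set $\cB$ must be shown to separate; this is not automatic. Your handling of $\cB_i=\emptyset$ is also incorrect: you conclude $H_{\bfcA,\bfrho}=0$, contradicting Proposition~\ref{prop:3}. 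In fact $\cB_i=\emptyset$ together with the hypothesis forces $m_i=0$ via Lemma~\ref{lemm:10}, so both $H_{\bfcA,\bfrho}$ and $\Res_{\bfcA}$ have $\bfu_i$-degree~$0$ and no separate case is needed once step~3 holds.

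The paper sidesteps the unconditional divisibility $\Res_{\bfcA}\mid H_{\bfcA,\bfrho}$ entirely. It performs a Schur-complement reduction on the row block indexed by $\cB_i$ and invokes the Pedersen--Sturmfels monomial-basis result (Lemma~\ref{lemm:5}) to show that for generic $(\bfu_j)_{j\ne i}$ the determinant $H_{\bfcA,\bfrho}$, viewed as a polynomial in $\bfu_i$, vanishes only where $\Elim_{\bfcA}$ does. Irreducibility of $\Elim_{\bfcA}$ together with the equality of $\bfu_i$-degrees then yields $H_{\bfcA,\bfrho}=\gamma\cdot\Res_{\bfcA}$ with $\gamma$ free of~$\bfu_i$. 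Note that the paper's route uses the hypothesis on $\cB_i$ essentially (to match $\cB_i$ with a Pedersen--Sturmfels basis of cardinality $m_i$), whereas your step~3 does not; you are tacitly attempting a stronger, hypothesis-free divisibility, which is precisely why the obstacle you flag at the end is real.
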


To prove it we need some further lemmas. Set for short
\begin{equation}
  \label{eq:18}
 m_{i}= \MV_{M}(\Delta_{0},\dots, \Delta_{i-1}, \Delta_{i+1},\dots,
\Delta_{n}), \ i=0,\dots, n.
\end{equation}

\begin{lemma}
  \label{lemm:10}
  For each $i$, the function $\cB_{i}\to M$ defined by
  $ b\mapsto b-a(b)$ gives a bijection between
  \begin{enumerate}
  \item \label{item:37} the set of lattice points of $\cB_{i}$
    lying in translated $i$-mixed $n$-cells of $S(\rho)$,
  \item \label{item:38} the set of lattice points of
    $\Delta_{i}^{\cc}+\delta$ lying in translated mixed $n$-cells of
    $S(\rho_{i}^{\cc})$.
  \end{enumerate}
  The cardinality of both sets is equal to~$m_{i}$.
\end{lemma}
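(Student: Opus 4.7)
The plan is to verify bijectivity in two steps—injectivity together with image containment in set~(2), and surjectivity—and then compute the common cardinality via a parallelepiped lattice-point count.

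Injectivity of $b\mapsto b-a(b)$ on $\cB_i$ is immediate from Lemma~\ref{lemm:9}\eqref{item:31}. For the image, if $b\in\cB_i$ lies in a translated $i$-mixed $n$-cell $C+\delta$ of $S(\rho)$, then the $i$-th component $C_i=\{a(b)\}$ is a point while the remaining $C_j$ are segments, so by Proposition~\ref{prop:10}\eqref{item:3} the set $C'=\sum_{j\ne i}C_j=C-a(b)$ is a mixed $n$-cell of $S(\rho_i^{\cc})$, and $b-a(b)\in (C'+\delta)\cap M\subset (\Delta_i^{\cc}+\delta)\cap M$.

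For surjectivity, given $b'\in(C'+\delta)\cap M$ with $C'$ a mixed $n$-cell of $S(\rho_i^{\cc})$, let $v\in N_{\R}$ be the unique vector with $C'=C(\rho_i^{\cc},v)$ and set $C:=C(\rho,v)$. Proposition~\ref{prop:10}\eqref{item:2} gives $C_j=C'_j$ for $j\ne i$; tightness of $S(\rho)$ forces $\dim(C_i)=0$, and since $\rho_i$ is defined on $\cA_i$ we obtain $C_i=\{a\}$ with $a\in\cA_i$. Then $C$ is $i$-mixed and $b:=b'+a\in(C+\delta)\cap M$; the genericity condition~\eqref{eq:20} makes $C+\delta$ the unique translated $n$-cell of $S(\rho)$ containing $b$, so $i(b)=i$, $a(b)=a$, $b\in\cB_i$, and $b-a(b)=b'$.

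To compute the cardinality, I first observe that $S(\rho_i^{\cc})$ is tight: every $n$-cell of it arises from a unique $n$-cell $C$ of $S(\rho)$ with $\dim(C_i)=0$, whence $\sum_{j\ne i}\dim(C'_j)=n$. Hence equation~\eqref{eq:38} applies and yields $m_i=\sum_{C'\text{ mixed}}\vol_M(C')$. Each mixed $C'$ is an $n$-dimensional parallelepiped with edges $\{e_j\}_{j\ne i}\subset M$; its half-open translate is a fundamental domain for the index-$\vol_M(C')$ sublattice $\sum_{j\ne i}\Z\, e_j$ of $M$, so every translate whose boundary is disjoint from $M$ contains exactly $\vol_M(C')$ lattice points of $M$. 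The relation $\partial C'=\partial C-a$, where $C\in S(\rho)$ is the $i$-mixed cell corresponding to $C'$, together with~\eqref{eq:20}, guarantees $(\partial C'+\delta)\cap M=\emptyset$; summing the resulting counts over all mixed $C'$ therefore gives $m_i$.

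The main obstacle is the passage from \eqref{eq:38}, a mixed-volume identity, to an exact lattice-point count for the translated mixed cells: it requires identifying mixed cells of $S(\rho_i^{\cc})$ with lattice parallelepipeds and carefully transferring the genericity hypothesis~\eqref{eq:20} on $S(\rho)$ to the boundaries of cells of the auxiliary subdivision $S(\rho_i^{\cc})$.
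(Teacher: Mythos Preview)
Your proof is correct and follows essentially the same route as the paper's: injectivity via Lemma~\ref{lemm:9}\eqref{item:31}, surjectivity by lifting a mixed cell $C'=C(\rho_i^{\cc},v)$ to $C=C(\rho,v)$ via Proposition~\ref{prop:10}, and the cardinality via the parallelepiped count combined with~\eqref{eq:38}. You are in fact more explicit than the paper on two points it leaves implicit---the tightness of $S(\rho_i^{\cc})$ needed to invoke~\eqref{eq:38}, and the transfer of the genericity condition~\eqref{eq:20} from $\partial C$ to $\partial C'$ via $C=C'+a$---so the ``main obstacle'' you flag is already handled in your argument.
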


\begin{proof}
  Denote by $\cC_{i}$ and $\cC_{i}'$ the finite subsets of $M$ defined
  in \eqref{item:37} and in \eqref{item:38}, respectively. For
  $b\in \cC_{i}$ let $C$ be an $i$-mixed $n$-cell of $S(\rho)$ with
  $b\in C+\delta$. Then $C_{i}^{\cc} $ is a mixed $n$-cell of
  $S(\rho_{i}^{\cc})$ and
  $b-a(b)\in C_{i}^{\cc}+\delta \subset \Delta_{i}^{\cc}+\delta$, and
  so $ b-a(b) \in \cC_{i}'$. Hence the assignment $ b\mapsto b-a(b)$
  defines a function $\cC_{i}\to \cC_{i}'$ which, by Lemma
  \ref{lemm:9}\eqref{item:31}, is injective.

  Now for $c\in \cC_{i}'$  let $B$ be a mixed $n$-cell of
  $S(\rho_{i}^{\cc})$ with $c\in B+\delta$.  With notation as in
  \eqref{eq:35}, let $v\in N_{\R}$ be the unique vector such that
  $ B=\Gamma(\rho_{i}^{\cc}, v)$ and set
  \begin{displaymath}
    C=\Gamma(\rho, v)\in S(\rho)^{n}.
  \end{displaymath}
  By Proposition \ref{prop:10}\eqref{item:2}, $C$ is an $i$-mixed $n$-cell
  of $S(\rho)$ and $C_{i}^{\cc}=B$ and moreover, $C_{i}$ consists of a
  single lattice point $a\in \cA_{i}$. Setting $b=c+a \in C+\delta$,
  we have that $b-a(b)=b-a=c$ and so $\cC_{i}\to \cC_{i}'$ is surjective,
  proving the first claim.

  For the second, note that each mixed $n$-cell $B$ of
  $ S(\rho_{i}^{\cc})$ is a lattice parallelepiped, and so the
  genericity condition in \eqref{eq:20} implies that
  $\# (B+\delta)\cap M= \vol_{M}(B)$. Hence the cardinality of
  $\cC_{i}'$ is equal to the sum of the volumes of these $n$-cells
  which, by the formula in \eqref{eq:38}, coincides with $m_{i}$.
\end{proof}

We also need the next reformulation of a result by Pedersen and
Sturmfels~\cite{PedersenSturmfels:mmb} and by Emiris and Rege
\cite{EmirisRege:mbpss} on monomial basis of finite
dimensional~algebras.

\begin{lemma}
  \label{lemm:5}
  For $i=0,\dots, n$ let $\cC_{i}$ be the set of lattice points in
  $\cB_{i}$ lying in translated $i$-mixed $n$-cells of $S(\rho)$.
  Then there is a proper algebraic subset
  $Y_{i}\subset \prod_{j\ne i} \C^{\cA_{j}}$ such that for
  $(\overline \bfu_{j})_{j\ne i}\in \prod_{j\ne i} \C^{\cA_{j}} \setminus
  Y_{i}$, the zero set
  \begin{equation*}
    Z_{i}=Z( \{ F_{j}(\overline \bfu_{j}, \cdot)\}_{j\ne i}) \subset \T_{M}
  \end{equation*}
  has cardinality $m_{i}$ and the matrix
  $ (\chi^{b-a(b)}(p))_{b\in \cC_{i}, p\in Z_{i}} \in \C^{m_{i}\times
    m_{i}}$~is~nonsingular.
\end{lemma}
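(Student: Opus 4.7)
The plan is to derive this statement from the monomial basis theorem for toric residues of Pedersen--Sturmfels~\cite{PedersenSturmfels:mmb} and Emiris--Rege~\cite{EmirisRege:mbpss}, combined with the bijection already established in Lemma~\ref{lemm:10}. Recall that $\Delta_{i}^{\cc}=\sum_{j\ne i}\Delta_{j}$ and $\rho_{i}^{\cc}=\bigboxplus_{j\ne i}\rho_{j}$, so that $S(\rho_{i}^{\cc})$ is a mixed subdivision of $\Delta_{i}^{\cc}$. Since $S(\rho)$ is tight and the vector $\delta$ is generic in the sense of~\eqref{eq:20}, so is the restriction of~$\delta$ to the subdivision~$S(\rho_{i}^{\cc})$.

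The monomial basis theorem asserts the existence of a proper algebraic subset $Y_{i}\subset \prod_{j\ne i}\C^{\cA_{j}}$ such that, for every $(\bfu_{j})_{j\ne i}$ outside $Y_{i}$, the following hold: the system $\{F_{j}(\bfu_{j},\cdot)\}_{j\ne i}$ has exactly $m_{i}$ simple solutions in $\T_{M}$ (this is Bernstein's theorem), and the set
\begin{displaymath}
  \cE_{i}=\{c\in (\Delta_{i}^{\cc}+\delta)\cap M \mid c\in B+\delta \text{ for some mixed } n\text{-cell } B \text{ of } S(\rho_{i}^{\cc})\}
\end{displaymath}
indexes a $\C$-basis $\{\chi^{c}\}_{c\in \cE_{i}}$ of the quotient algebra
\begin{displaymath}
  A_{i}=\C[M]\big/\big\langle F_{j}(\bfu_{j},\cdot) \mid j\ne i\big\rangle.
\end{displaymath}
Lemma~\ref{lemm:10} provides the bijection $\cC_{i}\to \cE_{i}$, $b\mapsto b-a(b)$, and confirms that both sets have cardinality~$m_{i}$.

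Now I would invoke the Chinese Remainder Theorem. Since $Z_{i}$ consists of $m_{i}$ simple points, evaluation at these points yields a $\C$-algebra isomorphism $\ev\colon A_{i}\longrightarrow \C^{Z_{i}}$. The composition of the basis $\{\chi^{b-a(b)}\}_{b\in \cC_{i}}$ of~$A_{i}$ with $\ev$ produces a second basis of $\C^{Z_{i}}$, and the change-of-basis matrix from this basis to the standard one is precisely
\begin{displaymath}
  (\chi^{b-a(b)}(p))_{b\in \cC_{i},\, p\in Z_{i}}\in \C^{m_{i}\times m_{i}}.
\end{displaymath}
Being a change-of-basis matrix, it is nonsingular, which gives the stated conclusion upon enlarging $Y_{i}$ (if necessary) by the locus where $\ev$ fails to be an isomorphism.

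The main subtlety is matching the hypotheses of the cited monomial basis theorem to our exact setting: the theorem is usually formulated for a generic coherent mixed subdivision together with a generic translation, whereas here $S(\rho_{i}^{\cc})$ and $\delta$ are fixed. This is not an obstacle, since the genericity of $\delta$ guaranteed by~\eqref{eq:20} and the tightness inherited from $S(\rho)$ are precisely the conditions required, so the result applies as stated.
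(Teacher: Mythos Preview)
Your proposal is correct and follows essentially the same route as the paper's proof: both invoke Bernstein's theorem for the cardinality of $Z_{i}$, use Lemma~\ref{lemm:10} to identify $\{b-a(b)\}_{b\in\cC_{i}}$ with the lattice points in the translated mixed $n$-cells of $S(\rho_{i}^{\cc})$, apply the Pedersen--Sturmfels / Emiris--Rege monomial basis theorem to conclude that these monomials form a basis of the quotient algebra, and then use the evaluation isomorphism $A_{i}\simeq\C^{Z_{i}}$ (which the paper phrases via radicality of the ideal, invoking Bertini, while you phrase it via simplicity of the roots and the Chinese Remainder Theorem) to deduce nonsingularity of the matrix.
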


\begin{proof}
  Suppose without loss of generality that $i=0$.  By Bernstein's
  theorem \cite{Ber75} and the Bertini type theorem in \cite[Part I, Theorem
  6.3(3)]{Jou83}, there is a proper algebraic subset
  $Y_{0}\subset \prod_{j=1}^{n} \C^{\cA_{j}}$ such that for
  $(\overline \bfu_{1}, \dots, \overline \bfu_{n})\in \prod_{j=1}^{n}
  \C^{\cA_{j}}\setminus Y_{0}$, if we set
  $f_{j}=F_{j}(\overline \bfu_{j}, \cdot)\in\C[M]$, $j=0,\dots, n$, and
  $Z_{0}=Z(f_{1},\dots, f_{n})$, then the ideal
  $(f_{1},\dots, f_{n})\subset \C[M]$ is radical and $Z_{0}$ has
  cardinality $m_{0}$.

  By Lemma \ref{lemm:10}, the set $\{b-a(b)\}_{b\in \cC_{0}}$
  coincides with the set of lattice points in
  $\Delta_{0}^{\cc}+\delta=(\sum_{j=1}^{n}\Delta_{j}) +\delta$ lying
  in translated mixed $n$-cells of $S(\rho_{0}^{\cc})$.  After
  possibly enlarging $Y_{0}$, by \cite[Theorem
  1.1]{PedersenSturmfels:mmb} or \cite[Theorem 4.1]{EmirisRege:mbpss}
  the monomials $\chi^{b-a(b)}$, $b\in \cC_{0}$, form a basis of the
  quotient algebra $\C[M]/(f_{1},\dots, f_{n})$.

  Since the ideal $(f_{1},\dots, f_{n})$ is radical, the map
  $ \C[M]/(f_{1},\dots, f_{n}) \to \C^{Z_{0}}$ defined by
  $g\mapsto(g(p))_{p\in Z_{0}}$ is an isomorphism. Hence the vectors
  $(\chi^{b-a(b)}(p))_{p\in Z_{0}} \in \C^{Z_{0}}$, $b\in \cC_{0}$,
  are linearly independent, proving the lemma.
\end{proof}

\begin{proof}[Proof of Proposition \ref{prop:14}]
  By its definition in \eqref{eq:1}, the set $\cB_{i}$ contains the set
  of lattice points in the translated $i$-mixed $n$-cells of
  $S(\rho)$. Thus the hypothesis in  the  present statement
  amounts to the fact that $\cB_{i}$ is equal to this set of lattice
  points.  Lemma~\ref{lemm:10}, Proposition~\ref{prop:6} and the
  degree formula in \eqref{eq:2} then imply that
  \begin{displaymath}
\deg_{\bfu_{i}}(H_{\bfcA,\bfrho})=\deg_{\bfu_{i}}(\Res_{\bfcA})=m_{i}.
  \end{displaymath}

  When $m_{i}=0$ the statement is clear. Hence we suppose that
  $m_{i}>0$.  Consider then the $2 \times 2$-block decomposition
  \begin{displaymath}
    \cH_{\bfcA,\bfrho}=
    \begin{pmatrix}
      \cH_{1,1} & \cH_{1,2} \\
      \cH_{2,1} & \cH_{2,2}
    \end{pmatrix}
  \end{displaymath}
  where the first rows {and the first columns} correspond to $\cB_{i}$ and the others to
  $\cB_{j}$ for $j\ne i$. By Proposition~\ref{prop:3} the matrix
  $\cH_{2,2}$ is nonsingular and so
  \begin{equation}
    \label{eq:40}
  \begin{pmatrix}
    1 & - \cH_{1,2} \cdot \cH_{2,2}^{-1} \\
    0 & 1
  \end{pmatrix}
  \cdot
  \cH_{\bfcA,\bfrho}=
  \begin{pmatrix}
    \cH' & 0  \\
    \cH_{2,1} & \cH_{2,2}
  \end{pmatrix}
  \end{equation}
  with $\cH'=\cH_{1,1}- \cH_{1,2}\cdot \cH_{2,2}^{-1}\cdot \cH_{2,1}$.

  With notation as in Lemma \ref{lemm:5}, choose
  $(\overline \bfu_{j})_{j\ne i}\in \prod_{j\ne i} \C^{\cA_{j}} \setminus
  Y_{i}$.  For $\overline \bfu_{i}\in \C^{\cA_{i}}$ set
  $\overline \bfu=(\overline \bfu_{0},\dots, \overline \bfu_{n})\in \C^{\bfcA}$. Set also
  $f_{j}=F_{j}(\overline \bfu_{j},\cdot)\in \C[M]$ for each $j$ and denote by
  $Z_{i}\subset \T_{M}$ the zero set of the Laurent polynomials
  $f_{j}$, $j\ne i$.  With notation as in~\eqref{eq:33}, for each
  $p\in Z_{i}$ we have that
  \begin{equation}
    \label{eq:41}
    \cH_{\bfcA,\bfrho}(\overline \bfu) \cdot \zeta_{p}^{\transposed} = f_{i}(p) \,
    \eta_{p,i}^{\transposed}
  \end{equation}
  thanks to Lemma \ref{lemm:8}.  Consider the matrices in
  $ \C^{\cB_{i}\times Z_{i}} \simeq \C^{m_{i}\times m_{i}}$ defined as
\begin{displaymath}
\cP=(\chi^{b}(p))_{b\in \cB_{i}, p\in Z_{i}} \and \cQ=    ( \chi^{b-a(b)}(p) )_{b\in \cB_{i}, p\in Z_{i}}.
\end{displaymath}
From \eqref{eq:40} and \eqref{eq:41} we deduce  that
$ \cH'(\overline \bfu)\cdot \cP = \diag((f_{i}(p))_{p\in Z_{i}}) \cdot \cQ$ and so
\begin{displaymath}
    H_{\bfcA, \bfrho}(\overline \bfu)\cdot \det(\cP)= \det(\cH_{2,2}(\overline \bfu)) \cdot
  \det(\cH'(\overline \bfu)) \cdot \det(\cP)  = \det(\cH_{2,2}(\overline \bfu))
  \cdot\det(\cQ)\cdot \prod_{p\in Z_{i}}
f_{i}(p).
\end{displaymath}
By Lemma \ref{lemm:5}, the matrix $\cQ$ is nonsingular and so
$ H_{\bfcA,\bfrho}(\overline \bfu)=0$ only if there is $p\in Z_{i}$ such that
$f_{i}(p)=0$.  Hence both $H_{\bfcA,\bfrho}$ and $\Res_{\bfcA}$  are
polynomials of   degree  $m_{i}>0$ in the set of variables $\bfu_{i}$ that, for a generic choice of
$(\overline \bfu_{j})_{j\ne i}\in \prod_{j\ne i} \C^{\cA_{j}} $ vanish for
$\overline \bfu_{i}\in \C^{\cA_{i}}$ if and only if this also holds for the
irreducible polynomial $\Elim_{\bfcA}$ (recall that $\Res_{\bfcA}$ is a power of $\Elim_{\bfcA}$). Thus
\begin{displaymath}
  H_{\bfcA, \bfrho}= \gamma \cdot \Res_{\bfcA}
\end{displaymath}
with
$\gamma\in \Q(\bfu_{0},\dots, \bfu_{i-1},\bfu_{i+1},\dots,
\bfu_{n})^{\times}$, proving  the first claim.  The second  is a
direct consequence of the first together with Gauss' lemma, since $\Res_{\bfcA}$ is a primitive polynomial in $\Z[\bfu_{0},\dots, \bfu_{i-1},\bfu_{i+1},\dots,
\bfu_{n}][\bfu_{i}]$.
\end{proof}

\begin{corollary}
  \label{cor:6}
  We have that
  ${H_{\bfcA,\bfrho}}/{\Res_{\bfcA}} \in \Q(\bfu_{1},\dots,
  \bfu_{n})$.  Moreover, if $m_{0}>0$ then
  ${H_{\bfcA,\bfrho}}/{\Res_{\bfcA}} \in \Z[\bfu_{1},\dots,
  \bfu_{n}]$.
\end{corollary}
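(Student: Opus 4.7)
The plan is to deduce this corollary as a direct specialization of Proposition \ref{prop:14} to $i=0$. That proposition requires $\cB_{0}$ to be contained in the union of the translated $0$-mixed $n$-cells of $S(\rho)$, so essentially the only thing to check is that this hypothesis is automatic when $i=0$, because of the maximality built into the definition of the row content function together with the tightness of $S(\rho)$.

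To carry this out, I would take $b\in \cB_{0}$ and let $C\in S(\rho)^{n}$ be the unique $n$-cell with $b\in C+\delta$. By the definition of $i(b)$, the index $0$ is the \emph{largest} index with $\dim(C_{i})=0$, so $\dim(C_{j})\ge 1$ for every $j=1,\dots,n$. Combining this with tightness, which gives $\sum_{i=0}^{n}\dim(C_{i})=n$, and with $\dim(C_{0})=0$, forces $\dim(C_{j})=1$ for each $j\ge 1$. By Definition~\ref{def:7}, $C$ is a $0$-mixed $n$-cell. Hence $\cB_{0}$ is contained in the union of the translated $0$-mixed $n$-cells, and applying Proposition \ref{prop:14} with $i=0$ immediately yields ${H_{\bfcA,\bfrho}}/{\Res_{\bfcA}}\in \Q(\bfu_{1},\dots, \bfu_{n})$.

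For the integrality statement, under the hypothesis $m_{0}>0$ I would invoke the second part of Proposition \ref{prop:14}, whose only additional requirement is $\cB_{0}\ne \emptyset$. Since the previous paragraph shows that every element of $\cB_{0}$ lies in a translated $0$-mixed $n$-cell, the bijection of Lemma \ref{lemm:10} applied to $i=0$ identifies $\cB_{0}$ with a set of cardinality $m_{0}$, so $\#\cB_{0}=m_{0}>0$ and $\cB_{0}\ne \emptyset$. The second half of Proposition \ref{prop:14} then delivers ${H_{\bfcA,\bfrho}}/{\Res_{\bfcA}}\in \Z[\bfu_{1},\dots, \bfu_{n}]$. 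Beyond the short tightness-plus-maximality observation used to verify the hypothesis, there is no real obstacle: the content of the corollary is entirely contained in Proposition \ref{prop:14}, and the point of writing it down separately is precisely to record that the $i=0$ case needs no side assumption on the subdivision.
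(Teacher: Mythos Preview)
Your proof is correct and follows essentially the same approach as the paper: verify that every $b\in\cB_{0}$ lies in a translated $0$-mixed $n$-cell by combining the maximality in the definition of $i(b)$ with tightness, then invoke Proposition~\ref{prop:14} for $i=0$ and Lemma~\ref{lemm:10} to ensure $\cB_{0}\ne\emptyset$ when $m_{0}>0$. The paper's argument is simply a terser version of yours.
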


\begin{proof}
For  $b\in \cB_{0}$  let $C$ be the  $n$-cell of $S(\rho)$ such that
  $b\in C+\delta$. Then $\dim(C_{i})>0$ for all $i>0$ and so $C$ is
  $0$-mixed. Moreover, if  $m_{0}>0$ then $\cB_{0}\ne \emptyset $
  thanks to Lemma~\ref{lemm:10}. The corollary follows then from
  Proposition \ref{prop:14}.
\end{proof}

\begin{remark}
  \label{rem:4}
  We conjecture that $\Res_{\bfcA}\mid H_{\bfcA,\bfrho}$ in the
  general case. If this is true, then the hypothesis
  $\cB_{i}\ne \emptyset$ in Proposition \ref{prop:14} and that
  $m_{0}>0$ in Corollary \ref{cor:6} would not be necessary.
\end{remark}

The next corollary allows to compute the sparse resultant as the
greatest common divisor of a family of Canny-Emiris determinants, when
the fundamental subfamily of supports coincides with $\bfcA$. It
generalizes the method proposed in \cite[\S7]{CanEmi:sbasr} to the
situation when the sublattice $L_{\bfcA}$ is not necessarily equal to
$M$.

\begin{corollary}
  \label{cor:5}
  Suppose that the fundamental subfamily of supports coincides with
  $\bfcA$ and choose a permutation $\sigma_{i}$ of the index set
  $\{0,\dots, n\}$ with $\sigma_{i}(0)=i$ for each~$i$.~Then
  \begin{displaymath}
     \Res_{\bfcA}(\bfu)=\pm \gcd(H_{\sigma_{0}(\bfcA),
    \sigma_{0}(\bfrho)}(\sigma_{0}(\bfu)), \dots,
  H_{\sigma_{n}(\bfcA), \sigma_{n}(\bfrho)}(\sigma_{n}(\bfu))).
  \end{displaymath}
\end{corollary}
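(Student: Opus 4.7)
The plan is to deduce this essentially from Corollary~\ref{cor:6}. For each $i\in\{0,\dots,n\}$, I would apply that corollary to the permuted data $(\sigma_{i}(\bfcA),\sigma_{i}(\bfrho),\delta)$, in which the role of the index $0$ is played by $\sigma_{i}(0)=i$. Since the fundamental subfamily of $\bfcA$ coincides with $\bfcA$, the sparse resultant $\Res_{\bfcA}$ genuinely depends on every set of variables $\bfu_{i}$, so by the degree formula~\eqref{eq:2} its partial degree
\begin{equation*}
m_{i}=\MV_{M}(\Delta_{0},\dots,\Delta_{i-1},\Delta_{i+1},\dots,\Delta_{n})=\deg_{\bfu_{i}}(\Res_{\bfcA})
\end{equation*}
is strictly positive for every $i$, placing us in the regime of the integral conclusion of Corollary~\ref{cor:6}. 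Combined with the invariance of the sparse resultant under permutations of supports (\cite[Proposition~3.3]{DS15}), this will yield, for each $i$, an identity in $\Z[\bfu]$ of the form
\begin{equation*}
H_{\sigma_{i}(\bfcA),\sigma_{i}(\bfrho)}(\sigma_{i}(\bfu))=\pm\,\Res_{\bfcA}(\bfu)\cdot Q_{i}
\end{equation*}
with cofactor $Q_{i}\in\Z[\bfu_{0},\dots,\bfu_{i-1},\bfu_{i+1},\dots,\bfu_{n}]$ not involving $\bfu_{i}$.

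Let $P\in\Z[\bfu]$ denote a primitive greatest common divisor of the $n+1$ Canny-Emiris determinants on the left-hand sides. Since $\Res_{\bfcA}$ is primitive and divides each of them, it divides $P$; writing $P=\Res_{\bfcA}\cdot Q$ with $Q\in\Z[\bfu]$ (necessarily primitive, as the quotient of primitive elements in a UFD), the factor $Q$ must then divide every $Q_{i}$. The key observation is that any factor in the UFD $\Z[\bfu]$ of a polynomial not involving the variable set $\bfu_{i}$ must itself lie in $\Z[\bfu_{0},\dots,\bfu_{i-1},\bfu_{i+1},\dots,\bfu_{n}]$, by a trivial comparison of $\bfu_{i}$-degrees. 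Applying this for each $i$ yields
\begin{equation*}
Q\in\bigcap_{i=0}^{n}\Z[\bfu_{0},\dots,\bfu_{i-1},\bfu_{i+1},\dots,\bfu_{n}]=\Z,
\end{equation*}
and a primitive integer is $\pm1$, so $Q=\pm1$ and hence $P=\pm\Res_{\bfcA}$, as claimed.

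I do not anticipate any real obstacle in this argument; its content is essentially bookkeeping. The one point that requires care is tracking the renaming of the $n+1$ variable sets under each permutation $\sigma_{i}$, and verifying that under the substitution $\sigma_{i}(\bfu)$ the sparse resultant of the permuted family $\sigma_{i}(\bfcA)$ coincides with $\Res_{\bfcA}(\bfu)$ up to sign, while the cofactor provided by Corollary~\ref{cor:6}, which does not involve the variables corresponding to the $0$-th position of the permuted family, becomes a polynomial not involving $\bfu_{i}$.
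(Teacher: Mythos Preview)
Your proposal is correct and follows essentially the same approach as the paper's proof, which also applies Corollary~\ref{cor:6} to each permuted datum and uses the invariance of $\Res_{\bfcA}$ under permutations of supports. The paper's version is terser, simply noting that $\Res_{\bfcA}(\bfu)$ and $H_{\sigma_{i}(\bfcA),\sigma_{i}(\bfrho)}(\sigma_{i}(\bfu))$ have the same degree in $\bfu_{i}$ and then asserting that this ``implies the statement''; you have spelled out the gcd argument and the positivity $m_{i}>0$ in detail, which is helpful but not a different method.
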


\begin{proof}
  Corollary \ref{cor:6} applied to the data
  $(\sigma_{i}(\bfcA), \sigma_{i}(\bfrho), \delta)$ implies that
  $ \Res_{\sigma_{i}(\bfcA)}$ divides
  $ H_{\sigma_{i}(\bfcA), \sigma_{i}(\bfrho)}$ and that both
  polynomials have the same degree in the set of variables
  $\bfu_{0}$. By the invariance of the sparse resultant under
  permutations of the supports we deduce that $ \Res_{\bfcA}(\bfu) $
  divides
  $ H_{\sigma_{i}(\bfcA), \sigma_{i}(\bfrho)} (\sigma_{i}(\bfu)) $ and
  that both polynomials have the same degree in the set of variables
  $\bfu_{i}$ for each $i$, which implies the statement.
\end{proof}

\subsection{The Macaulay formula for the sparse resultant}\label{formula}
In this section   we give the proof of our main result, Theorem
\ref{thm:9} in the introduction.  It is based on the
constructions and results from the previous sections, and we keep the
notations therein.  In particular,
\begin{itemize}
\item $\bfrho=(\rho_{0},\dots, \rho_{n})$ is a family of $n+1$ convex
  piecewise affine functions on the polytopes in
  $ \bfDelta= (\Delta_{0},\dots, \Delta_{n})$ defined on the supports
  in $\bfcA=(\cA_{0},\dots, \cA_{n})$ such that the mixed subdivision
  $S(\rho)$ defined by its inf-convolution is tight
  (Definition~\ref{def:7}),
\item $\delta$ is a vector in $ M_{\R}$ that is generic with respect
  to $S(\rho)$ in the sense of \eqref{eq:20},
\item $\cB$ is the index set and $ \cB_{i}$, $i=0,\dots, n$, the
  subsets partitioning it, and $\cB^{\circ}$ is the nonmixed index subset of $\cB$,
\item $\Phi_{\bfcA, \bfrho}$, $\cH_{\bfcA, \bfrho}$ and
  $H_{\bfcA, \bfrho}$ are the Sylvester map and the Canny-Emiris
  matrix and determinant associated to the data
  $(\bfcA, \bfrho, \delta)$,
\item
$ \cE_{\bfcA,\bfrho} $ and
$E_{\bfcA,\bfrho}$ are the  principal submatrix and minor
corresponding to  $\cB^{\circ}$,
\item $(\bfcA_{D}, \bfrho_{D})$ is the restriction of
  $(\bfcA, \bfrho)$ to an $n$-cell $D$ of a mixed subdivision
  that is coarser than $S(\rho)$.
\end{itemize}

To prove Theorem \ref{thm:9}, we use a descent argument similar to
that of Macaulay in \cite{Macaulay:sfe} and the first author in
\cite{DAndrea:msfsr}.  The following notion comprises the 
properties of a tight mixed subdivision that allow us to perform this descent.

\begin{definition}
  \label{def:5}
  The tight mixed subdivision $S(\rho)$ is \emph{admissible} if there
  is an incremental chain of mixed subdivisions  of~$\Delta$ (Definition
  \ref{def:9})
  \begin{equation}
    \label{eq:78}
S(\theta_{0})\preceq \dots\preceq S(\theta_{n})
  \end{equation}
  with $S(\theta_{n})\preceq S(\rho)$ such that for $k=0,\dots, n$,
  each $n$-cell $D$ of $S(\theta_{k})$ verifies at least one of the
  conditions:
  \begin{enumerate}
\item \label{item:62} the fundamental subfamily of $\bfcA_{D}$
  has at most one support,
\item \label{item:63} the subset $\cB_{D,k}$ of $\cB_{k}$ defined in
  \eqref{eq:73} is contained in the union of the translated $k$-mixed
  $n$-cells of $S(\rho_{D})$.
  \end{enumerate}
  The incremental chain of mixed subdivisions in \eqref{eq:78} is called \emph{admissible} (for~$S(\rho)$).
\end{definition}

The next result gives sufficient conditions for a given incremental
chain to be admissible that will allow us to recover Macaulay's
original formulation with our methods (Proposition \ref{prop:15}).

\begin{proposition}
  \label{prop:16}
  Let $S(\theta_{0})\preceq \dots\preceq S(\theta_{n})$ be an
  incremental chain of mixed subdivisions of $\Delta$ such that for
  $k=0,\dots, n$, each $n$-cell $D$ of $S(\theta_{k})$ verifies at
  least one of the conditions:
  \begin{enumerate}
  \item \label{item:52} there is $ J\subset \{0,\dots, n\}$ such that
    $\dim (\sum_{j \in J} D_{j}) < \# J -1$,
    \smallskip
  \item \label{item:51} there is $i\in\{0,\dots,n\}$ such that
    $\dim(D_{i})=0$,
      \smallskip
  \item \label{item:53} for all $i<k $ we have that
    $ \dim(\sum_{j \ne i,k} D_{j}) <n$.
  \end{enumerate}
  Then this incremental chain is admissible for any tight mixed
  subdivision of $\Delta$ that refines~$S(\theta_{n})$.
\end{proposition}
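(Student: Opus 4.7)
The plan is to verify admissibility in the sense of Definition~\ref{def:5} cell by cell, performing a case analysis on which of the three conditions \eqref{item:52}, \eqref{item:51}, or \eqref{item:53} is satisfied for each $n$-cell $D$ of $S(\theta_k)$. A preliminary observation I would establish first: since $S(\theta_k) \preceq S(\rho)$ and each $\rho_j$ is of the form $\vartheta_{\bfnu_j}$ (so every cell of $S(\rho_j)$ equals the convex hull of its intersection with $\cA_j$), the convex cell $D_j$ is a union of cells of $S(\rho_j)$, which forces $D_j = \conv(\cA_j \cap D_j)$. Consequently $\rank(L_{\bfcA_{D,J}}) = \dim(\sum_{j\in J} D_j)$ for every $J \subset \{0,\dots,n\}$, so the rank conditions on lattices translate into dimension conditions on Minkowski sums.

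For condition~\eqref{item:53}, I would pick $b \in \cB_{D,k}$ and let $C$ be the unique $n$-cell of $S(\rho)$ with $b - \delta \in C \subset D$, which by Proposition~\ref{prop:17} is also an $n$-cell of $S(\rho_D)$ with the same components. Since $i(b) = k$, one has $\dim(C_k) = 0$ and $\dim(C_i) \ge 1$ for $i > k$. If some $i < k$ had $\dim(C_i) = 0$, then $\sum_{j \ne i,k} C_j$ would coincide with $C$ up to translation, yielding $\dim(\sum_{j \ne i,k} D_j) \ge n$ via $C_j \subset D_j$, contradicting \eqref{item:53}. Hence $\dim(C_i) \ge 1$ for all $i \ne k$, and tightness of $S(\rho)$ forces each to equal one, making $C$ a $k$-mixed $n$-cell of $S(\rho_D)$, which is precisely \eqref{item:63} of Definition~\ref{def:5}.

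Condition~\eqref{item:51} follows directly: if $\dim(D_i) = 0$, then $\cA_i \cap D_i$ consists of a single vertex of $\Delta_i$, so $\{\cA_{D,i}\}$ is an essential subfamily of $\bfcA_D$ of size one. By Proposition~\ref{prop:12}, the fundamental subfamily of $\bfcA_D$ is the unique essential subfamily whenever it is nonempty, so it has at most one element, yielding \eqref{item:62}.

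The main obstacle is condition~\eqref{item:52}, under the assumption that neither \eqref{item:51} nor \eqref{item:53} holds. By the preliminary observation there is $J$ with $\rank(L_{\bfcA_{D,J}}) \le \#J - 2$. Taking $J_0 \subset J$ minimal with $\rank(L_{\bfcA_{D,J_0}}) \le \#J_0 - 1$, a standard minimality argument shows $\bfcA_{D,J_0}$ is essential, and since \eqref{item:51} fails one must have $\#J_0 \ge 2$. To establish \eqref{item:62} I would exhibit a second essential subfamily: the deficit
\begin{displaymath}
\rank(L_{\bfcA_{D,J}}) - \rank(L_{\bfcA_{D,J_0}}) \le \#(J \setminus J_0) - 1
\end{displaymath}
yields by pigeonhole an index $j^* \in J \setminus J_0$ with $\rank(L_{\bfcA_{D,J_0 \cup \{j^*\}}}) = \rank(L_{\bfcA_{D,J_0}})$; for any $j_0 \in J_0$ the set $(J_0 \setminus \{j_0\}) \cup \{j^*\}$ then has deficiency one, and a minimal deficient subset inside it must contain $j^*$ (otherwise it would be a proper deficient subset of $J_0$, contradicting minimality), producing a second essential subfamily $\bfcA_{D,J_1}$ with $J_1 \ne J_0$. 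The existence of two distinct essential subfamilies forces $\Elim_{\bfcA_D} = \pm 1$, since the two irreducible eliminants $\Elim_{\bfcA_{D,J_0}}$ and $\Elim_{\bfcA_{D,J_1}}$ involve genuinely different variables, so the hypersurfaces they cut out intersect in codimension at least two, precluding $\overline{\varpi(\Omega_{\bfcA_D})}$ from being a hypersurface. Thus the fundamental subfamily of $\bfcA_D$ is empty, establishing \eqref{item:62} and completing the proof.
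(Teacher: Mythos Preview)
Your handling of conditions~\eqref{item:51} and~\eqref{item:53} is correct and close to the paper's argument; the paper treats~\eqref{item:53} by the direct dimension count $\dim(C_i)=n-\sum_{j\ne i,k}\dim(C_j)\ge n-\sum_{j\ne i,k}\dim(D_j)>0$ rather than by contradiction, but your version is equivalent.

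The gap is in your treatment of condition~\eqref{item:52}. The pigeonhole step is wrong: from $\rank(L_{\bfcA_{D,J}})-\rank(L_{\bfcA_{D,J_0}})\le\#(J\setminus J_0)-1$ you cannot conclude that some single $j^*\in J\setminus J_0$ satisfies $\rank(L_{\bfcA_{D,J_0\cup\{j^*\}}})=\rank(L_{\bfcA_{D,J_0}})$. Adding the elements of $J\setminus J_0$ one at a time, some increment must vanish, but that only gives $L_{\cA_{D,j_\ell}}\subset (L_{\bfcA_{D,J_0\cup\{j_1,\dots,j_{\ell-1}\}}})_\R$, not $L_{\cA_{D,j_\ell}}\subset (L_{\bfcA_{D,J_0}})_\R$. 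Concretely: take $J_0=\{0,1\}$ with $D_0,D_1$ segments along $e_1$, and $J\setminus J_0=\{2,3\}$ with $D_2$ a segment along $e_1+e_2$ and $D_3$ a segment along $e_1-e_2$. Then $\rank(L_{\bfcA_{D,J}})=2$, $\rank(L_{\bfcA_{D,J_0}})=1$, so the deficit inequality holds with equality, yet $\rank(L_{\bfcA_{D,J_0\cup\{2\}}})=\rank(L_{\bfcA_{D,J_0\cup\{3\}}})=2\ne 1$. Your construction of $J_1$ therefore collapses.

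The paper avoids all of this with a one-line mixed-volume argument: for every $i$ one has $\dim(\sum_{j\in J\setminus\{i\}}D_j)\le\dim(\sum_{j\in J}D_j)<\#J-1\le\#(J\setminus\{i\})$, so by the vanishing criterion for mixed volumes $\MV_M(\{D_j\}_{j\ne i})=0$ for all $i$, whence $\deg_{\bfu_i}(\Res_{\bfcA_D})=0$ for all $i$ by~\eqref{eq:2} and $\Res_{\bfcA_D}=\pm 1$, giving~\eqref{item:62} directly. This is both shorter and avoids the essential-subfamily bookkeeping entirely.
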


\begin{proof}
  Let $k=0,\dots, n$ and $D \in S(\theta_{k})^{n}$. If this $n$-cell
  satisfies the condition~\eqref{item:52}, then for $i=0,\dots, n$ we
  have that
  \begin{displaymath}
    \dim\Big(\hspace{2mm} \sum_{\mathclap{j\in J\setminus \{i\}}} D_{j}\Big) \le \dim\Big(
    \sum_{j\in J} D_{j}\Big)<  \# J -1 \le  \#(J\setminus
  \{i\}).
\end{displaymath}
By the basic properties of the mixed volume,
$\MV_{M}(D_{0},\dots, D_{i-1},D_{i+1},\dots, D_{n})=0$. Hence
$\Res_{\bfcA_{D}}=\pm1$ thanks to the degree formula in \eqref{eq:2}
and so the fundamental subfamily of $\bfcA_{D}$ is empty. Thus in this
case $D$ satisfies the condition \eqref{item:62} in
Definition~\ref{def:5}.

If $D$ satisfies the condition \eqref{item:51}, then also
$\MV_{M}(D_{0},\dots, D_{j-1}, D_{j+1}, \dots, D_{n})=0$ for all
$j\ne i $ and so the fundamental subfamily of $\bfcA_{D}$ is either
empty or consists of the single support $\cA_{i}$. In both cases, $D$
also satisfies the condition \eqref{item:62} in Definition
\ref{def:5}.

Finally suppose that $D$ satisfies the condition \eqref{item:53}.  For
$b\in \cB_{D,k}$ let $C$ be the $n$-cell of $S(\rho_{D})$ such that
$b\in C+\delta$.  We have that $\dim(C_{k})=0$ and that
$C_{j}\subset D_{j}$ for all $j$, and so for each $i<k$ we have that
\begin{displaymath}
  \dim(C_{i}) = n-\sum_{j\ne i, k} \dim(C_{j}) =n-\dim\Big(\sum_{j\ne i, k} C_{j}\Big)
  \ge n-\dim\Big(\sum_{j\ne i, k} D_{j}\Big)>0.
\end{displaymath}
Since $b\in \cB_{D,k}$ we also have that $\dim(C_{i}) >0$ for all
$i>k$ and so $C$ is $k$-mixed. Hence in this case $D$ satisfies the
condition \eqref{item:63} in Definition \ref{def:5}. We conclude that
the incremental chain is admissible for any tight mixed subdivision of
$\Delta$ refining~$S(\theta_{n})$.
\end{proof}

As a consequence of this result, we deduce that tight incremental
chains of mixed subdivisions are admissible.

\begin{proposition}
  \label{prop:19}
  An incremental chain
  $S(\theta_{0})\preceq \dots\preceq S(\theta_{n})$ of mixed
  subdivisions of $\Delta$ with $S(\theta_{n})\preceq S(\rho)$
  {that is tight in the sense of Definition \ref{def:9}},
  satisfies the conditions in Proposition~\ref{prop:16}. In
  particular, it is admissible.
\end{proposition}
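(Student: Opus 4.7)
The plan is to verify, for each $k\in\{0,\dots,n\}$ and each $n$-cell $D$ of $S(\theta_k)$, that $D$ satisfies at least one of the conditions of Proposition~\ref{prop:16}; admissibility then follows from that result. In fact I expect to produce condition \eqref{item:51} or \eqref{item:53}, so \eqref{item:52} will not be needed.

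The first step is to unpack the tightness hypothesis from Definition~\ref{def:9} into a clean dimension equality for $D$. At stage $k$, tightness concerns the mixed subdivision of $\Delta$ induced by the $k+1$ convex piecewise affine functions $\theta_{k,0},\dots,\theta_{k,k-1}$ and $\bigboxplus_{i\ge k}\theta_{k,i}=0|_{\sum_{i\ge k}\Delta_i}$. Since $\theta_k$ equals this inf-convolution, the two subdivisions of $\Delta$ coincide; only the induced component structures differ. Applying Proposition~\ref{prop:10}\eqref{item:2} to both decompositions, together with the additivity of the face operator $\Lambda\mapsto \Lambda^v$ under Minkowski sums, I would identify the $(k+1)$-components of $D$ with $D_0,\dots,D_{k-1},\sum_{i\ge k}D_i$, where the $D_i$'s are the standard components of $D$ as an $n$-cell of $S(\theta_k)$. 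Definition~\ref{def:7} then reduces to
\begin{equation*}
\sum_{i=0}^{k-1}\dim(D_i)+\dim\!\Big(\sum_{i=k}^{n}D_i\Big)=n.
\end{equation*}

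A short case split concludes. If some $\dim(D_i)=0$, condition~\eqref{item:51} holds. Otherwise every $D_i$ is positive-dimensional; fix any $i<k$ (the claim is vacuous when $k=0$) and set $s=\sum_{j<k}\dim(D_j)$. By subadditivity of dimension under Minkowski sums and the displayed equality, which gives $\dim(\sum_{j>k}D_j)\le\dim(\sum_{j\ge k}D_j)=n-s$, one has
\begin{equation*}
\dim\!\Big(\sum_{j\ne i,k}D_j\Big)\le \sum_{j<k,\,j\ne i}\dim(D_j)+\dim\!\Big(\sum_{j>k}D_j\Big)\le (s-\dim(D_i))+(n-s)\le n-1,
\end{equation*}
so condition \eqref{item:53} holds.

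I do not anticipate a serious obstacle. The only subtlety is in the first step: one must carefully use Proposition~\ref{prop:10} to see that the ``lumped'' last component of the coarser $(k+1)$-part decomposition coincides with $\sum_{i\ge k}D_i$, and not merely with some superset of it. Once this identification is in place, the final dimension count is elementary.
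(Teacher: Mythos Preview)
Your proposal is correct and follows essentially the same approach as the paper: both establish the dimension equality $\sum_{j<k}\dim(D_j)+\dim\big(\sum_{j\ge k}D_j\big)=n$ from tightness, then split into the case where some component is a point (giving condition~\eqref{item:51}) versus the case where all $D_j$ with $j<k$ are positive-dimensional (giving condition~\eqref{item:53}) via subadditivity of dimension under Minkowski sums. Your explicit discussion of why the lumped last component equals $\sum_{i\ge k}D_i$ is a useful elaboration of a step the paper simply asserts.
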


\begin{proof}
  Since the incremental chain is tight, for $k=0,\dots, n$ and
  $D \in S(\theta_{k})^{n}$ we have that
  \begin{equation}
    \label{eq:32}
    \sum_{j=0}^{k-1}\dim(D_{j}) + \dim\Big( \sum_{j=k}^{n} D_{j}\Big) =n .
  \end{equation}
  If there is $i<k$ such that $D_{i}$ is a point, then $D$ satisfies
  the condition \eqref{item:51} in Proposition~\ref{prop:16}. Else
  $\dim(D_i)>0$ for all $i< k$ and so the equality in \eqref{eq:32}
  implies~that
  \begin{displaymath}
    \dim\Big(\sum_{j \ne i} D_{j}\Big) \le  \sum_{i\ne j <k} \dim(D_{j}) + \dim\Big( \sum_{j=k}^{n} D_{j}\Big)
= n - \dim(D_{i}) <n,   
  \end{displaymath}
  and so $D$ satisfies the condition \eqref{item:53} in Proposition
  \ref{prop:16}. This gives the first claim, whereas the second is an
  application of that proposition.
\end{proof}

\begin{corollary}
  \label{cor:2}
  The incremental chains of mixed subdivisions
  $ S(\theta_{0})\preceq \dots\preceq S(\theta_{n})$ of $\Delta$
  obtained by setting $s=n$ in Example \ref{exm:9}, are 
  admissible for $S(\theta_{n})$.
\end{corollary}

The next result gives the basic particular cases of Theorem
\ref{thm:9} that can be treated directly.  Recall that
$ m_{i}= \MV_{M}(\Delta_{0},\dots, \Delta_{i-1}, \Delta_{i+1},\dots,
\Delta_{n})$ for each $i$, as in \eqref{eq:18}.

\begin{proposition}
  \label{prop:18}
  Let $\bfcA_{J}$ be the fundamental subfamily of $\bfcA$. Then
\begin{enumerate}
\item \label{item:64} when $\bfcA_{J}=\emptyset $ we have that
  $\Res_{\bfcA}=\pm1$ and $H_{\bfcA,\bfrho}= E_{\bfcA,\bfrho}$,
\item \label{item:65} when $\bfcA_{J}$  consists  of a single support
  $\cA_{i}$, we have that $ \Res_{\bfcA}=\pm u_{i , a }^{m_{i }} $ and
  $ H_{\bfcA,\bfrho}=u_{i ,a }^{m_{i }} \, E_{\bfcA,\bfrho}$ for the
  unique lattice point  $a \in M$ such that $\cA_{i}=\{a \}$.
\end{enumerate}
\end{proposition}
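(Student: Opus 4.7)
The plan is to treat the two cases separately, reducing each to the combinatorics of lattice points in mixed $n$-cells as controlled by Lemma~\ref{lemm:10}.

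For part~\eqref{item:64}, the hypothesis $\bfcA_{J}=\emptyset$ forces $\Res_{\bfcA}\in \Z[\bfu_{\emptyset}]=\Z$, and primitivity yields $\Res_{\bfcA}=\pm1$. The partial degree formula~\eqref{eq:2} then gives $m_{i}=0$ for every $i$, and Lemma~\ref{lemm:10} shows that no lattice point of $\cB$ lies in any translated $i$-mixed $n$-cell of $S(\rho)$. Hence every $b\in \cB$ sits in an $n$-cell that is not $i$-mixed for any $i$, so $\cB^{\circ}=\cB$ and the principal submatrix $\cE_{\bfcA,\bfrho}$ coincides with $\cH_{\bfcA,\bfrho}$, proving $H_{\bfcA,\bfrho}=E_{\bfcA,\bfrho}$.

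For part~\eqref{item:65}, Proposition~\ref{prop:12} applied with $J=\{i\}$ gives $\rank(L_{\cA_{i}})=0$, which forces $\cA_{i}=\{a\}$ for some $a\in M$, and yields $d_{\bfcA}=m_{i}$ since $L_{\bfcA_{J}}=L_{\bfcA_{J}}^{\sat}=0$ and $M/L_{\bfcA_{J}}^{\sat}=M$. Combining this with Proposition~\ref{prop:8}, which identifies $\Elim_{\bfcA}$ with the sparse resultant of the single support $\{a\}$ in the rank-$0$ lattice (evidently equal to $\pm u_{i,a}$), and with the identity~\eqref{eq:48}, we obtain $\Res_{\bfcA}=\pm u_{i,a}^{m_{i}}$.

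For the determinantal identity in part~\eqref{item:65}, we carry out two parallel row expansions. Since $\cA_{i}=\{a\}$, Remark~\ref{rem:8} implies that for every $b\in \cB_{i}$ the row of $\cH_{\bfcA,\bfrho}$ indexed by $b$ has its unique nonzero entry on the diagonal, equal to $u_{i,a}$, as the condition $b'-b+a\in \{a\}$ forces $b'=b$. Expanding $H_{\bfcA,\bfrho}$ along these rows yields $H_{\bfcA,\bfrho}=u_{i,a}^{\#\cB_{i}}\cdot \det(\cH_{\bfcA,\bfrho,\cB\setminus \cB_{i}})$. Because $\Delta_{i}$ is a single point we have $m_{j}=0$ for every $j\ne i$, so Lemma~\ref{lemm:10} forbids any lattice point of $\cB$ from lying in a $j$-mixed cell with $j\ne i$; combined with the observation that a lattice point in any $j$-mixed cell necessarily belongs to $\cB_{j}$ (since $C_{j}$ is then the unique point-component, forcing $i(b)=j$), this gives $\cB\setminus \cB^{\circ}\subset \cB_{i}$, whence $\cB\setminus \cB_{i}\subset \cB^{\circ}$. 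The analogous row expansion of $\cE_{\bfcA,\bfrho}$ along its rows indexed by $\cB_{i}\cap \cB^{\circ}$ produces $E_{\bfcA,\bfrho}=u_{i,a}^{\#(\cB_{i}\cap \cB^{\circ})}\cdot \det(\cH_{\bfcA,\bfrho,\cB\setminus \cB_{i}})$, and the identity $\#\cB_{i}=m_{i}+\#(\cB_{i}\cap \cB^{\circ})$ from Lemma~\ref{lemm:10} then yields $H_{\bfcA,\bfrho}=u_{i,a}^{m_{i}}\cdot E_{\bfcA,\bfrho}$.

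The main subtlety is establishing the inclusion $\cB\setminus \cB_{i}\subset \cB^{\circ}$ in part~\eqref{item:65}, which combines the vanishing of the $m_{j}$ for $j\ne i$ with the combinatorial fact that the index $i(b)$ is uniquely determined by the mixed type of the enclosing $n$-cell whenever that cell is mixed.
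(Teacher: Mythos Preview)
Your proof is correct and follows essentially the same approach as the paper: both parts rest on the degree formula~\eqref{eq:2}, Lemma~\ref{lemm:10}, and the fact that rows of $\cH_{\bfcA,\bfrho}$ indexed by $\cB_i$ have a single diagonal entry $u_{i,a}$ when $\cA_i=\{a\}$. The paper streamlines part~\eqref{item:65} slightly by observing directly that $\cB^{\circ}=\cB\setminus \cC_i$ (where $\cC_i$ is the set of lattice points in translated $i$-mixed cells, of cardinality $m_i$), so a single expansion along the $\cC_i$ rows gives $H_{\bfcA,\bfrho}=u_{i,a}^{m_i}\,E_{\bfcA,\bfrho}$ in one step, rather than your two parallel expansions; but this is a cosmetic difference.
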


\begin{proof}
  The first claim in \eqref{item:64} is a direct consequence of the
  hypothesis that $J=\emptyset$.  The same hypothesis together with
  the degree formula in \eqref{eq:2} implies that $m_{j} =0$ for all
  $j$. Lemma \ref{lemm:10} then implies that $\cB^{\circ}=\cB$, which
  gives the second claim.

  For \eqref{item:65}, first note that, by the rank condition in Proposition \ref{prop:12}, if $\#J =1$ for the fundamental subfamily $\bfcA_J$, its unique element $\cA_i$ is a singleton. Then, in this case, $\bfu_{i }=\{u_{i ,a }\}$ and
  $m_{j}=0$ for all $j\ne i $. The first claim follows then from the
  fact that $\Res_{\bfcA}$ is a primitive homogeneous polynomial in
  $\Z[u_{i ,a }]$ of degree $m_{i }$.  The hypothesis that
  $\cA_{i }=\{a \}$ also implies that $S(\rho)$ has no $n$-cells that
  are $j$-mixed for $j\ne i $.  By Lemma \ref{lemm:10}, the subset
  $\cC_{i} \subset \cB_{i }$ of lattice points lying in the translated
  $n$-cells that are $i $-mixed has cardinality $m_{i }$. For each
  $b \in \cC_{i}$ we have that $\rc(b)=(i ,a )$ and so, for
  $b'\in \cB$,
  \begin{equation*}
    \cH_{\bfcA,\bfrho}[{b,b'}]=
    \begin{cases}
      u_{i ,a } & \text{ if } b'=b, \\
      0 & \text{ otherwise.}
    \end{cases}
  \end{equation*}
  Since $\cB^{\circ}=\cB\setminus \cC_{i}$ and $\cE_{\bfcA,\bfrho}$ is
  the principal submatrix of $\cH_{\bfcA,\bfrho}$ corresponding to
  this subset, we deduce the second claim.
\end{proof}

Now we are ready for the proof of the main result of this paper,
corresponding to Theorem \ref{thm:9} in the introduction.

\begin{theorem}
\label{thm:3}
  Suppose that $S(\rho)$ is admissible and let
  $S(\theta_{0})\preceq \dots\preceq S(\theta_{n})$ with
  $S(\theta_{n})\preceq S(\rho)$ be an admissible incremental chain
  for it. Then
\begin{displaymath}
  \Res_{\bfcA} =\pm
  \frac{H_{\bfcA,\bfrho}}{E_{\bfcA,\bfrho}} \and
  E_{\bfcA, \bfrho}= \prod_{D} E_{\bfcA_{ D}, \bfrho_{D}},
\end{displaymath}
the product in the second formula being over the $n$-cells $D$ of $S(\theta_{1})$.
\end{theorem}

\begin{proof}
  Let $S(\theta_{0})\preceq \dots\preceq S(\theta_{n})$ with
  $S(\theta_{n})\preceq S(\rho)$ be an admissible incremental chain,
  and for $k=0,\dots, n$ let $\theta_{k,j}\colon \Delta_{j}\to \R$,
  $j=0,\dots, n$, be the family of convex piecewise affine functions
  corresponding to $\theta_{k}$. Set also $\theta_{n+1}=\rho$.

  We prove by reverse induction on $k$ that for every $n$-cell $D$ of
  $ S(\theta_{k})$ we have that
  \begin{equation}
    \label{eq:74}
    \frac{H_{\bfcA_{D}, \bfrho_{D}}}{\Res_{\bfcA_{D}}}= \pm E_{\bfcA_{D}, \bfrho_{D}}.
  \end{equation}
  The {first statement} in the theorem corresponds to the case
  when $k=0$ and $D=\Delta$.

  For $k=n+1$ we note that $S(\theta_{n+1})=S(\rho)$ is tight. Hence
  for $D\in S(\theta_{n+1})$ we have that
  $\sum_{i=0}^{n}\dim(D_{i})=n$ and so $\dim(D_{i})=0$ for at least
  one $i$. This implies that the fundamental subfamily of $\bfcA_{D}$
  is either empty or consists of the single support
  $\cA_{i} \cap D_{i}$. Hence $\bfcA_{D}$ verifies the hypothesis of
  Proposition~\ref{prop:18}, and so the equality in \eqref{eq:74}
  follows from this result.

  Hence suppose that $0\le k\le n$ and let $D\in S(\theta_{k})$.  In
  case $D$ satisfies the condition~\eqref{item:62} in
  Definition~\ref{def:5}, the equality in~\eqref{eq:74} follows
  similarly from Proposition~\ref{prop:18}. If this does not happen,
  then $D$ satisfies the condition \eqref{item:63} in this definition,
  and so the subset $\cB_{D,k}$ is contained in the union of the
  translated $k$-mixed $n$-cells of $S(\rho_{D})$.

  By Proposition \ref{prop:22}, for all $i<k$ and every $n$-cell $C$
  of $S(\theta_{i})$ containing~$D$, the index set of the fundamental
  subfamily of $\bfcA_{C}$ contains that of $\bfcA_{D}$, and so this
  $n$-cell cannot satisfy the condition \eqref{item:62} in
  Definition~\ref{def:5}. Hence it satisfies the condition
  \eqref{item:63} in this definition and so $\cB_{C,i}$ is contained
  in the union of the translated $i$-mixed cells of
  $S(\rho_{C})$. From here we deduce that $\cB_{D,i}$ is contained in
  the union of the translated $i$-mixed cells of $S(\rho_{D})$ and as
  mentioned, the same happens for $i=k$. Together with
  Proposition~\ref{prop:14}, this implies that
  \begin{displaymath}
    E_{\bfcA_{ D}, \bfrho_{D}} \in \Z[\bfu_{k+1},\dots, \bfu_{n}] \and
\frac{H_{\bfcA_{ D}, \bfrho_{D}}}{\Res_{\bfcA_{ D}}} \in
    \Q(\bfu_{k+1},\dots, \bfu_{n}).
  \end{displaymath}
  Consider the vector $\bfomega\in \R^{\bfcA}$ defined by
  $\omega_{j,a}=\theta_{k+1,j}(a)$ for $j=0,\dots, n$ and
  $a\in \cA_{j}$.  Since the chain is incremental, we have that
  $\omega_{j,a}=0$ for $j\ge k+1$ and $a\in \cA_{j}$ and so
  \begin{displaymath}
    E_{\bfcA_{ D}, \bfrho_{D}}=\init_{\bfomega}(E_{\bfcA_{ D},
      \bfrho_{D}}) \and    \frac{H_{\bfcA_{ D}, \bfrho_{D}}}{\Res_{\bfcA_{ D}}}= \init_{\bfomega}\Big(
    \frac{H_{\bfcA_{ D}, \bfrho_{D}}}{\Res_{{\bfcA_{ D}}}}\Big) =  \frac{\init_{\bfomega}
      (H_{\bfcA_{ D}, \bfrho_{D}})}{\init_{\bfomega}(\Res_{\bfcA_{ D}})}.
  \end{displaymath}
Applying Theorem~\ref{mt1} and Theorem \ref{thm:2} for the subsets $\cB_{D}$ and $\cB^{\circ}_{D}$, we deduce that
\begin{equation}
  \label{eq:22}
\init_{\bfomega}(E_{\bfcA_{ D}, \bfrho_{D}})=
\prod_{D'} E_{\bfcA_{ D'}, \bfrho_{D'}}
\and
\frac{\init_{\bfomega}
      (H_{\bfcA_{ D}, \bfrho_{D}})}{\init_{\bfomega}(\Res_{\bfcA_{ D}})} =
  \prod_{D'}    \frac{H_{\bfcA_{ D'}, \bfrho_{D'}}}{\Res_{\bfcA_{ D'}}},
  \end{equation}
  both products being over the $n$-cells $D'$ of
  $S(\Theta_{\bfomega})=S(\theta_{k+1})$ that are contained in $D$.
  The equality in \eqref{eq:74} then follows from the inductive
  hypothesis.

  {The second statement follows from the first equality in
    \eqref{eq:22} applied to the case $k=0$ and $D=\Delta$.}
\end{proof}

Theorem \ref{thm:3} generalizes the Canny-Emiris conjecture since, by
Proposition \ref{prop:8}, the sparse eliminant coincides with the
sparse resultant of the fundamental subfamily of supports with respect
to the minimal lattice containing it.

\begin{remark}
  \label{rem:6}
  We can extend the Canny-Emiris construction to a larger class of
  matrices, following an idea of the first author in
  \cite{DAndrea:msfsr}. The study of its properties can be done
  similarly as for the original formulation, and so we only indicate
  the modifications.

  Let $\psi\colon \Lambda \to \R$ be a convex piecewise affine
  function on a polytope such that the mixed subdivision
  $S( \rho\boxplus \psi)$ on $\Delta+\Lambda$ is tight and its
  $(n-1)$-th skeleton does not contain any lattice point. The case
  treated in this paper corresponds  to the situation when
  $\Lambda=\{\delta\}$ and $\psi$ takes any value at this point.

  The index set is defined as $ \cB= ( \Delta+\Lambda) \cap M $.  For
  each $b\in \cB$ there is a unique $n$-cell $C$ of
  $S(\rho\boxplus \psi)$ containing it, and we denote by
  $C_{i}\in S(\rho_{i})$, $i=0,\dots, n$, and $B \in S(\psi)$ its
  components.  The tightness condition implies that there is always an
  $i$ such that $\dim(C_{i})=0$, in which case $C_{i}$ consists of a
  single lattice point of $\cA_{i}$.  We then set
\begin{displaymath}
  i(b) \in \{0,\dots, n\} \and a(b)\in \cA_{i(b)}
\end{displaymath}
for the largest of these $i$'s and the unique lattice point in the
corresponding component, respectively.  For $b\in \cB$ we have that
$b-a(b)+\cA_{i(b)}\subset \cB$ and so we can define a Sylvester map
$\Phi_{\bfcA,\bfrho,\psi} $ and the corresponding Canny-Emiris matrix
$\cH_{\bfcA,\bfrho,\psi}$ and determinant $H_{\bfcA,\bfrho,\psi}$ in
the same way as it was done before.

For $i\in \{0,\dots, n\}$, we say that an $n$-cell $C$ of
$S(\rho\boxplus \psi)$ is $i$-mixed if $\dim(C_{j})=1$ for all
$j\ne i$. The nonmixed index subset $\cB^{\circ} $ is the subset of
$\cB$ of lattice points lying in the $n$-cells that are not $i$-mixed
for any $i$, and we denote by $\cE_{\bfcA,\bfrho,\psi}$ and
$E_{\bfcA,\bfrho,\psi}$ the corresponding principal submatrix and
minor of the Canny-Emiris matrix.

Theorem \ref{thm:3} can then be extended to the statement that
\begin{displaymath}
  \Res_{\bfcA}= \pm \frac{H_{\bfcA,\bfrho,\psi}}{E_{\bfcA,\bfrho,\psi}}
\end{displaymath}
whenever the mixed subdivision $S(\rho)$ is admissible in the sense of
Definition~\ref{def:5}, together with a factorization for
$ E_{\bfcA, \bfrho,\psi}$ in terms of the $n$-cells of the second
mixed subdivision in an admissible chain for $S(\rho)$.
\end{remark}

\begin{remark}
  \label{rem:7}
  The extraneous factor $ E_{\bfcA, \bfrho}$ does not depend on the
  choice of the admissible chain
  $S(\theta_{0})\preceq \dots\preceq S(\theta_{n})$ but its
  factorization in Theorem \ref{thm:3} in principle depends on the
  second mixed subdivision $S(\theta_{1})$. One can go further and
  refine this factorization using the subsequent mixed subdivisions in
  this chain. It would be interesting to exhibit concrete cases where
  different admissible chains produce different factorizations for
  this extraneous factor.

  In addition, our factorization of $ E_{\bfcA, \bfrho}$ consists of a
  product of extraneous factors of smaller systems in the same
  dimension, whereas the factorizations presented by Macaulay in
  \cite[page 14]{Macaulay:sfe} and by the first author in
  \cite[(47)]{DAndrea:msfsr} consist of Canny-Emiris determinants and
  extraneous factors of systems in lower dimensions. It would be
  interesting to compare these approaches and put them into a more
  general framework.
\end{remark}

\section{Homogeneous resultants} \label{sec:homog-result-1}

The goal of this section is to show that Macaulay's classical formula
for the homogeneous resultant can be recovered as a particular case of
our construction.  Macaulay's row content function is given in terms
of the exponents of the monomials indexing the matrix, and we will
exhibit a family of affine functions on multiples of the standard
simplex whose associated mixed subdivision reflects this
idea. Unfortunately, this mixed subdivision is not tight in dimension
$\ge 3$ (Remark \ref{rem:11}) but we show that any tight refinement of
it will do the work (Proposition \ref{prop:2}). This is done by
constructing a chain of mixed subdivisions for this refinement that
is admissible (Proposition \ref{prop:15}).

\subsection{The classical Macaulay formula} \label{sec:class-maca-form}

In this section we describe the Macaulay formula for the homogeneous
resultant from \cite{Macaulay:sfe}.

Let $\bfd=(d_{0},\dots, d_{n})\in (\N_{>0})^{n+1}$. For $i=0,\dots, n$
let $\bfu_{i}=\{u_{i,\bfc}\}_{ |\bfc|=d_{i}}$ be a set of
${d_{i}+n\choose n}$ variables indexed by the lattice points
$\bfc \in \N^{n+1}$ of length $|\bfc|=d_{i}$, put
$\bfu=(\bfu_{0}, \dots, \bfu_{n})$ and denote by
\begin{displaymath}
\Res_{\bfd} \in \Z[\bfu]
\end{displaymath}
the corresponding {homogeneous} resultant as in Example \ref{exm:3}.

Let  $\bft=\{t_{0},\dots, t_{n}\}$  be a further set of $n+1$
variables. By \cite[Chapter~3, Theorem 2.3]{CLO05}, $\Res_{\bfd}$
is the unique irreducible polynomial in $\Z[\bfu]$
vanishing when evaluated at the coefficients of a system of $n+1$
homogeneous polynomials in the variables $\bft$ of degrees $\bfd$ if
and only if this system has a zero in the projective space
$\P^{n}_{\C}$, and verifying that
$\Res_{\bfd}(t_{0}^{d_{0}},\dots, t_{n}^{d_{n}})=1$.

Set $\K=\C(\bfu)$ and choose  an integer  $ m  \ge |\bfd| - n$.
For $i=0,\dots, n$ consider the general {homogeneous} polynomial in
the variables $\bft$ of degree $d_{i}$
\begin{displaymath}
  P_{i}=\sum_{|\bfc|=d_{i}} u_{i,\bfc}\, \bft^{\bfc} \in
  \K[\bft]
\end{displaymath}
and the linear subspace of the homogeneous part $\K[\bft]_{ m -d_{i}} $
given~by
\begin{equation}
  \label{eq:43}
  T_{i}=\{ Q_{i}\in \K[\bft]_{ m -d_{i}} \mid \deg_{t_{j}}(Q_{i})<d_{j}
  \text{ for } j=i+1,\dots, n\},
\end{equation}
where $\deg_{t_{j}}(Q_{i})$ denotes the degree of $Q_{i}$ in the
variable $t_{j}$. Set also $T= \K[\bft]_{ m }$ and consider the
linear map $\Psi_{\bfd, m }\colon \bigoplus_{i=0}^{n}T_{i}\to T$
defined by
\begin{equation*}
  \Psi_{\bfd, m }(Q_{0},\dots, Q_{n}) = \sum_{i=0}^{n} Q_{i}\, P_{i}.
\end{equation*}
Let $ \cI=\{\bfc\in \N^{n+1}\}_{ |\bfc|= m }$ be the \emph{index
  set}, and consider also the finite subsets
\begin{displaymath}
  \cI_{i}=\{\bfc=(c_{0},\dots, c_{n})\in \cI \mid c_{i}\ge
  d_{i} \text{ and } c_{j}<d_{j} \text{ for } j>i \}, \
  i=0,\dots, n,
\end{displaymath}
which form a partition of it.  Denoting by $\wh e_{i}$, $i=0,\dots, n$,
the vectors in the standard basis of $\R^{n+1}$,  the sets of
monomials
\begin{equation}
  \label{eq:58}
 \{\bft^{\bfc-d_{i}\, \wh e_{i}}\}_{ \bfc\in
    \cI_{i}}, \ i=0,\dots, n, \and   \{\bft^{\bfc}\}_{ \bfc\in \cI}
\end{equation}
are bases of $T_{i}$, $i=0,\dots, n$, and of $T$ respectively.  Then
we set
\begin{displaymath}
  \cM_{\bfd, m } \in \K^{\cI\times \cI}
\end{displaymath}
for the matrix of $\Psi_{\bfd, m }$ in terms of row vectors and with
respect to the monomial bases of $ \bigoplus_{i=0}^{n}T_{i}$ and of
$T$ given by \eqref{eq:58}, both indexed by $\cI$.

Set also $\cI^{\red}\subset \cI$ for the subset of lattice
points $\bfc=(c_{0},\dots, c_{n}) \in \cI$ with a unique $i$ such
that $c_{i}\ge d_{i}$. We then denote by
$\cI^{\circ}=\cI\setminus \cI^{\red}$  the \emph{nonreduced
  index subset} and by $\cN_{\bfd, m }$ the corresponding principal
submatrix of $\cM_{\bfd, m }$, The Macaulay formula for the
homogeneous resultant  \cite{Macaulay:sfe} then states that
\begin{equation}
  \label{eq:39}
  \Res_{\bfd}=\frac{\det(\cM_{\bfd, m })}{\det(\cN_{\bfd, m })},
\end{equation}
see \cite[Proposition 3.9.4.4]{Jouanolou:firf} for a modern treatment.

\begin{example}
  \label{exm:4}
Let $n=2$, $\bfd=(1,2,2)$ and $ m =|\bfd|-n=3$. The corresponding
  general homogeneous polynomials are
\begin{align*}
  P_{0}&= \alpha_{0}\, t_{0}+\alpha_{1}\, t_{1}+\alpha_{2}\, t_{2} , \\
  P_{1}&= \beta_{0}\, t_{0}^{2}+\beta_{1}\, t_{0}\, t_{1}+ \beta_{2}\,
         t_{1}^{2}+\beta_{3}\, t_{0}\, t_{2}+\beta_{4}\, t_{1}\,
         t_{2}+\beta_{5} \, t_{2}^{2}, \\
  P_{2}&= \gamma_{0}\, t_{0}^{2} +\gamma_{1}\, t_{0}\, t_{1}+ \gamma_{2}\,
         t_{1}^{2}+\gamma_{3}\, t_{0}\, t_{2}+\gamma_{4}\, t_{1}\,
         t_{2}+\gamma_{5} \, t_{2}^{2},
\end{align*}
and the index set splits as
$\cI=\cI_{0}\sqcup\cI_{1}\sqcup\cI_{2}$ with
\begin{align*}
  \cI_{0}&=\{(3,0,0),(2,1,0),(2,0,1),(1,1,1)\}, \\
  \cI_{1}&=\{(1,2,0),(0,3,0),(0,2,1)\}, \\
  \cI_{2} &=\{(1,0,2),(0,1,2),(0,0,3)\}.
\end{align*}
The matrix $\cM_{\bfd, m }$ is constructed by declaring that the row
corresponding to each lattice point $\bfc\in \cI_{i}$, $i=0,1,2$,
consists of the coefficients of the polynomials
$\bft^{\bfc-d_{i}\, \wh e_{i}}P_{i}$ in the monomial basis of the
homogeneous part $\K[\bft]_{3}$. Hence this matrix is written as
\begin{equation}
  \label{eq:57}
  {  \kbordermatrix{ & t_{0}^{3} & t_{0}^{2}\, t_{1} & t_{0}^{2}\,
      t_{2} & t_{0}\, t_{1}\, t_{2} & t_{0}\, t_{1}^{2} &    t_{1}^{3} & t_{1}^{2}\, t_{2} & t_{0}\, t_{2}^{2} & t_{1}\, t_{2}^{2} &    t_{2}^{3} \\
      t_{0}^{2}\,     P_{0}             & \alpha_{0}& \alpha_{1} & \alpha_{2} & 0&0&0&0&0&0&0\\
      t_{0}\, t_{1}\, P_{0}     &                 0   & \alpha_{0} &0 & \alpha_{2}& \alpha_{1}&0&0&0&0 &0\\
      t_{0}\,  t_{2}\, P_{0}     &0&0&\alpha_{0}&\alpha_{1}&0&0&0&\alpha_{2}& 0&0\\
      t_{1}\, t_{2}\, P_{0}     &0&0&0&\alpha_{0}&0&0&\alpha_{1}&0&\alpha_{2} &0 \\
      t_{0}\, P_{1}
      &\beta_{0}&\beta_{1}&\beta_{3}&\beta_{4}&\beta_{2}&0&0&\beta_{5}&0 &0 \\
      t_{1}\, P_{1}     &0&\beta_{0}&0&\beta_{3}&\beta_{1}&\beta_{2}&\beta_{4}&0& \beta_{5}&0\\
      t_{2}\, P_{1}
      &0&0&\beta_{0}&\beta_{1}&0&0&\beta_{2}&\beta_{3}&\beta_{4}&\beta_{5} \\
      t_{0}\,  P_{2}
      &\gamma_{0}&\gamma_{1}&\gamma_{3}&\gamma_{4}&\gamma_{2}&0&0&\gamma_{5}& 0&0 \\
      t_{1}\, P_{2}     &0&\gamma_{0}&0&\gamma_{3}&\gamma_{1}&\gamma_{2}&\gamma_{4}&0& \gamma_{5}&0\\
      t_{2}\, P_{2}
      &0&0&\gamma_{0}&\gamma_{1}&0&0&\gamma_{2}&\gamma_{3}&\gamma_{4}&\gamma_{5}
    }}.
\end{equation}
We have that $\cI\setminus \cI^{\red}= \{(1,2,0),(1,0,2)\}$ and
so
\begin{displaymath}
  \cN_{\bfd, m }
  =\begin{pmatrix}
    \beta_{2}& \beta_{5}\\
    \gamma_{2}& \gamma_{5}
    \end{pmatrix}.
\end{displaymath}
By the identity in \eqref{eq:39}, $ \Res_{\bfd}$ is the quotient of
the determinants of these matrices.  It is an irreducible
trihomogeneous polynomial in $\Z[\bfalpha, \bfbeta, \bfgamma]$ of
tridegree $(4,2,2)$ having $234$ monomial terms.
\end{example}

\subsection{A mixed subdivision on a
  simplex}\label{sec:mixed-regul-subd}

In this section we study a specific mixed subdivision of a  scalar
 multiple of the standard simplex of $\R^{n}$ that, in the next
one, will be applied to the analysis of the classical Macaulay formula
for the homogeneous resultant.

For $i=0,\dots, n$ consider the simplex
$ \Delta_{i} = \{\bfx\in (\R_{\ge0})^{n}\mid |\bfx| \le d_{i}\} $ and
the affine function $\varphi_{i}\colon \Delta_{i}\to \R$ defined by
\begin{equation}
  \label{eq:87}
  \varphi_{i}(\bfx)=
  \begin{cases}
    |\bfx|    & \text{ if } i=0, \\
    d_{i}-x_{i}& \text{ if } i>0,
  \end{cases}
\end{equation}
where $|\bfx|=\sum_{i=1}^{n}x_{i}$ denotes the {length} of the point
$\bfx=(x_{1},\dots, x_{n})\in \R^{n}$.  Set then
\begin{displaymath}
  \Delta=\sum_{i=0}^{n} \Delta_{i}=\{\bfx\in (\R_{\ge 0})^{n}\mid |\bfx|\le
  |\bfd|\} \and  \varphi=\bigboxplus_{i=0}^{n}\varphi_{i} \colon
  \Delta\longrightarrow \R
\end{displaymath}
for the Minkowski sum of these simplexes~and the inf-convolution of
these affine functions, respectively.

For a subset $I\subset\{0,\dots, n\}$ denote by
$I^{\cc}=\{0,\dots, n\}\setminus I$ its complement and consider the
polytope of $\Delta$ defined as
\begin{equation}
  \label{eq:61} C_{I}= \{\bfx\in \Delta \mid \ell_{i}(\bfx) \ge 0
\text{ for } i \in I \text{ and } \ell_{i}(\bfx) \le 0 \text{ for } i \in I^{\cc}\},
\end{equation}
where $\ell_{i}\colon \Delta\to \R$ is the affine function given by
$\ell_{i}(\bfx)= \sum_{j=1}^{n}d_{j} - |\bfx| $ when $i=0$ and by
$\ell_{i}(\bfx)=x_{i}-d_{i}$ when $i>0$.  For a subset $J\subset
I^{\cc}$ and an index $l\in I$
consider the lattice point defined as
    \begin{equation*}
   v_{J,l}=\Big( \sum_{j\in J} d_{j}\Big)  e_{l} + \sum_{j\in
        J^{\cc}}d_{j}\,  e_{j}
  \end{equation*}
   with $e_{i}$ equal to the $i$-th vector in the standard basis of
  $\R^{n}$ when $i>0$ and $e_{0}=\bfzero\in \R^{n}$.  For
  $i=0,\dots, n$ consider also the face of $ \Delta_{i}$ defined~as
\begin{equation}
  \label{eq:25}
  C_{I,i}=
  \begin{cases}
    d_{i}\, e_{i} & \text{ if } i\in I, \\
    d_{i} \conv( e_{i}, \{e_{j}\}_{j\in I}) & \text{ if } i\in I^{\cc}.
  \end{cases}
\end{equation}

The next result collects the basic information about the mixed
subdivision $S(\varphi)$ of $\Delta$ that we need for the study of the
Macaulay formula for the homogeneous resultant.

\begin{proposition}
  \label{prop:11}
  We have that $\varphi=\sum_{i=0}^{n}\max\{0,\ell_{i}\}$ and the
  $n$-cells of $S(\varphi)$ are the polytopes $C_{I}$ for
  $I\subset \{0,\dots, n\}$ with $I,I^{\cc}\ne \emptyset$.  Moreover,
  for each $I$ we have that
  \begin{enumerate}
  \item \label{item:17} the vertices of $C_{I}$ are the lattice points
    $v_{J,l}$ for $J\subset I^{\cc}$ and $l \in I$,
    \item \label{item:42} the components of $C_{I}$
  are the polytopes $C_{I,i}$, $i=0,\dots, n$,
  \item \label{item:18}
    $\sum_{i=0}^{n} \dim(C_{I,i})=\# I \cdot\#I^{\cc}$.
  \end{enumerate}
\end{proposition}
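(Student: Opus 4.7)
The plan is to identify the $n$-cells of $S(\varphi)$ together with their components via explicit witness vectors in $N_{\R}$, and then deduce the function formula and the vertex/dimension descriptions. For each proper nonempty $I\subset\{0,\dots,n\}$, define $v_I\in N_{\R}$ as follows: if $0\in I$, set $v_{I,j}=0$ for $j\in I\setminus\{0\}$ and $v_{I,j}=1$ for $j\in I^{\cc}$; if $0\notin I$, set $v_{I,j}=-1$ for $j\in I$ and $v_{I,j}=0$ for $j\in I^{\cc}\setminus\{0\}$. Since each $\varphi_i$ is affine on the simplex $\Delta_i$ whose vertex set is $\{0,d_ie_1,\dots,d_ie_n\}$, the cell $C(\varphi_i,v_I)$ is the convex hull of the vertices minimizing $\varphi_i+\langle v_I,\cdot\rangle$. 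Evaluating at each vertex and doing a case analysis on whether $i\in I$ or $i\in I^{\cc}$, and whether $0\in I$ or $0\notin I$, yields $C(\varphi_i,v_I)=C_{I,i}$. By Proposition~\ref{prop:10}, this proves item~\eqref{item:42} and simultaneously gives $C(\varphi,v_I)=\sum_iC_{I,i}$.

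To see that the $C_I$'s are precisely the $n$-cells of $S(\varphi)$, note that $\sum_{i=0}^n\ell_i\equiv 0$ on $\Delta$, so every $\bfx\in\Delta$ with $\ell_i(\bfx)\ne 0$ for all $i$ lies in a unique $C_I$ with $I=\{i:\ell_i(\bfx)>0\}$, which is automatically proper and nonempty. The $C_I$'s thus partition $\Delta$ generically and, being themselves $n$-cells of $S(\varphi)$, exhaust all of them. The identity $\varphi=\sum_{i=0}^n\max\{0,\ell_i\}$ now follows by observing that on each $C_I$ both sides are affine with common gradient $-v_I$ (by the support-function identity for $\varphi$ on $C(\varphi,v_I)$, and by direct computation noting that the right-hand side restricts to $\sum_{i\in I}\ell_i$ there), and both vanish at the point $(d_1,\dots,d_n)$, a common vertex of all $C_I$ where every $\ell_i=0$.

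For item~\eqref{item:18}: $\dim(C_{I,i})=0$ when $i\in I$, while for $i\in I^{\cc}$ the $\#I+1$ points $d_ie_i$ and $\{d_ie_j\}_{j\in I}$ are affinely independent (being distinct standard basis vectors together with at most one copy of $0=e_0$), giving $\dim(C_{I,i})=\#I$ and the sum $\#I\cdot\#I^{\cc}$. For item~\eqref{item:17}, I apply the standard characterization that a point $\sum_iw_i$ with $w_i$ a vertex of $C_{I,i}$ is a vertex of $C_I=\sum_iC_{I,i}$ iff there exists $w\in N_{\R}$ with $w_i$ the unique minimizer of $\langle w,\cdot\rangle$ on $C_{I,i}$ for every $i$. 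A short analysis (ruling out distinct choices of $\sigma(i)\in I$ for different $i\in I^{\cc}$, since two such choices $l,l'$ would force $w_l<w_{l'}$ and $w_{l'}<w_l$) shows that admissible selections are parametrized by pairs $(J,l)$ with $J\subset I^{\cc}$ and $l\in I$, via $w_i=d_ie_l$ for $i\in J$ and $w_i=d_ie_i$ otherwise; the resulting sum is $v_{J,l}$. The main obstacle is the case analysis in the first paragraph, which requires carefully tracking which vertices of $\Delta_i$ attain the minimum of $\varphi_i+\langle v_I,\cdot\rangle$ across the four membership combinations.
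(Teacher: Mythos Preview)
Your witness vectors $v_I$ are correct and the verification that $C(\varphi_i,v_I)=C_{I,i}$ goes through, so Proposition~\ref{prop:10} indeed gives $C(\varphi,v_I)=\sum_i C_{I,i}$ with components $C_{I,i}$. The gap is in the second paragraph: you assert that the polytopes $C_I$ (defined by the sign conditions on the $\ell_i$) are themselves $n$-cells of $S(\varphi)$, but you have only shown this for the a priori different polytopes $\sum_i C_{I,i}=C(\varphi,v_I)$. The identification $C_I=\sum_i C_{I,i}$ is never established, and everything downstream depends on it. Your function-formula argument compares the gradient of $\varphi$ on $C(\varphi,v_I)$ with that of $\sum_{i\in I}\ell_i$ on $C_I$; without knowing these two sets coincide, matching gradients and a common value at $(d_1,\dots,d_n)$ does not yield $\varphi=\sum_i\max\{0,\ell_i\}$. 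Your argument for item~\eqref{item:17} likewise computes the vertices of the Minkowski sum $\sum_i C_{I,i}$, not of the inequality-defined $C_I$, so it too presupposes the missing identification.

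The fix is to supply one inclusion directly. One option is to check coordinatewise that $\sum_i C_{I,i}\subset C_I$ (for each $k\in\{1,\dots,n\}$ and for $\ell_0$ separately, verify the sign of $\ell_k$ on a generic point of the Minkowski sum); since the $v_I$ are pairwise distinct, the $C(\varphi,v_I)$ are distinct $n$-cells, and combined with the fact that the $C_I$ cover $\Delta$ with disjoint interiors this forces equality. The paper takes the mirror route: it first computes the vertices of $C_I$ from the inequality description (by intersecting supporting hyperplanes), then checks each such vertex lies in $\sum_i C_{I,i}$, giving $C_I\subset\sum_i C_{I,i}$, and closes with the same covering argument. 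The paper does all of this after lifting to $\R^{n+1}$ via $z_0=|\bfd|-|\bfx|$, which puts the index $0$ on equal footing with the others and eliminates the $0\in I$ versus $0\notin I$ case split you flag at the end; that lift is not essential, but it is exactly what makes the ``tedious case analysis'' disappear.
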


\begin{example}
  \label{exm:6}
  For $n=2$, the mixed subdivision $S(\varphi)$ has $6$ maximal cells
  that, with notation as in Figure~\ref{fig:1}, decompose as
\begin{align*}
  C_{\{0\}}& =  (0,0)+ A_{1}+C_{2}, & C_{\{0,1\}}& = (0,0)+(d_{1},0)+\Delta_{2},\\
  C_{\{1\}}& =A_{0}+(d_{1},0)+B_{2},  & C_{\{1,2\}}& = \Delta_{0}+(d_{1},0)+(0,d_{2}), \\
  C_{\{2\}}& =  C_0 + B_1 + (0,d_2), & C_{\{0,2\}}& = (0,0)+\Delta_{1}+(0,d_{2}).
\end{align*}

\vspace*{-3mm}
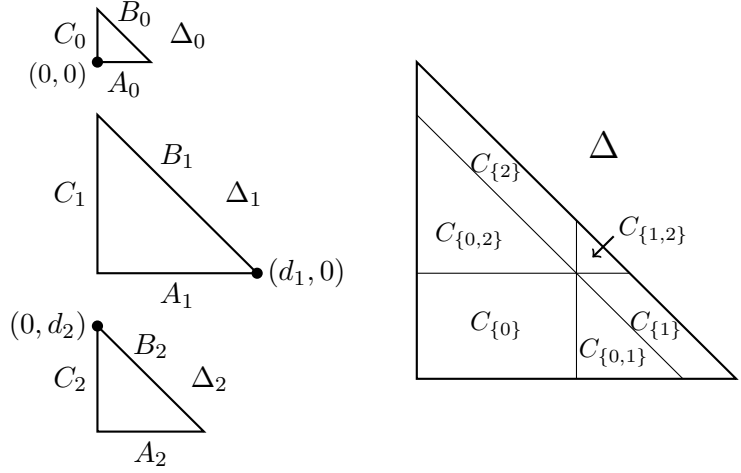
\begin{figure}[ht]
  \centering
\begin{tikzpicture}[scale=0.7]
\draw[thick, shift={(-6,6)}] (0,0)--(1,0)--(0,1)--cycle;
\draw[thick, shift={(-6,2)}] (0,0)--(3,0)--(0,3)--cycle;
\draw[thick, shift={(-6,-1)}] (0,0)--(2,0)--(0,2)--cycle;
\draw[thick] (0,0)--(6,0)--(0,6)--cycle;
\draw (0,5)--(5,0);
\draw (3,0)--(3,3);
\draw (0,2)--(4,2);
\draw (-5.5,6) node[below] {$A_0$};
\draw (-5.3,6.5) node[above] {$B_0$};
\draw (-6,6.5) node[left] {$C_0$};
\draw (-6,6.2) node[below left] {{\small $(0,0)$ }};
\draw (-4.3,6.5) node {$\Delta_0$};
\fill (-6,6) circle (3pt);
\draw (-4.5,2) node[below] {$A_1$};
\draw (-4.5,3.8) node[above] {$B_1$};
\draw (-6,3.5) node[left] {$C_1$};
\fill (-3,2) circle (3pt) node[right] {$(d_1,0)$};
\draw (-3.25,3.5) node {$\Delta_1$};
\draw (-5,-1) node[below] {$A_2$};
\draw (-5,0.2) node[above] {$B_2$};
\draw (-6,0) node[left] {$C_2$};
\fill (-6,1) circle (3pt) node[left] {$(0,d_2)$};
\draw (-3.9,0) node {$\Delta_2$};
\draw (1.5,1) node {{\small $C_{\{0\}}$}};
\draw (3.7,0.5) node {{\small $C_{\{0,1\}}$}};
\draw (4.5,1) node {{\small $C_{\{1\}}$}};
\draw (3.6,2.8) node[right] {{\small $C_{\{1,2\}}$}};
\draw[thick,->] (3.7, 2.7)-- (3.3, 2.3);
\draw (1.5,4) node {{\small $C_{\{2\}}$}};
\draw (1,2.7) node {{\small $C_{\{0,2\}}$}};
\draw (4,4) node[above left] {{\Large $\Delta$}};
\end{tikzpicture}
\vspace{-3mm}
\caption{A mixed subdivision in dimension 2}
  \label{fig:1}
\end{figure}
\end{example}

To prove Proposition \ref{prop:11}, we lift the previous constructions to $\R^{n+1}$.
For $i=0,\dots, n$ consider the simplex
$\wh \Delta_{i}= \{\bfz=(z_{0},\dots, z_{n})\in (\R_{\ge0})^{n+1}\mid |\bfz| =
d_{i}\}$ and the affine function
$\wh\varphi_{i}\colon \wh \Delta_{i}\to \R$ defined by
$ \wh\varphi_{i}(\bfz)=d_{i}-z_{i}$, and set
\begin{displaymath}
  \wh\Delta=\sum_{i=0}^{n} \wh\Delta_{i}=\{\bfz\in (\R_{\ge0})^{n+1}\mid
  |\bfz|=|\bfd|\} \and
  \wh\varphi=\bigboxplus_{i=0}^{n}\wh\varphi_{i} \colon
\wh\Delta\longrightarrow \R
\end{displaymath}
for the Minkowski sum of these simplexes and for the inf-convolution
of these affine functions, respectively.

For a subset $I\subset\{0,\dots, n\}$ consider the polytope of
$\wh \Delta$ defined as
    \begin{displaymath}
      \wh C_{I}= \{\bfz\in \wh\Delta \mid z_{i}\ge d_{i}
      \text{ for } i \in I \text{ and } z_{i}\le d_{i} \text{ for } i \in I^{\cc}\},
    \end{displaymath}
    and for each subset $J\subset I^{\cc}$ and each index $l\in I$
    consider the lattice point defined as
    \begin{equation}
    \label{eq:64}
    \wh   v_{J,l}=\Big( \sum_{j\in J} d_{j}\Big) \, \wh e_{l} + \sum_{j\in
        J^{\cc}}d_{j}\, \wh e_{j},
  \end{equation}
  where $\wh e_{i}$ denotes the $(i+1)$-th vector in the standard
  basis of $\R^{n+1}$.  For $i=0,\dots, n$ consider also the face of
  $\wh\Delta_{i}$ defined as
  \begin{equation}
\label{eq:89}
\wh   C_{I,i}=
  \begin{cases}
    d_{i}\, \wh e_{i} & \text{ if } i\in I, \\
    d_{i} \conv( \wh e_{i}, \{\wh e_{j}\}_{j\in I}) & \text{ if } i\in I^{\cc}.
  \end{cases}
\end{equation}
For each $i$ consider also the convex piecewise affine function
$\wh\eta_{i}\colon \wh \Delta\to \R$ defined by
$\wh\eta_{i}(\bfz)=\max\{0,z_{i}-d_{i}\}$ and set
$\wh \tau=\sum_{i=0}^{n}\wh\eta_{i}$.

\begin{lemma}
  \label{lemm:12}
  The $n$-cells of $S(\wh\tau)$ are the polytopes $\wh C_{I}$ for
  $I\subset \{0,\dots, n\}$ with $I,I^{\cc}\ne \emptyset$. For each
  $I$, the vertices of $\wh C_{I}$ are the lattice points
  $ \wh v_{J,l}$ for $J\subset I^{\cc}$ and $l\in I$.
\end{lemma}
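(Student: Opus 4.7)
The plan is to analyze the cells of $S(\wh\tau)$ directly from the structure of the summands. Each $\wh\eta_i$ has two domains of linearity in $\wh\Delta$, namely $\{\bfz\in \wh\Delta: z_i\le d_i\}$ where $\wh\eta_i\equiv 0$, and $\{\bfz\in \wh\Delta: z_i\ge d_i\}$ where $\wh\eta_i(\bfz)=z_i-d_i$. Since $\wh\tau$ is a sum of convex piecewise affine functions, its maximal regions of linearity are given by the common refinement of those of the summands, hence they are indexed by the sign patterns of $z_i-d_i$ for $i=0,\dots,n$. Precisely, these are the polytopes $\wh C_I$ defined in the excerpt, and on each of them $\wh\tau(\bfz)=\sum_{i\in I}(z_i-d_i)$. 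Hence the cells of $S(\wh\tau)$ are exactly the nonempty $\wh C_I$ (and their faces).

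The next step is to determine which $\wh C_I$ are $n$-dimensional. For $I=\emptyset$ (respectively $I=\{0,\dots,n\}$), the constraints $z_i\le d_i$ (resp.\ $z_i\ge d_i$) combined with $|\bfz|=|\bfd|$ force $\bfz=(d_0,\dots,d_n)$, so these cells reduce to a single point. For $I,I^{\cc}$ both nonempty, I would verify $n$-dimensionality by exhibiting the point $(d_0,\dots,d_n)$ together with, for a fixed $l\in I$, the points $(d_0,\dots,d_n)+d_i(\wh e_l-\wh e_i)$ for $i\in I^{\cc}$; combined with variations over $l\in I$, these span $n$ independent directions inside the affine hyperplane $|\bfz|=|\bfd|$.

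For the vertex characterization, a vertex of $\wh C_I$ is a point where $n$ linearly independent inequalities among $z_i\ge d_i$ for $i\in I$ and $0\le z_i\le d_i$ for $i\in I^{\cc}$ are active, in addition to the equality $|\bfz|=|\bfd|$. Since $d_i>0$, at most one constraint per index $i$ can be active, so exactly one index $l$ contributes no active constraint. I claim $l\in I$: if instead $l\in I^{\cc}$, setting $J=\{j\in I^{\cc}\setminus\{l\}: z_j=0\}$, the identity $|\bfz|=|\bfd|$ forces $z_l=d_l+\sum_{j\in J}d_j\ge d_l$, contradicting $z_l<d_l$ unless $J=\emptyset$ and $z_l=d_l$; but then the constraint $z_l\le d_l$ is in fact active and the vertex coincides with one obtained by taking $l\in I$. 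Writing then $J=\{j\in I^{\cc}: z_j=0\}$, a direct computation from $|\bfz|=|\bfd|$ yields $z_l=d_l+\sum_{j\in J}d_j$ and $z_j=d_j$ for $j\in I\cup(I^{\cc}\setminus J)$ with $j\ne l$, so the vertex equals $\wh v_{J,l}$ as in \eqref{eq:64}.

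The main obstacle is the sign analysis ruling out potential vertices arising from $l\in I^{\cc}$, and handling carefully the degenerate point $(d_0,\dots,d_n)=\wh v_{\emptyset,l}$, which is independent of $l\in I$ and at which $n+1$ constraints are active. The converse direction --- that every $\wh v_{J,l}$ with $J\subset I^{\cc}$ and $l\in I$ is indeed a vertex of $\wh C_I$ --- then follows by substitution: the point lies in $\wh C_I$, and the $n$ constraints activated at it, namely $z_i=d_i$ for $i\in I\setminus\{l\}$ together with $z_i\in\{0,d_i\}$ for $i\in I^{\cc}$, are linearly independent.
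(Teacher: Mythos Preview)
Your proposal is correct and follows essentially the same approach as the paper: both identify the $n$-cells of $S(\wh\tau)$ as the common refinement of the two-piece subdivisions $S(\wh\eta_i)$, dispose of the degenerate cases $I=\emptyset$ and $I^{\cc}=\emptyset$ via the relation $\sum_i(z_i-d_i)=0$ on $\wh\Delta$, and compute the vertices by intersecting $n$ supporting hyperplanes chosen from $\{z_j=0\}_{j\in I^{\cc}}$ and $\{z_j=d_j\}_{j}$. The only cosmetic difference is that the paper verifies $n$-dimensionality by exhibiting a single interior point of $\wh C_I$, whereas you exhibit a collection of boundary points whose directions span the tangent hyperplane; both arguments are equally short.
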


\begin{proof}
  For each $i$, the subdivision $S(\wh \eta_{i})$ has two $n$-cells,
  one defined by $z_{i}\ge d_{i}$ and the other by $z_{i}\le d_{i}$.
  The $n$-cells of $S( \wh\tau)$ are intersections of $n$-cells of
  these subdivisions and so they are of the form $\wh C_{I}$ for
  $I\subset \{0,\dots, n\}$.

  We have that $\sum_{i=0}^{n} z_{i}-d_{i}=0$ on $\wh \Delta$, and so
  if either $I= \emptyset$ or $I^{\cc}=\emptyset$ then the polytope
  $\wh C_{I}$ reduces to the lattice point $\bfd$.  Otherwise, take
  $0<\varepsilon < 1$ and consider the point
  $\bfz=(z_{0},\dots, z_{n})$ defined by
  $z_{j}= d_{j}+\frac{\varepsilon}{\#I}$ if $ j\in I$ and as
  $ z_{j}=d_{j}-\frac{\varepsilon}{\#I^{\cc}}$ if $ j\in I^{\cc}$.  We
  have that $|\bfz|=|\bfd|$ and that $z_{j}>0$ for all $j$, and so
  $\bfz\in \ri(\wh\Delta)$. Also for each $i$ we have that $\bfz$ lies
  in the relative interior of the corresponding $n$-cell of
  $S(\wh\eta_{i})$. Hence $\wh C_{I}$ is $n$-dimensional in this case,
  proving the first claim.

  Now let $I\subset \{0,\dots, n\}$ with $I,I^{\cc}\ne \emptyset$.
  The vertices of $\wh C_{I}$ are the intersections in this polytope
  of $n$ of its supporting hyperplanes. These supporting hyperplanes are the zero set of one of the affine functions
  \begin{displaymath}
z_{j}, \ j\in I^{\cc}, \and z_{j}-d_{j}, \ j=0,\dots, n .
  \end{displaymath}
  To compute the vertices, take disjoint subsets $J\subset I^{\cc}$
  and $K\subset \{0,\dots, n\}$ with $\#J+\#K =n$. The intersection of
  the corresponding hyperplanes in the affine span of $\wh\Delta$ is
  \begin{displaymath}
    \{\bfz\in  \R^{n+1} \mid  |\bfz|=|\bfd|, \
    z_{j}=0 \text{ for } j\in J \text{ and } z_{j}=d_{j} \text{ for }  j\in K \}
  \end{displaymath}
  and it consists of the lattice point
  \begin{displaymath}
    \wh v_{J,l}   = \Big(|\bfd|-\sum_{j\in K} d_{j}\Big) \,
    \wh e_{l} + \sum_{j\in K} d_{j}\, \wh e_{j}
  \end{displaymath}
 for the unique index $l$ in the complement of
  $J \cup K$.

  When $J\ne\emptyset$ we have that $\wh v_{J,l} \in \wh C_{I}$ if and
  only if $l\in I$. When $J=\emptyset$ we have that
  $\wh v_{J,l}=\bfd$, which is also realized by taking $l\in I$ and
  $K=\{0, \dots, n\}\setminus \{l\}$, proving the second claim.
\end{proof}

Recall that for a convex piecewise affine function
$\rho\colon \Delta \to \R$ on a polyhedron $\Delta$ of $ \R^{n+1}$
and a vector $w\in \R^{n+1}$ we denote by $\Gamma(\rho,w)$ the
corresponding cell of the subdivision $S(\rho)$ of $\Delta $ as in
\eqref{eq:35}.

\begin{lemma}
  \label{lemm:2}
  Let $I\subset\{0,\dots, n\}$ with $I,I^{\cc}\ne \emptyset$ and set
  $w_{I}=-\sum_{k\in I}\wh e_{k}$. Then
  \begin{enumerate}
  \item \label{item:29} $\wh C_{I}=\Gamma(\wh\tau, w_{I})$ and for
     $\bfz\in \wh C_{I}$  we have that
    $\wh\tau(\bfz)=\sum_{j\in I}z_{j}-d_{j}$,
  \item \label{item:43} for $i=0,\dots, n$ we have that
    $\wh C_{I,i}=\Gamma(\wh \varphi_{i},w_{I})$ and for    $\bfz\in \wh C_{I,i}$ we have that
    $\wh\varphi_{i}(\bfz)= 0$ if $i\in I$ and
    $\wh\varphi_{i}(\bfz)= \sum_{j\in I} z_{j}$ if $i\in I^{\cc}$,
  \item \label{item:44} $\wh C_{I}=\sum_{i=0}^{n}\wh C_{I,i}$,
  \item \label{item:46} $\wh \varphi=\wh\tau$.
  \end{enumerate}
\end{lemma}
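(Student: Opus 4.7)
The plan is to establish the four assertions in the order (1), (2), (4), (3), the last being a short corollary of the previous three combined with Proposition~\ref{prop:10}.

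For (1), I will unfold $\wh\tau(\bfz) + \langle w_I, \bfz \rangle = \sum_{i} \max(z_i - d_i, 0) - \sum_{i \in I} z_i$ and, using the identity $\max(z_i - d_i, 0) - z_i = -\min(z_i, d_i)$ valid for $i \in I$, rewrite it as
\begin{equation*}
-\sum_{i \in I} \min(z_i, d_i) + \sum_{i \in I^{\cc}} \max(z_i - d_i, 0).
\end{equation*}
The trivial bounds $\min(z_i, d_i) \le d_i$ and $\max(z_i - d_i, 0) \ge 0$ show this expression is $\ge -\sum_{i \in I} d_i$, with equality exactly when $z_i \ge d_i$ for every $i \in I$ and $z_i \le d_i$ for every $i \in I^{\cc}$, i.e.\ when $\bfz \in \wh C_I$. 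By the cell characterization \eqref{eq:79}, this both identifies $\wh C_I = C(\wh\tau, w_I)$ and yields $\wh\tau(\bfz) = \sum_{j \in I}(z_j - d_j)$ on $\wh C_I$.

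Part (2) is a direct optimization of the affine form
\begin{equation*}
\wh\varphi_i(\bfz) + \langle w_I, \bfz \rangle = d_i - z_i - \sum_{k \in I} z_k
\end{equation*}
over the simplex $\wh\Delta_i$, amounting to maximizing $z_i + \sum_{k \in I} z_k$ subject to $|\bfz| = d_i$, $\bfz \ge 0$. When $i \in I$ the coefficient of $z_i$ is $2$ and strictly dominates the others, so the unique optimizer is the vertex $d_i\,\wh e_i$; when $i \in I^{\cc}$ the active coefficients all equal $1$, so the optimal face is $d_i\,\conv(\wh e_i, \{\wh e_k\}_{k \in I})$. These coincide with the lifted polytopes in \eqref{eq:25}, and reading off $\wh\varphi_i$ at such points gives the claimed formulas.

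For (4), I will compute $\wh\varphi(\bfz)$ from its defining infimum. Writing $z_{i,j}$ for the $j$-th coordinate of $\bfz_i$, one has $\wh\varphi(\bfz) = |\bfd| - \sup\bigl\{\sum_j z_{j,j} : \bfz_j \in \wh\Delta_j,\ \sum_j \bfz_j = \bfz\bigr\}$. The constraints force $z_{j,j} \le \min(z_j, d_j)$, whence $\wh\varphi(\bfz) \ge |\bfd| - \sum_j \min(z_j, d_j) = \wh\tau(\bfz)$. To realize equality I will fix $z_{j,j} = \min(z_j, d_j)$ and choose the off-diagonal entries $(z_{i,j})_{i \ne j}$ as any nonnegative matrix with row sums $\max(d_i - z_i, 0)$ and column sums $\max(z_j - d_j, 0)$; the two marginals share the common total $|\bfd| - \sum_j \min(z_j, d_j)$, so such a matrix exists by the elementary transportation lemma. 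The only nontrivial point is this feasibility construction; the two inequalities are formal.

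Finally (3) is immediate: by (4) together with (1), $\wh C_I$ is an $n$-cell of $S(\wh\varphi)$, and by (2) with Proposition~\ref{prop:10}\eqref{item:2} its components are exactly the $\wh C_{I,i}$; Proposition~\ref{prop:10}\eqref{item:3} then yields $\wh C_I = \sum_i \wh C_{I,i}$.
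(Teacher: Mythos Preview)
Your proposal is correct. Parts (1) and (2) follow essentially the same computation as the paper. The genuine difference is in the treatment of (3) and (4), where you reverse the order. The paper first establishes (3) by checking that every vertex of $\wh C_{I}$ (as listed in Lemma~\ref{lemm:12}) lies in $\sum_{i}\wh C_{I,i}$, then observes that this sum is a cell of $S(\wh\varphi)$ by Proposition~\ref{prop:10} and concludes equality via a covering argument; with (3) in hand, (4) is obtained by evaluating $\wh\varphi$ on each $\wh C_{I}$ using (1), (2), (3) and Proposition~\ref{prop:10}\eqref{item:5}. You instead prove (4) directly by rewriting the inf-convolution as a transportation problem and invoking the elementary feasibility of nonnegative matrices with prescribed compatible marginals, and then read off (3) from Proposition~\ref{prop:10}\eqref{item:2}--\eqref{item:3}. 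Your route is self-contained and bypasses the vertex enumeration of Lemma~\ref{lemm:12} entirely for this lemma; the paper's route, on the other hand, makes the combinatorics of the cells $\wh C_{I}$ more explicit and reuses that enumeration. One minor remark: in your last paragraph you call $\wh C_{I}$ an ``$n$-cell'' of $S(\wh\varphi)$, but all you actually need (and all that follows from (1) and (4)) is that it is a cell $C(\wh\varphi,w_{I})$, which is already enough for Proposition~\ref{prop:10} to apply.
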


\begin{proof}
  For $\bfz\in \wh\Delta$ we have that
  $\wh \tau(\bfz)=\sum_{j=0}^{n}\max\{0,z_{j}-d_{j}\}$, which readily
  implies that 
  \begin{displaymath}
\wh \tau(\bfz)\ge \sum_{j\in I}z_{j}-d_{j}
  \end{displaymath}
  with equality if and only if $\bfz\in \wh C_{I}$. By the
  characterization of the cell $\Gamma(\wh\tau, w_{I})$ that follows
  from \eqref{eq:79}, this proves the statement in \eqref{item:29}.
  
  For each $i$, the vertices of $\wh \Delta_{i}$ are the lattice
  points $d_{i}\, \wh e_{j}$, $j=0,\dots, n$. For each $j$ we have
  that $ \langle w_{I}, d_{i}\, \wh e_{j}\rangle = -d_{i}$ if $j\in I$
  and $\langle w_{I}, d_{i}\, \wh e_{j}\rangle = 0$ if $j\in I^{\cc}$,
 whereas $ \wh\varphi_{i}(d_{i}\, \wh e_{j})=0$ if $j=i$ and
  $ \wh\varphi_{i}(d_{i}\, \wh e_{j})=d_{i}$ if $j\ne i$. Hence
  \begin{equation}\label{eq:3}
    \langle  w_{I}, d_{i}\, \wh e_{j}\rangle
    +\wh \varphi_{i}(d_{i}\,  \wh e_{j})=
\begin{cases}
  - d_{i} & \text{ if } j\in I \text{ and } j=i, \\
  0 & \text{ if } j\in I \text{ and } j \ne i,  \text{ or  } j\in I^{\cc}\text{ and } j=i,\\
  d_{i} & \text{ if } j\in I^{\cc} \text{ and } j\ne i.
\end{cases}
  \end{equation}
  Then the definition in \eqref{eq:89} and the characterization in
  \eqref{eq:79} easily imply that
  $\wh C_{I,i}=\Gamma(\wh \varphi_{i}, w_{I})$ which proves the first
  part of the statement in \eqref{item:43}.  Now let
  $\bfz\in \wh C_{I,i}$.  For $i\in I$ the polytope $\wh C_{I,i}$
  consists of the single lattice point $d_{i}\, \wh e_{i}$ and so
  $\wh \varphi_{i}(\bfz)=\wh \varphi_{i}(d_{i}\, \wh e_{i})= 0$. For
  $i\in I^{\cc}$ the vertices of $\wh C_{I,i}$ are the lattice points
  $d_{i}\, \wh e_{j}$, $j\in \{i\}\cup I$, and so
  $\bfz=\sum_{j\in \{i\}\cup I} z_{j}\, e_{j}$.  Hence~\eqref{eq:3}
  gives that
  $\wh \varphi_{i}(\bfz)= -\langle w_{I}, \bfz\rangle=\sum_{j\in I}
  z_{j}$, completing the proof of \eqref{item:43}.

Let $\wh v_{J,l}$ be the vertex of $\wh C_{I}$ associated to a
  subset $J\subset I^{\cc}$ and an index $l \in I$ as
  in~\eqref{eq:64}. We have that $d_{i}\, \wh e_{l}\in \wh C_{I,i}$ for
  all $i\in I^{\cc}$ and $d_{i}\, \wh e_{i}\in \wh C_{I,i}$ for all $i$,
and so
  $\wh v_{J,l} \in \sum_{i=0}^{n}\wh C_{I,i}$. Since this holds for
  all $J$ and $l$, Lemma \ref{lemm:12} implies that
\begin{equation}
  \label{eq:65}
  \wh C_{I}\subset \sum_{i=0}^{n}\wh C_{I,i}.
\end{equation}
From \eqref{item:43}  and Proposition \ref{prop:10}\eqref{item:2} we deduce
that the $\wh C_{I,i}$'s are the components of the cell
$\Gamma(\wh \varphi, w_{I})$ of $S(\wh\varphi)$ and by Proposition
\ref{prop:10}\eqref{item:3}, we have that  $\sum_{i=0}^{n}\wh C_{I,i}$ is a cell of
this mixed subdivision. On the other hand, the $\wh C_{I}$'s are
polytopes that cover $\wh\Delta$ and so the inclusion in \eqref{eq:65}
is an equality, as stated in \eqref{item:44}.

Now let $\bfz=(z_{0},\dots, z_{n})\in \wh C_{I}$ and
$\bfz_{i}\in \wh C_{I,i}$, $i=0,\dots,n$, such that
$\bfz=\sum_{i=0}^{n}\bfz_{i}$. Necessarily $\bfz_{i}=d_{i}\, \wh
e_{i}$ for $i\in I$ and so
\begin{displaymath}
  \wh\tau(\bfz)=\sum_{j\in I} z_{j}-d_{j}= \sum_{j\in I} \Big(\Big(
  \sum_{i=0}^{n}z_{i,j}\Big) -d_{j}\Big) = \sum_{\mathclap{i\in I^{\cc},j\in I}} z_{i,j}  = \sum_{i=0}^{n} \wh\varphi_{i}(\bfz_{i}) =  \wh\varphi(\bfz),
\end{displaymath}
where the first equality follows from \eqref{item:29} and the two last
from \eqref{item:43} and Proposition~\ref{prop:10}\eqref{item:5},
respectively.  Hence $\wh\tau$ and $\wh\varphi$ coincide on
$\wh C_{I}$ and so on the whole of $\wh\Delta$,
proving~\eqref{item:46}.
\end{proof}

\begin{proof}[Proof of Proposition \ref{prop:11}]
  Consider the projection $\pi\colon \R^{n+1}\to \R^{n}$ defined by
\begin{displaymath}
  \pi(z_{0},\dots, z_{n})=(z_{1},\dots, z_{n}).
\end{displaymath}
This linear map induces isomorphisms between $\wh\Delta$ and $\Delta$
and between $\wh \Delta_{i}$ and $\Delta_{i}$ for each $i$, and it
also satisfies that $ \wh \varphi_{i}=\varphi_{i}\circ\pi$ for each
$i$.

  For $\bfz\in \wh\Delta$ and $\bfx=\pi(\bfz)\in \Delta$, the
  condition that $ \bfz=\sum_{i=0}^{n}\bfz_{i} $ with
  $\bfz_{i}\in \wh \Delta_{i}$ translates into
  $\bfx=\sum_{i=0}^{n} \bfx_{i}$ with $\bfx_{i}\in \Delta_{i}$, and
  viceversa. Precisely, if the first condition holds then so does the
  second with $\bfx_{i}=\pi(\bfz_{i})$ and conversely, if the second
  condition holds then so does the first with $\bfz_{i}$ defined as
  the only preimage of $\bfx_{i}$ in $\wh\Delta_{i}$.

  We deduce from Lemma \ref{lemm:2}\eqref{item:46} that
  $\varphi\circ\pi = \wh \varphi =\wh\tau$, and the statement then
  follows from Lemmas \ref{lemm:12} and \ref{lemm:2}.
\end{proof}

\begin{remark}
  \label{rem:11}
  Proposition \ref{prop:11}\eqref{item:18} implies that the mixed
  subdivision $S(\varphi)$ is tight only when $n\le 2$.
\end{remark}

We next study a specific incremental chain of mixed subdivisions of
$\Delta$. For $k= 0,\dots, n$ consider the affine functions
$\theta_{k,i}\colon \Delta_{i}\to \R$, $i=0,\dots, n$, defined as
$\theta_{k,i}= \varphi_{i}$ if $i<k$ and as
$\theta_{k,i} =0|_{\Delta_{i}}$ if $i\ge k$, and set
$\theta_{k}=\bigboxplus_{i=0}^{n} \theta_{k,i}$ for their
inf-convolution.

For a subset $I\subset\{0,\dots, k-1\}$ let
$I^{\cc}=\{0,\dots, k-1\}\setminus I$
and consider the polytope of $\Delta$ defined as
\begin{equation*}
  C_{k,I}= \{\bfx\in \Delta \mid \ell_{i}(\bfx) \ge 0
\text{ for } i \in I \text{ and } \ell_{i}(\bfx) \le 0 \text{ for } i \in I^{\cc}\}.
\end{equation*}
For $J\subset I^{\cc}$ and $l\in I \cup\{k, \dots, n\} $ consider the
lattice point defined as
    \begin{equation*}
       v_{k,J,l}=\Big( \sum_{j\in J\cup\{k,\dots, n\}} d_{j}\Big) \, e_{l} + \sum_{j\in
        J^{\cc}}d_{j}\,  e_{j}
  \end{equation*}
  For $i=0,\dots, n$ consider also the face of $ \Delta_{i}$
  defined~as
\begin{equation*}
  C_{k,I,i}=
  \begin{cases}
    d_{i}\, e_{i} & \text{ if } i\in I, \\
    d_{i} \conv( e_{i}, \{e_{j}\}_{j\in I\cup \{k,\dots, n\}}) & \text{ if } i\in I^{\cc},\\
    d_{i} \conv( \{e_{j}\}_{j\in I\cup \{k,\dots, n\}}) & \text{ if }
    i\ge k.
  \end{cases}
\end{equation*}

The next result gives a detailed description of the mixed subdivision
$S(\theta_{k})$ of $\Delta$.

\begin{proposition}
  \label{prop:13}
  We have that
  $\theta_{k} =\sum_{i=0}^{k-1} \max\{0,\ell_{i}\} +
  \sum_{i=k}^{n}\ell_{i} $ and the $n$-cells of the mixed subdivision
  $S(\theta_{k})$ are the polytopes $C_{k,I}$ for
  $I\subset \{0,\dots, k-1\}$. Moreover, for each $I$
  \begin{enumerate}
  \item \label{item:49} the vertices of $C_{k,I}$ are the lattice
    points $ v_{k,J,l}$ for $J\subset I^{\cc}$ and
    $l\in I \cup\{k, \dots, n\} $,
    \item \label{item:55}  the
  components of $C_{k,I}$ are the polytopes $C_{k,I,i}$,
  $i=0,\dots, n$,
      \item \label{item:50}
    $\sum_{i=0}^{n} \dim(C_{k,I,i})=\# I^{\cc} \cdot
    (\#I+n-k+1)+(n-k+1) (\#I+n-k)$.
  \end{enumerate}
\end{proposition}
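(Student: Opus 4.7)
I would adapt the lifting strategy of Proposition \ref{prop:11}, working inside $\R^{n+1}$ via the projection $\pi$ from Lemma \ref{lemm:2}. Define $\wh\theta_{k,i}\colon\wh\Delta_i\to\R$ by $\wh\theta_{k,i}=\wh\varphi_i$ for $i<k$ and $\wh\theta_{k,i}=0|_{\wh\Delta_i}$ for $i\geq k$, with inf-convolution $\wh\theta_k$ on $\wh\Delta$. The first target is the identity
\begin{equation*}
\wh\theta_k(\bfz) = \sum_{i<k}\max\{0,\,z_i-d_i\} + \sum_{i\geq k}(z_i - d_i) \qquad (\bfz\in\wh\Delta),
\end{equation*}
which, using $|\bfz|=|\bfd|$, also equals $\sum_{i<k}\max\{0,\,d_i-z_i\}$. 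For any decomposition $\bfz=\sum_i\bfz_i$ with $\bfz_i\in\wh\Delta_i$, the sum $\sum_{i<k}\wh\varphi_i(\bfz_i)=\sum_{i<k}(d_i-z_{i,i})$ satisfies $\sum_{i<k}(d_i-z_{i,i})\geq\sum_{i<k}(d_i-\min\{z_i,d_i\})$; the reverse bound is realized by a decomposition with $z_{i,i}=\min\{z_i,d_i\}$ for $i<k$, routing excess mass off-diagonally through the summands $\wh\Delta_j$ with $j\geq k$. Projecting through $\pi$ yields the stated formula $\theta_k = \sum_{i<k}\max\{0,\ell_i\} + \sum_{i\geq k}\ell_i$ on $\Delta$.

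Since $\sum_{i\geq k}\ell_i$ is globally affine, it does not affect the subdivision, and the $n$-cells of $S(\theta_k)$ are cut out by the signs of $\ell_0,\ldots,\ell_{k-1}$; these are precisely the $C_{k,I}$ for $I\subset\{0,\ldots,k-1\}$. A short computation shows that $\wh C_{k,I}=C(\wh\theta_k,\wh w_{k,I})$ for the direction $\wh w_{k,I}=-\sum_{i\in I\cup\{k,\ldots,n\}}\wh e_i$, because on this cell the active affine piece of $\wh\theta_k$ reads $\sum_{i\in I\cup\{k,\ldots,n\}}(z_i-d_i)$. Applying Proposition \ref{prop:10}\eqref{item:2} in this direction and computing each face $C(\wh\theta_{k,i},\wh w_{k,I})$ by a direct optimization (analogous to Lemma \ref{lemm:2}\eqref{item:43}, but with the enlarged direction $\wh w_{k,I}$) gives $\{d_i\wh e_i\}$ for $i\in I$, the simplex $d_i\conv(\wh e_i,\{\wh e_j\}_{j\in I\cup\{k,\ldots,n\}})$ for $i\in I^\cc\cap\{0,\ldots,k-1\}$, and the simplex $d_i\conv(\{\wh e_j\}_{j\in I\cup\{k,\ldots,n\}})$ for $i\geq k$. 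Projecting via $\pi$ recovers the prescribed components $C_{k,I,i}$.

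Parts \eqref{item:49} and \eqref{item:50} are then routine. For the vertices, one follows Lemma \ref{lemm:12}: intersecting $n$ of the supporting hyperplanes of $\wh C_{k,I}$ inside the affine hyperplane $|\bfz|=|\bfd|$ produces, for each disjoint pair $(J,K)\subset I^\cc\times\{0,\ldots,n\}$ with a residual index $l$, the lattice point $\wh v_{k,J,l}$, which lies in $\wh C_{k,I}$ precisely when $l\in I\cup\{k,\ldots,n\}$. The dimension formula is a direct count from the explicit components: $\dim(C_{k,I,i})$ equals $0$, $\#I+n-k+1$, or $\#I+n-k$ in the respective cases $i\in I$, $i\in I^\cc\cap\{0,\ldots,k-1\}$, and $i\geq k$, and summing yields the claimed identity. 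The main obstacle is Step~1: verifying the closed-form expression for $\wh\theta_k$ requires a mass-allocation argument to show the zero-function summands for $i\geq k$ absorb the slack needed for the natural choice $z_{i,i}=\min\{z_i,d_i\}$, an ingredient not present in Proposition~\ref{prop:11}; once this is in hand, the rest follows the blueprint of Lemma~\ref{lemm:2}.
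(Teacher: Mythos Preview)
Your proposal is correct and follows essentially the same blueprint as the paper: lift to $\R^{n+1}$ via $\pi$, identify the $n$-cells $\wh C_{k,I}$ together with their defining direction $\wh w_{k,I}$, compute the components as the faces $C(\wh\theta_{k,i},\wh w_{k,I})$, and read off the vertices by intersecting supporting hyperplanes as in Lemma~\ref{lemm:12}. The one substantive difference is the order of the steps. You establish the closed form $\wh\theta_k=\wh\tau_k$ \emph{first}, via a global mass-allocation argument, and then deduce everything else; the paper instead introduces $\wh\tau_k$ as a target, determines its cells and the faces $C(\wh\theta_{k,i},\wh w_{k,I})$, checks that their Minkowski sums recover the $\wh C_{k,I}$, and only then verifies $\wh\theta_k=\wh\tau_k$ cell by cell via Proposition~\ref{prop:10}\eqref{item:5}, exactly as in the proof of Lemma~\ref{lemm:2}\eqref{item:46}. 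The paper's ordering sidesteps precisely what you flag as your main obstacle: once the components are known, the identity on each cell is immediate from the componentwise decomposition, and no separate feasibility argument for the allocation is needed. Your direct route also works (the residual transportation problem after fixing $z_{j,j}=\min\{z_j,d_j\}$ for $j<k$ is feasible because each forbidden diagonal cell has either zero residual row-supply or zero residual column-demand), but the paper's route is slightly cleaner here.
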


\begin{proof}
  Denote with a hat the corresponding objects in $\R^{n+1}$ as it was
  previously done for the study of $\varphi$, and consider
  also the convex
  piecewise affine function on $\Delta$ defined~as
\begin{displaymath}
  \tau_{k}=\sum_{i=0}^{k-1}\max\{0,\ell_{i}\} + \sum_{i=k}^{n}
  \ell_{i}
\end{displaymath}

The proof of these properties is a direct generalization of that for
Proposition \ref{prop:11} and so we only indicate the main steps:
  \begin{itemize}
  \item \label{item:23} show that the $n$-cells of $S(\wh \tau_{k})$
    are the polytopes $\wh C_{k,I}$ for $I\subset \{0,\dots, k-1\}$,
  \item \label{item:25} for each $I$, compute the vertices of $\wh C_{k,I}$ by
    considering the intersections of the supporting hyperplanes of this
    polytope,
  \item \label{item:28} compute the face of $\wh \Delta_{i}$
defined by the slope of $\wh \tau_{k}$ on $\wh C_{k,I}$,
  \item \label{item:30} show that the Minkowski sum of these faces
    coincides with  $\wh C_{k,I}$,
  \item \label{item:35} show that $\wh \theta_{k}$ coincides with
    $\wh \tau_{k}$ on each $\wh C_{k,I}$, and so on the whole of $\Delta$.
    \end{itemize}
    Finally, the obtained results are  brought back  to $\R^{n}$
    via the projection $\pi$, as in the proof of Proposition
    \ref{prop:11}.
\end{proof}

\begin{proposition}
  \label{prop:15}
  We have that $S(\theta_{0})\preceq \cdots\preceq S(\theta_{n})$ is
  an incremental chain of mixed subdivisions of $\Delta$ with
  $S(\theta_{n}) \preceq S(\varphi)$ that satisfies the conditions in
  Proposition \ref{prop:16}. In particular, this incremental chain is
  admissible for any tight mixed subdivision of $\Delta$ that
  refines~$S(\varphi)$.
\end{proposition}

\begin{proof}
  For each $I\subset\{0,\dots, n\} $ with $I,I^{\cc}\ne \emptyset$ we
  have that
  \begin{displaymath}
    C_{I}\subset C_{n,I\cap \{0,\dots, n-1\}} \and
  C_{I,i}\subset C_{n,I\cap \{0,\dots, n-1\},i}, \ i=0,\dots, n.
  \end{displaymath}
  Propositions \ref{prop:11} and \ref{prop:13} then imply that
  $S(\varphi)\succeq S(\theta_{n})$.  Similarly, for
  $k\in \{1,\dots, n\}$ and each $I\subset \{0,\dots, k-1\}$ we have
  that
  \begin{displaymath}
    C_{k,I}\subset C_{k-1,I\cap \{0,\dots, k-2\}} \and
    C_{k,I,i}\subset C_{k-1,I\cap \{0,\dots, k-2\},i} , \ i=0,\dots,
    n,
  \end{displaymath}
  and Proposition \ref{prop:13} implies that
  $S(\theta_{k})\succeq S(\theta_{k-1})$. Hence
  $ S(\theta_{0})\preceq \cdots\preceq S(\theta_{n}) \preceq
  S(\varphi)$.  Since $\theta_{k,i}=0|_{\Delta_{i}}$ for all $k$ and
  $i\ge k$, the chain
  \begin{equation*}
    S(\theta_{0})\preceq \cdots\preceq S(\theta_{n})
  \end{equation*}
  is incremental.

  For $k=0,\dots, n$, let $I\subset \{0,\dots, k-1\}$. If
  $I\ne \emptyset$ then for the $n$-cell $C_{I}$ of $S(\theta_{k})$,
  each component $C_{I,i}$ with $i\in I$ consists of a lattice point
  and so this component verifies the condition \eqref{item:51} in
  Proposition \ref{prop:16}. Else $I=\emptyset$ and so
  $C_{k,I,i}=d_{i} \conv( e_{i}, \{e_{j}\}_{j\in \{k,\dots, n\}})$ if
  $ i<k$ and
  $ C_{k,I,i}= d_{i} \conv( \{e_{j}\}_{j\in \{k,\dots, n\}})$ if
  $ i\ge k$.  For $i=0,\dots, k-1$ consider the nonzero vector
  $w_{i}\in \R^{n}$ defined as $w_{i}=\sum_{j=1}^{n}e_{j}$ if $i=0$
  and as $w_{i}=e_{i}$ if $i>0$. For all $j\ne i$ we have that
  $C_{k,I,j}$ lies in a hyperplane that is parallel to $w_{i}^{\bot}$
  and so
\begin{displaymath}
  \dim\Big( \sum_{j\ne i} C_{k,I,j}\Big) <n.
\end{displaymath}
Hence $C_{k,I}$ satisfies the condition \eqref{item:53} in Proposition
\ref{prop:16}. The last statement is a direct consequence of that
proposition.
\end{proof}

Figure \ref{fig:2} shows this admissible incremental chain of mixed
subdivisions for a case in dimension $n=2$.

\begin{figure}[ht]
\begin{tikzpicture}[scale=0.5]
\draw[thick] (0,0)--(6,0)--(0,6)--cycle;
\draw (3,-0.1) node[below] {$S(\theta_0)$};
\end{tikzpicture} \quad
\begin{tikzpicture}[scale=0.5]
\draw[thick] (0,0)--(6,0)--(0,6)--cycle;
\draw (3,-0.1) node[below] {$S(\theta_1)$};
\draw (0,5)--(5,0);
\end{tikzpicture} \quad
\begin{tikzpicture}[scale=0.5]
\draw[thick] (0,0)--(6,0)--(0,6)--cycle;
\draw (0,5)--(5,0);
\draw (3,0)--(3,3);
\draw (3,-0.1) node[below] {$S(\theta_2)$};
\end{tikzpicture} 
\vspace{-3mm}
\caption{An admissible incremental chain}
  \label{fig:2}
\end{figure}
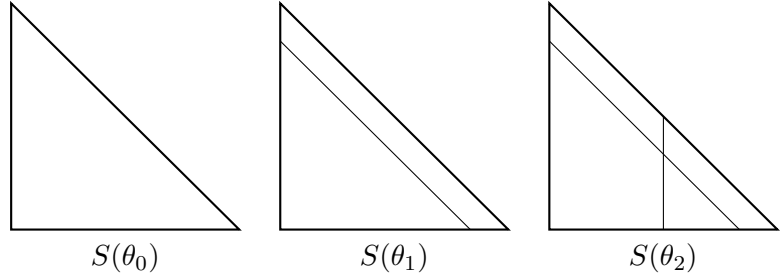

\subsection{Polyhedral interpretation} \label{sec:polyh-interpr}

Let $\bfd=(d_{0},\dots, d_{n})\in (\N_{>0})^{n+1}$. In this section we
study the classical Macaulay formula \eqref{eq:74} for the situation
when
\begin{displaymath}
  m =|\bfd|-n,
\end{displaymath}
which is the main case of interest.  We keep the notation of
\S\ref{sec:class-maca-form}. In particular, for $i=0,\dots, n$ we
denote by $\bfu_{i}$ a set of ${d_{i}+n\choose n}$ variables indexed
by the lattice points $\bfc \in \N^{n+1}$ of length
$|\bfc|=d_{i}$. Also $\bfu=(\bfu_{0}, \dots, \bfu_{n})$ and
$\K=\C(\bfu)$.

In this situation, we respectively denote the corresponding index set
and nonreduced index subset, linear map, Macaulay matrix and
distinguished principal submatrix by
\begin{displaymath}
\cI^{\circ} \subset \cI\subset \Z^{n+1}, \quad    \Psi_{\bfd}\colon
      \bigoplus_{i=0}^{n}T_{i} \to T , \quad  \cM_{\bfd} \in
      \K^{\cI \times \cI} \and \cN_{\bfd} \in
      \K^{\cI^{\circ} \times \cI^{\circ}},
\end{displaymath}
where $T_{i}$, $i=0,\dots, n$, and $T$ are the finite dimensional
linear subspaces of the polynomial ring
$\K[\bft]=\K[t_{0},\dots, t_{n}]$ in \eqref{eq:43}.

As explained in Example \ref{exm:3}, the homogeneous resultant
$\Res_{\bfd}$ coincides, up to the sign, with the sparse resultant
corresponding to the lattice $M=\Z^{n}$ and the family of supports
$\bfcA=(\cA_{0},\dots, \cA_{n})$ defined by
$\cA_i=\{\bfa\in\N^n \mid |\bfa| \leq d_i\}$ for each $i$.

We also use the notation of \S\ref{sec:mixed-regul-subd}. In
particular, for $i=0,\dots, n$ we consider the simplex
$ \Delta_{i} = \conv(\cA_{i})=\{\bfx\in (\R_{ \ge 0})^{n}\mid |\bfx|
\le d_{i}\} $ and the affine function
$\varphi_{i}\colon \Delta_{i}\to \R$ defined by
$ \varphi_{i}(\bfx)= |\bfx| $ if $i=0$ and as
$\varphi(\bfx) = d_{i}-x_{i}$ if $i>0$.  Let also
$ \Delta=\sum_{i=0}^{n} \Delta_{i}$ be the Minkowski sum of these
polytopes and $\varphi=\bigboxplus_{i=0}^{n}\varphi_{i}$ the
inf-convolution of these affine functions. By Proposition
\ref{prop:11} we have that
\begin{displaymath}
  \Delta=\{\bfx\in (\R_{\ge0})^{n}\mid |\bfx|\le
|\bfd|\} \and
\varphi=\sum_{i=0}^{n}\max\{0, \ell_{i}\},
\end{displaymath}
where $\ell_{i}\colon \Delta\to \R$ is the affine function defined by
$\ell_{i}(\bfx)= \sum_{j=1}^{n}d_{j} - |\bfx| $ when $i=0$ and by
$\ell_{i}(\bfx)=x_{i}-d_{i}$ when $i>0$.

For $i=0,\dots, n$ choose a linear function
$\mu_{i}\colon \R^{n}\to \R$ and set $
\rho_{i}=\varphi_{i}+\mu_{i}$. Set then
$\bfrho=(\rho_{0},\dots, \rho_{n})$ and
$\rho=\bigboxplus_{i=0}^{n}\rho_{i}$, and suppose that the mixed
subdivision $ S(\rho)$ of $\Delta$ is tight and refines
$S(\varphi)$.  By Propositions~\ref{prop:1} and \ref{prop:9}, both
conditions are attained when the $\mu_{i}$'s are sufficiently generic
and small.  Choose also
$\bfdelta = (\delta_{1},\dots, \delta_{n}) \in \R^{n}$ with
$\delta_{i}+1 > 0$ for all $i$ and $\sum_{i=1}^{n}(\delta_{i}+1) <1$,
and satisfying the genericity condition~\eqref{eq:20} with respect to
$S(\rho)$.

Consider then the index set and nonmixed index subset, Sylvester map,
Canny-Emiris matrix and distinguished principal submatrix
corresponding to $\bfcA$, $\bfrho$ and $\bfdelta$, respectively
denoted by
\begin{displaymath}
\cB^{\circ}\subset   \cB \subset \Z^{n}, \quad
\Phi_{\bfcA,\bfrho}\colon \bigoplus_{i=0}^{n} V_{i} \to V,  \quad
  \cH_{\bfcA,\bfrho} \in \K^{\cB\times \cB} \and \cE_{\bfcA,\bfrho} \in \K^{\cB^{\circ}\times \cB^{\circ}}
\end{displaymath}
where $V_{i}$, $i=0,\dots, n$, and $V$ are the finite dimensional
linear subspaces of
$\K[\Z^{n}]= \K[\bfs^{\pm1}]=\K[s_{1}^{\pm1},\dots, s_{n}^{\pm1}]$
defined in \eqref{eq:5}.

The next proposition shows that this Canny-Emiris matrix coincides
with that of Macaulay, and that this is also the case for their
distinguished principal submatrices.

\begin{proposition}
  \label{prop:2}
  The morphism of algebras
  $ \pi_{*}  \colon \K[\bft]\to \K[\bfs]$ defined by
  $\pi_{*}(t_{0})= 1$ and  $\pi_{*}(t_{i})= s_{i}$,  $i=1,\dots, n$,
  induces a commutative diagram
  \begin{displaymath}
    \xymatrixcolsep{5pc}
    \xymatrix{
      \bigoplus_{i=0}^{n}T_{i} \ar[d]^{\pi_{*}}\ar[r]^-{\Psi_{\bfd}}& T\ar[d]^{\pi_{*}}\\
      \bigoplus_{i=0}^{n}V_{i} \ar[r]^-{\Phi_{\bfcA,\bfrho}}& V
    }
\end{displaymath}
and bijections between the monomial bases of
$ \bigoplus_{i=0}^{n}T_{i}$ and $ \bigoplus_{i=0}^{n}V_{i}$, and
between those of $T$ and $V$. In particular
$\cH_{\bfcA,\bfrho}=\cM_{\bfd}$ and $\cE_{\bfcA,\bfrho}=\cN_{\bfd}$.
\end{proposition}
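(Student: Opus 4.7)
The plan is to verify that $\pi_*$ and the induced bijections between the index sets intertwine the two constructions directly.

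First, I would establish the bijection of index sets. The projection $\pi\colon\Z^{n+1}\to\Z^n$ dropping the first coordinate restricts to a bijection between $\Gamma=\{\bfc\in\N^{n+1}:|\bfc|=|\bfd|-n\}$ and $\{\bfa\in\N^n:|\bfa|\le|\bfd|-n\}$. Using the hypotheses $\delta_i+1>0$ and $\sum_{i=1}^n(\delta_i+1)<1$ together with the integrality of lattice points, one checks that $\cB=(\Delta+\bfdelta)\cap\Z^n$ coincides with this latter set, so $\pi$ yields a bijection $\Gamma\to\cB$. The same projection induces a bijection $\{\bfc\in\N^{n+1}:|\bfc|=d_i\}\to\cA_i$ for each $i$, and under the identification $u_{i,\bfc}\leftrightarrow u_{i,\pi(\bfc)}$ of variables we get $\pi_*(P_i)=F_i$ and $\pi_*(t^{\bfc})=s^{\pi(\bfc)}$.

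Next, I would show that the partitions correspond, namely $\pi(\Gamma_i)=\cB_i$ for $i=0,\dots,n$. For $\bfc\in\Gamma$ with $b=\pi(\bfc)$ and $x=b-\bfdelta$, the genericity condition \eqref{eq:20} together with the integrality of $b$ implies that $c_j\ge d_j$ is equivalent to $\ell_j(x)>0$ for the affine functions $\ell_j$ appearing in the description of $S(\varphi)$ in Proposition~\ref{prop:11}. Hence the set $I=\{j:c_j\ge d_j\}$ coincides with the index of the unique $n$-cell $C_I$ of $S(\varphi)$ containing $x$, so $\bfc\in\Gamma_i$ iff $\max(I)=i$. Since $S(\rho)\succeq S(\varphi)$, the $n$-cell $C$ of $S(\rho)$ with $x\in C$ is contained in $C_I$, and by Proposition~\ref{prop:11} its components $C_j\subset C_{I,j}$ satisfy $C_j=\{d_j e_j\}$ for every $j\in I$. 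The hard part is to verify that no additional component $C_j$ with $j\in I^\cc$ and $j>\max(I)$ collapses to a point, so that $i(b)=\max(I)$; I would derive this from the tightness identity $\sum_j\dim(C_j)=n$ combined with the genericity and smallness of the perturbations $\mu_j$, in the spirit of Propositions~\ref{prop:1} and~\ref{prop:9}.

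From this analysis one reads off $a(b)=d_i\,\pi(\wh e_i)$ for $b\in\cB_i$, with the convention $\pi(\wh e_0)=0\in\Z^n$ and $\pi(\wh e_j)=e_j$ for $j\ge 1$. The commutative diagram then reduces to the identity
\begin{displaymath}
\pi_*(t^{\bfc-d_i\wh e_i}P_i)=s^{\pi(\bfc)-d_i\pi(\wh e_i)}\,\pi_*(P_i)=s^{b-a(b)}\,F_i
\end{displaymath}
for $\bfc\in\Gamma_i$ and $b=\pi(\bfc)$, which is immediate from the monomial formula for $\pi_*$ and $\pi_*(P_i)=F_i$. Since $\pi_*$ carries the monomial bases of $\bigoplus_iT_i$ and $T$ in \eqref{eq:58} bijectively to those of $\bigoplus_iV_i$ and $V$ in \eqref{eq:5}, the matrices of the two Sylvester maps in these bases coincide, yielding $\cH_{\bfcA,\bfrho}=\cM_{\bfd}$.

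Finally, for the principal submatrix equality $\cE_{\bfcA,\bfrho}=\cN_{\bfd}$, I would match the distinguished subsets: the condition $\bfc\in\Gamma^\circ$ (at least two indices $j$ with $c_j\ge d_j$) translates via the correspondence above to $\#I\ge 2$ for the $S(\varphi)$-cell $C_I$ containing $x$, which combined with the fact that $\dim(C_j)=0$ exactly for $j\in I$ is equivalent to $C$ having at least two point-components, i.e., $C$ is not $i$-mixed for any $i$, which is precisely $b\in\cB^\circ$. Therefore the principal submatrices indexed by the corresponding subsets are equal, completing the proof.
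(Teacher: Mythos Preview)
Your overall architecture matches the paper's proof: identify $\Gamma\to\cB$ via the projection $\pi$, show $\pi(\Gamma_i)=\cB_i$ with $a(b)=d_i\,e_i$, deduce commutativity of the diagram, and finally match $\Gamma^\circ$ with $\cB^\circ$. The paper organizes exactly these steps as Lemmas~\ref{lemm:14}, \ref{lemm:19} and~\ref{lemm:16}.

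The gap is precisely at the point you flag as the ``hard part'': showing that for the $n$-cell $C$ of $S(\rho)$ contained in $C_I$, one has $\dim(C_j)>0$ for every $j\in I^{\cc}$ (you need this both for $i(b)=\max(I)$ and, in its full strength, for the identification of $\cB^\circ$). Your proposed justification---tightness plus genericity and smallness of the $\mu_j$'s ``in the spirit of Propositions~\ref{prop:1} and~\ref{prop:9}''---does not do the job. Tightness gives only $\sum_j\dim(C_j)=n$, which by itself does not determine which components are points, and Propositions~\ref{prop:1} and~\ref{prop:9} concern only the existence of tight refinements and stability under perturbation; neither controls the dimensions of individual components of a given cell. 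The paper's argument (Lemma~\ref{lemm:16}) is structural and does not invoke genericity at all: for each $i\in I^{\cc}$ one exhibits a nonzero vector $w_i\in\R^n$ (namely $w_0=\sum_{j=1}^n e_j$ and $w_i=e_i$ for $i>0$) such that every $C_{I,j}$ with $j\neq i$ lies in a hyperplane parallel to $w_i^{\bot}$; hence $\sum_{j\neq i}C_j$ has dimension $<n$, and tightness then forces $\dim(C_i)>0$. This explicit hyperplane observation is the missing ingredient in your sketch.
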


To prove it, we first need to  establish  some auxiliary lemmas.

\begin{lemma}
  \label{lemm:16}
  Let $C$ be an $n$-cell of $S(\rho)$. Let $I\subset \{0,\dots, n\}$
  with $I, I^{\cc}\ne\emptyset $ such that $C\subset C_{I}$ and
  $i\in \{0, \dots, n\}$. Then
  \begin{enumerate}
  \item \label{item:57} the $i$-th component $C_{i}$ is a point if and
    only if $i\in I$, and if this is the case then
    $C_{i}=\{d_{i}\, e_{i}\}$,
  \item \label{item:58} $C$ is $i$-mixed if and only if $I=\{i\}$.
  \end{enumerate}
\end{lemma}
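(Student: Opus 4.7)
The plan is to derive both parts from a dimension analysis of the Minkowski decomposition $C=\sum_{j=0}^{n}C_{j}$ combined with the explicit description of the components $C_{I,j}$ in~\eqref{eq:25}. Part (2) will then follow as a formal consequence of part (1).

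For the easy implication of (1), since $S(\rho)\succeq S(\varphi)$ each component $C_{j}$ is contained in the corresponding component $C_{I,j}$ of $C_{I}$. When $j\in I$, the description in~\eqref{eq:25} gives $C_{I,j}=\{d_{j}e_{j}\}$, forcing $C_{j}=\{d_{j}e_{j}\}$.

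The harder implication, namely $\dim(C_{i})\ge 1$ whenever $i\in I^{\cc}$, is the real content. I plan to argue by contradiction: suppose $C_{i}$ is a point for some $i\in I^{\cc}$. Write $V_{j}$ for the direction space of $C_{j}$ (the linear span of $C_{j}-C_{j}$), so that $\dim(C)=\dim(\sum_{j}V_{j})$. The easy implication gives $V_{j}=\{0\}$ for $j\in I$, and by hypothesis $V_{i}=\{0\}$ as well, so
\begin{displaymath}
  n=\dim(C)=\dim\Big(\sum_{j\in I^{\cc}\setminus\{i\}}V_{j}\Big).
\end{displaymath}
Now $V_{j}\subseteq \operatorname{span}\{e_{k}-e_{j}\mid k\in I\}$ for each $j\in I^{\cc}$, since $C_{I,j}$ is the simplex with vertices $\{d_{j}e_{k}\mid k\in I\cup\{j\}\}$ (using the convention $e_{0}=\bfzero$). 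For $j\ne i$ these generators lie in
\begin{displaymath}
W_{i}:=\operatorname{span}\{e_{s}-e_{t}\mid s,t\in\{0,\dots,n\}\setminus\{i\}\},
\end{displaymath}
which has dimension $n-1$: when $i>0$ it equals $\operatorname{span}\{e_{s}\mid s\in\{1,\dots,n\}\setminus\{i\}\}$ by setting $t=0$, and when $i=0$ it coincides with the hyperplane $\{v\in\R^{n}\mid \sum_{s}v_{s}=0\}$. This contradicts the displayed equality and finishes part (1).

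For part (2), by definition $C$ is $i$-mixed when $\dim(C_{i})=0$ and $\dim(C_{j})=1$ for every $j\ne i$; applying part (1) to each component immediately translates this into $i\in I$ together with $j\in I^{\cc}$ for all $j\ne i$, so $I=\{i\}$. Conversely, if $I=\{i\}$ part (1) yields $\dim(C_{i})=0$ and $\dim(C_{j})\ge 1$ for $j\ne i$, and combined with the bound $\dim(C_{j})\le\dim(C_{I,j})=\#I=1$ this pins down $\dim(C_{j})=1$. The main obstacle throughout is the dimension computation for $W_{i}$, which reduces to a short combinatorial verification once the vertex description of $C_{I,j}$ is in hand.
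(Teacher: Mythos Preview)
Your proof is correct and follows essentially the same strategy as the paper's. Both arguments hinge on the observation that for each $j\ne i$ the direction space of $C_{I,j}$ (and hence of $C_{j}$) lies in a fixed hyperplane of $\R^{n}$ not depending on $j$; the paper specifies this hyperplane as the orthogonal complement of a vector $w_{i}$ (equal to $e_{i}$ when $i>0$ and to $\sum_{j=1}^{n}e_{j}$ when $i=0$), which is exactly your $W_{i}$. Your treatment of part~(2) is slightly more explicit than the paper's in that you bound $\dim(C_{j})$ from above via $\dim(C_{I,j})=1$ rather than invoking tightness, but the two routes are equivalent here.
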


\begin{proof}
  Since $S(\rho)\succeq S(\varphi)$ we have that
  $C_{i}\subset C_{I,i}$. If $i\in I$ then \eqref{eq:25} implies that
  $C_{i}=\{d_{i}\, e_{i}\}$.  Conversely, for $i \in I^{\cc}$ consider
  the vector $w_{i}\in \R^{n}$ defined as
  $w_{i}=\sum_{j=1}^{n}e_{j}$ if $i=0$ and as $w_{i}=e_{i}$ if
  $i>0$. For all $j \ne i$ we have that $C_{I,j}$ lies in a hyperplane
  that is parallel to $w_{i}^{\bot}$, and so does $C_{j}$. Hence
  \begin{displaymath}
    \dim(C_{i}) = n-
  \dim\Big( \sum_{j\ne i} C_{j}\Big) >0,
\end{displaymath}
proving \eqref{item:57}.  The statement in \eqref{item:58} is a direct
consequence of that in~\eqref{item:57}: $C$ is $i$-mixed if and only
if $C_{I,i}$ is the unique component of $C_{I}$ of dimension 0, which
is equivalent to the fact that $I=\{i\}$.
\end{proof}

\begin{lemma}
  \label{lemm:14}
$ \cB=\{\bfb \in \N^{n} \mid |\bfb| \le
|\bfd|-n\}$.
\end{lemma}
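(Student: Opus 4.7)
The plan is to unfold the definition $\cB = (\Delta + \bfdelta) \cap \Z^n$ into concrete inequalities and use the two hypotheses on $\bfdelta$, namely $\delta_i + 1 > 0$ for all $i$ and $\sum_{i=1}^n(\delta_i+1) < 1$, to match them with the integrality of lattice points.

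First I would translate $\bfb \in \cB$, with $\bfb=(b_1,\dots,b_n)\in \Z^n$, into the system
\begin{displaymath}
b_i - \delta_i \ge 0 \quad (i=1,\dots,n) \and \sum_{i=1}^n (b_i-\delta_i) \le |\bfd|,
\end{displaymath}
by using the description $\Delta = \{\bfx \in (\R_{\ge 0})^n \mid |\bfx|\le |\bfd|\}$ from Proposition~\ref{prop:11}. The pointwise positivity conditions reduce to $b_i \ge 0$ for each $i$: the hypothesis $\delta_i + 1 > 0$ gives $\delta_i > -1$, and since $b_i$ is an integer, $b_i \ge \delta_i$ is equivalent to $b_i \ge 0$.

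For the length condition, the hypothesis $\sum_{i=1}^n(\delta_i+1) < 1$ combined with $\delta_i + 1 > 0$ yields
\begin{displaymath}
 -n < \sum_{i=1}^n \delta_i < 1-n.
\end{displaymath}
Hence $|\bfb| \le |\bfd| + \sum_{i=1}^n \delta_i$ with $|\bfb| \in \Z$ forces $|\bfb| \le |\bfd|-n$ (as the right-hand side is strictly less than $|\bfd|-n+1$), while conversely, if $|\bfb| \le |\bfd| - n$ then the lower bound $\sum_{i=1}^n \delta_i > -n$ gives $|\bfb| \le |\bfd|-n < |\bfd| + \sum_{i=1}^n \delta_i$. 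Combining both parts produces the claimed equality.

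There is really no obstacle here; the only point requiring care is that both chains of inequalities use the strict bounds to get the correct integer round-offs in the two directions, but this is immediate from the hypotheses on $\bfdelta$.
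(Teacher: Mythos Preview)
Your proof is correct and matches the paper's argument essentially line for line. The only nicety worth making explicit is that the reverse implication $b_i\ge 0\Rightarrow b_i\ge\delta_i$ in the pointwise step also uses $\delta_i<0$, which follows because each positive term $\delta_i+1$ lies in a sum bounded by~$1$ and hence is itself less than~$1$.
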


\begin{proof}
  Let $\bfb=(b_{1},\dots, b_{n}) \in \Z^{n}$. Then
  $\bfb \in \Delta+\bfdelta$ if and only if $b_{i}-\delta_{i}\ge 0$
  for all $i$ and $|\bfb - \bfdelta|\le |\bfd|$. Since $\bfb$ is a
  lattice point, $\delta_{i}+1 > 0$ for all $i$ and
  $\sum_{i=1}^{n}(\delta_{i}+1) <1$, these conditions are equivalent
  to $b_{i}\ge 0$ for all $i$ and $|\bfb|\le |\bfd|-n$, proving the
  lemma.
\end{proof}

\begin{lemma}
  \label{lemm:19}
  Let $\bfb=(b_{1},\dots, b_{n})\in \cB$ and set
  $b_{0}=|\bfd|-n-|\bfb|$. Then
  \begin{enumerate}
  \item \label{item:27} for $I\subset \{0,\dots, n\}$ with
    $I, I^{\cc}\ne \emptyset$ we have that $\bfb\in C_{I}+\bfdelta$ if
    and only if $b_{i}\ge d_{i}$ for all $i\in I$ and $b_{i}< d_{i}$
    for all $i\in I^{\cc}$,
  \item \label{item:59} for each $i$ we have that $\bfb\in \cB_{i}$ if
    and only if if $b_{i}\ge d_{i}$ and $b_{j}< d_{j}$ for $j>i$, and
    if this is the case then $a(b)=d_{i}\, e_{i}$,
\item \label{item:60} $\bfb\in \cB\setminus \cB^{\circ}$ if and only
  if there is a unique $i$ such that $b_{i}\ge d_{i}$.
  \end{enumerate}
\end{lemma}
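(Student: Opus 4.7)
The plan is to reduce each part to the defining inequalities for the cells $C_I$ given in \eqref{eq:61} and to apply Lemma~\ref{lemm:16}. First I will record what the hypotheses on $\bfdelta$ force numerically. From $\delta_i+1>0$ for all $i$ and $\sum_{i=1}^{n}(\delta_i+1)<1$, one finds $\delta_i\in(-1,0)$ for every $i$ (if any $\delta_i\ge 0$ then the sum of the $n$ positive terms $\delta_j+1$ would already exceed $1$). In particular $|\bfdelta|=\sum_{i=1}^n\delta_i\in(-n,1-n)$.

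For part \eqref{item:27}, write out the linear functions $\ell_i$ of Proposition~\ref{prop:11} evaluated at $\bfb-\bfdelta$. For $i>0$, $\ell_i(\bfb-\bfdelta)=b_i-d_i-\delta_i$; since $b_i\in\Z$ and $-\delta_i\in(0,1)$, the inequalities $\ell_i\ge 0$ and $\ell_i\le 0$ are equivalent to $b_i\ge d_i$ and $b_i<d_i$, respectively. For $i=0$, using Lemma~\ref{lemm:14} and the identity $b_0=|\bfd|-n-|\bfb|$, one rewrites
\[
\ell_0(\bfb-\bfdelta)=\sum_{j=1}^{n}d_j-|\bfb|+|\bfdelta|=(b_0-d_0)+n+|\bfdelta|;
\]
since $b_0-d_0\in\Z$ and $n+|\bfdelta|\in(0,1)$, the inequalities $\ell_0\ge 0$ and $\ell_0\le 0$ are likewise equivalent to $b_0\ge d_0$ and $b_0<d_0$. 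Plugging these into the description of $C_I$ in \eqref{eq:61} yields \eqref{item:27}. Note that the unique $I$ with $\bfb-\bfdelta\in\ri(C_I)$ exists by the genericity condition \eqref{eq:20} applied to $S(\rho)\succeq S(\varphi)$, and it satisfies $I,I^{\cc}\ne\emptyset$ because otherwise $\bfb-\bfdelta$ would not lie in $\Delta$ (for $I=\{0,\dots,n\}$) or would violate $|\bfb|\le|\bfd|-n$ (one of the strict/non-strict inequalities above would fail).

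For part \eqref{item:59}, let $I=\{i\,:\,b_i\ge d_i\}$, which by \eqref{item:27} is the index set of the unique maximal cell $C_I$ of $S(\varphi)$ whose translate contains $\bfb$. Let $C\in S(\rho)^n$ be the $n$-cell with $\bfb\in C+\bfdelta$; since $S(\rho)\succeq S(\varphi)$, we have $C\subset C_I$. By Lemma~\ref{lemm:16}\eqref{item:57}, the $j$-th component of $C$ is a point precisely when $j\in I$, in which case that component equals $\{d_j\,e_j\}$. Therefore $i(\bfb)=\max I$ and $a(\bfb)=d_{i(\bfb)}e_{i(\bfb)}$. Rewriting ``$i=\max I$'' gives the stated characterization: $b_i\ge d_i$ and $b_j<d_j$ for all $j>i$.

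For part \eqref{item:60}, by definition $\bfb\in\cB\setminus\cB^{\circ}$ iff the $n$-cell $C$ above is $i$-mixed for some $i$. By Lemma~\ref{lemm:16}\eqref{item:58}, this happens precisely when $I=\{i\}$ for a single index $i$, which by \eqref{item:27} is equivalent to the existence of a unique $i$ with $b_i\ge d_i$. No step presents any real obstacle; the only care required is the arithmetic verification for $\ell_0$, which uses both the integrality of $b_0$ and the range $|\bfdelta|\in(-n,1-n)$ enforced by the two conditions on $\bfdelta$.
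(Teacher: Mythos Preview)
Your proof is correct and follows essentially the same route as the paper: for \eqref{item:27} you unwind the inequalities $\ell_i(\bfb-\bfdelta)\gtrless 0$ and use integrality together with the constraints on $\bfdelta$ (the paper compresses this into ``arguing as in the proof of Lemma~\ref{lemm:14}''), and for \eqref{item:59} and \eqref{item:60} you invoke Lemma~\ref{lemm:16} exactly as the paper does. One minor quibble: your stated reason why the subset $I=\{i:b_i\ge d_i\}$ satisfies $I,I^{\cc}\ne\emptyset$ is slightly off (in fact $\bfb-\bfdelta\in\Delta$ holds automatically for $\bfb\in\cB$); the clean argument is that $\sum_{i=0}^{n}b_i=|\bfd|-n$, which is strictly between $\sum_i(d_i-1)=|\bfd|-(n+1)$ and $|\bfd|$, so neither all $b_i<d_i$ nor all $b_i\ge d_i$ is possible.
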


\begin{proof}
  With notation as in \eqref{eq:61} we have that
  $\bfb\in C_{I}+\bfdelta$ if and only if
  $\ell_{i}(\bfb-\bfdelta) \ge 0$ for $i\in I$ and
  $\ell_{i}(\bfb-\bfdelta) \le 0$ for $i\in I^{\cc}$. Arguing as in
  the proof of Lemma \ref{lemm:14}, we deduce that these conditions
  are equivalent to $b_{i}\ge d_{i}$ for all $i\in I$ and
  $b_{i}< d_{i}$ for all $i\in I^{\cc}$, as stated in \eqref{item:27}.

  The statements in \eqref{item:59} and \eqref{item:60} follow from
  that in \eqref{item:27} together with Lemma~\ref{lemm:16}.
\end{proof}

\begin{proof}[Proof of Proposition \ref{prop:2}]
  Let $\pi\colon \Z^{n+1}\to \Z^n$ be the linear map given by
  $\pi(c_{0},\dots, c_{n})= (c_{1},\dots, c_{n})$, so that
  $\pi_{*}(\bft^{\bfc})=\bfs^{\pi(\bfc)}=s_{1}^{c_{1}}\dots
  s_{n}^{c_{n}}$ for all $ \bfc=(c_{0},\dots, c_{n})\in \N^{n+1}$.

  By Lemma \ref{lemm:14},  $\pi$ induces a bijection
  between the index sets $\cI $ and $\cB$, and so the morphism of
  algebras $\pi_{*}$ gives a bijection between the monomial bases of $T$
  and of $V$. Similarly, by Lemma \ref{lemm:19}\eqref{item:59} $\pi$
  also induces a bijection between $\cI_{i}$ and $\cB_{i}$ for each
  $i$, and so $\pi_{*}$ gives a bijection between the monomial bases of
  $ \bigoplus_{i=0}^{n}T_{i}$ and $ \bigoplus_{i=0}^{n}V_{i}$, proving
  the second claim.  Moreover, for $\bfc\in \cI$ we have that
  \begin{displaymath}
    \pi_{*}( \Psi_{\bfd}(\bft^{\bfc}))=\pi_{*}( \bft^{\bfc- d_{i}\, \wh
      e_{i}}) = \bfs^{\pi(\bfc)-d_{i}\, e_{i}} = \Phi_{\bfcA,
      \bfrho}(\bfs^{\pi(\bfc)})= \Phi_{\bfcA,
      \bfrho}(\pi_{*}(\bft^{\bfc})),
  \end{displaymath}
  which shows the commutativity of the diagram. The last claim is a
  direct consequence of the two previous.
\end{proof}

\begin{corollary}
  \label{cor:4}
   $\Res_{\bfd}= {\det(\cM_{\bfd})}/{\det(\cN_{\bfd})}$.
\end{corollary}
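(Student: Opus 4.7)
The plan is to combine Proposition~\ref{prop:2}, which identifies the Macaulay matrix and its distinguished principal submatrix with those of the Canny-Emiris construction for the data $(\bfcA, \bfrho, \bfdelta)$, together with Theorem~\ref{thm:9}, which expresses the sparse resultant as the quotient $\det(\cH_{\bfcA,\bfrho})/\det(\cE_{\bfcA,\bfrho})$ whenever the underlying mixed subdivision is admissible.

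The first step is to check that $S(\rho)$ is admissible in the sense of Definition~\ref{def:5}. By Proposition~\ref{prop:15}, the chain $S(\theta_{0})\preceq \cdots\preceq S(\theta_{n})$ is an admissible incremental chain of mixed subdivisions of $\Delta$ satisfying $S(\theta_{n})\preceq S(\varphi)$. Since $S(\rho)$ is assumed to be a refinement of $S(\varphi)$, we have $S(\theta_{n})\preceq S(\varphi)\preceq S(\rho)$, so this same incremental chain witnesses the admissibility of $S(\rho)$. Applying Theorem~\ref{thm:9} yields
\begin{equation*}
  \Res_{\bfcA} = \pm \frac{\det(\cH_{\bfcA,\bfrho})}{\det(\cE_{\bfcA,\bfrho})}.
\end{equation*}

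The second step is to translate this into the language of Macaulay matrices. By Proposition~\ref{prop:2}, the pushforward $\pi_{*}$ identifies the Sylvester map $\Phi_{\bfcA,\bfrho}$ with $\Psi_{\bfd}$ and bijects the respective monomial bases, so that $\cH_{\bfcA,\bfrho} = \cM_{\bfd}$ and $\cE_{\bfcA,\bfrho} = \cN_{\bfd}$. Combined with the coincidence of $\Res_{\bfcA}$ and $\Res_{\bfd}$ up to sign recalled in Example~\ref{exm:3}, we obtain
\begin{equation*}
 \Res_{\bfd} = \pm \frac{\det(\cM_{\bfd})}{\det(\cN_{\bfd})}.
\end{equation*}

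The remaining and only nontrivial point is the determination of the sign. The classical normalization $\Res_{\bfd}(t_{0}^{d_{0}},\dots,t_{n}^{d_{n}}) = 1$ recorded in \S\ref{sec:class-maca-form} allows us to fix it by specializing $u_{i,d_{i}\wh{e}_{i}}=1$ and $u_{i,\bfc}=0$ for $\bfc\ne d_{i}\wh e_{i}$. Under this specialization, Proposition~\ref{prop:3} (applied with $\bfomega=\bfzero$) shows that both $\cM_{\bfd}$ and $\cN_{\bfd}$ become diagonal matrices whose nonzero diagonal entries are of the form $u_{i(b),a(b)}=1$, so $\det(\cM_{\bfd})/\det(\cN_{\bfd})=1$ after specialization and hence the sign is positive. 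The main (minor) obstacle is thus confirming this sign; the rest is assembling already proven facts.
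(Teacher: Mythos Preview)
Your approach is essentially the same as the paper's: use Proposition~\ref{prop:15} to get admissibility, apply Theorem~\ref{thm:9}, identify the matrices via Proposition~\ref{prop:2}, and then fix the sign by specializing to $t_i^{d_i}$. Your observation that admissibility of $S(\rho)$ follows from $S(\theta_n)\preceq S(\varphi)\preceq S(\rho)$ is exactly the missing link the paper leaves implicit when it only says ``$S(\varphi)$ is admissible.''

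There is, however, a small slip in your sign argument. Proposition~\ref{prop:3} cannot be invoked with $\bfomega=\bfzero$: its hypothesis is $\vartheta_{\bfomega_i}=\rho_i$, which forces $\bfomega_i=(\rho_i(a))_{a\in\cA_i}$, not $\bfzero$; and in any case that proposition computes an initial part of the determinant, not a specialization of the matrix. The correct justification is more elementary and does not need Proposition~\ref{prop:3} at all. By Lemma~\ref{lemm:19}\eqref{item:59}, for every $b\in\cB$ one has $a(b)=d_{i(b)}\,e_{i(b)}$. Under the specialization $u_{i,a}=1$ if $a=d_i e_i$ and $u_{i,a}=0$ otherwise, the entry $\cH_{\bfcA,\bfrho}[b,b']=u_{i(b),\,b'-b+a(b)}$ is nonzero exactly when $b'-b+a(b)=d_{i(b)}e_{i(b)}=a(b)$, i.e.\ when $b'=b$. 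Hence both $\cM_{\bfd}$ and $\cN_{\bfd}$ specialize to identity matrices, so the quotient equals $1=\Res_{\bfd}(t_0^{d_0},\dots,t_n^{d_n})$, giving the $+$ sign.
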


\begin{proof}
  By Proposition \ref{prop:15}, the mixed subdivision $S(\rho)$ is
  admissible. Theorem \ref{thm:9} and Proposition~\ref{prop:2} then
  imply that
  \begin{displaymath}
\Res_{\bfd}=\pm \frac{\det(\cH_{\bfcA, \bfrho})}{\det(\cE_{\bfcA, \bfrho})}=\pm \frac{\det(\cM_{\bfd})}{\det(\cN_{\bfd})}.
\end{displaymath}
The sign can be determined by considering the evaluation of both sides
of this equality at the coefficients of systems of polynomials
$t_{i}^{d_{i}}$, $i=0,\dots, n$.
\end{proof}

\begin{example}
  \label{exm:5}
  Consider again the case when $n=2$, $\bfd=(1,2,2)$ and $m=3$.
  Then
\begin{displaymath}
  \cA_{0}=\{(0,0),(1,0),(0,1)\}  \and
  \cA_{1}=\cA_{2}=\{(0,0),(1,0),(2,0), (1,0), (1,1), (0,2)\}.
\end{displaymath}
By Proposition \ref{prop:11}\eqref{item:18}, the mixed subdivision
$S(\varphi)$ is tight and so we can take $\rho_{i}=\varphi_{i}$,
$i=0, 1,2$. We also choose
$\bfdelta =(-\frac{2}{3}, -\frac{3}{4}) \in \R^{2}$.

As shown in Figure \ref{fig:5}, the index set $\cB$ splits as
$\cB=\cB_{0}\sqcup\cB_{1}\sqcup\cB_{2}$ with
\begin{align*}
  \cB_{0}&=\{(0,0),(1,0),(0,1),(1,1)\}, \\
  \cB_{1}&=\{(2,0),(3,0),(2,1)\}, \\
  \cB_{2} &=\{(0,2),(1,2),(0,3)\},
\end{align*}
and the row content function assigns to the elements of $\cB_{i}$,
$i=0, 1, 2$, the vertices $(0,0)\in \Delta_{0}$,
$(2,0)\in \Delta_{1}$, $(0,2)\in \Delta_{2}$, respectively. Moreover,
the elements of $\cB$ lying in the translated non-mixed $2$-cells are
$(2,0)$ and $(0,2)$.

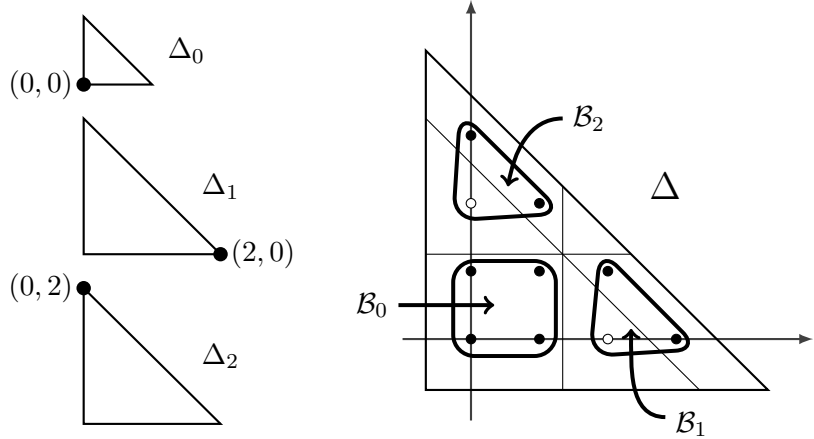
\begin{figure}[ht]
  \begin{tikzpicture}[scale=0.9]
\draw[shift={(0.66, 0.75)},-latex,color=darkgray,thick] (-1,0) -- (5,0);
\draw[shift={(0.66, 0.75)},-latex,color=darkgray,thick] (0,-1.2) -- (0,5);
\draw[thick, shift={(-5,4.5)}] (0,0)--(1,0)--(0,1)--cycle;
\draw[thick, shift={(-5,2)}] (0,0)--(2,0)--(0,2)--cycle;
\draw[thick, shift={(-5,-0.5)}] (0,0)--(2,0)--(0,2)--cycle;
\draw[thick] (0,0)--(5,0)--(0,5)--cycle;
\fill (-5,4.5) circle (3pt) node[left] {$(0,0)$};
\fill (-3,2) circle (3pt) node[right] {$(2,0)$};
\fill (-5,1.5) circle (3pt) node[left] {$(0,2)$};
  \draw (2,0)--(2,3);
  \draw (0,2)--(3,2);
  \draw (0,4)--(4,0);
  \draw (-3.5,5) node {$\Delta_0$};
  \draw (-3,3) node {$\Delta_1$};
  \draw (-3,0.5) node {$\Delta_2$};
  \draw (3.5,3) node {{\Large $\Delta$}};
  \foreach \x in {0,1} {
            \foreach \y in {0,1} {
            \filldraw[color=black](\x+0.66,\y+0.75) circle (2pt);
            }}
  \foreach \x in {0,1,2,3} {
            \filldraw[color=black](\x+0.66,3.75-\x) circle (2pt);
            }
  \draw[fill=white] (2.66,0.75) circle (2pt);
  \draw[fill=white] (0.66,2.75) circle (2pt);
  \draw[line width=1.5pt, rounded corners=8pt] (0.4,0.5)--(1.9, 0.5)--(1.9,1.9)--(0.4,1.9)--cycle;
  \draw[line width=1.5pt, rounded corners=9pt] (2.4,0.5)--(4, 0.6)--(2.55,2.07)--cycle;
  \draw[line width=1.5pt, rounded corners=9pt] (0.4,2.5)--(2, 2.6)--(0.5,4.1)--cycle;
  \draw (-0.4,1.25) node[left] {$\cB_{0}$};
  \draw[line width=1.5pt, ->] (-0.4,1.25) to (1,1.25);
  \draw (2,4) node[right] {$\cB_{2}$};
\draw[line width=1.5pt, ->] (2,4) to [out=180, in=80] (1.2,3);
\draw (3.5,-0.5) node[right] {$\cB_{1}$};
\draw[line width=1.5pt, ->] (3.5,-0.4) to [out=180, in=-90] (3,0.9);
\end{tikzpicture}
\vspace{-3mm}
\caption{The index set of a mixed subdivision}  \label{fig:5}
\end{figure}

The Canny-Emiris matrix   $\cH_{\bfcA,\bfrho}$ and its principal submatrix
$\cE_{\bfcA,\bfrho}$ respectively coincide with the Macaulay matrices
$\cM_{\bfd}$ and $\cN_{\bfd}$ in Example \ref{exm:4}, in agreement
with Proposition~\ref{prop:2}.
\end{example}

\begin{remark}
  \label{rem:10}
  The Macaulay matrix $\cM_{\bfd}$ cannot be produced by a mixed
  subdivision admitting a tight incremental chain
  (Definition~\ref{def:9}).  This can be shown by inspecting the
  different mixed subdivisions of $\Delta$ that allow such an
  incremental chain and verifying that none  of them  coincides with
  $S(\rho)$. One of these incremental chains for the case when $n=2$
  and $\bfd=(1,3,2)$ is shown in Example \ref{exm:8}.
\end{remark}

We next show that  when the mixed subdivision $S(\rho)$ is not
admissible, the formula in Theorem \ref{thm:9} might fail to hold.

\begin{example}
  \label{exm:7}
   Let notation be as in Example \ref{exm:5} and instead of the
  $\varphi_{i}$'s, consider  the affine functions
  $\rho_{i}\colon \Delta_{i}\to \R$, $i=0,1,2$, defined by
\begin{align*}
  &\rho_{0}(0,0)=0, \ \rho_{0}(1,0)=1, \ \rho_{0}(0,1)=1, \\
  &\rho_{1}(0,0)=0, \ \rho_{1}(2,0)=0, \ \rho_{1}(0,2)=3, \\
  &\rho_{2}(0,0)=0, \ \rho_{2}(2,0)=3, \ \rho_{2}(0,2)=0.
\end{align*}
Let $\rho\colon \Delta\to \R$ be their inf-convolution. The mixed
subdivision $S(\rho)$ of $\Delta$ is tight and has 6 maximal
cells as shown in Figure \ref{fig:6}, that decompose as
\begin{align*}
  C_{1 }&= (0,0)+A_{1}+B_{2} , &      C_{2}&=  A_{0}+(2,0)+B_{2}, &    C_{3}&= (1,0)+ (2,0)+\Delta_{2},      \\
  C_{4}&= B_{0}+A_{1}+(0,2),      &  C_{5}&= \Delta_{0}+(2,0)+(0,2),      &  C_{6}&= (0,1)+\Delta_{1}+(0,2).
\end{align*}

\begin{figure}[ht]
  \begin{tikzpicture}[scale=0.9]
\draw[shift={(0.66, 0.75)},-latex,color=darkgray,thick] (-1,0) -- (5,0);
\draw[shift={(0.66, 0.75)},-latex,color=darkgray,thick] (0,-1.2) -- (0,5);
\draw[thick, shift={(-5,5)}] (0,0)--(1,0)--(0,1)--cycle;
  \draw[thick, shift={(-5,2.25)}] (0,0)--(2,0)--(0,2)--cycle;
  \draw[thick, shift={(-5,-0.5)}] (0,0)--(2,0)--(0,2)--cycle;
  \draw[thick] (0,0)--(5,0)--(0,5)--cycle;
  \draw (2,0)--(2,3);
  \draw (0,2)--(3,2);
  \draw (3,0)--(3,2);
  \draw (0,3)--(2,3);
\fill (-5,5) circle (3pt) node[left] {$(0,0)$};
\fill (-3,2.25) circle (3pt) node[right] {$(2,0)$};
\fill (-5,1.5) circle (3pt) node[left] {$(0,2)$};
  \draw (-4.5,5) node[below] {$A_0$};
  \draw (-5,5.5) node[left] {$B_0$};
  \draw (-3.5,5) node {$\Delta_0$};
  \draw (-4,2.25) node[below] {$A_1$};
  \draw (-5,3.25) node[left] {$B_1$};
  \draw (-3,3.25) node {$\Delta_1$};
  \draw (-4,-0.5) node[below] {$A_2$};
  \draw (-5,0.5) node[left] {$B_2$};
  \draw (-3,0.5) node {$\Delta_2$};
  \foreach \x in {0,1,2} {
            \foreach \y in {0,1} {
            \filldraw[color=black](\x+0.66,\y+0.75) circle (2pt);
            }}
  \foreach \x in {0,1} {
            \filldraw[color=black](\x+0.66,2.75) circle (2pt);
            }
  \draw[fill=white] (3.66,0.75) circle (2pt);
  \draw[fill=white] (0.66,3.75) circle (2pt);
\draw[line width=1.5pt, ->] (1,-0.5) to (1,0.4);
\draw (1,-0.5) node[below] {$C_1$};
\draw[line width=1.5pt, ->] (2.5,-0.5) to (2.5,0.4);
\draw (2.5,-0.5) node[below] {$C_2$};
\draw[line width=1.5pt, ->] (4.25,-0.7) to [out=180, in=-90] (3.75,0.4);
\draw (4.25,-0.7) node[right] {$C_3$};
\draw[line width=1.5pt, ->] (-0.4,2.5) to (1,2.5);
\draw (-0.4,2.5) node[left] {$C_4$};
\draw[line width=1.5pt, ->] (3,3) to [out=180, in=90] (2.5,2.2);
\draw (3,3) node[right] {$C_5$};
\draw[line width=1.5pt, ->] (1.5,4) to [out=180, in=75] (1,3.5);
\draw (1.5,4) node[right] {$C_6$};
\end{tikzpicture}
  \vspace{-3mm}
   \caption{A non-admissible mixed subdivision} \label{fig:6}
\end{figure}
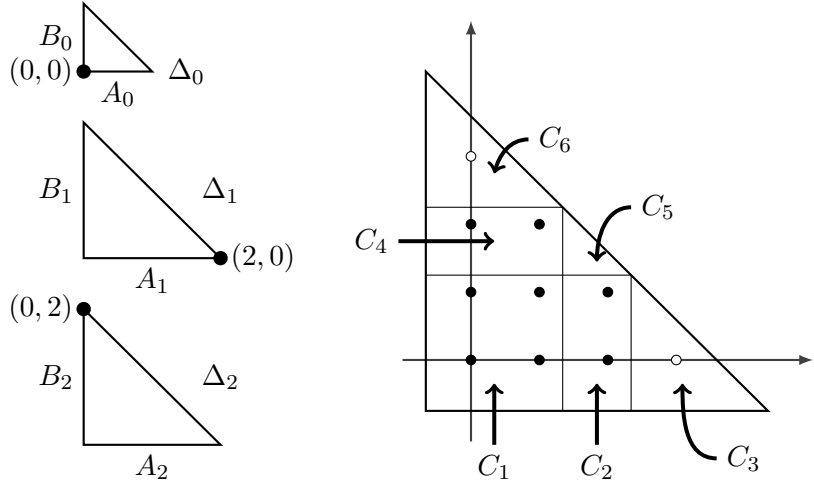

The index set and row function corresponding to the data
$\bfcA=(\cA_{0}, \cA_{1}, \cA_{2})$,
$\bfrho=(\rho_{0},\rho_{1},\rho_{2})$ and $\bfdelta$ are equal to
those in Example \ref{exm:5}, and so the Canny-Emiris matrix
$\cH_{\bfcA,\bfrho}$ also coincides with that in \eqref{eq:57}.
However, the translated non-mixed cells of $S(\rho)$ differ from those
in Example \ref{exm:5}. Their lattice points are $(3,0)$ and $(0,3)$
and the corresponding principal submatrix is
\begin{displaymath}
  \cE_{\bfcA,\bfrho}=
  \begin{pmatrix}
    \beta_{2}& 0\\
    0& \gamma_{5}
  \end{pmatrix}.
\end{displaymath}
The determinant of this matrix does not divide that of
$\cH_{\bfcA,\bfrho}$ and so the formula in Theorem \ref{thm:9} does
not hold in this case. In particular, $S(\rho)$ is not admissible.

 Indeed, this latter observation can be verified directly: let
\begin{equation*}
  S(\theta_{0}) \preceq S(\theta_{1}) \preceq S(\theta_{2})
\end{equation*}
be an incremental chain of mixed subdivisions of $\Delta$ with
$S(\theta_{2})\preceq S(\rho)$ and for each $k=0,1,2$ let
$\theta_{k,i}\colon \Delta_{i}\to \R$, $i=0,1,2$, be the corresponding
family of convex piecewise affine functions.

If $\theta_{1,0}\colon \Delta_{0}\to \R$ is not constant, then
$S(\theta_{1})$ has a cell that is a translate of the triangle
$ \Delta_{1}+\Delta_{2}= \{(x_{1},x_{2}) \in (\R_{\ge 0})^{2} \mid
x_{1}+x_{2} \le 4\}$ which is not compatible with the assumption that
$S(\rho)$ is a refinement of $S(\theta_{1})$, as it can be verified on
Figure~\ref{fig:5}.  We deduce that
$\theta_{1,0}\colon \Delta_{0}\to \R$ is constant, but in this case
$S(\theta_{1})$ is the trivial mixed subdivision of $\Delta$ and this
incremental chain does not verify the conditions in
Definition~\ref{def:5} for $k=1$, and so it is not admissible.
\end{example}



\newcommand{\noopsort}[1]{} \newcommand{\printfirst}[2]{#1}
  \newcommand{\singleletter}[1]{#1} \newcommand{\switchargs}[2]{#2#1}
  \def\cprime{$'$}
\providecommand{\bysame}{\leavevmode\hbox to3em{\hrulefill}\thinspace}
\providecommand{\MR}{\relax\ifhmode\unskip\space\fi MR }
\providecommand{\MRhref}[2]{%
  \href{http://www.ams.org/mathscinet-getitem?mr=#1}{#2}
}
\providecommand{\href}[2]{#2}

\end{document}